\newtheorem{theorem}{Theorem}[section]
\newtheorem{lemma}[theorem]{Lemma}
\newtheorem{proposition}[theorem]{Proposition}
\newtheorem{corollary}[theorem]{Corollary}
\theoremstyle{definition}
\theoremstyle{remark}
\newtheorem*{remark}{Remark}
\newtheorem*{organisation}{Organisation}
\newcommand{\vertiii}[1]{{\left\vert\kern-0.25ex\left\vert\kern-0.25ex\left\vert #1 
\right\vert\kern-0.25ex\right\vert\kern-0.25ex\right\vert}}
\newcommand{\R}{{\mathbb R}}
\numberwithin{equation}{section}
\def\1{\textbf{\rm 1}}
\def\XXint#1#2#3{{\setbox0=\hbox{$#1{#2#3}{\int}$}
\vcenter{\hbox{$#2#3$}}\kern-.5\wd0}}
\begin{document}

\date{\today}
\keywords{Hypercontractivity, logarithmic Sobolev inequality, Talagrand's inequality, Fokker--Planck equation, regularisation, optimal transport}

\subjclass[2010]{{26D10, 47D07, 52A40 (primary); 35K05, 60J60 (secondary)}}

\author[Bez]{Neal Bez}
\address[Neal Bez]{Department of Mathematics, Graduate School of Science and Engineering,
Saitama University, Saitama 338-8570, Japan}
\email{nealbez@mail.saitama-u.ac.jp}
\author[Nakamura]{Shohei Nakamura}
\address[Shohei Nakamura]{Department of Mathematics, Graduate School of Science, Osaka University, Toyonaka, Osaka 560-0043, Japan}
\email{srmkn@math.sci.osaka-u.ac.jp}
\author[Tsuji]{Hiroshi Tsuji}
\address[Hiroshi Tsuji]{Department of Mathematics, Graduate School of Science, Osaka University, Toyonaka, Osaka 560-0043, Japan}
\email{u302167i@ecs.osaka-u.ac.jp}


\title[Stability of hypercontractivity, the LSI and Talagrand's inequality]{Stability of hypercontractivity, the logarithmic Sobolev inequality, and Talagrand's cost inequality}

\begin{abstract}
We provide deficit estimates for Nelson's hypercontractivity inequality, the logarithmic Sobolev inequality, and Talagrand's transportation cost inequality under the restriction that the inputs are semi-log-subharmonic, semi-log-convex, or semi-log-concave. In particular, our result on the logarithmic Sobolev inequality complements a recently obtained result by Eldan, Lehec and Shenfeld concerning a deficit estimate for inputs with small covariance. Similarly, our result on Talagrand's transportation cost inequality complements and, for a large class of semi-log-concave inputs, improves a deficit estimate recently proved by Mikulincer. Our deficit estimates for hypercontractivity will be obtained by using a flow monotonicity scheme built on the Fokker--Planck equation, and our deficit estimates for the logarithmic Sobolev inequality will be derived as a corollary. For Talagrand's inequality, we use an optimal transportation argument. An appealing feature of our framework is robustness and this allows us to derive deficit estimates for the hypercontracivity inequality associated with the Hamilton--Jacobi equation,  the Poincar\'e inequality, and for Beckner's inequality.
\end{abstract}

\maketitle

\section{Introduction and main results}\label{S1}

In their work on stability of the celebrated gaussian logarithmic Sobolev inequality (LSI for short), Ledoux, Nourdin and Peccati \cite{LNP} identified that it is of interest to understand the relationship between the deficit (the difference between the two sides of the inequality) and the size of the covariance matrix of the input. Addressing this, Eldan, Lehec and Shenfeld \cite{ELS} established that for probability distributions whose covariance is bounded above by the identity (in the sense of positive definite transformations), then the deficit is minimised on an appropriate gaussian distribution. 

Although it is well known that the classical form of the LSI yields a multitude of fundamental inequalities, including Talagrand's transportation cost inequality and hypercontractivity, it is often the case that the corresponding deficit estimates do not enjoy such connections (at least not in a direct or obvious way). For example, under an assumption on the smallness of the covariance, it seems unclear how to derive a deficit estimate for Talagrand's inequality from the LSI deficit estimate in \cite{ELS}. Nevertheless, Mikulincer \cite{Miku} argued in a direct manner and, inspired by the approach in \cite{ELS}, was able to establish a certain deficit estimate for Talagrand's inequality. 

Whilst it was also shown in \cite{ELS,Miku} that such deficit results can dramatically fail for certain inputs (in fact, suitably chosen gaussian mixtures) with large covariance, one is certainly left wondering to what extent one may recover such failure by restricting to natural classes of inputs. Addressing this is one source of motivation for the present work.

We are also drawn to the problem of establishing deficit estimates of a similar nature for the hypercontractivity inequality associated with the Ornstein--Uhlenbeck semigroup; again, it seems completely unclear to us whether it is possible to derive such estimates from the LSI deficit estimate in \cite{ELS} under a small assumption on the covariance (see the forthcoming discussion after Theorem \ref{t:MainLSI}). In fact, if one thinks of the standard bridge between hypercontractivity and the LSI (the argument of Gross), it appears \emph{prima facie} that the former is the stronger statement. 

We address both of these issues by considering input functions which are semi-log-subharmonic\footnote{As far as we are aware, the idea of working with semi-log-subharmonic input functions in the context of hypercontractivity goes back to Graczyk, Kemp, and Loeb \cite{GKL} where they observed so-called strong hypercontractivity for the dilation semigroup $T_tf(x) = f(e^{-t}x)$ under the assumption of log-subharmonicity and thus improved some work of Janson \cite{Janson}.} or semi-log-convex. As we shall see, there are a number of advantages of working in such a framework. For example, whilst smallness of the covariance may not be preserved under Ornstein--Uhlenbeck flow, semi-log-subharmonicity and semi-log-convexity are known to be preserved under the flow (see the forthcoming Lemma \ref{l:LogPreserve} for precise details) and this fact will play a crucial role in this paper. In fact, semi-log-concavity is also preserved under the Ornstein--Uhlenbeck flow and from this we shall obtain results connected with the small covariance results mentioned above due to Eldan--Lehec--Shenfeld and Mikulincer; in the case of Talagrand's inequality, our result will be an improvement on Mikulincer's result with regards to the size of the deficit (albeit for a somewhat smaller class of inputs). 

An appealing feature of the framework in which we work is its robustness. As a result, we shall capitalise on the strength and wealth of connections that hypercontractivity enjoys in order to derive improved versions of a variety of functional inequalities, including hypercontractivity associated with the Hamilton--Jacobi semigroup (and consequently an improved form of a dual form of Talagrand's inequality), as well as the Poincar\'e inequality and Beckner's inequality.


\subsection{Functional inequalities}
Throughout the paper we shall frequently use the notation 
\[
\gamma_\beta (x) = \frac{1}{(2\pi \beta)^{\frac{n}{2}}} e^{ -\frac{|x|^2}{2\beta} }, \quad x \in \mathbb{R}^n,
\] 
for the centred and $L^1$-normalised gaussian with variance $\beta>0$. We abbreviate to $\gamma$ in the case $\beta=1$.   The Ornstein--Uhlenbeck semigroup $P_s$,  $s>0$,  is given by 
\begin{equation}\label{e:PsFormula}
P_sf(x) = \int_{\mathbb{R}^n} f(e^{-s} x + \sqrt{1-e^{-2s}} y)\, d\gamma(y)
\end{equation}
for non-negative $f \in L^1(\gamma)$. This $u_s:= P_s f$ solves the gaussian heat equation 
$$
\partial_s u_s = \mathcal{L}_1 u_s,\;\;\; u_0 = f,\;\;\; (s,x) \in (0,\infty) \times \mathbb{R}^n,
$$  
where in general $\mathcal{L}_\beta$ is given by 
\[
\mathcal{L}_\beta \phi(x) = \beta \Delta\phi(x) - x\cdot\nabla \phi(x)
\]
for $\beta>0$. For reasons that will become apparent, as above we shall often write the solution of time-dependent PDE in the form $u_s(x)$ rather than $u(s,x)$ (and so we shall never use the notation $u_s$ to mean $\partial_s u$).

Note that each $P_s$ can be extended to a bounded operator on $L^p(\gamma)$ for all $p\in[1,\infty]$. Nelson's fundamental gaussian hypercontractivity inequality is a quantification of the smoothing property of the Ornstein--Uhlenbeck semigroup $P_s$, and states that
\begin{equation}\label{e:HCClassic}
\big\| P_s\big[ f^\frac1p \big]  \big\|_{ L^q(\gamma) } \le \bigg( \int_{\mathbb{R}^n} f\, d\gamma\bigg)^\frac1p
\end{equation}
for all non-negative $f \in L^1(\gamma)$, where $1<p<q<\infty$ and $s>0$ satisfy $\frac{q-1}{p-1}  = e^{2s}$. This time threshold is sharp (for smaller $s$ the inequality fails even with a constant larger than $1$) and is often referred to as ``Nelson's time". We refer the reader to the survey paper \cite{DGS} and the book \cite{Faris} for detailed historical background and illuminating discussion of the vast connections of hypercontractivity to several branches of the mathematical sciences, including constructive quantum field theory. 

In the case $-\infty <q < p < 1$ with\footnote{If we reinterpret the inequality by writing $g = f^{1/p}$, then the case $p = 0$ may be included since $\|g\|_{L^p(\gamma)} \to \exp(\int \log g \, d\gamma)$ as $p \to 0$.} $p \neq 0$ (and the relation $\frac{q-1}{p-1}  = e^{2s}$ unchanged), the hypercontractivity inequality is known to hold in reverse form
\begin{equation}\label{e:RevHC}
\big\| P_s \big[ f^\frac1p \big] \big\|_{L^q(\gamma)} \ge \bigg( \int_{\mathbb{R}^n} f\, d\gamma \bigg)^\frac1p
\end{equation}
for all positive $f \in L^1(\gamma)$ and the constant is sharp (see Borell \cite{Borell}). The reverse form is also significant for several reasons; for example, from certain deficit estimates that we shall derive for reverse hypercontractivity, later in the paper (Section \ref{S4}) we deduce an improved version of an inequality dual to Talagrand's inequality. We also refer the reader to \cite{MOS} for wider context on reverse hypercontractivity.

As observed by Gross \cite{Gross}, the hypercontractivity inequality 
implies the LSI inequality
\begin{equation}\label{e:LogSob}
{\rm Ent}_{\gamma}(f) \le \frac12 {\rm I}_{\gamma}(f),
\end{equation}
by considering the special case $p = 2$ and $s\to0$ in \eqref{e:HCClassic}\footnote{Remarkably, Gross \cite{Gross} showed one can recover \eqref{e:HCClassic} for all admissible exponents from the LSI and hence hypercontractivity (forward and reverse, in fact) is indeed equivalent to the LSI when considering \emph{general} inputs.}.
Here, the entropy ${\rm Ent}_{\gamma}(f)$ and Fisher information ${\rm I}_{\gamma}(f)$ are defined by 
\begin{align}\label{e:EntFisher}
{\rm Ent}_{\gamma}(f)&:= \int_{\mathbb{R}^n} f\log\, f\, d\gamma - \bigg( \int_{\mathbb{R}^n} f\, d\gamma \bigg) \log\, \bigg( \int_{\mathbb{R}^n} f\, d\gamma \bigg),\\
{\rm I}_{\gamma}(f)&:= \int_{\mathbb{R}^n} \frac{|\nabla f|^2}{f}\, d\gamma. \nonumber 
\end{align}
The LSI inequality too enjoys a plethora of connections to a wide range of fields, including convex geometry, differential geometry, probability theory, and information theory; see, for example, \cite{BoLe}, \cite{BGL}, and the survey paper \cite{LedouxICM}. 
For instance, it is well-known that the LSI implies Talagrand's quadratic transportation cost inequality  which states that  
\begin{equation}\label{e:TalCl}
\frac12 W_2(\gamma, v )^2 \le {\rm Ent}_\gamma\big( \frac{v}{\gamma} \big)
\end{equation}
for $v:\mathbb{R}^n\to (0,\infty)$ satisfying $\int v\, dx =1$ and $\int |x|^2v\, dx < \infty$. Here, $W_2(\cdot,\cdot)$ is the quadratic Wasserstein distance; see Section \ref{S3.5} for details.

Deficit estimates for the LSI have been the subject of a large number of recent papers, including  \cite{BGRS,DoTo,ELS,FIL,FIPR,FMP,Goz,InKi,InMa,Kim,LNP,Tsu}. Among them, we are especially interested in the result by Eldan--Lehec--Shenfeld \cite{ELS}. We are also interested in the stability of Nelson's hypercontractivity inequality \eqref{e:HCClassic} and, unlike other papers on deficit estimates for the LSI, we simultaneously consider the stability of both \eqref{e:HCClassic} and \eqref{e:LogSob}; as we have already mentioned, there is no obvious reason that the \emph{equivalence} of these estimates should persist when restricting the class of input functions and typically one would expect \eqref{e:LogSob} to be the stronger estimate.

Before introducing the various statements of our results, it is instructive to discuss extremisers. Carlen \cite{Carlen} first showed that equality holds in \eqref{e:LogSob} if and only if $f(x) = e^{a\cdot x +b}$,  $a\in\mathbb{R}^n, b\in\mathbb{R}$.  Building on this, Ledoux \cite{LedouxJFA92} showed that (among smooth inputs) equality in \eqref{e:HCClassic} holds if and only if $f(x) = e^{ a\cdot x + b }$, $a\in\mathbb{R}^n$, $b\in\mathbb{R}$. In particular, $f = 1 = \frac{\gamma}{\gamma}$ is essentially the unique extremiser for both of these inequalities and consequently $f = \frac{\gamma_\beta}{\gamma}$ does not attain equality in \eqref{e:HCClassic} or \eqref{e:LogSob} unless $\beta=1$.  Moreover, it is possible to directly compute the deficits (in a multiplicative sense for \eqref{e:HCClassic}) for such inputs: for $\beta\neq 1$,  
\begin{equation}\label{e:DeficitHC}
\frac{\big\| P_s \big[ \big( \frac{\gamma_\beta}{\gamma} \big)^\frac1p \big] \big\|_{L^q(\gamma)}}{\big( \int_{\mathbb{R}^n} \frac{\gamma_\beta}{\gamma}\, d\gamma \big)^\frac1p}
= 
\beta^\frac{n}{2p'} \beta_s^{-\frac{n}{2q'}} < 1,
\end{equation}
where $\beta_s>0$ is given by  
\begin{equation}\label{e:beta2}
\beta_s := 1 + (\beta-1) \frac{q}{p}e^{-2s} 
\end{equation}
and 
\begin{equation}\label{e:DeficitLogSob}
{\rm Ent}_{\gamma}\big( \frac{\gamma_\beta}{\gamma} \big) - \frac12 {\rm I}_{\gamma}\big( \frac{\gamma_\beta}{\gamma} \big) = -\frac{n}{2} ( \log\, \beta - 1 + \frac1\beta ) < 0. 
\end{equation}
It was shown by Eldan--Lehec--Shenfeld \cite{ELS} that the LSI can be improved if the covariance of the input function is small enough. 
Let us recall the covariance matrix of a probability measure $\rho$ is given by 
$$
{\rm cov}\, (\rho) := \left( \int_{\mathbb{R}^n} x_ix_j\, d\rho - \bigg( \int_{\mathbb{R}^n} x_i \, d\rho \bigg)\bigg( \int_{\mathbb{R}^n} x_j \, d\rho \bigg) \right)_{1\le i,j \le n}
$$
and thus in particular we have $ {\rm cov}\, (\gamma_\beta) = \beta {\rm id}$.  It is also straightforward from \eqref{e:DeficitLogSob} to see that  
\begin{equation}\label{e:LSIDefGauss}
{\rm Ent}_{\gamma}\big( \frac{\gamma_a}{\gamma} \big) - \frac12 {\rm I}_{\gamma}\big( \frac{\gamma_a}{\gamma} \big)\le {\rm Ent}_{\gamma}\big( \frac{\gamma_\beta}{\gamma} \big) - \frac12 {\rm I}_{\gamma}\big( \frac{\gamma_\beta}{\gamma} \big)
\end{equation}
for all $0<a\le \beta \le1$. 
It is a consequence of \cite[Theorem 3]{ELS} that, as long as $\beta\le1$, then \eqref{e:LSIDefGauss} continues to hold even if $\gamma_a$ is replaced by any probability distribution whose covariance is $\le \beta{\rm id} $. 
\begin{theorem}[\cite{ELS}]\label{t:ELS}
Let $\beta < 1$. For any non-negative $v $ such that $\int_{\mathbb{R}^n} v\, dx = 1$ and ${\rm cov}\, (v) \le \beta {\rm id}$, we have
\begin{equation}\label{e:ELS}
{\rm Ent}_\gamma\big( \frac{v}{\gamma} \big) - \frac12 {\rm I}_\gamma\big( \frac{v}{\gamma} \big)
\le 
{\rm Ent}_{\gamma}\big( \frac{\gamma_\beta}{\gamma} \big) - \frac12 {\rm I}_{\gamma}\big( \frac{\gamma_\beta}{\gamma} \big). 
\end{equation}
\end{theorem}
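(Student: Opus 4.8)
The plan is to run the Ornstein--Uhlenbeck flow starting from $v$ and read off the deficit from the $\Gamma_2$-computation underlying the Bakry--\'Emery proof of the LSI. Write $f = v/\gamma$ and $u_t = P_t f$; then $d\mu_t := u_t\, d\gamma$ is the law of $X_t = e^{-t}X_0 + \sqrt{1-e^{-2t}}\, G$ with $X_0 \sim v\, dx$ and $G \sim \gamma$ independent, so $u_t = v_t/\gamma$ where $v_t$ is the Lebesgue density of $\mu_t$, and $\mu_t \to \gamma$ as $t \to \infty$. One may assume ${\rm I}_\gamma(f) < \infty$ (otherwise the left-hand side of \eqref{e:ELS} is $-\infty$), and then the regularising effect of $P_t$ makes $v_t$ smooth, strictly positive and rapidly decaying for every $t > 0$, so all manipulations below are valid and the $t \to 0^+$ limits are the expected ones. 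Along the flow one has the de Bruijn identity $-\frac{d}{dt}{\rm Ent}_\gamma(u_t) = {\rm I}_\gamma(u_t)$ together with the identity
\begin{equation*}
-\frac{d}{dt}{\rm I}_\gamma(u_t) = 2\,{\rm I}_\gamma(u_t) + 2R(t), \qquad R(t) := \int_{\mathbb{R}^n} \|\nabla^2 \log u_t\|_{\mathrm{HS}}^2 \, d\mu_t \ge 0,
\end{equation*}
which encodes the exact Bochner formula $\Gamma_2(g) = \|\nabla^2 g\|_{\mathrm{HS}}^2 + |\nabla g|^2$ for the Ornstein--Uhlenbeck operator. Solving the second relation for ${\rm I}_\gamma(u_t)$, integrating in $t$, applying Tonelli, and using ${\rm Ent}_\gamma(u_t), {\rm I}_\gamma(u_t) \to 0$ as $t \to \infty$, one gets the exact deficit formula
\begin{equation*}
{\rm Ent}_\gamma\big( \tfrac{v}{\gamma} \big) - \tfrac12 {\rm I}_\gamma\big( \tfrac{v}{\gamma} \big) = -\int_0^\infty R(t)\, dt .
\end{equation*}

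The second step is a pointwise-in-$t$ lower bound $R(t) \ge R_\gamma(t)$, where $R_\gamma(t)$ is the same quantity for the input $\gamma_\beta$. Since $\log u_t = \log v_t + \frac{|x|^2}{2} + \text{const}$ we have $\nabla^2 \log u_t = \nabla^2 \log v_t + {\rm id}$, so Jensen's inequality for the convex map $M \mapsto \|M\|_{\mathrm{HS}}^2$ on symmetric matrices gives $R(t) \ge \big\| {\rm id} + \int_{\mathbb{R}^n} \nabla^2 \log v_t \, d\mu_t \big\|_{\mathrm{HS}}^2$. Integration by parts identifies $\int_{\mathbb{R}^n} \nabla^2 \log v_t \, d\mu_t = -J_t$, where $J_t := \int_{\mathbb{R}^n} (\nabla \log v_t) \otimes (\nabla \log v_t) \, d\mu_t$ is the Fisher information matrix of $\mu_t$; a further integration by parts shows that the $(2n) \times (2n)$ covariance matrix of $\big( X_t - \mathbb{E}X_t,\ \nabla \log v_t(X_t) \big)$ is positive semidefinite with diagonal blocks ${\rm cov}(\mu_t)$ and $J_t$ and off-diagonal block $-{\rm id}$, so its Schur complement yields $J_t \ge {\rm cov}(\mu_t)^{-1}$. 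Finally $X_t = e^{-t}X_0 + \sqrt{1-e^{-2t}}\, G$ gives ${\rm cov}(\mu_t) = e^{-2t}{\rm cov}(v) + (1-e^{-2t}){\rm id} \le c_t\, {\rm id}$ with $c_t := 1 - e^{-2t}(1-\beta) \le 1$, hence $J_t \ge c_t^{-1}{\rm id} \ge {\rm id}$; thus ${\rm id} - J_t$ is symmetric with every eigenvalue at most $1 - c_t^{-1} \le 0$, and so
\begin{equation*}
R(t) \ge \big\| {\rm id} - J_t \big\|_{\mathrm{HS}}^2 \ge n \big( c_t^{-1} - 1 \big)^2 .
\end{equation*}

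For the input $\gamma_\beta$ all three inequalities are equalities: $v_t$ is the Gaussian $\gamma_{c_t}$, so $\nabla^2 \log v_t = -c_t^{-1}{\rm id}$ is constant, $J_t = c_t^{-1}{\rm id} = {\rm cov}(\mu_t)^{-1}$, and $R_\gamma(t) = n(c_t^{-1}-1)^2$. Hence $R(t) \ge R_\gamma(t)$ for every $t > 0$, and integrating over $(0,\infty)$ and invoking the deficit formula for both $v$ and $\gamma_\beta$ gives
\begin{equation*}
{\rm Ent}_\gamma\big( \tfrac{v}{\gamma} \big) - \tfrac12 {\rm I}_\gamma\big( \tfrac{v}{\gamma} \big) = -\int_0^\infty R(t)\, dt \le -\int_0^\infty R_\gamma(t)\, dt = {\rm Ent}_\gamma\big( \tfrac{\gamma_\beta}{\gamma} \big) - \tfrac12 {\rm I}_\gamma\big( \tfrac{\gamma_\beta}{\gamma} \big),
\end{equation*}
which is \eqref{e:ELS}; and evaluating $\int_0^\infty n(c_t^{-1}-1)^2\, dt = \tfrac{n}{2}(\log \beta - 1 + \tfrac1\beta)$ directly recovers the value of the right-hand side recorded in \eqref{e:DeficitLogSob}.

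The main obstacle is the rigorous justification of the first step: differentiating entropy and Fisher information along the flow, the identity for $\frac{d}{dt}{\rm I}_\gamma(u_t)$, the integration-by-parts formula $\int \nabla^2 \log v_t \, d\mu_t = -J_t$, and the vanishing of all boundary terms, together with the $t \to 0^+$ and $t \to \infty$ limits. These follow from the standard hypercontractive and regularising estimates for the Ornstein--Uhlenbeck semigroup (which instantly produce smooth, strictly positive, rapidly decaying densities), supplemented if necessary by an approximation of $v$; once this is secured, the remainder is linear algebra (Jensen, the Schur complement, and monotonicity of the squared eigenvalues on the relevant interval).
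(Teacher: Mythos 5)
The paper quotes Theorem \ref{t:ELS} from \cite{ELS} without reproducing a proof, so there is no internal argument to compare against; your proposal is a self-contained proof by a genuinely different route. Eldan--Lehec--Shenfeld work with the F\"ollmer process (a stochastic bridge from $\gamma$ to $\mu$) and first establish a matrix-refined LSI deficit estimate \cite[Theorem~1]{ELS} before specialising. You instead run the Ornstein--Uhlenbeck flow on $f = v/\gamma$, express the deficit via the Bakry--\'Emery $\Gamma_2$-calculus as $-\int_0^\infty \int \|\nabla^2 \log u_t\|_{\mathrm{HS}}^2 \, d\mu_t\, dt$, and lower bound the integrand pointwise in $t$ by combining Jensen's inequality, the integration-by-parts identity $\int \nabla^2 \log v_t\, d\mu_t = -J_t$ (this is precisely the Fisher-information-matrix observation the paper alludes to in the footnote following Theorem \ref{t:MainLSI}), the Cram\'er--Rao bound $J_t \ge \mathrm{cov}(\mu_t)^{-1}$, and the exact covariance evolution $\mathrm{cov}(\mu_t) = e^{-2t}\mathrm{cov}(v) + (1-e^{-2t})\mathrm{id}$ under OU flow. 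I have checked the chain of estimates: the de Bruijn and $\Gamma_2$ identities give your exact deficit formula, the Schur-complement argument for $J_t \ge \mathrm{cov}(\mu_t)^{-1}$ is correct, the eigenvalue bound $\|\mathrm{id} - J_t\|_{\mathrm{HS}}^2 \ge n(c_t^{-1}-1)^2$ is valid since every eigenvalue of $\mathrm{id} - J_t$ lies in $(-\infty, 1-c_t^{-1}] \subseteq (-\infty,0]$, and $\int_0^\infty n(c_t^{-1}-1)^2\, dt = \tfrac{n}{2}(\log\beta - 1 + \tfrac1\beta)$ recovers \eqref{e:DeficitLogSob}. What each approach buys: the ELS route delivers the eigenvalue-resolved refinement (Theorem \ref{t:ELS2}(1) in the paper) essentially for free, whereas your route is more elementary and transparent --- only heat-flow $\Gamma_2$-calculus plus linear algebra, no stochastic interpolation --- at the cost that your scalar Jensen step collapses the matrix information, so recovering the eigenvalue-resolved statement would require a matrix-valued (rather than Hilbert--Schmidt-norm) version of your pointwise bound. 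The regularity issues you flag (differentiating along the flow, vanishing boundary terms, the $t\to 0^+$ limit) are standard under OU smoothing and finite Fisher information; that part of your sketch is acceptable.
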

It is clear that equality holds in \eqref{e:ELS} when $v = \gamma_\beta$ and an explicit calculation reveals that the constant in \eqref{e:ELS} is given by
\begin{equation*}\label{e:ConstLSI}
{\rm Ent}_{\gamma}\big( \frac{\gamma_\beta}{\gamma} \big) - \frac12 {\rm I}_{\gamma}\big( \frac{\gamma_\beta}{\gamma} \big)
=
-\frac{n}2
\big(
\log\, \beta - 1 + \frac1{\beta} 
\big). 
\end{equation*}

Shortly after \cite{ELS}, Mikulincer proved an analogous result for Talagrand's inequality. As we have already alluded to, it is not clear if one can derive deficit estimates for Talagrand's inequality directly from deficit estimates for the LSI in the setting of small covariance inputs, and the following result was obtained by utilising \emph{ideas from the approach} in \cite{ELS} (rather than a direct application of the results in \cite{ELS}).
\begin{theorem}[\cite{Miku}]\label{t:MikuWeak}
Let $\beta < 1$. 
For any non-negative $v$ such that $\int_{\mathbb{R}^n} v\, dx =1$, $\int_{\mathbb{R}^n} xv\, dx =0$ and ${\rm cov}\, (v) \le \beta {\rm id}$, 
\begin{equation}\label{e:MikuWeak}
\frac12 W_2( \gamma, v )^2 - {\rm Ent}_\gamma\big( \frac{v}{\gamma} \big) 
\le
- n 
\frac{2(1-\beta) + (\beta+1)\log\, \beta}{2(\beta-1)}.  
\end{equation}
\end{theorem}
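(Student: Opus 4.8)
The plan is to run a direct optimal transportation argument: I would bound the Talagrand deficit ${\rm Ent}_\gamma(v/\gamma)-\tfrac12W_2(\gamma,v)^2$ from below by means of the Brenier map from $\gamma$ to $v$, squeezing the covariance hypothesis in through a single Cauchy--Schwarz step. Assume the deficit is finite (otherwise there is nothing to prove). Write $\mu$ for the measure with density $v$ and let $\nabla\varphi$ ($\varphi$ convex, twice differentiable $\gamma$-a.e.) be the optimal map with $(\nabla\varphi)_{\#}\gamma=\mu$, so that $v(\nabla\varphi(x))\det\nabla^2\varphi(x)=\gamma(x)$ for $\gamma$-a.e.\ $x$. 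Inserting this Monge--Amp\`ere identity into ${\rm Ent}_\gamma(v/\gamma)=\int_{\mathbb{R}^n}\log\tfrac{v(\nabla\varphi(x))}{\gamma(\nabla\varphi(x))}\,d\gamma(x)$, using $\log\gamma(x)-\log\gamma(\nabla\varphi(x))=\tfrac12(|\nabla\varphi(x)|^2-|x|^2)$, and subtracting $\tfrac12W_2(\gamma,v)^2=\tfrac12\int_{\mathbb{R}^n}|\nabla\varphi-x|^2\,d\gamma$, I get
\[
{\rm Ent}_\gamma(v/\gamma)-\tfrac12W_2(\gamma,v)^2=\int_{\mathbb{R}^n}\big(x\cdot\nabla\varphi-|x|^2-\log\det\nabla^2\varphi\big)\,d\gamma .
\]
Since $\nabla\gamma=-x\gamma$, a Gaussian integration by parts turns $\int_{\mathbb{R}^n}x\cdot\nabla\varphi\,d\gamma$ into $\langle\Delta\varphi,\gamma\rangle$, the (nonnegative) distributional Laplacian of the convex function $\varphi$ paired with $\gamma$; discarding its singular part, which only helps, and using $\int_{\mathbb{R}^n}|x|^2\,d\gamma=n$, I arrive at
\[
{\rm Ent}_\gamma(v/\gamma)-\tfrac12W_2(\gamma,v)^2\ \ge\ \int_{\mathbb{R}^n}\big(\mathrm{tr}(\nabla^2\varphi)-\log\det\nabla^2\varphi-n\big)\,d\gamma .
\]

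Next I would feed in the covariance bound. As $\nabla\varphi$ transports $\gamma$ to $\mu$ and $\int_{\mathbb{R}^n}xv\,dx=0$, one has $\int_{\mathbb{R}^n}|\nabla\varphi|^2\,d\gamma=\int_{\mathbb{R}^n}|y|^2\,d\mu(y)=\mathrm{tr}({\rm cov}(v))\le n\beta$, so combining with the integration by parts above and Cauchy--Schwarz,
\[
\int_{\mathbb{R}^n}\mathrm{tr}(\nabla^2\varphi)\,d\gamma\ \le\ \int_{\mathbb{R}^n}x\cdot\nabla\varphi\,d\gamma\ \le\ \Big(\int_{\mathbb{R}^n}|x|^2\,d\gamma\Big)^{1/2}\Big(\int_{\mathbb{R}^n}|\nabla\varphi|^2\,d\gamma\Big)^{1/2}\ \le\ n\sqrt{\beta}.
\]

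To close the argument, set $\tau:=\mathrm{tr}(\nabla^2\varphi)>0$ ($\gamma$-a.e., the deficit being finite) and let $\psi(s):=s-\log s-1$, which is nonnegative, convex, and strictly decreasing on $(0,1]$. The matrix AM--GM inequality $\det M\le(\tfrac1n\mathrm{tr}\,M)^n$ for symmetric $M\ge 0$ gives $\mathrm{tr}(\nabla^2\varphi)-\log\det\nabla^2\varphi-n\ge n\,\psi(\tau/n)$ pointwise; then Jensen's inequality, the monotonicity of $\psi$ on $(0,1]$, and the bound $\tfrac1n\int\tau\,d\gamma\le\sqrt\beta<1$ yield
\[
{\rm Ent}_\gamma(v/\gamma)-\tfrac12W_2(\gamma,v)^2\ \ge\ n\,\psi\Big(\tfrac1n\int_{\mathbb{R}^n}\tau\,d\gamma\Big)\ \ge\ n\,\psi(\sqrt\beta)=n\Big(\sqrt\beta-1-\tfrac12\log\beta\Big).
\]
The bound $n(\sqrt\beta-1-\tfrac12\log\beta)$ obtained here is exactly the Talagrand deficit of $\gamma_\beta$, hence optimal; since moreover an elementary one-variable computation gives $n(\sqrt\beta-1-\tfrac12\log\beta)\ge -n\tfrac{2(1-\beta)+(\beta+1)\log\beta}{2(\beta-1)}$ for all $\beta\in(0,1)$, the estimate \eqref{e:MikuWeak} follows (with room to spare).

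The main obstacle will be rigour rather than strategy: justifying the Monge--Amp\`ere identity in the $\gamma$-a.e.\ sense, the integration by parts $\int_{\mathbb{R}^n}x\cdot\nabla\varphi\,d\gamma=\langle\Delta\varphi,\gamma\rangle$, and the finiteness of all integrals when the only a priori regularity of $\varphi$ is $\nabla\varphi\in L^2(\gamma)$ (which follows from the finite second moment of $v$). I would handle these with the standard truncation/approximation machinery for optimal maps and convex functions, noting that the singular part of $\Delta\varphi$ has a favourable sign throughout.
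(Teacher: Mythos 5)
Your proposal takes a genuinely different route from Mikulincer's. The paper cites Mikulincer \cite{Miku} for this result and explains that his argument is based on the F\"ollmer process (following \cite{ELS}); you instead run a direct mass-transport argument in the spirit of Cordero-Erausquin \cite{Cor}, which is closer to the method the present paper uses in Section \ref{S3.5} to prove the \emph{stronger} Theorem \ref{t:MainTalagrand}. What is striking, and worth double-checking very carefully on your end, is that your chain of inequalities appears to prove the sharp bound $\tfrac12 W_2(\gamma,v)^2 - {\rm Ent}_\gamma(v/\gamma) \le n(1 + \tfrac12\log\beta - \sqrt\beta)$ of \eqref{e:TIMWeak} assuming only ${\rm cov}(v)\le\beta\,{\rm id}$ (plus the centering), which is strictly stronger than \eqref{e:MikuWeak} and is precisely the open problem raised at the end of Section \ref{S1} (see the discussion and the footnote attributing the question to Shenfeld). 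The mechanism is clear: where the paper's transport proof (Proposition \ref{t:RegTal}) invokes Caffarelli's contraction theorem to obtain the \emph{pointwise} bound $T'\le\sqrt\beta$ from semi-log-concavity and then integrates $\psi(T')\ge\psi(\sqrt\beta)$ pointwise, you replace the pointwise control by the \emph{average} bound $\tfrac1n\int\mathrm{tr}(\nabla^2\varphi)\,d\gamma\le\sqrt\beta$ obtained from the covariance hypothesis via Cauchy--Schwarz, and then use the convexity (Jensen) and monotonicity of $\psi(s)=s-\log s-1$ to reach the same constant. The pieces--the Monge--Amp\`ere identity with the Alexandrov Hessian, the favourable sign of the singular part of $\Delta\varphi$ under Gaussian integration by parts, the matrix AM--GM reduction, Jensen, and the monotonicity of $\psi$ on $(0,1]$--all go the right way, and equality is attained at $v=\gamma_\beta$.

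Two remarks. First, there is a harmless sign slip in your closing sentence: the elementary inequality you need to deduce \eqref{e:MikuWeak} from your stronger bound is
\[
\sqrt\beta - 1 - \tfrac12\log\beta \ \ge\ \frac{2(1-\beta)+(\beta+1)\log\beta}{2(\beta-1)} \qquad (0<\beta<1),
\]
i.e.\ without the extra minus sign on the right; this is exactly the inequality recorded (in negated form) right after Theorem \ref{t:MainTalagrand} in the text. Second, the genuine weak point--which you correctly flag--is the justification of $\int x\cdot\nabla\varphi\,d\gamma = \langle\Delta\varphi,\gamma\rangle$ with no boundary contribution when the only regularity available is $\nabla\varphi\in L^2(\gamma)$ (no Caffarelli bound, hence possibly a nontrivial singular Monge--Amp\`ere measure). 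Because $\nabla\varphi$ is monotone and $\gamma$ decays superexponentially, a cutoff argument and the $L^2(\gamma)$ control on $\nabla\varphi$ should make the boundary term vanish, and the singular part of $\Delta\varphi$ only helps; but since this step is the whole gap between what is proven and what is stated as an open problem in the paper, it deserves to be written out completely rather than waved at. If the integration-by-parts step holds as you claim, then your argument is correct and in fact resolves the open problem; I would encourage you to write up the technical details with care, precisely because the conclusion is stronger than one might expect from so elementary a chain.
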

To be precise, \cite[Theorem 3]{ELS} and \cite[Theorem 2]{Miku} give bounds  on the deficit in terms of the eigenvalues of the covariance matrix of the input from which we immediately obtain the above statements;  see the forthcoming Theorem \ref{t:ELS2} in Section \ref{section:consequences}.  

These two theorems naturally raise two questions. Firstly, is there a reasonable class of inputs with large covariance for which the deficit estimate holds? To this, Eldan--Lehec--Shenfeld and Mikulincer  gave  \textit{counterexamples} to the validity of \eqref{e:ELS} and \eqref{e:MikuWeak} in the case of large covariance and $\beta\gg1$ by mixing two gaussians. The covariance blows up whilst the deficit tends to zero. Nevertheless, the basic inequality \eqref{e:LSIDefGauss} for the LSI  remains true in the case $1<\beta < a$ and hence it is reasonable to expect some improvement like \eqref{e:ELS} and \eqref{e:MikuWeak} even when $\beta>1$ by imposing additional structure which excludes the counterexample in \cite{ELS,Miku}. 

Secondly, is the estimate \eqref{e:MikuWeak} sharp? Whilst the deficit estimate for the LSI in \eqref{e:ELS} is clearly sharp with equality when $v = \gamma_\beta$, the estimate \eqref{e:MikuWeak} is not sharp on such an input and this is suggestive that there is room for improvement. 

One of our aims in this paper is to address these questions by working in the framework of semi-log-subharmonicity and semi-log-convexity/concavity, and capitalising on invariance properties of such inputs under the Ornstein--Uhlenbeck flow. After introducing the framework within which we work, and its connection with covariance, we shall introduce our main results concerning stability estimates for hypercontractivity, Talagrand's inequality and the LSI.

\subsection{Our framework}
For $\beta>0$, we say that the twice differentiable function $v:\mathbb{R}^n\to(0,\infty)$ is $\beta$-\emph{semi-log-subharmonic} if 
\begin{equation}\label{e:BetaLogHar}
\Delta \log\, v \ge - \frac{n}\beta. 
\end{equation}
We were led to consider such input functions through work of Graczyk--Kemp--Loeb \cite{GKL} in which they proved so-called strong hypercontractivity for the dilation semigroup $T_sf(x) = f(e^{-s}x)$. More precisely, it was shown that
\begin{equation}\label{e:Janson}
\| T_s f \|_{L^q(\gamma)} \le \| f \|_{L^p(\gamma)}
\end{equation}
holds for $p,q,s$ satisfying $\frac{q}{p} = e^{2s}$ (we call such $s$ as ``Janson's time") under the condition that the input functions are log-subharmonic. The strong hypercontractivity \eqref{e:Janson} was firstly observed by Janson \cite{Janson}, namely he proved \eqref{e:Janson} for all $f:\mathbb{R}^{2d} \to \mathbb{C}$ such that $f(x,y) = F(x+iy)$ for some holomorphic function $F$ on $\mathbb{C}^d$, $d\ge1$. 
Notice that such $f:\mathbb{R}^{2d} \to \mathbb{C}$ are also harmonic $\Delta_{2d} f = 0$ and hence $P_sf = T_s f$. So Janson's result can be read as 
\begin{equation}\label{e:HC29Apr}
\big\| P_s f \|_{L^q(\gamma)} \le \| f \|_{L^p(\gamma)}
\end{equation}
for all holomorphic $f: \mathbb{R}^{2d} \to \mathbb{C} $ in the above sense and $\frac{q}{p} = e^{2s}$. Note that Janson's time (given by $\frac{q}{p} = e^{2s}$) improves Nelson's time (given by $\frac{q-1}{p-1} = e^{2s}$). Motivated by Janson's work, Graczyk--Kemp--Loeb \cite{GKL} extended the class of functions obeying \eqref{e:Janson} to log-subharmonic functions\footnote{If $f :\mathbb{R}^{2d} \to \mathbb{C}$ is holomorphic in the above sense, then $\log\, |f|$ becomes subharmonic, namely $|f|$ is log-subharmonic; see \cite[Example 2.1]{GKL}. Meanwhile it seems hopeless  to compare the result of Graczyk--Kemp--Loeb and the original hypercontractivity \eqref{e:HC29Apr} for $P_s$ since if $f$ is log-subharmonic and harmonic, then $f$ must be constant. }.


For $\beta>0$, we say that the twice differentiable function $v:\mathbb{R}^n\to(0,\infty)$ is $\beta$-\emph{semi-log-convex} if 
\begin{equation}\label{e:SemilogconvexScar}
\nabla^2 \log\, v \ge - \frac{1}{\beta}{\rm id},
\end{equation}
and $\beta$-\emph{semi-log-concave} if 
\begin{equation}\label{e:SemilogconcaveScar}
\nabla^2 \log\, v \le - \frac1\beta {\rm id}.
\end{equation}
Throughout the paper, our main results shall be stated in terms of either semi-log-subharmonicity or semi-log-convexity/semi-log-concavity. As we have already mentioned, these notions interact very nicely with the Ornstein--Uhlenbeck flow $P_s$ and this contributes to the robustness of our framework in terms of applications. Before exhibiting our main results, let us take a moment to compare these notions with each other and, to facilitate a comparison with related work (particularly \cite{ELS} and \cite{Miku}), with the size of covariance.

Firstly, for $n \geq 2$, it is clear by taking traces that $\beta$-semi-log-convexity is a stronger notion than $\beta$-semi-log-subharmonicity (and equivalent when $n=1$). Although less obvious, it is also the case that $\beta$-semi-log-convexity is also a stronger condition than ${\rm cov}\, (v) \ge \beta {\rm id}$. Analogous reverse statements also hold, and we summarise all of these facts as follows:
\begin{equation}\label{e:Relation27Apr1}
	\nabla^2 \log\, v \ge - \frac{1}{\beta} {\rm id}
	\Rightarrow
	\Delta \log\, v \ge - \frac{n}{\beta}
	\;\;\;
	{\rm and}
	\;\;\; 
	{\rm cov}\, (v) \ge \beta {\rm id},
\end{equation}
and 
\begin{equation}\label{e:Relation27Apr2}
	\nabla^2 \log\, v \le - \frac{1}{\beta} {\rm id}
	\Rightarrow
	\Delta \log\, v \le - \frac{n}{\beta}
	\;\;\;
	{\rm and}
	\;\;\; 
	{\rm cov}\, (v) \le \beta {\rm id}.
\end{equation}
Lemma \ref{l:LogPreserve} contains a more precise statement and we refer the reader forward to Section \ref{S2} for the proof as well.

We also have the following relation between the largeness of the covariance and semi-log-subharmonicity: 
\begin{equation}\label{e:Relation27Apr3}
	\Delta \log\, v \ge - \frac{n}{\beta}
	\Rightarrow
	{\rm tr}\, {\rm cov}\, (v) \ge n\beta. 
\end{equation}
Note that there is no such direct link between the size of $\Delta \log\, v$ and ${\rm cov}\, (v)$.
We also remark that whilst assumptions on covariance are usually weaker, if one restricts input functions to centered isotropic gaussians, then all of the above notions become equivalent: for all $a>0$, 
$$
	\nabla^2 \log\, \gamma_a \ge - \frac{1}{\beta} {\rm id}
	\Leftrightarrow
	\Delta \log\, \gamma_a \ge - \frac{n}{\beta}
	\Leftrightarrow 
	{\rm cov}\, (\gamma_a) \ge \beta {\rm id}
	\Leftrightarrow
	a\ge\beta.  
$$
In particular, these observations reveal that semi-log-subharmonicity and semi-log-convexity are natural alternatives to the largeness of the covariance when it comes to salvaging the failure of the deficit estimates for the LSI and Talagrand's inequality in \cite{ELS} and \cite{Miku}.

Looking more widely, notions of semi-log-convexity and semi-log-concavity have appeared in several different contexts. For instance, semi-log-convexity, which is closely related to the Li--Yau gradient estimate \cite{LiYau}, has recently featured in the work of Eldan--Lee \cite{EL} and Lehec \cite{Lehec}. These papers address a conjecture of Talagrand concerning a regularised version of the classical Markov inequality for gaussian measure $\gamma$ in which the inputs are restricted to those which arise from Ornstein--Uhlenbeck flow. Interestingly, the breakthrough work of Eldan--Lee \cite{EL} noticed that the natural framework for such a regularising phenomenon is wider than simply Ornstein--Uhlenbeck flows and they established a stronger result for semi-log-convex inputs. The refinement of Eldan and Lee's work by Lehec \cite{Lehec} also holds in the more general setting of semi-log-convex inputs, and this perspective has been further developed by Gozlan--Li--Madiman--Roberto--Samson \cite{GLMRS} as the theory begins to grow beyond the gaussian measure.

Regarding semi-log-concavity, for example, we mention the work on the stability of the entropy jump inequality due to Ball--Nguyen \cite{BaNg}, Bizeul \cite{Biz}, and the Pr\'{e}kopa--Leindler inequality due to Ball--B\"{o}r\"{o}czky \cite{BaBo10,BaBo11} and B\"{o}r\"{o}czky--De \cite{BoDe21}. We also mention work of the third author \cite{Tsu} and Indrei--Marcon \cite{InMa} where they also established an improved LSI under certain a log-convexity and log-concavity assumption (the shape of the stability estimates in these papers differs from \eqref{e:ELS} in the current paper and are independent results).
We also mention recent work due to Mikulincer--Shenfeld \cite{MikuShen} regarding preservation of log-concavity.

\subsection{Main results: hypercontractivity, LSI, and Talagrand's inequality}

We begin with our main result on hypercontractivity. 
\begin{theorem} \label{t:MainHyper}
Let $s>0$ and suppose $p,q \in \mathbb{R} \setminus \{1\}$ satisfy $\frac{q-1}{p-1} = e^{2s}$. Also, let $\beta > 0$ and assume that the twice differentiable function $v:\mathbb{R}^n \to (0,\infty)$ belongs to $L^2(\gamma_\beta^{-1})$.

(1) Suppose $1<p<q<\infty$. When $\beta > 1$ let $v$ be $\beta$-semi-log-subharmonic, and when $\beta < 1$ let $v$ be $\beta$-semi-log-concave. Then
\begin{equation}\label{e:RegHC}
\big\| P_s\big[ \big(\frac{v}{\gamma}\big)^\frac1p\big] \big\|_{L^q(\gamma)}
\le 
\big\| P_s\big[ \big(\frac{\gamma_\beta}{\gamma}\big)^\frac1p\big] \big\|_{L^q(\gamma)}
\bigg( \int_{\mathbb{R}^n} \frac{v}{\gamma} \, d\gamma\bigg)^\frac1p.
\end{equation}

(2) Suppose $-\infty <q < p< 1$ and $p\neq 0, 1-e^{-2s}$.
\begin{enumerate}
\item[(i)] 
If $p q >0$, $\beta > 1$ and $v$ is $\beta$-semi-log-subharmonic, then 
\begin{equation}\label{e:RegRevHC}
\big\| P_s \big[ \big(\frac{v}{\gamma}\big)^\frac1p \big] \big\|_{L^q(\gamma)} \ge \big\| P_s \big[ \big( \frac{\gamma_\beta}{\gamma} \big)^\frac1p \big] \big\|_{L^q(\gamma)}  \bigg( \int_{\mathbb{R}^n} \frac{v}{\gamma}\, d\gamma \bigg)^\frac1p. 
\end{equation}
\item[(ii)]
If $p q <0$, $0<\beta < 1$ and $v$ is $\beta$-semi-log-concave, then \eqref{e:RegRevHC} holds. 
\end{enumerate}
\end{theorem}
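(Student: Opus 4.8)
The plan is to run the Fokker--Planck flow on the input $v$ and show that the deficit in \eqref{e:RegHC} is monotone along it, vanishing at infinite time; the semi-log-subharmonicity/semi-log-concavity hypothesis should enter only to fix the sign of the derivative. I will describe case (1) with $1<p<q$; case (2) should be the same scheme with the inequalities reversed and more careful sign bookkeeping. By homogeneity one may take $\int_{\R^n} v\, dx = 1$ (the condition $\Delta\log v\ge -n/\beta$ is scale invariant), and by a routine approximation/truncation argument — using $v\in L^2(\gamma_\beta^{-1})$ to keep all quantities below finite — one may assume $v$ is smooth, strictly positive and suitably decaying. Set $h_t := P_t(v/\gamma)$ and $v_t := h_t\gamma$, so that $v_t$ solves the Fokker--Planck equation with $v_0=v$ and $\partial_t h_t = \mathcal{L}_1 h_t$. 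By Lemma~\ref{l:LogPreserve}, $\beta$-semi-log-subharmonicity (when $\beta>1$) and $\beta$-semi-log-concavity (when $\beta<1$) are each propagated: $v_t$ is $\beta_t$-semi-log-subharmonic, resp. $\beta_t$-semi-log-concave, where $\beta_t := 1+(\beta-1)e^{-2t}$ stays on the same side of $1$ as $\beta$ and $\beta_t\to1$ as $t\to\infty$. A direct gaussian computation gives $P_t(\gamma_\beta/\gamma) = \gamma_{\beta_t}/\gamma$, so the gaussian model evolves consistently under the flow.

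Next I would introduce the flow deficit
\[
\mathcal{D}(t) := \log\big\| P_s\big[(v_t/\gamma)^{1/p}\big]\big\|_{L^q(\gamma)} - \log\big\| P_s\big[(\gamma_{\beta_t}/\gamma)^{1/p}\big]\big\|_{L^q(\gamma)},
\]
where by \eqref{e:DeficitHC}--\eqref{e:beta2} the second term is the explicit function $\tfrac{n}{2p'}\log\beta_t - \tfrac{n}{2q'}\log\big(1+(\beta_t-1)\tfrac{q}{p}e^{-2s}\big)$. Since $v_t\to\gamma$ and $\gamma_{\beta_t}\to\gamma$ as $t\to\infty$, one has $\mathcal{D}(t)\to0$, and \eqref{e:RegHC} (in the normalised case) is precisely the statement $\mathcal{D}(0)\le0$. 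Thus it suffices to show $\mathcal{D}'(t)\ge0$ for all $t>0$.

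To compute $\mathcal{D}'(t)$, put $g_t := h_t^{1/p}$ and introduce the auxiliary flow $w_\tau := P_\tau g_t$, $\tau\in[0,s]$, which solves $\partial_\tau w_\tau = \mathcal{L}_1 w_\tau$ and has $w_s = P_s[(v_t/\gamma)^{1/p}]$. Differentiating the $L^q(\gamma)$ norm, using $\partial_t g_t = \tfrac{1}{p} g_t^{1-p}\mathcal{L}_1 h_t$, the self-adjointness of $P_s$ on $L^2(\gamma)$, the commutation $\nabla P_s = e^{-s}P_s\nabla$, integration by parts against $\mathcal{L}_1$, and the fundamental theorem of calculus in the auxiliary time $\tau$ (where the $\tau=0$ data is explicit), together with the analogous explicit computation for the model term, I expect to reach an identity of the schematic form
\[
\mathcal{D}'(t) = \int_{\R^n} c_t(x)\,\big(\Delta\log v_t(x) + \tfrac{n}{\beta_t}\big)\, d\mu_t(x),
\]
with $\mu_t$ a positive measure and $c_t\ge0$ — built from $P_s$-images of powers of $g_t$ and gradient terms, its nonnegativity coming from $1<p<q$ and positivity of $P_s$ — while in the $\beta<1$ case the identity instead pairs the matrix field $\nabla^2\log v_t + \tfrac{1}{\beta_t}\mathrm{id}$ against a positive semi-definite field and the accompanying scalar coefficient has the opposite sign. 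Invoking the propagated semi-log hypothesis then forces $\mathcal{D}'(t)\ge0$: when $\beta_t>1$ the factor $\Delta\log v_t+n/\beta_t$ is $\ge0$ by semi-log-subharmonicity, and when $\beta_t<1$ the matrix $\nabla^2\log v_t+\tfrac{1}{\beta_t}\mathrm{id}$ is negative semi-definite by semi-log-concavity while the coefficient is $\le0$. Integrating over $[0,\infty)$ against $\mathcal{D}(\infty)=0$ gives $\mathcal{D}(0)\le0$, i.e. \eqref{e:RegHC}. For part (2) I would run the identical scheme: the constraints $p\neq0$ and $p\neq 1-e^{-2s}$ (equivalently $q\neq0$, since $q=1+(p-1)e^{2s}$) keep the substitution $g_t=h_t^{1/p}$ and the $L^q$ norm meaningful, and with $q<p<1$ the sign of $c_t$ and the monotonicity of $\mathcal{D}$ are reversed according to $pq>0$ (case (i)) or $pq<0$ (case (ii)), yielding $\mathcal{D}(0)\ge0$, i.e. \eqref{e:RegRevHC}; the choice of semi-log-subharmonic ($\beta>1$) versus semi-log-concave ($\beta<1$) is again dictated by which one-sided bound on $\nabla^2\log v_t$ makes the integrand have the correct sign.

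The hard part will be the derivation of the differential identity for $\mathcal{D}'(t)$ and, above all, verifying that $c_t$ (and, in the semi-log-concave regime, the accompanying matrix field) genuinely has the claimed sign: this rests on carefully combining $\partial_t h_t=\mathcal{L}_1 h_t$, the commutation relation for $\nabla$ and $P_s$, repeated integration by parts, and a Cauchy--Schwarz/$\Gamma_2$-type estimate, with every boundary term controlled through the approximation and the $L^2(\gamma_\beta^{-1})$ assumption; the passage to the limit $t\to\infty$ must likewise be justified within the same approximation.
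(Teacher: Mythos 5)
Your plan deviates from the paper's proof in one structural choice that then propagates throughout: you evolve $v$ by the $1$-Fokker--Planck flow (equivalently $h_t = P_t(v/\gamma)$) and compare at each time to the moving gaussian $\gamma_{\beta_t}$ with $\beta_t = 1+(\beta-1)e^{-2t}$, whereas the paper evolves $v$ by the $\beta$-Fokker--Planck flow $\partial_t v_t = \mathcal{L}_\beta^* v_t$ and compares to a \emph{fixed} model $\gamma_\beta$, which is the stationary state of that flow. This matters for your appeal to Lemma \ref{l:LogPreserve}: as stated, the lemma asserts that the $\beta$-Fokker--Planck flow preserves $\beta$-semi-log-subharmonicity/concavity with the \emph{same} constant $\beta$; it does not say that the $1$-Fokker--Planck flow downgrades the parameter to $\beta_t$. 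Your claim is in fact true — one can adapt the proof of Lemma \ref{l:LogPreserve} by writing $v_t(x) = C\,e^{-|x|^2/(2\beta_t)}\int e^{-\frac{\beta_t}{2\beta(1-e^{-2t})}|w|^2}\,e^{|w+cx|^2/(2\beta)}v_0(w+cx)\,dw$ with $c=\beta e^{-t}/\beta_t$ and then using superposition (for subharmonicity) or Pr\'ekopa--Leindler (for concavity) — but it is a different statement and you cannot just cite the lemma for it.

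The more serious issue is that the entire technical core of the theorem is missing. You write that you ``expect to reach'' a schematic differential identity
\[
\mathcal{D}'(t) = \int_{\mathbb{R}^n} c_t(x)\,\big(\Delta\log v_t(x) + \tfrac{n}{\beta_t}\big)\,d\mu_t(x)
\]
with a nonnegative $c_t$. The paper's Proposition \ref{Prop:Uhmmm2} and Lemmas \ref{l:Step1}, \ref{l:Step2} show that the actual derivative of the flowed functional does \emph{not} take such a clean form: after substantial algebra one gets a Cauchy--Schwarz/Jensen term $P_sh_t\,P_s[h_t\Gamma(\log\tfrac{v_t}{\gamma_\beta})] - |P_s[h_t\nabla\log\tfrac{v_t}{\gamma_\beta}]|^2$ (which is nonnegative for an unrelated reason), a piece paired against $\Delta\log\tfrac{v_t}{\gamma}$ whose sign is controlled by the semi-log hypothesis via $(\beta-\beta_s)(1-\tfrac1\beta)n\le(\beta-\beta_s)\Delta\log\tfrac{v_t}{\gamma}$, and several ``$I$'', ``$N$'', ``$R$'' expressions that must be shown to vanish upon integration using the identities \eqref{e:ConsSep26G=}--\eqref{e:ConsSep26F=} and the commutation $\nabla P_s = e^{-s}P_s\nabla$; the coefficient bookkeeping $\Upsilon_1,\Upsilon_2,\Upsilon_3$ is delicate and quite specific. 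Your deficit $\mathcal{D}(t)$ also carries a moving model term whose $t$-derivative (through $\beta_t$) must cancel or combine with the corresponding pieces of the flowed norm; you have not indicated how this cancellation would occur. In short, the part you flag as ``the hard part'' is not merely hard — it is essentially the whole proof, and your route (different flow, time-dependent model) changes the computation in ways that are not obviously controlled by the same cancellations. I would not accept this as establishing Theorem \ref{t:MainHyper} without the differential inequality being carried out in full.
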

Before proceeding to our main results concerning the LSI and Talagrand's inequality, we make a few remarks about the above result. Firstly, it is an easy exercise to check that the constant in \eqref{e:RegHC} and \eqref{e:RegRevHC} can be explicitly computed as 
$$
\big\| P_s\big[ \big(\frac{\gamma_\beta}{\gamma}\big)^\frac1p\big] \big\|_{L^q(\gamma)}
= 
\beta^{\frac {n}{2p'}}\beta_s^{-\frac{n}{2q'}}\in(0,1),
$$
and $\beta_s$ is given in \eqref{e:beta2}. Moreover equality is established when $v =\gamma_\beta$. 

Secondly, we recommend that the reader does not dwell on the condition $v \in L^2(\gamma_\beta^{-1})$. This is imposed in order to justify various technicalities in the proof and we will exhibit a rich class of functions satisfying these conditions in Corollary \ref{c:FPversion} below by considering non-negative solutions to Fokker--Planck equations.

Also, naively, one may expect that the assumption on $v$ in the case $\beta < 1$ may be weakened to \emph{$\beta$-semi-log-superhamonicity}, that is
$$
\Delta \log\, v \le - \frac{n}{\beta}.
$$
We leave this problem open and refer the reader forward to the end of this section for further discussion on this. From a different viewpoint, the case $pq < 0$ appears to be special in the sense that the standard argument from which hypercontractivity is derived from the LSI seems to break down when applied to the corresponding deficit estimates.

As a consequence of Theorem \ref{t:MainHyper}, we improve the LSI in the same spirit.
\begin{theorem} \label{t:MainLSI}
Let $\beta > 0$ and assume that the twice differentiable function $v:\mathbb{R}^n \to (0,\infty)$ belongs to $L^2(\gamma_\beta^{-1})$.

(1) If $\beta > 1$ and $v$ is $\beta$-semi-log-subharmonic, then \eqref{e:ELS} holds.

(2) If $0<\beta < 1$ and $v$ is $\beta$-semi-log-concave, then \eqref{e:ELS} holds.
\end{theorem}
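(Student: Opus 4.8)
The plan is to derive Theorem~\ref{t:MainLSI} from the refined hypercontractivity estimate of Theorem~\ref{t:MainHyper}(1) by running Gross's classical linearisation argument: one specialises to the exponent $p=2$ and differentiates at $s=0^{+}$. Both parts of Theorem~\ref{t:MainLSI} are handled in one stroke, since the hypothesis pairs ``$\beta>1$, $v$ $\beta$-semi-log-subharmonic'' and ``$0<\beta<1$, $v$ $\beta$-semi-log-concave'' are exactly the two alternatives for which Theorem~\ref{t:MainHyper}(1) is valid, and $v\in L^2(\gamma_\beta^{-1})$ is assumed in both statements. As in Theorems~\ref{t:ELS} and~\ref{t:MikuWeak} we take $v$ normalised with $\int_{\mathbb{R}^n}v\,dx=1$ and set $f:=v/\gamma$, so that $\int_{\mathbb{R}^n}f\,d\gamma=1$.

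For $s>0$ let $q=q(s):=1+e^{2s}$, so that $\frac{q-1}{p-1}=e^{2s}$ with $p=2$ and $q(s)\to 2$ as $s\to0^{+}$, and define
\begin{align*}
M(s)&:=\log\big\|P_s[f^{1/2}]\big\|_{L^{q(s)}(\gamma)},\\
M_\beta(s)&:=\log\big\|P_s[(\gamma_\beta/\gamma)^{1/2}]\big\|_{L^{q(s)}(\gamma)}.
\end{align*}
Theorem~\ref{t:MainHyper}(1) with $p=2$, together with the explicit value $\|P_s[(\gamma_\beta/\gamma)^{1/2}]\|_{L^{q(s)}(\gamma)}=\beta^{n/(2p')}\beta_s^{-n/(2q'(s))}$ recorded after its statement (with $\beta_s$ as in \eqref{e:beta2}), shows $M(s)\le M_\beta(s)$ for all $s>0$; moreover $M(0)=M_\beta(0)=0$, since $\|g\|_{L^2(\gamma)}^2=\int g^2\,d\gamma$ and $\int f\,d\gamma=\int\gamma_\beta/\gamma\,d\gamma=1$. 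Dividing $M(s)-M(0)\le M_\beta(s)-M_\beta(0)$ by $s>0$ and letting $s\to0^{+}$ gives $\dot M(0^{+})\le\dot M_\beta(0^{+})$, once both right derivatives are known to exist. The function $M_\beta(s)=\tfrac{n}{2p'}\log\beta-\tfrac{n}{2q'(s)}\log\beta_s$ is manifestly smooth, and a direct computation --- equivalently, the identity \eqref{e:DeficitLogSob} --- yields $2\dot M_\beta(0^{+})={\rm Ent}_\gamma(\gamma_\beta/\gamma)-\tfrac12{\rm I}_\gamma(\gamma_\beta/\gamma)$, which is the right-hand side of \eqref{e:ELS}.

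The remaining point is to identify $2\dot M(0^{+})$ with the LSI deficit of $f$, which is Gross's computation. Writing $g:=f^{1/2}$ and $h_s:=P_sg$, so that $M(s)=\tfrac1{q(s)}\log\int h_s^{q(s)}\,d\gamma$, differentiating at $s=0$ and using $\partial_s h_s=\mathcal{L}_1 h_s$ together with the gaussian integration by parts $\int h^{q-1}\mathcal{L}_1 h\,d\gamma=-(q-1)\int h^{q-2}|\nabla h|^2\,d\gamma$, and the values $q(0)=2$, $q'(0)=2$, $\int g^2\,d\gamma=1$, one finds $\dot M(0^{+})=\int g^2\log g\,d\gamma-\int|\nabla g|^2\,d\gamma=\tfrac12\big({\rm Ent}_\gamma(v/\gamma)-\tfrac12{\rm I}_\gamma(v/\gamma)\big)$, after rewriting $\int g^2\log g\,d\gamma=\tfrac12{\rm Ent}_\gamma(f)$ and $\int|\nabla g|^2\,d\gamma=\tfrac14{\rm I}_\gamma(f)$ (both using $\int f\,d\gamma=1$). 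Combining this with $\dot M(0^{+})\le\dot M_\beta(0^{+})$ and the value of $2\dot M_\beta(0^{+})$ gives precisely \eqref{e:ELS}.

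The step I expect to be the main obstacle is the analytic justification that $M$ is right-differentiable at $0$ with the value given by Gross's formula --- specifically, differentiating under the integral sign in $\frac{d}{ds}\int h_s^{q(s)}\,d\gamma$ at $s=0$ and carrying out the integration by parts with no boundary contribution at infinity. Both rely on adequate decay and regularity of $h_s=P_s[(v/\gamma)^{1/2}]$ and its gradient as $s\to0^{+}$, and on the finiteness of ${\rm Ent}_\gamma(v/\gamma)$ and ${\rm I}_\gamma(v/\gamma)$; this is exactly where the integrability hypothesis $v\in L^2(\gamma_\beta^{-1})$ (satisfied in particular by the Fokker--Planck solutions of Corollary~\ref{c:FPversion}) enters, forcing gaussian-type integrability of $v$, hence of $f^{1/2}$ and of $P_s[f^{1/2}]$, and propagating the required bounds. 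I anticipate these facts to be available as an auxiliary regularity lemma from the hypercontractivity part of the paper, which one would simply invoke; granting it, the corollary follows at once, uniformly in the two cases $\beta>1$ and $0<\beta<1$.
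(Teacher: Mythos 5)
Your proposal is correct and follows essentially the same route as the paper: in Section 3.3 the paper proves an auxiliary Lemma \ref{l:Hyper->LogSob} (comparing $\Lambda(s):=\|P_s[f^{1/2}]\|_{L^{q(s)}(\gamma)}/\psi(s)$ at $s=0^+$, which is exactly the $M(s)\le M_\beta(s)$ comparison you set up after taking logarithms) and then invokes it with $\psi(s)=\|P_s[(\gamma_\beta/\gamma)^{1/2}]\|_{L^{q(s)}(\gamma)}$, computing $\psi'(0)$ via the same Gross-type differentiation formula \eqref{e:30Apr-4} you use. The regularity points you flag at the end are treated in the paper at a similar level of detail (by citing \cite[Theorem 5.2.3]{BGL} and the standard differentiation identities), so your outline matches both the structure and the depth of the paper's proof.
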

In the case $\beta >1$, thanks to \eqref{e:Relation27Apr3}, Theorem \ref{t:MainLSI}(1) states that the deficit estimate \eqref{e:ELS} holds in the case that the trace of the covariance of the input is large enough if one imposes additional information on $\Delta \log\, v$.  This can be viewed as a complementary result to the small-covariance result of Eldan--Lehec--Shenfeld in Theorem \ref{t:ELS}\footnote{After we prepared an earlier version of this paper, it was pointed out to us that under the stronger assumption that $v$ is $\beta$-semi-log-convex, Theorem \ref{t:MainLSI}(1) follows from arguments in \cite{ELS}. More precisely, one should combine \cite[Theorem 1]{ELS} with arguments similar to the one deriving \cite[Theorem 3]{ELS} by noticing that the Fisher information matrix can be realised as 
$$
-\int_{\mathbb{R}^n}  v \nabla^2 \log\, v\, dx. 
$$
On the other hand, under our weaker assumption of $\beta$-semi-log-subharmonicity, it does not seem to be obvious if one is able to run such an argument.} .

In the case of $0<\beta <1$, thanks to \eqref{e:Relation27Apr2}, Theorem \ref{t:MainLSI}(2) follows from Theorem \ref{t:ELS}. Nevertheless, we include this in our statement of Theorem \ref{t:MainLSI} to emphasise that our approach naturally yields results in both regimes $\beta < 1$ and $\beta > 1$. Also, we remark that it does not seem to be obvious if one could derive some improved form of hypercontractivity from Theorem \ref{t:ELS} (for $0<\beta < 1$). This appears to be because the assumption on the covariance in Theorem \ref{t:ELS} is somewhat rigid with regard to running the standard argument from which one derives classical hypercontractivity from the LSI{\footnote{In fact, if one attempts to run the standard argument, then eventually one would need to apply Theorem \ref{t:ELS} with an input form of $v= \gamma P_s\big[ f^\frac1p \big]^q$. For that purpose one has to ensure that the covariance of $\gamma P_s\big[ f^\frac1p \big]^q $ is small enough. However, it is not clear if such a non-linear flow preserves the covariance or not.}}. On the other hand, our semi-log-subharmonicity and semi-log-convexity/semi-log-concavity assumption appears to be more flexible and this can be considered an advantage of our framework. 
In fact, it is robust enough to improve other functional inequalities such as the hypercontractivity inequality for the Hamilton--Jacobi equation, the dual form of Talagrand's inequality, as well as the Poincar\'{e} and Beckner inequalities; see Section \ref{S4}.

In our next result, we improve Miklincer's deficit estimate of Talagrand's inequality \eqref{e:MikuWeak} under somewhat stronger assumptions on the inputs.
\begin{theorem}\label{t:MainTalagrand}
	Let $\beta>0$. Suppose the twice differentiable $v:\mathbb{R}^n\to (0,\infty)$ satisfies $\int_{\mathbb{R}^n} v\, dx = 1$ and $\int_{\mathbb{R}^n} |x|^2 v\, dx < \infty$. 

(1) If $\beta >1$ and $v$ satisfies 
		$$
		0\ge \nabla^2 \log\, v \ge - \frac1{\beta}{\rm id},
		$$
then we have 
\begin{equation}\label{e:TIMWeak}
			\frac12 W_2( \gamma, v )^2 - {\rm Ent}_\gamma\big( \frac{v}{\gamma} \big) 
\le
\frac12 W_2( \gamma, \gamma_\beta )^2 - {\rm Ent}_\gamma\big( \frac{\gamma_\beta}{\gamma} \big). 
\end{equation}
		
(2) If $0<\beta < 1$ and $v$ is $\beta$-semi-log-concave, then 
we have \eqref{e:TIMWeak}.
\end{theorem}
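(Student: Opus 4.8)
The plan is to run the optimal-transport proof of Talagrand's inequality \eqref{e:TalCl} and then sharpen its last step with Caffarelli's contraction theorem. Let $\nabla\varphi$ be the Brenier map sending $\gamma$ to $v$, with $\varphi$ convex; since $\int_{\mathbb{R}^n}|x|^2v\,dx<\infty$ this map exists, is optimal (so $W_2(\gamma,v)^2=\int_{\mathbb{R}^n}|x-\nabla\varphi(x)|^2\,d\gamma$), and satisfies the Monge--Amp\`ere equation $v(\nabla\varphi)\det\nabla^2\varphi=\gamma$ almost everywhere, $\nabla^2\varphi$ denoting the Alexandrov Hessian. The change of variables formula then gives the identity
\[
{\rm Ent}_\gamma\big(\tfrac{v}{\gamma}\big)=\int_{\mathbb{R}^n}\Big(\tfrac{|\nabla\varphi(x)|^2-|x|^2}{2}-\log\det\nabla^2\varphi(x)\Big)\,d\gamma(x),
\]
and, subtracting $\tfrac12W_2(\gamma,v)^2$ and integrating by parts (using $\nabla\gamma=-x\gamma$ and discarding the nonnegative singular part of the distributional Laplacian of the convex function $\varphi$), one arrives at
\[
{\rm Ent}_\gamma\big(\tfrac{v}{\gamma}\big)-\tfrac12W_2(\gamma,v)^2\ge\int_{\mathbb{R}^n}\big({\rm tr}\,\nabla^2\varphi(x)-n-\log\det\nabla^2\varphi(x)\big)\,d\gamma(x).
\]
This is the Cordero-Erausquin/Otto--Villani argument: the integrand is pointwise $\ge0$ since $\log t\le t-1$. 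A direct Gaussian computation shows that for $v=\gamma_\beta$ the Brenier map is $x\mapsto\sqrt{\beta}\,x$, that the right-hand side of \eqref{e:TIMWeak} equals
\[
\tfrac12W_2(\gamma,\gamma_\beta)^2-{\rm Ent}_\gamma\big(\tfrac{\gamma_\beta}{\gamma}\big)=-n\big(\sqrt{\beta}-1-\tfrac12\log\beta\big),
\]
and that this is precisely what the integrand produces when $\nabla^2\varphi=\sqrt{\beta}\,{\rm id}$; thus \eqref{e:TIMWeak} is equivalent to the lower bound $\int_{\mathbb{R}^n}({\rm tr}\,\nabla^2\varphi-n-\log\det\nabla^2\varphi)\,d\gamma\ge n(\sqrt{\beta}-1-\tfrac12\log\beta)$.

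The refinement is a spectral bound on $\nabla^2\varphi$. Write $v=e^{-W}$. In case (2), $\nabla^2 W\ge\tfrac1\beta{\rm id}$, so Caffarelli's contraction theorem, applied to the Brenier map from the standard Gaussian to $v$, yields $0\le\nabla^2\varphi(x)\le\sqrt{\beta}\,{\rm id}$ for a.e.\ $x$. In case (1), $v$ is log-concave with $\nabla^2 W\le\tfrac1\beta{\rm id}$, so Caffarelli's contraction theorem in its version for transport between log-concave measures, applied to the \emph{inverse} Brenier map $\nabla\psi=(\nabla\varphi)^{-1}$ from $v$ to $\gamma$, gives $0\le\nabla^2\psi\le\tfrac1{\sqrt{\beta}}{\rm id}$, whence $\nabla^2\varphi(x)=\big(\nabla^2\psi(\nabla\varphi(x))\big)^{-1}\ge\sqrt{\beta}\,{\rm id}$ a.e. In both cases, letting $\mu_1(x),\dots,\mu_n(x)>0$ be the eigenvalues of $\nabla^2\varphi(x)$ and $f(t)=t-1-\log t$, we have ${\rm tr}\,\nabla^2\varphi(x)-n-\log\det\nabla^2\varphi(x)=\sum_i f(\mu_i(x))$; since $f$ is decreasing on $(0,1]$ and increasing on $[1,\infty)$, in case (2) each $\mu_i(x)\in(0,\sqrt{\beta}\,]$ with $\sqrt{\beta}<1$ lies in the decreasing branch, and in case (1) each $\mu_i(x)\in[\sqrt{\beta},\infty)$ with $\sqrt{\beta}>1$ lies in the increasing branch, so in either case $f(\mu_i(x))\ge f(\sqrt{\beta})$. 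Hence the integrand is $\ge n f(\sqrt{\beta})=n(\sqrt{\beta}-1-\tfrac12\log\beta)$ pointwise, and integrating against the probability measure $\gamma$ completes the proof.

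I expect the main obstacle to be the regularity required to make all of this rigorous: the Monge--Amp\`ere identity, the integration by parts, and the statement ``$f(\mu_i(x))\ge f(\sqrt{\beta})$ a.e.'' presuppose that $\varphi$ (respectively $\psi$) is twice differentiable almost everywhere with the stated Hessian bounds holding a.e., which is exactly what the Caffarelli regularity theory underlying the contraction theorem supplies under our hypotheses on $v$ (namely $C^{1,1}$ regularity of $\varphi$ when $0<\beta<1$, and Lipschitz regularity of $\nabla\psi$ --- i.e.\ strong convexity of $\varphi$ --- when $\beta>1$). To invoke these results cleanly one may first reduce, by a standard approximation argument, to the case where $v$ is smooth and strictly positive while retaining an arbitrarily small relaxation of its semi-log-concavity (and log-concavity in case (1)): for instance convolving with a narrow Gaussian, which turns a $\beta$-semi-log-concave density into a $(\beta+\varepsilon)$-semi-log-concave one, preserves log-concavity and $\int v\,dx=1$, and is amenable to a limiting argument; a short Ornstein--Uhlenbeck evolution would serve equally well in view of Lemma \ref{l:LogPreserve}. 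A minor bookkeeping point, used above but worth verifying explicitly, is that $n(\sqrt{\beta}-1-\tfrac12\log\beta)$ indeed coincides with $\tfrac12W_2(\gamma,\gamma_\beta)^2-{\rm Ent}_\gamma(\gamma_\beta/\gamma)$, a routine Gaussian integral.
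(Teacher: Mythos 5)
Your proof is correct, but it takes a genuinely different path from the one in the paper. The paper proves the statement only in dimension one (Proposition \ref{t:RegTal}), using Brenier's map as a scalar change of variables and Caffarelli's contraction theorem to bound $T'$, and then obtains the general-$n$ case (and the sharper Corollary \ref{Cor:Matrix}) by a tensorisation/disintegration argument exploiting the additivity of entropy. You instead run the Cordero-Erausquin optimal transport identity directly in $\mathbb{R}^n$, reduce the deficit to $\int \sum_i f(\mu_i)\,d\gamma$ with $f(t)=t-1-\log t$ and $\mu_i$ the eigenvalues of $\nabla^2\varphi$, and then bound each $\mu_i$ via Caffarelli's theorem (applied to the forward map when $\beta<1$, and to the inverse map when $\beta>1$ so as to get a \emph{lower} Hessian bound $\nabla^2\varphi\ge\sqrt\beta\,{\rm id}$). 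The mechanisms are the same---Caffarelli's theorem plus the monotonicity of $f$ on the appropriate branch---but your argument is more uniform and avoids the one-dimensional reduction, at the cost of invoking the Alexandrov Hessian and the convention of discarding the nonnegative singular part of $\Delta\varphi$ in the integration by parts; the paper's route avoids multidimensional regularity issues (it only needs the scalar Monge--Amp\`ere equation and a one-dimensional integration by parts whose boundary term is controlled via a small log-concavity perturbation). Both approaches lean on the log-concavity hypothesis in case (1) for essentially the same reason: to control the growth of the transport map at infinity. One small remark: your route in fact gives Corollary \ref{Cor:Matrix}'s Talagrand bound almost for free, since the eigenvalue bounds $\mu_i(x)\in[0,\sqrt{\beta_i}]$ (resp.\ $\ge\sqrt{\beta_i}$) carry over verbatim to the anisotropic case with $B^{-1}$ in place of $\beta^{-1}\mathrm{id}$, after diagonalising $B$.
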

A direct calculation shows that the constant in \eqref{e:TIMWeak} is explictly given by
\begin{equation*}\label{e:ConstTal}
		\frac12 W_2( \gamma, \gamma_\beta )^2 - {\rm Ent}_\gamma\big( \frac{\gamma_\beta}{\gamma} \big) 
		= 
		n\big( 1 + \frac12 \log\, \beta - \sqrt{\beta} \big).
\end{equation*}
For any $\beta > 0$ we have
$$
1 + \frac12 \log\, \beta - \sqrt{\beta}
<
- \frac{2(1-\beta) + (\beta+1)\log\, \beta}{2(\beta-1)}
$$
and so it is clear that \eqref{e:TIMWeak} improves upon \eqref{e:MikuWeak} for $\beta$-semi-log-concave inputs and $0< \beta < 1$. As a further appealing feature, inequality \eqref{e:TIMWeak} is clearly sharp since equality holds when $v = \gamma_\beta$. 

We have already remarked that the standard argument for deriving Talagrand's inequality from the LSI is not applicable in the setting of deficit estimates under a small-covariance assumption, and since the same appears to be true in our framework, we must prove Theorem \ref{t:MainTalagrand} directly without appealing to Theorem \ref{t:MainLSI} (or Theorem \ref{t:MainHyper}). Interestingly, our proofs of Theorems \ref{t:MainHyper} and \ref{t:MainTalagrand} are completely different; in the former case we use a flow monotonicity approach using Fokker--Planck flows, and in the latter case we use mass transportation. We refer the reader forward to Section \ref{section:overview} for further details of our methodology, as well as the preceding part of Section \ref{section:consequences} in which we give a number of further extended remarks on our main results Theorems \ref{t:MainHyper}--\ref{t:MainTalagrand}. Before that, we mention some related open problems which we consider to be interesting and were unable to address in the present work.

\subsection{A few open problems} 
It seems reasonable to expect that \eqref{e:TIMWeak} holds when $0<\beta <1$ and ${\rm cov}\, (v) \le \beta {\rm id}$. If true, this would simultaneously extend both Mikulincer's result in Theorem \ref{t:MikuWeak} and our result in Theorem \ref{t:MainTalagrand}(2) in what seems to be a natural way\footnote{This problem was raised by Yair Shenfeld in discussions which evolved after the first version of this paper was released.}.

In the context of hypercontractivity and/or the LSI, if the inequality is valid under $\beta$-semi-log-concavity then it seems reasonable to hope that it is also true under the weaker assumption of $\beta$-semi-log-superhamonicity\footnote{Similarly, one could hope to extend results which are valid when ${\rm cov}\, (v) \le \beta {\rm id}$ to the wider class of inputs for which
$$
{\rm tr}\, {\rm cov}\, (v) \le n\beta.
$$}.
Unfortunately, it appears that our methods do not allow us to address this. In particular, our proof of Theorem \ref{t:MainHyper} critically used the fact that semi-log-subhamonicity is preserved under the Ornstein--Uhlenbeck flow. On the other hand, as a simple example shows, semi-log-superhamonicity is not always preserved under the flow; see Lemma \ref{l:LogPreserve} and the subsequent remark. 

\section{Consequences and remarks on our main results} \label{section:consequences}

\subsection{Regularisation via Fokker--Planck flow}
Here we provide a concrete and rich class of functions satisfying the assumptions in Theorems \ref{t:MainHyper}--\ref{t:MainTalagrand}. The class we have in mind consists of non-negative functions which have been regularised by the Fokker--Planck equation. 
Instances where functional inequalities have been improved by restricting attention to inputs which arise as the evolution of a certain diffusion equation can be seen in several lines of research. 
For instance,  Bennett, Carbery, Christ and Tao \cite{BCCT} (see also \cite{BN}) addressed  regularisation of the Brascamp--Lieb inequality\footnote{Here we mean the Brascamp--Lieb inequality which generalises the H\"older, Loomis--Whitney and Young convolution inequalities arising in \cite{BraLi_Adv}, rather than the concentration inequality for log-concave probability distributions from \cite{BraLi}. On the other hand, certain ideas from the latter paper concerning the preservation of log-concavity under certain diffusion processes will in fact play an important role in the present paper as well -- see the forthcoming Lemma \ref{l:LogPreserve}.} via non-isotropic heat equations.  Also, the aforementioned resolution by Eldan and Lee \cite{EL} of the gaussian version of Talagrand's conjecture may be viewed as an improvement of Markov's inequality due to the regularising nature of the Ornstein--Uhlenbeck flow.  

We introduce the Fokker--Planck equation. 
For $\beta >0 $,  the Fokker--Planck equation with a diffusion speed $\beta$, or $\beta$-Fokker--Planck equation for short, is 
\begin{equation}\label{e:FP}
\partial_t v_t = \mathcal{L}_{\beta}^* v_t := \beta \Delta v_t + x\cdot  \nabla v_t  + n v_t,\;\;\; (t,x) \in (0,\infty)\times \mathbb{R}^n, 
\end{equation}
where $\mathcal{L}_{\beta}^*$ stands for the dual of $\mathcal{L}_\beta$ with respect to $L^2({d}x)$. 
As is well-known, the Fokker--Planck equation is closely related to the gaussian heat equation. In fact,  $v_t$ solves \eqref{e:FP} if and only if $u_t$ solves $\partial_t u_t = \mathcal{L}_\beta u_t$,  under the transformation $u_t = \frac{v_t}{\gamma_\beta}$. 
The relevancy of the Fokker--Planck equation to hypercontractivity and the LSI can be seen by focusing on its stable solution or equilibrium state given by $v_t = \gamma_\beta$.  
In fact,  the extremiser of hypercontractivity and the LSI given by $f = \frac{\gamma}{\gamma}$ can be regarded as the equilibrium state for the $1$-Fokker--Planck equation.
This heuristic explains why the \textit{heat-flow monotonicity} scheme is well-fitted to the theory of hypercontractivity and the LSI, see \cite{BGL,LedouxICM} for a comprehensive treatment on heat flow  and \cite{OttoVil} for the Fokker--Planck flow. 

We now introduce a class of functions \textit{regularised} by the Fokker--Planck equation. 
For $\beta>0$ and initial data any non-negative finite measure $d\mu$, let $v_{*}$ be the corresponding $2\beta$-Fokker--Planck solution: 
\begin{equation}\label{e:v*}
\partial_t v_{*} = \mathcal{L}_{2\beta}^* v_{*},\;\;\; (t,x) \in (0,\infty)\times \mathbb{R}^n,\;\;\; v_{*}(0,x) = d\mu(x). 
\end{equation}
Then for a fixed time $t_* := \frac12 \log\, 2 >0$ we introduce our regularised class 
$$
{\rm FP}(\beta) := \{  v = v_{*}(t_{*},\cdot) : \,  \text{$v_*$ is non-negative solution to \eqref{e:v*}} \}. 
$$
Our somewhat artificial choices of $2\beta$ and $t_*$ may raise eyebrows. They are natural in the sense that $\gamma_\beta$, the stationary solution of the $\beta$-Fokker--Planck equation, can be represented by $ \gamma_\beta =  v_{*}(t_{*},\cdot) $ with initial data taken to be the Dirac delta measure supported at the origin.  In particular, $ \gamma_\beta $,  the extremiser of the inequality \eqref{e:ELS}, belongs to the class ${\rm FP}(\beta)$.  
As other simple properties of ${\rm FP}(\beta)$,  we have the nesting property ${\rm FP}(\beta_1) \supseteq {\rm FP}(\beta_2)$ for $\beta_1\le \beta_2$. 
Also for $a>0$, $\gamma_a$ belongs to ${\rm FP}(\beta)$ if and only if $a\ge \beta$.  In particular, $\gamma$, which is the extremiser of \eqref{e:HCClassic} and \eqref{e:LogSob}, is excluded from the class ${\rm FP}(\beta)$ in the case $\beta>1$.  
Finally, we observe that if $v$ is the class of ${\rm FP}(\beta)$, then $v$ earns semi-log-convexity\footnote{For a precise statement and proof, see Lemma \ref{l:LogPreserve}.}: 
$$
v \in {\rm FP}(\beta)
\Rightarrow 
\nabla^2 \log\, v \ge - \frac{1}{\beta}{\rm id}.
$$
Hence, as a corollary of Theorems \ref{t:MainHyper} and \ref{t:MainLSI} (with a certain amount of technical justification), we obtain the following. 
\begin{corollary}\label{c:FPversion}
Let $\beta > 1$ and $v\in {\rm FP}(\beta)$. Then the LSI deficit estimate  \eqref{e:ELS} holds. Also, if
$s>0$ and $1<p<q<\infty$ satisfy $\frac{q-1}{p-1}= e^{2s}$, then the hypercontractivity deficit estimate \eqref{e:RegHC} holds. 
\end{corollary}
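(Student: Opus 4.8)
The plan is to deduce Corollary~\ref{c:FPversion} directly from Theorems~\ref{t:MainHyper}(1) and~\ref{t:MainLSI}(1) by checking that, for $\beta>1$, every $v\in{\rm FP}(\beta)$ (normalised so that $\int_{\mathbb{R}^n} v\,dx=1$, as~\eqref{e:ELS} requires) satisfies their hypotheses: $v$ is strictly positive, (indeed infinitely) differentiable, $\beta$-semi-log-subharmonic, and lies in $L^2(\gamma_\beta^{-1})$. The first move is to write $v$ explicitly. Using the correspondence between the $2\beta$-Fokker--Planck equation and the $2\beta$-gaussian heat equation together with the Mehler kernel, the $2\beta$-Fokker--Planck flow sends a Dirac mass at $z$ to $\gamma_{2\beta(1-e^{-2t})}(\,\cdot\,-e^{-t}z)$, so by linearity in the initial measure $\mu$ one has $v_*(t,x)=\int_{\mathbb{R}^n}\gamma_{2\beta(1-e^{-2t})}(x-e^{-t}z)\,d\mu(z)$. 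The (at first sight peculiar) choices $2\beta$ and $t_*=\tfrac12\log 2$ are exactly what make $2\beta(1-e^{-2t_*})=\beta$ and $e^{-t_*}=2^{-1/2}$, so that
\begin{equation*}
v(x)=\int_{\mathbb{R}^n}\gamma_\beta(x-2^{-1/2}z)\,d\mu(z).
\end{equation*}
Smoothness and, for $\mu\not\equiv 0$, strict positivity are then immediate by differentiating under the integral sign.

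The $\beta$-semi-log-subharmonicity is precisely where Lemma~\ref{l:LogPreserve} enters: it yields $v\in{\rm FP}(\beta)\Rightarrow\nabla^2\log v\ge-\tfrac1\beta{\rm id}$ (i.e.\ $\beta$-semi-log-convexity), whence $\Delta\log v\ge-\tfrac n\beta$ by the trace bound in~\eqref{e:Relation27Apr1}. This is the one point at which the preservation of semi-log-convexity under the Ornstein--Uhlenbeck/Fokker--Planck flow is used, and it is the reason the corollary is stated for ${\rm FP}(\beta)$ rather than for arbitrary $\beta$-semi-log-subharmonic inputs. For the integrability condition, a Jensen-and-Fubini computation resting on the gaussian identity $\int_{\mathbb{R}^n}\gamma_\beta(x-a)^2\gamma_\beta(x)^{-1}\,dx=e^{|a|^2/\beta}$ gives
\begin{equation*}
\|v\|_{L^2(\gamma_\beta^{-1})}^2\le\mu(\mathbb{R}^n)\int_{\mathbb{R}^n}e^{|z|^2/(2\beta)}\,d\mu(z),
\end{equation*}
which is finite whenever $\mu$ has a small gaussian exponential moment, in particular when $\mu$ is compactly supported. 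For such $\mu$ all hypotheses of Theorems~\ref{t:MainHyper}(1) and~\ref{t:MainLSI}(1) hold and both~\eqref{e:RegHC} and~\eqref{e:ELS} follow at once.

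It remains to pass from compactly supported (or exponentially decaying) $\mu$ to general initial data, and this limiting argument is where I expect the real difficulty to lie. Truncating $\mu_k:=\mu|_{B_k}$ gives $\mu_k\nearrow\mu$ and correspondingly $v_k:=\int_{\mathbb{R}^n}\gamma_\beta(\,\cdot\,-2^{-1/2}z)\,d\mu_k(z)\nearrow v$ pointwise (with $\nabla v_k\to\nabla v$); each $v_k$ again lies in ${\rm FP}(\beta)$ and arises from a compactly supported measure, so Theorems~\ref{t:MainHyper}(1) and~\ref{t:MainLSI}(1) apply to each $v_k$ (after the harmless renormalisation so that $\int_{\mathbb{R}^n} v_k\,dx=1$, which has no effect in the limit since $\mu_k(\mathbb{R}^n)\to 1$). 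Letting $k\to\infty$ in~\eqref{e:RegHC} is painless: since $p>0$ and $P_s$ is positivity preserving, both sides are monotone in $k$ and monotone convergence applies. In~\eqref{e:ELS} the entropy term converges by dominated convergence, but the Fisher-information term is the delicate point, since ${\rm I}_\gamma$ is only lower semicontinuous --- the wrong direction for preserving an upper bound on the deficit. Here one invokes the explicit formula once more: a Cauchy--Schwarz estimate gives the pointwise majorisation
\begin{equation*}
\frac{|\nabla v_k+xv_k|^2}{v_k}(x)\le\frac{1}{\beta^2}\int_{\mathbb{R}^n}\big|(\beta-1)x+2^{-1/2}z\big|^2\gamma_\beta(x-2^{-1/2}z)\,d\mu(z),
\end{equation*}
whose $x$-integral equals $\tfrac{(\beta-1)^2 n}{\beta}\mu(\mathbb{R}^n)+\tfrac12\int_{\mathbb{R}^n}|z|^2\,d\mu(z)$; when this is finite it furnishes a $k$-independent integrable majorant, so ${\rm I}_\gamma(v_k/\gamma)\to{\rm I}_\gamma(v/\gamma)$ by dominated convergence and~\eqref{e:ELS} survives the limit. (When $\mu$ is so heavy-tailed that $\int_{\mathbb{R}^n}|z|^2\,d\mu=\infty$, the entropy and Fisher information of $v$ degenerate and~\eqref{e:ELS} is to be read, and checked, in the obvious limiting sense.) Modulo this limiting argument and the attendant integrability bookkeeping, the proof is nothing more than the gaussian representation of ${\rm FP}(\beta)$ combined with Theorems~\ref{t:MainHyper}(1),~\ref{t:MainLSI}(1) and Lemma~\ref{l:LogPreserve}.
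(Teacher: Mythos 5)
Your proof is correct and follows essentially the same route as the paper's: represent $v\in{\rm FP}(\beta)$ explicitly as a gaussian convolution, reduce to compactly supported initial measure $\mu$ so that the technical hypothesis $v\in L^2(\gamma_\beta^{-1})$ holds, apply Theorems~\ref{t:MainHyper}(1) and~\ref{t:MainLSI}(1) (using Lemma~\ref{l:LogConVari}(1) for the semi-log-subharmonicity), and pass to the limit. The paper dispatches the final step with the phrase ``a standard approximation argument and the monotone convergence theorem,'' and your write-up adds genuine value by pointing out that this is clean only on the hypercontractivity side: for $1<p<q$ both sides of~\eqref{e:RegHC} are monotone in the truncation parameter, whereas for~\eqref{e:ELS} the Fisher information only enjoys lower semicontinuity, which is the wrong direction for the upper bound. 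Your fix --- a Cauchy--Schwarz majorisation of $|\nabla v_k + xv_k|^2/v_k$ by a $k$-independent integrable function whose $x$-integral you correctly compute to be $\tfrac{(\beta-1)^2n}{\beta}\mu(\mathbb{R}^n)+\tfrac12\int|z|^2\,d\mu(z)$ --- is a sound dominated-convergence argument (and when that quantity is infinite, ${\rm I}_\gamma(v/\gamma)=\infty$ and~\eqref{e:ELS} holds vacuously). Two small remarks: your Jensen--Fubini bound $\|v\|_{L^2(\gamma_\beta^{-1})}^2\le\mu(\mathbb{R}^n)\int e^{|z|^2/(2\beta)}\,d\mu(z)$ is a bit cleaner and more general than the paper's pointwise estimate; and an alternative that sidesteps the Fisher-information limit entirely is to first establish~\eqref{e:RegHC} for the untruncated $v$ (your monotone-convergence step) and then deduce~\eqref{e:ELS} by applying Lemma~\ref{l:Hyper->LogSob} directly to $f=v/\gamma$, exactly as Theorem~\ref{t:MainLSI} is derived from Theorem~\ref{t:MainHyper} in Section~\ref{S3.3}.
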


\subsection{Reducing dimensional dependence}
Here we state versions of our main results in which the dependence on the dimension of the ambient space is weakened. For this we first recall \cite[Theorem 3]{ELS} and \cite[Theorem 2]{Miku} in full generality (Theorems \ref{t:ELS} and \ref{t:MikuWeak} presented earlier being special cases). 
\begin{theorem}\label{t:ELS2}
Suppose the non-negative function $v$ satisfies $\int_{\mathbb{R}^n} v\, dx =1$, and let $\beta_1,\ldots, \beta_n$ be the eigenvalues of ${\rm cov}\, (v)$. 

(1) \cite[Theorem 3]{ELS} Then 
\begin{equation}\label{e:ELSgene}
{\rm Ent}_\gamma \big( \frac{v}{\gamma} \big)
\le 
\frac12 {\rm I}_\gamma\big( \frac{v}{\gamma} \big)
- 
\frac12 
\sum_{ \substack{ i=1,\ldots, n: \\ \beta_i \le 1 } } \big(\log\, \beta_i - 1 + \frac{1}{\beta_i}\big).
\end{equation}

(2) \cite[Theorem 2]{Miku} If, in addition, $\int_{\mathbb{R}^n} |x|^2 v\, dx<\infty$ and $\int_{\mathbb{R}^n} xv\, dx =0$,  then 
\begin{equation}\label{e:Miku}
\frac12 W_2( \gamma, v )^2 - {\rm Ent}_\gamma\big( \frac{v}{\gamma} \big) 
\le
-
\sum_{ \substack{  i=1,\ldots, n : \\ \beta_i <1 } } \frac{ 2(1-\beta_i) + (\beta_i +1) \log\, \beta_i }{2( \beta_i - 1)}. 
\end{equation}
\end{theorem}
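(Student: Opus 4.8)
The plan here is essentially to quote the literature: part (1) is precisely \cite[Theorem 3]{ELS} and part (2) is precisely \cite[Theorem 2]{Miku}, so the ``proof'' is a one-line citation. The reason we bother to record Theorem \ref{t:ELS2} is that Theorems \ref{t:ELS} and \ref{t:MikuWeak} fall out of it at once: if ${\rm cov}\,(v)\le\beta\,{\rm id}$ with $\beta\le1$, then every eigenvalue satisfies $\beta_i\le\beta\le1$, so every summand in \eqref{e:ELSgene} is present, and since $t\mapsto\log t-1+\tfrac1t$ is decreasing on $(0,1]$ each such summand is at least $\log\beta-1+\tfrac1\beta$; this converts \eqref{e:ELSgene} into \eqref{e:ELS}, and the analogous monotonicity of $t\mapsto\tfrac{2(1-t)+(t+1)\log t}{2(t-1)}$ on $(0,1)$ converts \eqref{e:Miku} into \eqref{e:MikuWeak}. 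That elementary reduction is all one has to write down; everything of substance is in \cite{ELS,Miku}, and for the reader's orientation we recall the shape of those two arguments.

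For part (1), the Eldan--Lehec--Shenfeld proof of the LSI deficit estimate runs through the F\"{o}llmer process of $\mu:=v\,dx$: the drift $(b_t)_{t\in[0,1]}$, adapted to a Brownian filtration, for which $X_t=\int_0^t b_r\,dr+B_t$ has $X_1\sim\mu$ and which minimises $\tfrac12\,\mathbb{E}\int_0^1|b_t|^2\,dt$. F\"{o}llmer's identity says this minimum is ${\rm Ent}_\gamma(v/\gamma)$, while ${\rm I}_\gamma(v/\gamma)=\mathbb{E}\,|b_1(X_1)|^2$; moreover, since $b_t$ solves a viscous Hamilton--Jacobi equation, $t\mapsto b_t(X_t)$ is a martingale whose quadratic variation has density $\|\nabla b_t(X_t)\|_{\rm HS}^2$, and swapping a double time integral then yields the clean identity $\tfrac12{\rm I}_\gamma(v/\gamma)-{\rm Ent}_\gamma(v/\gamma)=\tfrac12\,\mathbb{E}\int_0^1 t\,\|\nabla b_t(X_t)\|_{\rm HS}^2\,dt$, from which the bare LSI is immediate. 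The quantitative gain then comes from bounding this integral below: by a Cram\'{e}r--Rao-type identity $\nabla b_t(X_t)$ is a (tilted) covariance matrix, its conditional expectation along the flow obeys a matrix Riccati differential inequality whose initial datum is ${\rm cov}\,(\mu)$, and passing to the eigenbasis of ${\rm cov}\,(\mu)$ decouples this into $n$ scalar Riccati comparisons; integrating the $i$-th one reproduces the contribution $\tfrac12(\log\beta_i-1+\tfrac1{\beta_i})$ when $\beta_i\le1$, with no usable gain in the directions where $\beta_i>1$. I expect the matrix step --- the Riccati comparison and the check that the eigenstructure of the covariance is preserved along the flow --- to be the real work, since it is precisely what upgrades the pointwise Bochner/$\Gamma_2$ positivity underlying the bare LSI into a quantitative deficit.

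For part (2), Mikulincer's proof of the Talagrand deficit estimate transplants the same mechanism to transport. Taking $(b_t)$ to be the F\"{o}llmer drift of $\mu:=v\,dx$ as above, $(B_1,X_1)$ is a coupling of $\gamma$ and $\mu$, so $W_2(\gamma,v)^2\le\mathbb{E}\,|X_1-B_1|^2=\mathbb{E}\,\bigl|\int_0^1 b_t\,dt\bigr|^2$, while ${\rm Ent}_\gamma(v/\gamma)=\tfrac12\,\mathbb{E}\int_0^1|b_t|^2\,dt$; hence the deficit is at most $\tfrac12\,\mathbb{E}\bigl(\bigl|\int_0^1 b_t\,dt\bigr|^2-\int_0^1|b_t|^2\,dt\bigr)=-\tfrac12\,\mathbb{E}\int_0^1|b_t(X_t)-\overline{b}|^2\,dt\le0$, where $\overline{b}=\int_0^1 b_r\,dr$, and one must quantify this Cauchy--Schwarz slack. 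Since $b_t(X_t)$ is again a martingale, the slack is controlled by the same Hessian $\nabla b_t$, hence by ${\rm cov}\,(\mu)$ through the Cram\'{e}r--Rao mechanism; decoupling in the covariance eigenbasis gives the $i$-th contribution $\tfrac{2(1-\beta_i)+(\beta_i+1)\log\beta_i}{2(\beta_i-1)}$ for $\beta_i<1$. The hypotheses $\int xv\,dx=0$ and $\int|x|^2v\,dx<\infty$ enter only to keep the transport cost finite and the stochastic representations legitimate. The delicate point here, as noted in the introduction, is the interaction of the non-linear transport with the linear interpolating flow --- which is also why this route does not seem to pass through the LSI deficit directly.
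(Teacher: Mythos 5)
Your proposal is correct and takes exactly the same approach as the paper: Theorem \ref{t:ELS2} is stated there purely as a recall of \cite[Theorem 3]{ELS} and \cite[Theorem 2]{Miku} with no independent proof supplied, and the elementary monotonicity reduction you record (that $t\mapsto\log t-1+\tfrac1t$ and $t\mapsto\tfrac{2(1-t)+(t+1)\log t}{2(t-1)}$ are decreasing on $(0,1]$, so ${\rm cov}\,(v)\le\beta\,{\rm id}$ with $\beta\le1$ forces every summand to dominate its value at $\beta$) is precisely the observation by which the paper deduces Theorems \ref{t:ELS} and \ref{t:MikuWeak} from it. The survey of the F\"ollmer-process arguments in \cite{ELS,Miku} is reasonable orientation but goes beyond what the paper itself provides, which is simply the citation.
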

Theorems \ref{t:MainHyper}--\ref{t:MainTalagrand} may be improved in the same spirit as follows. For the statement, we introduce the notation
$$
\gamma_B(x):= \frac{1}{ ({\rm det}\, (2\pi B))^\frac12 }e^{ - \frac12 \langle x, B^{-1}x\rangle },
$$
where $B$ is a positive definite symmetric matrix $\mathbb{R}^n$.
\begin{corollary}\label{Cor:Matrix}
Let $\beta_1,\ldots, \beta_n >0$ be the eigenvalues of the positive definite symmetric matrix $B$, and suppose the twice differentiable function $v:\mathbb{R}^n \to (0,\infty)$ belongs to $L^2(\gamma_B^{-1})$.

(1) If 
\begin{equation}\label{e:Semilogconvex}
\nabla^2 \log\, v \ge - B^{-1}
\end{equation}
then we have 
\begin{equation}\label{e:HCMatrixConv}
\big\| P_s\big[ \big( \frac{v}{\gamma}\big)^\frac1p \big]  \big\|_{ L^q(\gamma ) }
\le 
\prod_{ \substack{  i=1,\ldots, n : \\ \beta_i \ge 1 }} \beta_i^{ \frac{1}{2p'} } ( \beta_i )_s^{-\frac{1}{2q'}} 
\bigg( \int_{\mathbb{R}^n} \frac{v}{\gamma}\, d\gamma \bigg)^\frac1p
\end{equation}
and 
\begin{equation}\label{e:LSIMatrixConv}
{\rm Ent}_{\gamma} \big( \frac{v}{\gamma} \big)
\le 
\frac12 
{\rm I}_\gamma \big( \frac{v}{\gamma} \big)
-
\frac{1}{2}
\sum_{ \substack{ i=1,\ldots,n :\\ \beta_i \ge1 } } \big(\log\, \beta_i -1 + \frac1{\beta_i}\big). 
\end{equation}
If, in addition, $\int_{\mathbb{R}^n} |x|^2 v\, dx<\infty$ and $\nabla^2 \log\, v \le 0$, then
\begin{equation}\label{e:TIMatrixConv}
\frac12 W_2(\gamma,v)^2 - {\rm Ent}_\gamma\big( \frac{v}{\gamma} \big)
\le 
\sum_{ \substack{ i=1,\ldots,n :\\ \beta_i \ge1 } } (  1 + \frac12 \log\, \beta_i - \sqrt{\beta_i} ). 
\end{equation}

(2)  If
\begin{equation}\label{e:Semilogconcave}
\nabla^2 \log\, v \le - B^{-1}
\end{equation}
then we have 
\begin{equation}\label{e:HCMatrixConc}
\big\| P_s\big[ \big( \frac{v}{\gamma}\big)^\frac1p \big]  \big\|_{ L^q(\gamma ) }
\le 
\prod_{ \substack{  i=1,\ldots, n : \\ \beta_i \le 1 }} \beta_i^{ \frac{1}{2p'} } ( \beta_i )_s^{-\frac{1}{2q'}} 
\bigg( \int_{\mathbb{R}^n} \frac{v}{\gamma}\, d\gamma \bigg)^\frac1p
\end{equation}
and 
\begin{equation}\label{e:LSIMatrixConc}
{\rm Ent}_{\gamma} \big( \frac{v}{\gamma} \big)
\le 
\frac12 
{\rm I}_\gamma \big( \frac{v}{\gamma} \big)
-
\frac{1}{2}
\sum_{ \substack{ i=1,\ldots,n :\\ \beta_i \le1 } } \big(\log\, \beta_i -1 + \frac1{\beta_i}\big). 
\end{equation}
If, in addition, $\int_{\mathbb{R}^n} |x|^2 v\, dx<\infty$, then 
\begin{equation}\label{e:TIMatrixConc}
\frac12 W_2(\gamma,v)^2 - {\rm Ent}_\gamma\big( \frac{v}{\gamma} \big)
\le 
\sum_{ \substack{ i=1,\ldots,n :\\ \beta_i \le1 } } (  1 + \frac12 \log\, \beta_i - \sqrt{\beta_i} ).
\end{equation}
\end{corollary}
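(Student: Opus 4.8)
The plan is to treat this as the anisotropic refinement of Theorems \ref{t:MainHyper}--\ref{t:MainTalagrand}, proved by repeating their proofs with the scalar diffusion parameter $\beta$ replaced by the matrix $B$. First I would reduce to $B$ diagonal: since $\gamma$, the semigroup $P_s$, the entropy, the Fisher information and $W_2$ are invariant under orthogonal changes of variables $x \mapsto Ox$, while $\nabla^2 \log v$ and $B$ transform by conjugation, one may assume $B = {\rm diag}(\beta_1,\ldots,\beta_n)$, so that $\gamma_B = \bigotimes_{i=1}^n \gamma_{\beta_i}$. Next I would record that the constants in \eqref{e:HCMatrixConv}--\eqref{e:TIMatrixConc} are precisely the deficits (computed as in \eqref{e:DeficitHC}, \eqref{e:DeficitLogSob} and the constant displayed just after Theorem \ref{t:MainTalagrand}) of the product gaussian $\gamma_{\widetilde B}$, where $\widetilde B := {\rm diag}(\max(\beta_i,1))$ in part (1) and $\widetilde B := {\rm diag}(\min(\beta_i,1))$ in part (2): by tensorisation such a deficit splits as a product (or sum) over $i$ of one-dimensional deficits, and each coordinate on the ``wrong'' side of $1$ contributes the value at parameter $1$, which is trivial. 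In other words, the corollary is exactly the assertion that the deficit of $v$ does not exceed that of $\gamma_{\widetilde B}$.

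The core step is then to rerun the flow-monotonicity scheme behind Theorem \ref{t:MainHyper} (and the optimal transport scheme behind Theorem \ref{t:MainTalagrand}) along the $\widetilde B$-Fokker--Planck flow $\partial_t v_t = {\rm div}(\widetilde B \nabla v_t) + x \cdot \nabla v_t + n v_t$, equivalently the $\widetilde B$-Ornstein--Uhlenbeck flow on $u_t := v_t/\gamma_{\widetilde B}$, whose equilibrium is $\gamma_{\widetilde B}$. Two ingredients are required. First, a matrix-valued version of the preservation Lemma \ref{l:LogPreserve}: the bound $\nabla^2 \log v \ge - B^{-1}$ in part (1) (resp.\ $\nabla^2 \log v \le - B^{-1}$ in part (2); resp.\ the additional hypothesis $\nabla^2 \log v \le 0$ in the Talagrand statement of part (1)) should be propagated along this flow. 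Since $\widetilde B$ is diagonal, the flow factorises into one-dimensional flows and this should reduce, eigendirection by eigendirection, to the scalar preservation statements already in hand. Second, the monotonicity computation itself: the functional used in the proof of Theorem \ref{t:MainHyper} (resp.\ of Theorem \ref{t:MainTalagrand}) should be monotone along the $\widetilde B$-flow, the argument using the preserved Hessian bound exactly as in the isotropic case; evaluating at $t=0$ (input $v$) and $t=\infty$ (input $\gamma_{\widetilde B}$) then yields \eqref{e:HCMatrixConv}, \eqref{e:HCMatrixConc}, \eqref{e:TIMatrixConv} and \eqref{e:TIMatrixConc}. Finally, \eqref{e:LSIMatrixConv} and \eqref{e:LSIMatrixConc} would be deduced from the corresponding hypercontractive estimates by taking $p = 2$ and $s \to 0$, in the manner of Gross's derivation of the LSI; there is also an alternative route to \eqref{e:LSIMatrixConc} via Theorem \ref{t:ELS2}(1), since $\nabla^2 \log v \le - B^{-1}$ forces ${\rm cov}\,(v) \le B$, whence, comparing eigenvalues in decreasing order, those of ${\rm cov}\,(v)$ are dominated by those of $B$, and $t \mapsto \log t - 1 + 1/t$ is nonnegative and decreasing on $(0,1]$.

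The main difficulty I anticipate is the mismatch between the hypothesis, phrased with $B$, and the flow, run with $\widetilde B \ne B$. In an eigendirection $i$ where $\widetilde{\beta}_i \ne \beta_i$ (so $\widetilde{\beta}_i = 1$ with $\beta_i$ on the ``wrong'' side of $1$), the hypothesis only supplies $\partial_i^2 \log v \ge -\beta_i^{-1}$ (resp.\ $\le -\beta_i^{-1}$), which is strictly weaker than the bound $\partial_i^2 \log v \ge -1$ (resp.\ $\le -1$) matched to the flow in that coordinate, and one must check that this weaker bound still suffices, both for the preservation and for the sign of the term appearing in the monotonicity computation. I expect this to be handled by exploiting the regularising effect of the one-dimensional flow in those directions, which drives the Hessian of $\log$ towards the equilibrium value $-1$, together with the fact that in those directions the ``favourable'' side of the bound is never actually used -- they only ever contribute the trivial constant. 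Everything else should be routine: the technical justifications (differentiation under the integral sign, the requisite $L^2$-type integrability, convergence to equilibrium, and in the transport argument the regularity of the optimal transport map) and the endpoint evaluations, the latter being immediate from \eqref{e:DeficitHC}, \eqref{e:DeficitLogSob} and tensorisation.
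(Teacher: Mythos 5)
Your proposal is plausible in broad outline but takes a genuinely different, and considerably heavier, route than the paper. The paper does not rework the flow-monotonicity machinery with a matrix-valued diffusion: after reducing to diagonal $B$ by rotation (as you also do), it \emph{tensorises} the one-dimensional statements already proved. For \eqref{e:HCMatrixConv} and \eqref{e:HCMatrixConc} it peels off one coordinate at a time via Minkowski's integral inequality, applies the one-dimensional Theorem~\ref{t:MainHyper} to $y_1 \mapsto u(y_1,y_2)$ (or classical hypercontractivity when that $\beta_i$ is on the ``wrong'' side of $1$), and observes that the partial gaussian average $V(x_2):=\int u\, d\gamma(x_1)$ inherits the required one-dimensional semi-log-convexity (respectively, semi-log-concavity in part~(2), via a Pr\'ekopa--Leindler argument) in the remaining variable, so that the one-dimensional result can be applied a second time; \eqref{e:LSIMatrixConv} and \eqref{e:LSIMatrixConc} then follow from Lemma~\ref{l:Hyper->LogSob} as you anticipate. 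For the Talagrand estimates \eqref{e:TIMatrixConv}--\eqref{e:TIMatrixConc} the paper runs no flow and invokes no anisotropic Caffarelli contraction: it factors $v$ into a marginal $v^1$ and conditionals $v(\cdot\mid y_1)$, couples coordinate-by-coordinate, applies the one-dimensional Proposition~\ref{t:RegTal} to each, and closes with additivity of entropy. Your plan of redoing Proposition~\ref{Prop:Uhmmm2}, Lemmas~\ref{l:Step1}--\ref{l:Step2} and Lemma~\ref{l:LogPreserve} with the truncated matrix $\widetilde B$ is in principle feasible --- you correctly identify that in the mismatched eigendirections the coefficient $\widetilde\beta_i - (\widetilde\beta_i)_s$ vanishes, so the unfavourable Hessian bound there never acts --- but carrying it out is far from ``routine'' given the delicate bookkeeping in those lemmas, and your phrase ``run the optimal transport scheme along the $\widetilde B$-flow'' conflates the flow-monotonicity proof of Theorem~\ref{t:MainHyper} with the static mass-transport proof of Theorem~\ref{t:MainTalagrand}; the latter has no flow in it. What the paper's tensorisation buys is exactly the avoidance of all this: it reuses the scalar results verbatim and handles the ``wrong-side'' coordinates by simply invoking the classical (constant-$1$) inequality there. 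Your side remark deriving \eqref{e:LSIMatrixConc} from Theorem~\ref{t:ELS2}(1) via ${\rm cov}(v)\le B$, eigenvalue domination, and monotonicity of $t\mapsto \log t - 1 + 1/t$ on $(0,1]$ is correct and is a clean alternative for that single inequality.
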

We clarify that in the above statement, following \eqref{e:beta2}, we write $(\beta_i)_s = 1 + (\beta_i - 1)\frac{q}{p}e^{-2s}$ for each eigenvalue $\beta_i$.

Clearly, equality holds in each of \eqref{e:HCMatrixConv}--\eqref{e:TIMatrixConc} when $v= \gamma_B$. We also remark that although Corollary \ref{Cor:Matrix} is clearly a stronger result compared with Theorems \ref{t:MainHyper}--\ref{t:MainTalagrand}, interestingly we will establish Corollary \ref{Cor:Matrix} by combining the one-dimensional result in Theorems \ref{t:MainHyper}--\ref{t:MainTalagrand} with a standard tensorisation argument. In particular, it is an appealing feature of our framework that the semi-log-convexity/semi-log-concavity assumption is very well suited to such a tensorisation argument.

\subsection{Overview of our approach and structure of the paper} \label{section:overview}
To prove the statement regarding the hypercontractivity inequality in Theorem \ref{t:MainHyper}, we use a flow monotonicity argument based on the Fokker--Planck equation. To be precise, we shall show that
\begin{equation*}
t \mapsto \int  P_s \big[ \big( \frac{v_t}{\gamma}\big)^\frac1p \big]^q \, d\gamma
\end{equation*}
is non-decreasing on $(0,\infty)$, where $v_t$ is the evolution of the given input $v$ along the $\beta$-Fokker--Planck equation. Comparing the above time-dependent functional at $t=0$ and $t = \infty$ generates the desired inequality \eqref{e:RegHC}. As a crucial step in the proof of the monotonicity of the above functional, we shall use a quantitative version of the fact that semi-log-subharmonicity and semi-log-convexity/concavity are preserved under Fokker--Planck flows (see Lemma \ref{l:LogPreserve}). The improved LSI in Theorem \ref{t:MainLSI} will be derived from \eqref{e:RegHC} by following the well-known differentiation argument (due to Gross \cite{Gross}). 

Our proof of Theorem \ref{t:MainHyper} takes a great deal of inspiration from work by Bennett--Carbery--Christ--Tao \cite{BCCT} on regularised Brascamp--Lieb inequalities and  \cite{BB} on the Young convolution inequality (both of which use more classical heat flows), as well as the approach to \eqref{e:HCClassic} taken by Aoki \emph{et al.} \cite{ABBMMS} (using Ornstein--Uhlenbeck flow); we refer the reader forward to Sections \ref{S3.4} and \ref{section:furtherremarksflow} for further elaboration on this, including some discussion on our reasons for employing Fokker--Planck flow. We also remark that although the arguments of Eldan--Lehec--Shenfeld \cite{ELS} are less close to those in the current paper, they also employed a flow (the F\"ollmer process from stochastic analysis).

As mentioned already, our proof of Theorem \ref{t:MainTalagrand} proceeds differently and we employ a mass transportation argument. Our approach here was inspired by the mass-transport proof of the classical version of the LSI due to Cordero-Erausquin \cite{Cor}.

\begin{organisation}
In Section \ref{S2}, we state and prove some results on our framework of semi-log-subharmonicity and semi-log-convexity/concavity; in particular, relations between these notions and covariance, as well as preservation properties under Fokker--Planck flows. Finally, a technical result concerning regularity and decay of Fokker--Planck flows is given and will be used to justify various interchanges of limits in the proof of Theorem \ref{t:MainHyper}.

In Section \ref{S3}, we will prove Theorems \ref{t:MainHyper} and \ref{t:MainLSI}. We also establish Corollary \ref{c:FPversion} and the statements in Corollary \ref{Cor:Matrix} concerning hypercontractivity and the LSI. 

In Section \ref{S3.5}, we prove Theorem \ref{t:MainTalagrand} as well as the statements in Corollary \ref{Cor:Matrix} regarding Talagrand's inequality. 

In Section \ref{S4}, we will apply these results to improve the hypercontractivity inequality for the Hamilton--Jacobi equation, the dual form of Talagrand's inequality, and the Poincar\'{e} and Beckner inequalities. 

Finally, in Section \ref{S5} we will explore the possibility of obtaining deficit estimates for hypercontractivity and the LSI in the setting of more general measure spaces. 

\end{organisation}

\section{Preliminaries}\label{S2}
We begin with the following lemma which explains relations between covariance, semi-log-subharmonicity, semi-log-convexity/concavity and the class ${\rm FP}(\beta)$. 
Although these results may be considered well-known, we provide a proof. 
\begin{lemma}\label{l:LogConVari}
Let $\beta>0$. 
\begin{enumerate}
\item
If $ v \in {\rm FP}(\beta)$, then 
\begin{equation}\label{e:LogConvexv*}
\nabla^2\log\, v \ge -\frac1\beta {\rm id}. 
\end{equation}
\item
Let $v:\mathbb{R}^n\to (0,\infty)$ be twice differentiable, normalised by $\int_{\mathbb{R}^n} v\, dx =1$, and satisfy the decay condition 
\begin{equation}\label{e:DecayTech}
\lim_{|x|\to\infty}|x|v(x) = \lim_{|x|\to\infty}|\nabla v(x)| =0. 
\end{equation}
Then we have \eqref{e:Relation27Apr1}--\eqref{e:Relation27Apr3}. 
\end{enumerate}
\end{lemma}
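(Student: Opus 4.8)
The plan is to treat the two parts separately, as they rely on quite different ideas. For part (1), I would exploit the transformation noted just above the lemma: if $v \in {\rm FP}(\beta)$ then $v = v_*(t_*,\cdot)$ where $v_*$ solves the $2\beta$-Fokker--Planck equation \eqref{e:v*} with a non-negative finite measure as initial data, and under the substitution $u_* = v_*/\gamma_{2\beta}$ one has $\partial_t u_* = \mathcal{L}_{2\beta} u_*$. Writing things in terms of the Ornstein--Uhlenbeck-type semigroup, $v_*(t,\cdot)$ is (up to the gaussian weight) an average of the initial data against a gaussian kernel whose variance increases with $t$; at the specific time $t_* = \tfrac12\log 2$ the relevant parameter is tuned so that $v = v_*(t_*,\cdot)$ is, up to normalisation, a convolution $(\text{measure}) * \gamma_{\beta}$ times $\gamma_{2\beta}/\gamma$ — more precisely one should identify $v$ with $\gamma_\beta \cdot (P\text{-type average of }d\mu)$. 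The key analytic input is the classical fact (going back to the Brascamp--Lieb-type preservation of log-concavity under heat-type flows, and used in \cite{EL,Lehec}) that $x\mapsto \log(d\mu * \gamma_\beta)(x)$ is semi-log-convex with constant governed by the gaussian variance: $\nabla^2 \log(d\mu * \gamma_\beta) \ge -\tfrac1\beta {\rm id}$, because $\log \gamma_\beta$ contributes exactly $-\tfrac1\beta {\rm id}$ to the Hessian and convolution with a log-concave kernel only improves convexity (Prékopa--Leindler). Combining this Hessian bound with the explicit (constant-Hessian) contribution of the remaining gaussian factors, and checking the arithmetic of the variances at $t = t_*$, yields exactly \eqref{e:LogConvexv*}. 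I would also record here the accompanying positivity and regularity of $v$ (so that "twice differentiable, positive" is automatic), since that is needed for the later theorems.

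For part (2), I would verify \eqref{e:Relation27Apr1}--\eqref{e:Relation27Apr3} by elementary computations under the stated decay hypothesis \eqref{e:DecayTech}. The implication $\nabla^2 \log v \ge -\tfrac1\beta {\rm id} \Rightarrow \Delta \log v \ge -\tfrac n\beta$ (and its reverse) is immediate by taking traces. The substantive point is the covariance comparison. Here I would integrate by parts: for the $(i,j)$ entry, start from
\[
{\rm cov}(v)_{ij} = \int_{\mathbb{R}^n} x_i x_j\, v\, dx - \Big(\int x_i v\Big)\Big(\int x_j v\Big),
\]
use $x_j v = -\partial_j\!\big(\tfrac{|x|^2}{2}\big)\cdot(-v)$... more cleanly, use the identity $\int x_i x_j v\,dx = -\int x_i v\,\partial_j\big(\text{something}\big)$; the standard route is the Gaussian integration-by-parts / Stein-type identity giving
\[
{\rm cov}(v) = \int_{\mathbb{R}^n} \big( \nabla^2 W \big)^{-1}\! \text{-type expression},
\]
but the robust version I would actually use is: writing $v = e^{-\varphi}$, one integrates by parts (the decay \eqref{e:DecayTech} killing boundary terms) to get the matrix identity relating $\int (x\otimes x) v$ to $\int (\nabla^2 \log v) v$ plus $\int (\nabla\log v \otimes \nabla \log v) v$, and then uses that the latter is positive semidefinite together with $\int \nabla\log v \otimes \nabla\log v \, v \ge 0$. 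Concretely: $\int x\otimes x\, v\, dx = \mathrm{id} + \int (\nabla^2\log v)\,v\,dx + (\text{mean terms})$ after using $\int \nabla v\,dx = 0$ and $\int x\otimes \nabla v\,dx = -\mathrm{id}$, which reduces the covariance bound to the Hessian bound on $\log v$. This gives \eqref{e:Relation27Apr1} and \eqref{e:Relation27Apr2}; for \eqref{e:Relation27Apr3} one only keeps the trace, so $\Delta\log v \ge -\tfrac n\beta$ suffices and no full Hessian control is needed.

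The main obstacle, I expect, is part (1): one must correctly bookkeep the several gaussian factors (the $\gamma_{2\beta}$ weight from the transformation, the heat kernel $\gamma$ at time $t_*$, and the rescaling $x\mapsto e^{-t_*}x$ intrinsic to $\mathcal{L}_{2\beta}$) and check that the variances conspire so that the net constant in the Hessian lower bound is exactly $-\tfrac1\beta{\rm id}$ and not some other multiple — this is precisely why the specific choices $2\beta$ and $t_* = \tfrac12\log 2$ appear. A secondary technical point is justifying the integration by parts in part (2): the hypothesis \eqref{e:DecayTech} controls $|x|v$ and $|\nabla v|$ but one should make sure the relevant products $x_i x_j v$, $x_i \partial_j v$ decay fast enough at infinity, which follows by combining \eqref{e:DecayTech} with the normalisation and a standard argument, or by first establishing it on a dense class and passing to the limit. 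Neither obstacle is deep, but the first requires care with constants.
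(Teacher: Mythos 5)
Your plan for part (1) is essentially correct and matches the paper's: the paper simply cites the universal Hessian lower bound $\nabla^2\log v_t \geq -\frac{1}{(1-e^{-2t})\beta}\mathrm{id}$ for $\beta$-Fokker--Planck flows with arbitrary non-negative initial data (from \cite{EL,GLMRS}), and evaluates at $t_* = \tfrac12\log 2$ for the $2\beta$-flow, giving $-\tfrac1\beta\mathrm{id}$ exactly. One caveat: attributing the bound $\nabla^2\log(d\mu * \gamma_\beta) \geq -\tfrac1\beta\mathrm{id}$ to Pr\'ekopa--Leindler is off. Pr\'ekopa--Leindler preserves \emph{log-concavity} under convolution; here $d\mu$ is arbitrary, and the correct mechanism is to write
\[
(d\mu * \gamma_\beta)(x) = e^{-|x|^2/(2\beta)}\int e^{x\cdot z/\beta}\,e^{-|z|^2/(2\beta)}\,d\mu(z),
\]
where the integral is a superposition of log-affine (hence log-convex) functions of $x$, so it is log-convex. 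The $-\tfrac1\beta\mathrm{id}$ comes entirely from the prefactor. The conclusion is the same, but the reasoning is the ``superposition preserves log-convexity'' fact, not Pr\'ekopa--Leindler.

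The genuine gap is in your proof of \eqref{e:Relation27Apr2}. Your integration-by-parts identity (which the paper also uses) reads, after centering,
\[
\mathrm{cov}(v) = \beta\,\mathrm{id} + \beta^2\int \nabla^2\log\tfrac{v}{\gamma_\beta}\, dv + \beta^2\int \big(\nabla\log\tfrac{v}{\gamma_\beta}\big)\otimes\big(\nabla\log\tfrac{v}{\gamma_\beta}\big)\, dv.
\]
Under $\nabla^2\log v \geq -\tfrac1\beta\mathrm{id}$, both correction terms are $\geq 0$ and you immediately get $\mathrm{cov}(v)\geq\beta\,\mathrm{id}$, which is \eqref{e:Relation27Apr1}; taking traces gives \eqref{e:Relation27Apr3}. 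But under $\nabla^2\log v \leq -\tfrac1\beta\mathrm{id}$ the Hessian term becomes $\leq 0$ while the rank-one tensor term remains $\geq 0$, so the two terms have opposite signs and you cannot conclude $\mathrm{cov}(v)\leq\beta\,\mathrm{id}$ from this identity. You need a different argument for \eqref{e:Relation27Apr2}. The paper instead reads $\nabla^2\log v \leq -\tfrac1\beta\mathrm{id}$ as the Bakry--\'Emery curvature condition $CD(\tfrac1\beta,\infty)$, which yields the Poincar\'e inequality $\int|\varphi|^2\,dv \leq \beta\int|\nabla\varphi|^2\,dv$ for mean-zero $\varphi$; applying this with $\varphi(x)=\langle\theta,x\rangle$ gives $\langle\theta,\mathrm{cov}(v)\theta\rangle \leq \beta|\theta|^2$. (An alternative is the Brascamp--Lieb variance inequality $\mathrm{Var}_v(\varphi)\leq\int\langle\nabla\varphi,(\nabla^2(-\log v))^{-1}\nabla\varphi\rangle\,dv$ and the same choice of $\varphi$.) Either of these fills the gap.
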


\begin{proof}[Proof of Lemma \ref{l:LogConVari}]
The assertion (1) follows from the fact that 
\begin{equation}\label{e:NatuLogConv}
\nabla^2 \log\, v(t,x) \ge - \frac{ 1 }{ 1-e^{-2t} }\frac1{\beta} {\rm id},\;\;\; (t,x) \in (0,\infty)\times\mathbb{R}^n
\end{equation}
for any $\beta>0$ and any $\beta$-Fokker--Planck solution $\partial_t v_t = \mathcal{L}_\beta^* v_t$ with any non-negative initial data; see \cite[(1) on p.3]{GLMRS} or \cite[Lemma 1.3]{EL}. 
In fact, this shows that for $v_*$, a solution to \eqref{e:v*},  and $t_* = \frac12\log\, 2$, 
$$
\nabla^2 \log\, v_*(t_*,x) \ge - \frac{ 1 }{ 1-e^{-2t_*} }\frac1{2\beta} {\rm id} = - \frac1\beta {\rm id}. 
$$

Let us show \eqref{e:Relation27Apr1} next. 
By translation invariance, without loss of generality we may assume $\int_{\mathbb{R}^n} x_i\, dv = 0$, $1\le i\le n$, and hence 
\begin{equation}\label{e:Cov}
{\rm cov}\, (v) = \bigg(\int_{\mathbb{R}^n} x_ix_j \, dv  \bigg)_{1\le i,j\le n}  = \int_{\mathbb{R}^n} x\otimes x \, dv.
\end{equation}
With this in mind, we integrate by parts
\begin{align*}
\int_{\mathbb{R}^n} x_ix_j \, dv
=&
\int_{\mathbb{R}^n} x_ix_j \frac{v}{\gamma_\beta}\, d\gamma_\beta 
=
\beta \delta_{i,j} + \beta^2 \int_{\mathbb{R}^n} \partial_{ij} \big( \frac{v}{\gamma_\beta} \big) \, d\gamma_\beta. 
\end{align*}
Here we used the decay assumption \eqref{e:DecayTech} and the normalisation. 
We next notice that 
$$
\partial_{ij}f = f \partial_{ij} \log\, f +  f( \partial_i \log\, f)( \partial_j  \log\, f) 
$$ 
from which we see that 
\begin{align*}
{\rm cov}\, (v) 
=& 
\beta {\rm id} 
+ 
\beta^2
\int_{\mathbb{R}^n}   \nabla^2 \log\, \frac{v}{\gamma_\beta} \, dv 
+ 
\beta^2 
\int_{\mathbb{R}^n}  \big( \nabla \log\, \frac{v}{\gamma_\beta}\big) \otimes \big( \nabla \log\, \frac{v}{\gamma_\beta}\big)  \, dv. 
\end{align*}
For the second term, the $-\frac1{\beta}$-semi-log-convexity assumption of $v$ yields that 
$$
\nabla^2 \log\, \frac{v}{\gamma_\beta}\ge 0
$$
whilst we clearly have non-negativity of the third term since 
$$
\big\langle x,  \big( \nabla \log\, \frac{v}{\gamma_\beta}\big) \otimes \big( \nabla \log\, \frac{v}{\gamma_\beta}\big) x \big\rangle
= 
\big|x \cdot \nabla \log\, \frac{v}{\gamma_\beta}\big|^2 \ge 0
$$
for all $x \in \mathbb{R}^n$.  This completes the proof of \eqref{e:Relation27Apr1}. 
By the similar argument, one can prove \eqref{e:Relation27Apr3}. 

For the proof of \eqref{e:Relation27Apr2}, we regard the assumption $\nabla^2 \log\, v \le -\frac1\beta {\rm id}$ as the curvature-dimension condition $CD(\frac1\beta, \infty)$, in which case it follows from \cite[Corollary 4.8.2, p.212]{BGL} that we have the Poincar\'{e} inequality 
\begin{equation}\label{e:BetaPoi}
\int_{\mathbb{R}^n} |\varphi|^2\,  dv \le \beta \int_{\mathbb{R}^n} |\nabla \varphi|^2\, dv,\;\;\; {\rm for}\;\;\; \int_{\mathbb{R}^n} \varphi \, dv =0. 
\end{equation}
Again, without loss of generality, we suppose $\int_{\mathbb{R}^n} x_i\, dv =0$. From \eqref{e:Cov} we obtain
$$
\langle \theta , {\rm cov}\, (v)\theta\rangle 
=
\int_{\mathbb{R}^n} | \langle \theta , x\rangle|^2\, dv. 
$$
Since $\int_{\mathbb{R}^n} x_i\, dv =0$, we may apply \eqref{e:BetaPoi} with $\varphi(x) := \langle \theta, x \rangle$ to see that 
$$
\langle \theta , {\rm cov}\, (v)\theta\rangle 
\le \beta 
\int_{\mathbb{R}^n} | \nabla_x \big( \langle \theta , x\rangle\big) |^2\, dv
=
\beta|\theta|^2
$$
and hence ${\rm cov}\,(v) \le \beta {\rm id}$. 
\end{proof}

In the proof of Theorem \ref{t:MainHyper}, we will flow input function $v$ via the $\beta$-Fokker--Planck equation:
\begin{equation}\label{e:BetaFPv0}
\partial_t v_t =\mathcal{L}_\beta^* v_t,\;\;\; (t,x)\in (0,\infty)\times\mathbb{R}^n,\;\;\; v_0 = v. 
\end{equation}
A crucial property in our argument is the fact that the Fokker--Planck flow preserves semi-log-subharmonicity as well as semi-log-convexity and semi-log-concavity in the following sense. Again, we are not claiming originality here and we include a proof for the sake of completeness.
\begin{lemma}\label{l:LogPreserve}
Let $\beta>0$ and $v_t$ be a $\beta$-Fokker--Planck solution of \eqref{e:BetaFPv0} with twice differentiable initial data $v: \mathbb{R}^n\to(0,\infty)$.  
Then 
\begin{equation}\label{e:ConvexPresereve}
\nabla^2 \log\, v \ge -\frac1\beta {\rm id} \Rightarrow \nabla^2 \log\, v_t \ge -\frac1\beta {\rm id},\;\;\; t>0,
\end{equation}
and 
\begin{equation}\label{e:ConcavePresereve}
\nabla^2 \log\, v \le -\frac1\beta {\rm id} \Rightarrow \nabla^2 \log\, v_t \le -\frac1\beta {\rm id},\;\;\; t>0.
\end{equation}
Moreover, we have that 
\begin{equation}\label{e:PreSubHar}
\Delta \log\, v \ge - \frac{n}{\beta}
\Rightarrow 
\Delta \log\, v_t \ge - \frac{n}{\beta},
\;\;\;
t>0.
\end{equation}

\end{lemma}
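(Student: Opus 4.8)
The plan is to reduce all three implications to the classical facts that log-convexity, log-concavity, and log-subharmonicity are each preserved under dilations and under convolution with a Gaussian, by exploiting the explicit (Mehler-type) representation of the Ornstein--Uhlenbeck semigroup.

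First I would pass from the $\beta$-Fokker--Planck flow to the associated Ornstein--Uhlenbeck flow: recall that $v_t$ solves \eqref{e:BetaFPv0} precisely when $u_t := v_t/\gamma_\beta$ solves $\partial_t u_t = \mathcal{L}_\beta u_t$, with $u_0 = v/\gamma_\beta$. Since $\nabla^2 \log\, \gamma_\beta = -\frac1\beta {\rm id}$ and $\Delta \log\, \gamma_\beta = -\frac n\beta$, we have $\nabla^2 \log\, v_t = \nabla^2 \log\, u_t - \frac1\beta {\rm id}$ and $\Delta \log\, v_t = \Delta \log\, u_t - \frac n\beta$, so \eqref{e:ConvexPresereve}, \eqref{e:ConcavePresereve} and \eqref{e:PreSubHar} are respectively equivalent to the statements that the $\mathcal{L}_\beta$-flow preserves log-convexity, log-concavity and log-subharmonicity of $u_0$. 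I would then use the representation $u_t(x) = \int_{\mathbb{R}^n} u_0(e^{-t}x + w)\, \gamma_{\beta(1-e^{-2t})}(w)\, dw$, which displays $u_t$ as the dilation by $e^{-t}$ of the Gaussian convolution $u_0 * \gamma_{\beta(1-e^{-2t})}$. It thus suffices to show that each of the three properties is stable under (i) dilations $g \mapsto g(\lambda x)$, $\lambda > 0$, and (ii) convolution with a non-negative finite measure $\mu$.

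Point (i) is immediate, since $\nabla^2 \log\,[g(\lambda\,\cdot)](x) = \lambda^2 (\nabla^2\log\, g)(\lambda x)$ (and the same with $\nabla^2$ replaced by $\Delta$), so the relevant sign is unchanged. For (ii): preservation of log-concavity is the Pr\'ekopa--Leindler inequality (the Gaussian being log-concave); preservation of log-convexity follows from H\"older's inequality, since for $x = \lambda x_0 + (1-\lambda)x_1$ one has $\int g(x-w)\,d\mu(w) \le \int g(x_0-w)^\lambda g(x_1-w)^{1-\lambda}\,d\mu(w) \le \big(\int g(x_0-w)\,d\mu(w)\big)^\lambda \big(\int g(x_1-w)\,d\mu(w)\big)^{1-\lambda}$.

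The log-subharmonic case of (ii) is the main obstacle, as it is not a textbook closure property. I would first note that $\Delta\log\, g \ge 0$ forces $\Delta g \ge |\nabla g|^2/g \ge 0$; then, for log-subharmonic $g_1, g_2$, a short application of the arithmetic--geometric mean and Cauchy--Schwarz inequalities (using $\Delta g_i \ge 0$) gives $(g_1+g_2)\Delta(g_1+g_2) \ge |\nabla(g_1+g_2)|^2$, i.e. the cone of positive log-subharmonic functions is stable under finite sums. Stability under convolution with $\gamma_{\beta(1-e^{-2t})}$ then follows by a Riemann-sum approximation, the limit interchanges being justified by the smoothing effect of Gaussian convolution and the standing differentiability and decay assumptions on $v$; alternatively --- and perhaps more cleanly --- one can combine the spherical sub-mean-value characterisation of the subharmonicity of $\log\, g$ with the integral form of H\"older's inequality to check directly that $\log\,(g*\mu)$ obeys the sub-mean-value inequality. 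Together with (i) this yields \eqref{e:PreSubHar}, completing the proof. The residual technical points (finiteness of $u_0 * \gamma_{\beta(1-e^{-2t})}$, differentiation under the integral sign) can be handled exactly as in \cite{EL} and \cite{GLMRS}.
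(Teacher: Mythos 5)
Your proposal is correct and takes essentially the same route as the paper: both pass through the explicit Mehler-type representation of the Fokker--Planck solution and reduce to closure of log-convexity, log-concavity, and log-subharmonicity under dilation and Gaussian superposition, handled respectively via H\"older's inequality, the Pr\'ekopa--Leindler inequality, and the sub-mean-value characterisation combined with a Riemann-sum approximation (your passage to $u_t = v_t/\gamma_\beta$ is only a cleaner bookkeeping of the paper's $\phi(x,w) = (2\pi\beta)^{-n/2}u_0(w+e^{-t}x)$). Two minor remarks: the finite-sum step for log-subharmonicity actually relies on the full inequality $g_i\Delta g_i \ge |\nabla g_i|^2$ rather than merely $\Delta g_i \ge 0$ (this is exactly \cite[Proposition~2.2]{GKL}, which the paper invokes), and the limit-interchange technicalities you defer at the end constitute the bulk of the paper's proof, with the relevant citation being \cite{GKL} rather than \cite{EL} or \cite{GLMRS}.
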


\begin{proof}
In this proof we make use of the fact that the solution to \eqref{e:BetaFPv0} has the explicit form\footnote{The uniqueness of the solution is a consequence of Widder's theorem \cite{Widder}.}
\begin{equation} \label{e:FPsolution}
v_t(x) = \frac{1}{ ( 2\pi \beta( 1-e^{-2t} ) )^\frac{n}{2} } \int_{\mathbb{R}^n} e^{ - \frac{|  x -  e^{-t}y |^2}{ 2\beta(1-e^{-2t}) }} v_0(y)\, dy.
\end{equation}
This may be rewritten as
\begin{equation}\label{e:vtForm18Feb}
v_t(x) =  e^{ - \frac{|x|^2}{2\beta} } \int_{\mathbb{R}^n} \phi(x,w)\, d\mu(w),
\end{equation}
where
\begin{align*}
\phi(x,w) & := e^{\frac{|w+ e^{-t}x|^2}{2\beta}} v_0(w +e^{-t}x),\\ 
d\mu(w) & :=  \frac{1}{ ( 2\pi \beta( 1-e^{-2t} ) )^\frac{n}{2} } e^{ - \frac{1}{2\beta(1 - e^{-2t})}  |w|^2 }\, dw.
\end{align*}
From the assumption $\nabla^2 \log\, v_0 \ge - \frac1\beta {\rm id}$, we see that 
$$
\nabla^2_x  \log\, \phi(x,w) 
= 
\frac{e^{-2t}}{\beta} {\rm id} + e^{-2t} \nabla^2 \log\, v_0 (w + e^{-t}x)
\ge 
0.
$$
Using the fact that log-convexity is preserved under superpositions, we obtain \eqref{e:ConvexPresereve}.

The preservation of semi-log-concavity in \eqref{e:ConcavePresereve} is more subtle and can be found, for instance, in \cite{BraLi,IPT}. For the sake of completeness, we give a proof of \eqref{e:ConcavePresereve} here. 
We use the explicit form of the solution \eqref{e:FPsolution} to write 
\begin{align*}
v_t(x) 
& = 
e^{ - \frac{|x|^2}{2\beta }}\int_{\mathbb{R}^n} \gamma_{ \beta (1-e^{-2t}) }(w) e^{\frac{|w+ e^{-t}x|^2}{2\beta}} v_0(w +e^{-t}x)\, dw\\
& =: 
e^{ - \frac{|x|^2}{2\beta }}\int_{\mathbb{R}^n} \psi_x(w)\, dw,\;\;\; 
\psi_x(w)
:= 
\gamma_{ \beta (1-e^{-2t}) }(w) e^{\frac{|w+ e^{-t}x|^2}{2\beta}} v_0(w +e^{-t}x)\\
& =: 
e^{ - \frac{|x|^2}{2\beta }} \Psi(x),
\end{align*}
where we suppress the dependence on $t$ in $\psi_x$ and $\Psi$.

It is sufficient to show the log-concavity of $\Psi$. To this end, we focus on 
$$
\Psi(\lambda x_0 + (1-\lambda)x_1)
= 
\int_{\mathbb{R}^n} \psi_{\lambda x_0 + (1-\lambda)x_1}(y)\, dy. 
$$
We claim that 
\begin{equation}\label{e:PLAssump}
\psi_{\lambda x_0 + (1-\lambda)x_1}(\lambda y_0 + (1-\lambda) y_1)
\ge 
\psi_{x_0}(y_0)^\lambda \psi_{x_1}(y_1)^{1-\lambda}. 
\end{equation}
Once we could prove \eqref{e:PLAssump}, then we conclude from the Pr\'{e}kopa--Leindler inequality that 
$$
\Psi(\lambda x_0 + (1-\lambda)x_1)
\ge \bigg( \int_{\mathbb{R}^n} \psi_{x_0} \bigg)^\lambda \bigg( \int_{\mathbb{R}^n} \psi_{x_1} \bigg)^{1 - \lambda} 
= 
\Psi(x_0)^\lambda \Psi(x_1)^{1-\lambda }
$$
which shows the desired log-convexity of $\Psi$. 

Towards \eqref{e:PLAssump}, we regard $\psi(x,y):= \psi_x(y)$. In fact, from this viewpoint,  \eqref{e:PLAssump} is clearly equivalent to the log-concavity of $\psi$ in the $(x,y)$-variable: 
$$
\psi( \lambda ( x_0,y_0 ) + (1-\lambda) (x_1,y_1) )
\ge 
\psi( x_0,y_0  )^{\lambda}
\psi( x_1,y_1  )^{1-\lambda}. 
$$
This can be seen by writing  
$$
\psi(x,y) = \gamma_{\beta(1-e^{-2t})}(y) \vartheta(y+e^{-t}x),\;\;\; \vartheta(w):= e^{\frac{|w|^2}{2\beta}}v_0(w)
$$
and noting that our assumption \eqref{e:ConcavePresereve} implies $\vartheta$ is log-concave. It is also clear that $\gamma_{\beta(1-e^{-2t})}$ is log-concave too. Hence 
\begin{align*}
&\psi( \lambda ( x_0,y_0 ) + (1-\lambda) (x_1,y_1) )\\
& \ge
\gamma_{\beta(1-e^{-2t})}(y_0)^{\lambda}  \gamma_{\beta(1-e^{-2t})}(y_1)^{1-\lambda} \vartheta(y_0+e^{-t}x_0)^\lambda\vartheta( y_1+e^{-t}x_1)^{1-\lambda}\\
& = 
\psi( x_0,y_0  )^{\lambda}
\psi( x_1,y_1  )^{1-\lambda}
\end{align*}
which gives the desired log-concavity of $\psi$.
\if0
The semi-log-concavity preserving \eqref{e:ConcavePresereve} is a simple consequence of Pr\'{e}kopa--Leindler inequality and the fact that $u_t = \frac{v_t}{\gamma_\beta}$ is a solution to $\partial_t u_t = \mathcal{L}_\beta u_t$, see \cite{BraLi}.  
For the proof of \eqref{e:ConvexPresereve}, we use the explicit form of the solution 
\begin{align*}
v(t,x) 
=& 
\frac{1}{ ( 2\pi \beta( 1-e^{-2t} ) )^\frac{n}{2} } \int_{\mathbb{R}^n} e^{ - \frac{| x - e^{-t}y |^2}{ 2\beta(1-e^{-2t}) } } v_0(y)\, dy \\
=&
\frac{1}{ ( 2\pi \beta( 1-e^{-2t} ) )^\frac{n}{2} } \int_{\mathbb{R}^n} e^{ - \frac{| x - e^{-t}y |^2}{ 2\beta(1-e^{-2t}) } } e^{- \frac{|y|^2}{2\beta}} e^{\frac{|y|^2}{2\beta}} v_0(y)\, dy .
\end{align*}
From a direct calculation, we notice that 
$$
e^{ - \frac{| x - e^{-t}y |^2}{ 2\beta(1-e^{-2t}) } } e^{- \frac{|y|^2}{2\beta}}
=
e^{ - \frac{1}{2\beta(1 - e^{-2t})} \big[ (1 - e^{-2t} )|x|^2 + |y - e^{-t}x|^2 \big]}
$$
and hence 
\begin{align*}
v(t,x) 
=&
e^{ - \frac{|x|^2}{2\beta }} \frac{1}{ ( 2\pi \beta( 1-e^{-2t} ) )^\frac{n}{2} } \int_{\mathbb{R}^n} e^{ - \frac{1}{2\beta(1 - e^{-2t})}  |y - e^{-t}x|^2 } e^{\frac{|y|^2}{2\beta}} v_0(y)\, dy\\
=& 
e^{ - \frac{|x|^2}{2\beta }} \frac{1}{ ( 2\pi \beta( 1-e^{-2t} ) )^\frac{n}{2} } \int_{\mathbb{R}^n} e^{ - \frac{1}{2\beta(1 - e^{-2t})}  |w|^2 } e^{\frac{|w+ e^{-t}x|^2}{2\beta}} v_0(w +e^{-t}x)\, dw.
\end{align*}
We now regard 
$$
\phi(x,w):= e^{\frac{|w+ e^{-t}x|^2}{2\beta}} v_0(w +e^{-t}x),\;\;\; d\mu(w) :=  \frac{1}{ ( 2\pi \beta( 1-e^{-2t} ) )^\frac{n}{2} } e^{ - \frac{1}{2\beta(1 - e^{-2t})}  |w|^2 }\, dw
$$
so that 
$$
v_t(x) = e^{ - \frac{|x|^2}{2\beta} } \psi(x),\;\;\; \psi(x):= \int_{\mathbb{R}} \phi(x,w)\, d\mu(w).
$$
Since we assumed $\nabla^2 \log\, v_0 \ge - \frac1\beta {\rm id}$, we see that 
$$
\nabla^2_x  \log\, \phi(x,w) 
= 
\frac{e^{-2t}}{\beta} {\rm id} + e^{-2t} \nabla^2 \log\, v_0 (w + e^{-t}x)
\ge 
0.
$$
Hence we may apply Artin's theorem, see \cite[P. 649]{MOA} to see that $\psi$ is log-convex too. This concludes that 
$$
\nabla^2 \log\, v(t,x) = - \frac1\beta {\rm id} +  \nabla^2 \log\, \psi (x) \ge - \frac 1\beta{\rm id}. 
$$
\fi 

To show \eqref{e:PreSubHar}, we borrow ideas from the proof\footnote{For technical reasons, the result itself in \cite[Lemma 2.4]{GKL} is not quite applicable in our case to derive \eqref{e:PreSubHar}.} of \cite[Lemma 2.4]{GKL}.
We first note that for any twice differentiable function $\rho$ on $\mathbb{R}^n$ 
$$
\Delta \rho \ge 0
\Leftrightarrow 
\rho(x) \le 
\int_{O(n)} \rho(x+\theta y)\, d\sigma(\theta) 
$$
for all $x,y\in\mathbb{R}^n$, where $O(n)$ represents the group of orthonomal transforms and $d\sigma$ denotes the normalised Haar measure on $O(n)$, see \cite{GKL} for instance.  
Hence, recalling \eqref{e:vtForm18Feb}, it suffices to show that $x\mapsto \int \phi(x,w)\, d\mu(w)$ is log-subharmonic, namely 
\begin{equation}\label{e:Goal26Apr}
\log\, \int_{\mathbb{R}^n} \phi(x,w)\, d\mu(w)
\le 
\int_{O(n)} 
\log\, \int_{\mathbb{R}^n} \phi(x+\theta y,w)\, d\mu(w)
\, d\sigma(\theta)
\end{equation}
for any $x,y\in\mathbb{R}^n$. Also the assumption ensures that $x\mapsto \phi(x,w)$ is log-subharmonic for each fixed $w$.
From now we fix arbitrary $x,y$. 
We claim that 
\begin{equation}\label{e:Goal26Apr_2}
\log\, \int_{\mathbb{R}^n} \phi(x,w) \chi_R(w)\, d\mu(w)
\le 
\int_{O(n)} 
\log\, \int_{\mathbb{R}^n} \phi(x+\theta y,w)\chi_R(w)\, d\mu(w)
\, d\sigma(\theta)
 \end{equation}
 for all $R\in\mathbb{N}$ where $\chi_R$ is the characteristic function of $[-R,R]^n$. 
 Once we could see \eqref{e:Goal26Apr_2}, then Fatou's lemma and the monotone convergence theorem show that 
 \begin{align*}
\log\, \int_{\mathbb{R}^n} \phi(x,w)\, d\mu(w)
\le&
\liminf_{R\to\infty}
\log\, \int_{\mathbb{R}^n} \phi(x,w) \chi_R(w)\, d\mu(w)\\
\le& 
\liminf_{R\to\infty}
\int_{O(n)} 
\log\, \int_{\mathbb{R}^n} \phi(x+\theta y,w)\chi_R(w)\, d\mu(w)
\, d\sigma(\theta)\\
=& 
\int_{O(n)} 
\log\, \int_{\mathbb{R}^n} \phi(x+\theta y,w)\, d\mu(w)
\, d\sigma(\theta) 
 \end{align*}
 and hence \eqref{e:Goal26Apr}.
 
To show \eqref{e:Goal26Apr_2}, we denote the density of $d\mu(w)$ by $p(w)$ and approximate the integral $ \int \phi(x,w) \chi_R(w)\, d\mu(w)$ by its Riemann sum: 
\begin{align*}
\log\, \int_{\mathbb{R}^n} \phi(x,w) \chi_R(w)\, d\mu(w)
=& 
\log\, \int_{\mathbb{R}^n} \phi(x,w) \chi_R(w)p(w)\, dw\\
=&
\lim_{h\downarrow0}
\log\,  
{h^n} \sum_{i=1}^{I_{h,R}} \phi( x, w_i ) p(w_i), 
\end{align*} 
where $(w_i)_{i=1}^{I_{h,R}}$ denotes lattice points in $[-R,R]^n$ with width $h$ and $I_{h,R} \sim h^{-n} (2R)^n$. Since log-subharmonicity is preserved under the finite superposition (see, for example, Proposition 2.2 in \cite{GKL}), it follows that
\begin{align*}
\log\, \int_{\mathbb{R}^n} \phi(x,w) \chi_R(w)\, d\mu(w) \le 
\lim_{h\downarrow 0} 
\int_{O(n)}
\log\, 
\big( {h^n} \sum_{i=1}^{I_{h,R}} \phi( x + \theta y , w_i ) p(w_i)\big)\, d\sigma(\theta). 
\end{align*}
If we could justify the interchanging of $\lim_{h\downarrow 0}$ and $\int_{O(n)}$, then we would  conclude \eqref{e:Goal26Apr_2} and complete the proof. 
To this end let us show that 
\begin{equation}\label{e:UniformConv}
\lim_{h\downarrow 0 } 
\sup_{\theta \in O(n)}
\bigg| 
{h^n} \sum_{i=1}^{I_{h,R}} \phi( x + \theta y , w_i ) p(w_i)
- 
\int_{\mathbb{R}^n} \phi(x+\theta y, w) \chi_R(w) p(w)\, dw
\bigg|
=
0.
\end{equation}
Note that we do not need to pay attention to the dependence on $x,y,R$ as these are fixed.  
If we decompose $[-R,R]^n$ into cubes $(Q_i)_{i=1}^{I_{h,R}}$ where the sidelength of each cube is $h$ and $w_i \in Q_i$, then 
\begin{align*}
&\bigg| 
{h^n} \sum_{i=1}^{I_{h,R}} \phi( x + \theta y , w_i ) p(w_i)
- 
\int_{\mathbb{R}^n} \phi(x+\theta y, w) \chi_R(w) p(w)\, dw
\bigg|\\
& \le 
{h^n} \sum_{i=1}^{I_{h,R}} 
\frac{1}{|Q_i|}
\int_{Q_i} 
\big| 
\psi_\theta(w_i)
-
\psi_\theta(w)
\big|\, dw, 
\end{align*}
where $\psi_\theta(w) := \phi(x+\theta y, w) p(w)$.  From the mean value theorem, and by using the explicit formulae for $\psi_\theta, \phi, p$, we obtain
\begin{align*}
\bigg| 
{h^n} \sum_{i=1}^{I_{h,R}} \phi( x + \theta y , w_i ) p(w_i)
- 
\int_{\mathbb{R}^n} \phi(x+\theta y, w) \chi_R(w) p(w)\, dw
\bigg| \lesssim
h, 
\end{align*}
where the implicit constant depends on $x,y,R,\beta,t$. Hence \eqref{e:UniformConv} follows. 
\end{proof}
\begin{remark}
It is reasonable to seek an analogue of \eqref{e:PreSubHar} for semi-log-superharmonicity. This would follow if it were true that $\Delta \log\, P_tf \le 0$ for all $t > 0$ whenever 
$
\Delta \log\, f \le 0.
$ 
Interestingly, this is too optimistic and can easily be seen to fail. 
In fact, if we take 
$$
f(x) = e^{ x_1x_2 },\;\;\; x = (x_1,x_2) \in \mathbb{R}^2, 
$$
then $ \Delta \log\, f =0 $ whilst $\Delta \log\, P_tf >0$ for all $t>0$.  
\end{remark}

The following lemma, which is a consequence of Lemmas \ref{l:LogConVari} and \ref{l:LogPreserve}, will be used to justify certain technicalities in the flow monotonicity argument in the next section.
We denote by $\Gamma$ the carr\'e du champ operator
$$
\Gamma\phi(x):= |\nabla \phi(x)|^2
$$
for differentiable functions $\phi$ on $\mathbb{R}^n$. 
\begin{lemma}\label{l:Technical1}
Let  $\beta>0$. Assume that the twice differentiable function $v:\mathbb{R}^n\to(0,\infty)$ satisfies 
\begin{equation}\label{e:Tech25May}
\inf \frac{v}{\gamma_\beta} >0.
\end{equation}
Suppose also that $v \in L^2(\gamma_\beta^{-1})$.
Then, for a solution $v_t$ to \eqref{e:BetaFPv0} and a nonzero real number $p$,  we have 
$$
\big( \frac{v_t}{\gamma} \big)^\frac1p \Delta W_j,\; \big( \frac{v_t}{\gamma} \big)^\frac1p \nabla W_j,\;  \big( \frac{v_t}{\gamma} \big)^\frac1p \Gamma W_j,\; \big( \frac{v_t}{\gamma} \big)^\frac1p \nabla W_1 \cdot \nabla W_2 \in L^1(d\gamma),\;\;\; j=1,2, 
$$
for each $t>0$, where $W_1(x):= \frac{|x|^2}{2}$ and $W_2:= \log\, \frac{v_t}{\gamma}$. 
\end{lemma}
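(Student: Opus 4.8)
The plan is to deduce the lemma from four quantitative facts about $v_t$ at a fixed $t>0$: a pointwise lower bound $v_t\gtrsim\gamma_\beta$, a pointwise Gaussian upper bound on $v_t$ together with its first two derivatives, an at-most-polynomial bound on $\nabla\log v_t$, and the one-sided bound $\Delta\log v_t\ge-n/(\beta(1-e^{-2t}))$. Note first that $v\in L^2(\gamma_\beta^{-1})$ forces $v\in L^1$ (Cauchy--Schwarz against $\gamma_\beta^{1/2}$), so $v_t$ is a smooth positive function for each $t>0$ and $W_1,W_2$ are smooth, making all the displayed products meaningful.

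First I would establish the pointwise control. For the lower bound, $v\ge c_0\gamma_\beta$ with $c_0:=\inf(v/\gamma_\beta)>0$ by \eqref{e:Tech25May}, and since $\gamma_\beta$ is the stationary solution of \eqref{e:BetaFPv0} and the flow is linear and positivity preserving, $v_t\ge c_0\gamma_\beta$ for all $t>0$. For the upper bound I would use the explicit kernel $K_t$ of \eqref{e:FPsolution}: writing $v_t(x)=\int K_t(x,y)\gamma_\beta(y)^{1/2}\cdot\bigl(v(y)\gamma_\beta(y)^{-1/2}\bigr)\,dy$ and applying Cauchy--Schwarz, the factor $\int K_t(x,y)^2\gamma_\beta(y)\,dy$ is an elementary Gaussian integral of the form $C_t\gamma_{\beta_t}(x)$ with $\beta_t:=\beta(1+e^{-2t})$, so $v_t\le C_t\|v\|_{L^2(\gamma_\beta^{-1})}\gamma_{\beta_t}$. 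Differentiating $K_t$ in $x$ only brings down a factor $\sigma^{-2}(x-e^{-t}y)$ with $\sigma^2=\beta(1-e^{-2t})$, which after the same Cauchy--Schwarz step costs at most a polynomial in $x$, whence $|\nabla v_t|,|\Delta v_t|\le C_t(1+|x|^2)\gamma_{\beta_t}$. Feeding the two-sided bounds into a Gaussian-integral computation, and using the explicit form of $W_1$, yields the moment bounds
\[
\int_{\mathbb{R}^n}\Bigl(\frac{v_t}{\gamma}\Bigr)^{1/p}(1+|x|)^k\,d\gamma<\infty,\qquad k\ge0;
\]
this is the only place $v\in L^2(\gamma_\beta^{-1})$ is used.

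Next, for the polynomial bound on $\nabla\log v_t$ I would invoke \eqref{e:NatuLogConv} (the quantitative form of semi-log-convexity underlying Lemmas \ref{l:LogConVari} and \ref{l:LogPreserve}): the $\beta$-Fokker--Planck solution $v_t$ satisfies $\nabla^2\log v_t\ge-\kappa_t\,{\rm id}$ with $\kappa_t:=1/(\beta(1-e^{-2t}))$, so $g_t:=\log v_t+\tfrac{\kappa_t}{2}|x|^2$ is convex; the two-sided bounds of the previous step give $|g_t(x)|\le C_t(1+|x|^2)$, and a convex function of at-most-quadratic growth has an at-most-quadratic gradient (use $\nabla g_t(x)\cdot e\le g_t(x\pm e)-g_t(x)$), so $|\nabla\log v_t(x)|\le C_t(1+|x|^2)$. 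Taking traces also gives $\Delta\log v_t\ge-n\kappa_t$. Combined with the moment bound, every product not involving $\Delta W_2$ is now immediate: $\nabla W_1=x$, $\Delta W_1=n$, $\Gamma W_1=|x|^2$, $\nabla W_2=\nabla\log v_t+x$, $\Gamma W_2=|\nabla\log v_t+x|^2$ and $\nabla W_1\cdot\nabla W_2=x\cdot(\nabla\log v_t+x)$ are all dominated by $(v_t/\gamma)^{1/p}$ times a polynomial.

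The main obstacle is $(v_t/\gamma)^{1/p}\Delta W_2=(v_t/\gamma)^{1/p}(\Delta\log v_t+n)$, since $\Delta\log v_t$ has no pointwise upper bound under the bare hypotheses. From $\Delta\log v_t\ge-n\kappa_t$ one gets $|\Delta\log v_t|\le\Delta\log v_t+2n\kappa_t$, so it suffices to bound $\int(v_t/\gamma)^{1/p}\Delta\log v_t\,d\gamma$. Writing $\Delta\log v_t=\Delta v_t/v_t-|\nabla\log v_t|^2$, the second term is controlled by the previous step, and for the first I would integrate by parts once against the weight with Lebesgue density $(v_t/\gamma)^{1/p}v_t^{-1}\gamma=v_t^{1/p-1}\gamma^{1-1/p}$; the boundary terms vanish because $v_t$ and $\nabla v_t$ decay like Gaussians against the polynomial growth of the remaining factors, and since $\nabla\bigl(v_t^{1/p-1}\gamma^{1-1/p}\bigr)=(\tfrac1p-1)v_t^{1/p-1}\gamma^{1-1/p}\nabla W_2$ this produces
\[
\int_{\mathbb{R}^n}\Bigl(\frac{v_t}{\gamma}\Bigr)^{1/p}\frac{\Delta v_t}{v_t}\,d\gamma=-\Bigl(\tfrac1p-1\Bigr)\int_{\mathbb{R}^n}\Bigl(\frac{v_t}{\gamma}\Bigr)^{1/p}\nabla W_2\cdot\nabla\log v_t\,d\gamma,
\]
whose integrand is again $(v_t/\gamma)^{1/p}$ times a polynomial, hence integrable by the moment bound. (In the special cases of the main theorem where $v$ is additionally semi-log-concave, Lemma \ref{l:LogPreserve} gives $\nabla^2\log v_t\le-\beta^{-1}{\rm id}$ as well, so $\Delta\log v_t$ is two-sidedly bounded and this last step is trivial.) The delicate points throughout are the convergence of the Gaussian integral in the moment bound and the vanishing of the boundary terms, both of which rely on the precise numerology relating $\beta_t=\beta(1+e^{-2t})$ to the exponent $1/p$, not merely on the qualitative Gaussian shape of $v_t$.
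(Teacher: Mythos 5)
Your proof is correct, and it follows a genuinely different route from the paper's. Both arguments begin from the lower bound $v_t/\gamma_\beta \geq \inf(v/\gamma_\beta) > 0$, but the paper then expands $(v_t/\gamma)^{1/p}\Delta W_2$ in terms of $u_t := v_t/\gamma_\beta$ and $\gamma_\beta/\gamma$ and invokes a parabolic regularity estimate (citing \cite[(7.78)]{Grigolyan}) to place $u_t$, $\nabla u_t$, $\Delta u_t$ \emph{uniformly} in $L^\infty$, after which the whole expression is dominated pointwise by $C(t,\varepsilon)(1+|x|^2)(\gamma_\beta/\gamma)^{1/p}$ and the argument closes directly, with $\Delta W_2$ itself bounded. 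Your proof instead extracts a quantitative Gaussian bound $v_t \leq C_t\gamma_{\beta_t}$ with the strictly wider variance $\beta_t = \beta(1+e^{-2t})$ by Cauchy--Schwarz against the explicit kernel of \eqref{e:FPsolution}; this is in fact the sharp rate available from $v_0 \in L^2(\gamma_\beta^{-1})$ alone (taking $v_0 = \gamma_\alpha$ with $\alpha\in(\beta,2\beta)$ shows $v_t/\gamma_\beta$ is unbounded for every $t>0$, so the paper's appeal to a global $L^2 \to L^\infty$ smoothing bound deserves scrutiny), and the price you pay is that $\Delta\log v_t$ then carries a lower bound from \eqref{e:NatuLogConv} but no pointwise upper bound. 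Your reduction $|\Delta\log v_t| \leq \Delta\log v_t + 2n\kappa_t$, the splitting $\Delta\log v_t = \Delta v_t/v_t - |\nabla\log v_t|^2$, and the integration by parts are precisely the extra moves needed to circumvent this, with the convexity argument furnishing the polynomial control on $\nabla\log v_t$ that makes them run. Both proofs ultimately lean on the same sign constraints relating $1/p$ to $\beta$ (respectively $\beta_t$) that the lemma, as stated for arbitrary nonzero $p$ and $\beta>0$, leaves implicit; you correctly flag this, and it is a feature of the lemma's loose statement rather than a defect of either argument. On balance your route is more self-contained, avoids the external regularity reference, and makes the unconditional one-sided Hessian bound \eqref{e:NatuLogConv} a visible structural input rather than an incidental remark, at the modest extra cost of the integration by parts.
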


\begin{proof}
We prove $ \big( \frac{v_t}{\gamma} \big)^\frac1p \Delta W_2 \in L^1(\gamma) $ and remark that this case contains ideas which may be used for other terms. 
A direct computation shows that 
\begin{align*}
\big( \frac{v_t}{\gamma} \big)^\frac1p \Delta W_2
=& 
- \big( \frac{v_t}{\gamma_\beta} \big)^{\frac1p - 2} \big( \frac{\gamma_\beta}{\gamma} \big)^{\frac1p - 2} \big| \frac{\gamma_\beta}{\gamma} \nabla \frac{v_t}{\gamma_\beta} +  \frac{v_t}{\gamma_\beta} \nabla \frac{\gamma_\beta}{\gamma}  \big|^2\\
&+
\big( \frac{v_t}{\gamma_\beta} \big)^{\frac1p - 1} \big( \frac{\gamma_\beta}{\gamma} \big)^{\frac1p - 1}
\big(  \frac{\gamma_\beta}{\gamma} \Delta \frac{v_t}{\gamma_\beta} + 2 \nabla \frac{v_t}{\gamma_\beta} \cdot \nabla \frac{\gamma_\beta}{\gamma} + \frac{v_t}{\gamma_\beta} \Delta \frac{\gamma_\beta}{\gamma}  \big).
\end{align*}
Hence  $ |\big( \frac{v_t}{\gamma} \big)^\frac1p \Delta W_2| $ is bounded, up to a multiplicative constant, by 
\begin{align*}
& \big( \frac{v_t}{\gamma_\beta} \big)^{\frac1p - 2} |\nabla \frac{v_t}{\gamma_\beta}|^2  \big( \frac{\gamma_\beta}{\gamma} \big)^{\frac1p }
+ 
\big( \frac{v_t}{\gamma_\beta} \big)^{\frac1p }  \big( \frac{\gamma_\beta}{\gamma} \big)^{\frac1p  }|\nabla W_1|^2\\
+& 
\big( \frac{v_t}{\gamma_\beta} \big)^{\frac1p - 1} | \Delta \frac{v_t}{\gamma_\beta} |  \big( \frac{\gamma_\beta}{\gamma} \big)^{\frac1p }
+
\big( \frac{v_t}{\gamma_\beta} \big)^{\frac1p }  \big( \frac{\gamma_\beta}{\gamma} \big)^{\frac1p  } |\Delta W_1 | 
+
\big( \frac{v_t}{\gamma_\beta} \big)^{\frac1p - 1} |\nabla \frac{v_t}{\gamma_\beta} |  \big( \frac{\gamma_\beta}{\gamma} \big)^{\frac1p}|\nabla W_1|. 
\end{align*}
Note that $ u_t:= \frac{v_t}{\gamma_\beta} $ satisfies the gaussian heat equation 
$$
\partial_t u_t = \mathcal{L}_\beta u_t,\;\;\; u_0 \in L^1(\gamma_\beta ) \cap L^2(\gamma_\beta)
$$
since we assumed $ v_0 \in L^2(\gamma_\beta^{-1}) $. 
Hence we may employ \cite[(7.78)]{Grigolyan} to bound 
$$
\sup_{y \in \mathbb{R}^n} | \frac{v_t}{\gamma_\beta}(y)|  +   |\nabla \frac{v_t}{\gamma_\beta}(y)| + |\Delta \frac{v_t}{\gamma_\beta}(y)|
\le 
C(t) 
\| u_0 \|_{L^2(\gamma_\beta)},
$$
where $C(t)$ is locally bounded in $t$. 

Also we know from the assumption \eqref{e:Tech25May} and the formula \eqref{e:FPsolution} that, for some $\varepsilon > 0$, we have the lower bound $ \frac{v_t}{\gamma_\beta} \ge \varepsilon$ for all $t > 0$. This yields  
\begin{align*}
&|\big( \frac{v_t}{\gamma} \big)^\frac1p \Delta W_2 | \le 
C(t,\varepsilon) \bigg( 1+ |\nabla W_1| + |\nabla W_1|^2  \bigg) \big( \frac{\gamma_\beta}{\gamma} \big)^\frac1p.
\end{align*}
If we recall $W_1(x) = \frac{|x|^2}{2}$ and notice that  $ \big( \frac{\gamma_\beta}{\gamma} \big)^\frac1p \le C_{p,\beta} \frac{\gamma_\beta}{\gamma} $, then we conclude that 
\begin{align*}
\int_{\mathbb{R}^n} |\big( \frac{v_t}{\gamma} \big)^\frac1p(y) \Delta W_2(y) | \, d\gamma(y) \le
C(t,\varepsilon) \int_{\mathbb{R}^n} ( 1  + |y|^2)  \frac{\gamma_\beta}{\gamma} \, d\gamma(y)<\infty
\end{align*} 
as desired.
\if0
We next turn to the case $0<\beta\le 1$.  From the assumption \eqref{e:SymmAssump} and \eqref{e:ConcavePresereve}, we know that $v_t$ is $-\frac1\beta$-log-concave for all $t>0$.  In addition we have the lower bound \eqref{e:NatuLogConv} regardless of the initial data. Hence,  
$$
-\frac{1}{1-e^{-2t}}\frac1\beta {\rm id}\le \nabla^2 \log\, v_t \le -\frac1\beta{\rm id}
$$
for all $t>0$.  This shows 
$$
|W_2(x)|\le C_{t,\beta} |x|^2,\; |\nabla W_2(x)|\le C_{t,\beta}|x|,\; |\Delta W_2(x)| \le C_{t,\beta}
$$
and hence the matters are reduced to quantities of $W_1$. However, these cases can be handled in a similar way as in the case $\beta\ge1$. 
\fi
\end{proof}

\section{Hypercontractivity and the LSI : Proof of Theorems \ref{t:MainHyper} and \ref{t:MainLSI}, and Corollaries \ref{c:FPversion} and \ref{Cor:Matrix}}\label{S3}

\subsection{Flow monotonicity: the key result}
Recall that our strategy is to first prove Theorem \ref{t:MainHyper} and then deduce Theorem \ref{t:MainLSI} from it. For Theorem \ref{t:MainHyper}, we establish the monotonicity of a certain functional under Fokker--Planck flow.  
Fix any $\beta>0$ and $v$ satisfying the assumptions in Theorem \ref{t:MainHyper}, and let $v_t$ be the $\beta$-Fokker--Planck solution with initial data $v$ as in \eqref{e:BetaFPv0}. 
We then introduce a functional by 
\begin{equation}\label{e:vtilde}
Q(t):= \int_{\mathbb{R}^n} \widetilde{v}_t\, dx,\;\;\; \widetilde{v}_t:= \gamma P_s \big[ \big( \frac{v_t}{\gamma}\big)^\frac1p \big]^q ,\;\;\; t>0. 
\end{equation}
It will suffice to establish that $Q(t)$ is non-decreasing on $(0,\infty)$ and in order to achieve this we seek to obtain an expression for $\partial_t \widetilde{v}_t$ involving either non-negative terms or terms which vanish upon integrating with respect to Lebesgue measure on $\mathbb{R}^n$ (such as $\mathcal{L}_{\beta_s}^* \widetilde{v}_t$). Establishing such an expression is the main component of the proof of Theorem \ref{t:MainHyper} and follows from an extremely careful investigation of the interaction between two flows,  one is the Ornstein--Uhlenbeck flow $P_s$ and the other is the $\beta$-Fokker--Planck flow $v_t$. The key result underlying this phenomenon is the following.
\begin{proposition}\label{Prop:Uhmmm2}
Let $\beta>0$, $s>0$, $1<p<q<\infty$ satisfy $\frac{q-1}{p-1} = e^{2s}$.  
Suppose the twice differentiable $v:\mathbb{R}^n\to(0,\infty)$ belongs to $L^2(\gamma_\beta^{-1})$ and satisfies \eqref{e:Tech25May}. Also, when $\beta> 1$ let $v$ be $\beta$-semi-log-subharmonic, and when $\beta<1$ let $v$ be $\beta$-semi-log-concave.
If 
$$
\partial_{t} v_t = \mathcal{L}_\beta^* v_t,\;\;\; v_0 = v,
$$
then we have that 
\begin{align*}
&\big( \frac{\beta_s}{\beta} \big)^2	\frac1{\beta_s} \frac{pp'}{q} \frac{ \partial_t \widetilde{v}_t - \mathcal{L}_{\beta_s}^* \widetilde{v}_t }{ \widetilde{v}_t^{1-\frac{2}{q}}\gamma^\frac{2}q } \\
& \ge
P_sh_t P_s\big[ h_t \Gamma( \log\, \frac{v_t}{\gamma_\beta} ) \big] - \big| P_s\big[ h_t \nabla ( \log\, \frac{v_t}{\gamma_\beta} ) \big]\big|^2 \\
&\quad  + 
\big( \frac{\beta_s}{\beta} \big)^2	\frac1{\beta_s} \frac{pp'}{q} (I+N) 
+\big(1 + \frac{\beta_s}{\beta} \big)(1-\frac{1}{\beta})
	R( h_t, h_t  \nabla \big(\log\, \frac{v_t}{\gamma}\big) )\\
	&\quad  -
	(1-\frac1{\beta})^2 R( h_t, h_t x   ),
\end{align*}
where $h_t:= \big( \frac{v_t}{\gamma}\big)^\frac1p$, and $\beta_s>0$ and $\widetilde{v}_t$ are defined by \eqref{e:beta2} and \eqref{e:vtilde}, respectively. Here, 
\begin{align*}
I & :=
q(\beta-\beta_s) P_sh_t P_s \big[ h_t \Delta \big( \log\,  \frac{v_t}{\gamma} \big) \big] 
+\frac qp (\beta-\beta_s) P_sh_t P_s \big[ h_t \Gamma( \log\,  \frac{v_t}{\gamma} ) \big]\\
& \quad  + 
\frac{q}{p}( \beta_s - 2\beta +1 )  P_sh_t P_s \big[ h_t x \cdot \nabla \big( \log\,  \frac{v_t}{\gamma} \big) \big]\\
&\quad  +
\frac qp (1-\beta) P_sh_t  P_s \big[  h_t( n - |x|^2 ) \big],\\
N & := 
q( \beta_s - 1)  P_sh_t x \cdot \nabla P_sh_t + (\beta_s - 1) (n - |x|^2)  \big(P_sh_t\big)^2,
\end{align*}
and for $f$ and $\mathbf{g} = (g_j)_{j=1}^n$, 
$$
R(f,\mathbf{g})
:= 
P_s f P_s\big[ x\cdot \mathbf{g} \big] - P_s\big[ f x \big] \cdot P_s\mathbf{g}
		-
		(1-e^{-2s})
		\big(
		P_s f P_s\big[ {\rm div}\, \mathbf{g} \big] - P_s\big[ \nabla f\big] \cdot P_s\mathbf{g}
		\big).
$$
\end{proposition}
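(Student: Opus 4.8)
The plan is to prove Proposition \ref{Prop:Uhmmm2} by a direct (if lengthy) computation: differentiate $\widetilde{v}_t=\gamma(P_sh_t)^q$ in $t$ using the Fokker--Planck equation, compute $\mathcal{L}_{\beta_s}^*\widetilde{v}_t$ from its definition, and match term by term. The starting point is the chain rule
$$\partial_t\widetilde{v}_t=q\,\gamma\,(P_sh_t)^{q-1}P_s[\partial_th_t],\qquad \partial_th_t=\frac1p\,h_t\,\frac{\partial_tv_t}{v_t},$$
together with the observation (already exploited in Section \ref{S2}) that $u_t:=v_t/\gamma_\beta$ solves the gaussian heat equation $\partial_tu_t=\mathcal{L}_\beta u_t$, which gives
$$\frac{\partial_tv_t}{v_t}=\frac{\mathcal{L}_\beta u_t}{u_t}=\beta\bigl(\Delta\log u_t+\Gamma(\log u_t)\bigr)-x\cdot\nabla\log u_t .$$
One then rewrites the right-hand side in terms of $W_2=\log\frac{v_t}{\gamma}$ via $\nabla\log u_t=\nabla W_2-(1-\frac1\beta)x$, $\Delta\log u_t=\Delta W_2+n(\frac1\beta-1)$, and $\Gamma(\log u_t)=\Gamma W_2-2(1-\frac1\beta)x\cdot\nabla W_2+(1-\frac1\beta)^2|x|^2$; this change of base point from $\gamma_\beta$ to $\gamma$ is exactly what produces the coefficients $(1-\frac1\beta)$, $(1-\frac1\beta)^2$, $\beta-\beta_s$, $\beta_s-1$ appearing in $I$, $N$ and in front of the $R$-terms. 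Symmetrically, $\mathcal{L}_{\beta_s}^*\widetilde{v}_t=\beta_s\Delta\widetilde{v}_t+x\cdot\nabla\widetilde{v}_t+n\widetilde{v}_t$ is expanded using $\nabla\gamma=-x\gamma$, the commutation relation $\nabla P_s=e^{-s}P_s\nabla$, and the dual Mehler identity $P_s[x_jf]=e^{-s}x_jP_sf+(1-e^{-2s})P_s[\partial_jf]$ (gaussian integration by parts in the Mehler kernel), which is precisely the identity underpinning the definition of $R(f,\mathbf{g})$. Indeed, applying that identity once more shows $R\equiv0$ modulo the interchanges justified by Lemma \ref{l:Technical1}, but it is convenient to retain this grouping for the later integration in $x$.

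The second step is pure bookkeeping: after substitution, the resulting terms split into four families --- (a) those that assemble, using $\frac1p+\frac1{p'}=1$ and the Nelson relation $\frac{q-1}{p-1}=e^{2s}$, into the Cauchy--Schwarz expression $P_sh_tP_s[h_t\Gamma(\log\frac{v_t}{\gamma_\beta})]-|P_s[h_t\nabla(\log\frac{v_t}{\gamma_\beta})]|^2$ (which is $\ge0$ unconditionally, being $\mathbb{E}[h_t]\,\mathbb{E}[h_t|\nabla w|^2]-|\mathbb{E}[h_t\nabla w]|^2$ for the Mehler average, with $h_t\ge0$); (b) the lower-order remainders collected into $I$ and $N$; (c) the commutator remainders $R(h_t,h_t\nabla\log\frac{v_t}{\gamma})$ and $R(h_t,h_tx)$ coming from comparing $P_s[x\cdot(\,\cdot\,)]$ with $x\cdot P_s(\,\cdot\,)$; and (d) the terms that recombine exactly into $\mathcal{L}_{\beta_s}^*\widetilde{v}_t$ --- this last matching is what \emph{forces} the definition \eqref{e:beta2} of $\beta_s$, since $\beta_s$ is the unique diffusion speed for which $\gamma_{\beta_s}$ is, up to normalisation, the function $\gamma\,P_s[(\gamma_\beta/\gamma)^{1/p}]^q$, i.e.\ the equilibrium state of $\mathcal{L}_{\beta_s}^*$ attached to the extremal input $v=\gamma_\beta$ (cf.\ \eqref{e:DeficitHC}).

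The one and only place where an inequality rather than an equality enters is a single term of the form $c(s)\,P_sh_t\,P_s[\,h_t\,\Delta\log\tfrac{v_t}{\gamma_\beta}\,]$ with $c(s)$ of a definite sign. When $\beta>1$, $\beta$-semi-log-subharmonicity gives $\Delta\log\frac{v_t}{\gamma_\beta}=\Delta\log v_t+\frac n\beta\ge0$, and by Lemma \ref{l:LogPreserve}\,\eqref{e:PreSubHar} this property is preserved along the $\beta$-Fokker--Planck flow; since $h_t\ge0$ one may discard this term to obtain ``$\ge$''. When $\beta<1$ the coefficient $c(s)$ reverses sign, and one instead uses that it is $\beta$-semi-log-\emph{concavity} --- not semi-log-superharmonicity --- which is preserved under the flow, by \eqref{e:ConcavePresereve}; this yields $\nabla^2\log\frac{v_t}{\gamma_\beta}\le0$, hence $\Delta\log\frac{v_t}{\gamma_\beta}\le0$, so the same term is again dropped in the favourable direction. (It is precisely because semi-log-superharmonicity is not preserved by the flow, see the remark after Lemma \ref{l:LogPreserve}, that the stronger semi-log-concavity hypothesis is needed here rather than merely the scalar condition.)

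I expect the main obstacle to be purely organisational: the computation produces on the order of a few dozen terms coming from three nested layers --- the $t$-derivative through the Fokker--Planck equation, the $\beta_s$-weighted Laplacian, and the action of $P_s$ --- and the delicate point is to regroup them so that the positivity in (a) becomes visible and the residual coefficients land \emph{exactly} in the stated forms of $I$, $N$ and $R$, with nothing left over beyond the discarded sign-definite term. All the gaussian integration-by-parts manipulations inside the Mehler kernel (needed for $\nabla P_s=e^{-s}P_s\nabla$, for the dual identity above, and hence for the very appearance of $R$) are legitimate pointwise for each fixed $t>0$ by the regularity and decay guaranteed in Lemma \ref{l:Technical1}, so no analytic input beyond the preliminaries of Section \ref{S2} is required; the content of the proposition is the algebraic identity plus the single use of the (flow-preserved) log-subharmonicity/concavity hypothesis.
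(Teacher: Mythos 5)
Your outline is correct and follows the paper's route: the paper carries out precisely this computation, only factoring it through two intermediate lemmas --- Lemma~\ref{l:Step1}, which expresses $\frac{\partial_t\widetilde{v}_t-\mathcal{L}_{\beta_s}^*\widetilde{v}_t}{\widetilde{v}_t^{1-2/q}\gamma^{2/q}}$ in terms of $\log\frac{v_t}{\gamma}$, and Lemma~\ref{l:Step2}, which re-bases to $\log\frac{v_t}{\gamma_\beta}$ and thereby generates the $R$-terms --- before applying the single sign-definite bound (with coefficient $c(s)=p(1-\tfrac{\beta_s}{\beta})$, positive for $\beta>1$ and negative for $\beta<1$) via the flow-preservation statements in Lemma~\ref{l:LogPreserve}. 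One small correction: the $R$-terms are retained in the statement not ``for the later integration in $x$'' (they vanish identically in the gaussian case, as you observe), but to facilitate the extension to general measures in Section~\ref{subsection:hypergeneral}, where the explicit Mehler formulae are unavailable.
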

Two remarks are in order.  Firstly the additional assumption \eqref{e:Tech25May} we imposed in the statement of Proposition \ref{Prop:Uhmmm2} is a technical one and it is harmless for proving Theorem \ref{t:MainHyper}.  Secondly, we recommend the reader does not dwell on the complicated terms $ I,N,R( h_t, h_t  \nabla \big(\log\, \frac{v_t}{\gamma}\big) ), R( h_t, h_t x   )$. In fact, one can see that all of them become identically zero; see the forthcoming discussion in Section \ref{S3.4}.  Nevertheless, we keep them as they are in order facilitate our forthcoming investigation of hypercontractivity on more general measure spaces rather than gaussian space; see Section \ref{subsection:hypergeneral}. 

To show Proposition \ref{Prop:Uhmmm2}, we use the following two lemmas. 
\begin{lemma}\label{l:Step1}
Under the assumptions in Proposition \ref{Prop:Uhmmm2}, we have 
\begin{align*}
&\frac{ \partial_t \widetilde{v}_t - \mathcal{L}_{\beta_s}^* \widetilde{v}_t }{ \widetilde{v}_t^{1- \frac{2}{q}}\gamma^\frac2q }
=
\frac{q}{pp'} p( \beta_s - \beta ) P_sh_t P_s\big[ h_t \Delta \log\, \frac{v_t}{\gamma} \big]
+ 
\frac{q}{pp'} \beta_s I_1 + I + N,
\end{align*}
where $h_t: = \big(\frac{v_t}{\gamma}\big)^\frac1p$ and 
\begin{equation}\label{e:I1}
I_1:= P_sh_t P_s\big[ h_t \Gamma\log\, \frac{v_t}{\gamma} \big] - \big| P_s\big[ h_t \nabla \log \frac{v_t}{\gamma}\big]\big|^2. 
\end{equation}
\end{lemma}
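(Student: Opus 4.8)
\textbf{Plan of proof for Lemma \ref{l:Step1}.} The strategy is a direct, if somewhat lengthy, computation: differentiate the functional $\widetilde v_t = \gamma\, P_s\big[h_t\big]^q$ in $t$, use the equation for $v_t$ to rewrite $\partial_t h_t$ in terms of spatial operators applied to $h_t$ and to $\log\frac{v_t}{\gamma}$, and then organise the resulting expression so that the ``dissipative'' part $\mathcal{L}^*_{\beta_s}\widetilde v_t$ is subtracted off cleanly, leaving the Bakry--\'Emery-type bilinear quantity $I_1$ together with the error terms $I$ and $N$. Throughout I will write $h = h_t = \big(\frac{v_t}{\gamma}\big)^{1/p}$, $u = P_s h$, so that $\widetilde v_t = \gamma\, u^q$.

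\textbf{Step 1: differentiate in $t$.} Since $v_t$ solves $\partial_t v_t = \mathcal L_\beta^* v_t$ and $u_t := \frac{v_t}{\gamma_\beta}$ solves the gaussian heat equation $\partial_t u_t = \mathcal L_\beta u_t$, a short computation gives a formula for $\partial_t \log\frac{v_t}{\gamma}$, and hence for $\partial_t h = \frac1p h\, \partial_t\log\frac{v_t}{\gamma}$, purely in terms of $\Delta$, $\nabla$ and the potential $|x|^2$ applied to $\log\frac{v_t}{\gamma}$ and to $h$. Then $\partial_t \widetilde v_t = q\gamma\, u^{q-1} P_s[\partial_t h]$, and since $P_s$ is a fixed linear operator we may commute $P_s$ with $\partial_t$. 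This produces $\partial_t\widetilde v_t$ as $q\gamma u^{q-1}$ times $P_s$ applied to an explicit combination of $h\,\Delta\log\frac{v_t}{\gamma}$, $h\,\Gamma\log\frac{v_t}{\gamma}$, $h\, x\cdot\nabla\log\frac{v_t}{\gamma}$ and $h(n-|x|^2)$. The regularity and decay supplied by Lemma \ref{l:Technical1} (using that $v\in L^2(\gamma_\beta^{-1})$ and $\inf\frac{v}{\gamma_\beta}>0$) justify differentiation under the integral sign and all integrations by parts that follow.

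\textbf{Step 2: compute $\mathcal L^*_{\beta_s}\widetilde v_t$ in the same variables.} Using $\widetilde v_t = \gamma\, u^q$ and the intertwining identity for $P_s$ with $\mathcal L_1$ (namely $\partial_s P_s h = \mathcal L_1 P_s h$, equivalently the commutation of $P_s$ with the OU generator up to the scaling built into Nelson's time), expand $\mathcal L^*_{\beta_s}(\gamma u^q) = \beta_s\Delta(\gamma u^q) + x\cdot\nabla(\gamma u^q) + n\gamma u^q$ by the product rule. The Gaussian weight $\gamma$ turns the first-order and zeroth-order parts into expressions involving $x\cdot\nabla u$ and $|x|^2$, while the $\beta_s\Delta$ term produces $u^{q-2}|\nabla u|^2$ and $u^{q-1}\Delta u$ contributions; the key point is to recognise $|\nabla u|^2 = |\nabla P_s h|^2 = |P_s[\nabla h] e^{-s}|^2$ and, crucially, to use the Cauchy--Schwarz-type identity that converts $P_s[h\,\Gamma\log\frac{v_t}{\gamma}]\,P_s h - |P_s[h\nabla\log\frac{v_t}{\gamma}]|^2$ into the quantity $I_1$ of \eqref{e:I1}. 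Dividing by $\widetilde v_t^{1-2/q}\gamma^{2/q} = \gamma^{2/q}(\gamma u^q)^{1-2/q} = \gamma\, u^{q-2}$ normalises both sides to have the same homogeneity in $u$.

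\textbf{Step 3: subtract and collect.} Form $\partial_t\widetilde v_t - \mathcal L^*_{\beta_s}\widetilde v_t$, divide by $\widetilde v_t^{1-2/q}\gamma^{2/q}$, and bookkeep the coefficients. The leading term is the one proportional to $P_s h\, P_s[h\,\Delta\log\frac{v_t}{\gamma}]$ with coefficient $\frac{q}{pp'}\,p(\beta_s-\beta)$; the $I_1$ term appears with coefficient $\frac{q}{pp'}\beta_s$; everything that is left over is, by definition, packaged into $I$ (the terms carrying an explicit $P_s h\cdot P_s[\cdots]$ structure with $\beta-\beta_s$, $\beta_s-2\beta+1$, or $1-\beta$ prefactors) and $N$ (the terms that are local in $P_s h$, i.e.\ involve $P_s h\, x\cdot\nabla P_s h$ and $(n-|x|^2)(P_s h)^2$, with prefactor $\beta_s-1$). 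This is purely a matter of matching; the definitions of $I$ and $N$ in Proposition \ref{Prop:Uhmmm2} are reverse-engineered precisely so that the identity closes.

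\textbf{Main obstacle.} The genuine difficulty is not conceptual but organisational: there are many first- and second-order terms arising from the product rule applied to $\gamma u^q$ and from $\partial_t\log\frac{v_t}{\gamma}$, and the relation $\frac{q-1}{p-1}=e^{2s}$ (equivalently the definition $\beta_s = 1+(\beta-1)\frac{q}{p}e^{-2s}$) must be used repeatedly to make cancellations happen and to produce exactly the stated coefficients $\big(\frac{\beta_s}{\beta}\big)^2\frac{1}{\beta_s}\frac{pp'}{q}$ in the final form. The one step requiring genuine care is the passage from $\beta_s\Delta(\gamma u^q)$ to the $I_1$ structure: one must isolate the ``square'' $|P_s[h\nabla\log\frac{v_t}{\gamma}]|^2$ correctly against $P_s h\, P_s[h\,\Gamma\log\frac{v_t}{\gamma}]$, which uses that $\nabla u = e^{-s}P_s[\nabla h]$ and $\nabla h = \frac1p h\nabla\log\frac{v_t}{\gamma}$, together with the chain rule for $\Delta(h^?)$. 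I expect the cleanest route is to first record the three auxiliary identities ($\partial_t\log\frac{v_t}{\gamma}$ in spatial terms; $\nabla u$ and $\Delta u$ via $P_s$ and the heat-flow commutation; and $\Delta(\gamma u^q)/(\gamma u^{q-2})$ expanded) as displayed equations, and then combine them mechanically.
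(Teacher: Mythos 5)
Your proposal is correct in outline and follows essentially the same route as the paper's proof: differentiate $\widetilde v_t = \gamma\,(P_sh_t)^q$ in $t$ (commuting $P_s$ with $\partial_t$, justified by Lemma \ref{l:Technical1}), expand $\mathcal{L}^*_{\beta_s}\widetilde v_t$ via the diffusion structure of the generator, subtract, normalise by $\widetilde v_t^{1-2/q}\gamma^{2/q}$, and match coefficients; the residual is packaged into $I$ and $N$ by design. One imprecision worth flagging in Step 2: $\partial_s P_s h = \mathcal{L}_1 P_s h$ is not an intertwining identity but the semigroup equation, and it is not what is needed; the paper instead uses the commutation $\mathcal{L}_\beta (P_s f) = \beta P_s(\mathcal{L}_1 f) + (\beta-1)x\cdot\nabla(P_sf)$ together with $\nabla P_s = e^{-s}P_s\nabla$ (which you do cite) to turn $\Gamma(P_sh_t)$ into $|P_s[h_t\nabla\log\frac{v_t}{\gamma}]|^2$ and expose the $I_1$ structure, and it records in advance the two auxiliary identities $\mathcal{L}_\beta^*(\gamma^{1/q}f) = \gamma^{1/q}\mathcal{L}_\beta f + (1-\tfrac{\beta}{q})\gamma^{1/q}T_qf$ and $\mathcal{L}_\beta^*(f^\mu) = \mu f^{\mu-1}\mathcal{L}_\beta^* f - (\mu-1)n f^\mu + \beta\mu(\mu-1)f^{\mu-2}\Gamma(f)$ to keep the product-rule expansion of $\mathcal{L}_{\beta_s}^*(\gamma u^q)$ tractable; with those in hand your Step 3 is precisely the paper's calculation.
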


\begin{lemma}\label{l:Step2}
Under the assumptions in Proposition \ref{Prop:Uhmmm2}, we have 
\begin{align*}
&
P_sh_t P_s\big[ h_t \Gamma\log\, \frac{v_t}{\gamma_\beta} \big] - \big| P_s\big[ h_t \nabla \log \frac{v_t}{\gamma_\beta}\big]\big|^2\\
& = 
p( 1-\frac1\beta )( 1-\frac{\beta_s}{\beta} )P_s h_t P_s\big[ n h_t  \big]
-
p( 1 - \frac{\beta_s}{\beta} )( 1 + \frac{\beta_s}{\beta} ) P_sh_t P_s\big[ h_t \Delta \log\, \frac{v_t}{\gamma} \big] \\
& 
\quad  +\big( \frac{\beta_s}{\beta} \big)^2 I_1
    -
	\big(1 + \frac{\beta_s}{\beta} \big)(1-\frac{1}{\beta})
	R( h_t, h_t \nabla \big(\log\, \frac{v_t}{\gamma}\big) )
	+
	(1-\frac1{\beta})^2 R( h_t, h_t x ), 
\end{align*}
where $h_t: = \big(\frac{v_t}{\gamma}\big)^\frac1p$ and $I_1$ is given by \eqref{e:I1}. 
\end{lemma}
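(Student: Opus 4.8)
The plan is to prove Lemma~\ref{l:Step2} as a pointwise identity; the hypotheses of Proposition~\ref{Prop:Uhmmm2} enter only through Lemma~\ref{l:Technical1}, which guarantees that all the integrals defining the $P_s$-expressions below are finite and may be differentiated and multiplied freely. Throughout I write $g:=P_sh_t$, $A:=\nabla\log\frac{v_t}{\gamma}$ and $D:=P_sh_t\,P_s\big[h_t\Delta\log\frac{v_t}{\gamma}\big]$. Four elementary ingredients are needed. First, since $h_t=(\frac{v_t}{\gamma})^{1/p}$ we have the logarithmic-derivative identities $h_tA=p\nabla h_t$ and $h_t\Delta\log\frac{v_t}{\gamma}=p\Delta h_t-\frac1p h_t|A|^2$. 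Second, the Ornstein--Uhlenbeck semigroup intertwines with differentiation: $P_s[\nabla\phi]=e^{s}\nabla P_s\phi$, and hence $P_s[\Delta\phi]=e^{2s}\Delta P_s\phi$. Third, since $\frac{\gamma}{\gamma_\beta}$ is Gaussian, $\log\frac{\gamma}{\gamma_\beta}=\frac n2\log\beta-\frac12(1-\frac1\beta)|x|^2$, so $\nabla\log\frac{v_t}{\gamma_\beta}=A-(1-\frac1\beta)x$ and $\Gamma\log\frac{v_t}{\gamma_\beta}=|A|^2-2(1-\frac1\beta)\,x\cdot A+(1-\frac1\beta)^2|x|^2$. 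Fourth, from $\frac{q-1}{p-1}=e^{2s}$ one checks the bookkeeping identities $1-e^{-2s}=p\big(1-\frac qp e^{-2s}\big)$ and $1-\frac{\beta_s}{\beta}=(1-\frac1\beta)\big(1-\frac qp e^{-2s}\big)$, with $\beta_s$ as in \eqref{e:beta2}.

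Inserting the third ingredient into the left-hand side of Lemma~\ref{l:Step2} and abbreviating
\begin{align*}
B&:=P_sh_t\,P_s[h_t\,x\cdot A]-P_s[h_tx]\cdot P_s[h_tA],\\
C&:=P_sh_t\,P_s[h_t|x|^2]-|P_s[h_tx]|^2,
\end{align*}
one obtains, by a direct expansion,
\begin{equation*}
P_sh_tP_s\big[h_t\Gamma\log\tfrac{v_t}{\gamma_\beta}\big]-\big|P_s\big[h_t\nabla\log\tfrac{v_t}{\gamma_\beta}\big]\big|^2
= I_1-2\big(1-\tfrac1\beta\big)B+\big(1-\tfrac1\beta\big)^2C,
\end{equation*}
where the leading term is exactly $I_1$ of \eqref{e:I1} because $A=\nabla\log\frac{v_t}{\gamma}$ and $|A|^2=\Gamma\log\frac{v_t}{\gamma}$. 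So it remains to re-express $B$ and $C$ through the quantities on the right-hand side of the Lemma.

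This I would do with three sub-identities, all flowing from $h_tA=p\nabla h_t$ and the intertwining relations. First, $I_1=p^2e^{2s}g^2\Delta\log g-pD$, using $g\Delta g-|\nabla g|^2=g^2\Delta\log g$. Second, unwinding the definition of $R$ with $\mathbf{g}=h_tA=p\nabla h_t$ (so ${\rm div}\,\mathbf{g}=p\Delta h_t$ and $x\cdot\mathbf{g}=p\,x\cdot\nabla h_t$), and inserting the first sub-identity together with $1-e^{-2s}=p(1-\frac qp e^{-2s})$, gives $R(h_t,h_tA)=B-\big(1-\frac qp e^{-2s}\big)\big(I_1+pD\big)$. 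Third, unwinding the definition of $R$ with $\mathbf{g}=h_tx$ (so ${\rm div}\,\mathbf{g}=x\cdot\nabla h_t+nh_t$) and again using $h_tA=p\nabla h_t$ to recognise $B$ inside the resulting expression, gives $C-R(h_t,h_tx)=\big(1-\frac qp e^{-2s}\big)\big(B+p\,P_sh_tP_s[nh_t]\big)$. Then I would solve the last two identities for $B$ and $C$, substitute into the expansion of the previous paragraph, and use $1-\frac{\beta_s}{\beta}=(1-\frac1\beta)(1-\frac qp e^{-2s})$ to convert all coefficients into functions of $\frac{\beta_s}{\beta}$; after a short computation the terms proportional to $P_sh_tP_s[nh_t]$ and to $D$ cancel, the $I_1$-coefficient collapses (writing $r=\frac{\beta_s}{\beta}$, it equals $r^2+2(1-r)-(1-r)^2=1$, consistent with the expansion), and the whole expression reorganises into exactly the right-hand side of Lemma~\ref{l:Step2}.

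I expect the genuinely delicate part to be purely computational rather than conceptual: keeping track of the many $e^{\pm 2s}$ and $\frac qp e^{-2s}$ factors so that they recombine into precisely the stated $\beta_s$-dependent coefficients, with no residual $\Delta g$, $x\cdot\nabla g$ or $|x|^2g$ terms left over. The structural reason this is possible is the Nelson-time relation $1-e^{-2s}=p(1-\frac qp e^{-2s})$ noted above, which forces all the ``pure $g$'' derivative terms to cancel; everything else has been packaged, via the very definition of the functionals $R$ appearing in Proposition~\ref{Prop:Uhmmm2}, so that the remaining terms assemble into the claimed form.
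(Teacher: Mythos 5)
Your plan is essentially correct and runs along the same lines as the paper's proof. The paper also expands the left-hand side into $I_1 + I_2 + I_3$ (in your notation $I_1$, $(1-\frac1\beta)^2 C$ and $-2(1-\frac1\beta)B$ respectively), then rewrites $I_2$ and $I_3$ by pulling out $R(h_t,h_tx)$ and $R(h_t,h_t\nabla\log\frac{v_t}{\gamma})$ from their definitions, and finally regroups coefficients via $(1-\frac1\beta)\frac{1-e^{-2s}}{p}=1-\frac{\beta_s}{\beta}$, which is precisely your $\kappa$-identity. Your sub-identities~2 and~3 are correct and equivalent to the identities the paper uses for $B$ and $C$. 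Your sub-identity~1 ($I_1=p^2e^{2s}g^2\Delta\log g-pD$) is also correct, but it is not actually needed: the paper keeps $I_1$ and $D$ abstract and never invokes the intertwining $P_s[\nabla\phi]=e^s\nabla P_s\phi$ inside this lemma, so you can drop it and shorten the argument.

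One claim in your description is simply wrong and would mislead you if you tried to use it as a check: the terms proportional to $P_sh_tP_s[nh_t]$ and to $D=P_sh_tP_s\big[h_t\Delta\log\frac{v_t}{\gamma}\big]$ do \emph{not} cancel. They survive the collection of coefficients and appear in the final formula with nonzero coefficients, namely $p(1-\frac1\beta)(1-\frac{\beta_s}{\beta})$ for $P_sh_tP_s[nh_t]$ and $-p(1-\frac{\beta_s}{\beta})(1+\frac{\beta_s}{\beta})$ for $D$; these are exactly the first two terms on the right-hand side of Lemma~\ref{l:Step2}. Concretely, once you substitute $B=R(h_t,h_tA)+\kappa(I_1+pD)$ and $C=R(h_t,h_tx)+\kappa(B+pnP_sh_tP_s[h_t])$ into $I_1-2(1-\frac1\beta)B+(1-\frac1\beta)^2C$ and set $r=\frac{\beta_s}{\beta}=1-(1-\frac1\beta)\kappa$, the coefficient of $I_1+pD$ becomes $-2(1-r)+(1-r)^2$, which together with the free $I_1$ gives $I_1\big(1-2(1-r)+(1-r)^2\big)+pD\big(-2(1-r)+(1-r)^2\big)=r^2I_1-p(1-r)(1+r)D$, and the single $P_sh_tP_s[nh_t]$ contribution contributes $(1-\frac1\beta)^2\kappa\cdot p\,n=p(1-\frac1\beta)(1-r)$ as its coefficient. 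Nothing vanishes; everything recombines. The ``pure $g$'' terms you mention likewise do not vanish on their own: they are repackaged into $I_1$ and $D$ via $I_1+pD=p^2e^{2s}g^2\Delta\log g$. With this corrected, the computation indeed closes to give exactly the stated right-hand side.
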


Let us complete the proof of Proposition \ref{Prop:Uhmmm2} by assuming the validity of Lemmas \ref{l:Step1} and \ref{l:Step2} for a moment. 

\begin{proof}[Proof of Proposition \ref{Prop:Uhmmm2}]
With Lemma \ref{l:Step2} in mind,  we  focus on the term
$$
p( 1-\frac1\beta )( 1-\frac{\beta_s}{\beta} )P_s h_t P_s \big[ nh_t\big] 
= 
pP_s h_t \frac1\beta P_s\big[ h_t ( \beta- \beta_s ) ( 1-\frac1\beta )n \big]. 
$$
We appeal to Lemma \ref{l:LogPreserve} and our assumptions on $v$ to see 
$$
\begin{cases}
(1 - \frac1\beta ) n  \ge \Delta \log\, \frac{v_t}{\gamma}\;\;\; &{\rm if}\;\;\; 0<\beta<1\\
(1 - \frac1\beta ) n  \le \Delta \log\, \frac{v_t}{\gamma}\;\;\; &{\rm if}\;\;\; 1<\beta,
\end{cases}
$$
for all $t>0$. 
For the sign of the coefficient, from the definition of $\beta_s$ we know that 
$$
\beta - \beta_s = \beta - 1 - (\beta - 1) \frac{q}{p}e^{-2s} = \frac1p( \beta - 1 )( 1 - e^{-2s}  )
$$
and hence
$$
\begin{cases}
\beta - \beta_s\le 0\;\;\; &{\rm if}\;\;\; 0< \beta <1\\
\beta - \beta_s\ge0\;\;\; &{\rm if}\;\;\; 1< \beta.  
\end{cases}
$$
Overall, we see that for all $t>0$
$$
( \beta- \beta_s ) ( 1-\frac1\beta ) n \le (\beta - \beta_s) \Delta \log\, \frac{v_t}{\gamma},
$$
regardless whether $\beta$ is less than or greater than one. 
Therefore from Lemma \ref{l:Step2} we obtain that for all $t,\beta>0$, 
\begin{align*}
&
P_sh_t P_s\big[ h_t \Gamma\log\, \frac{v_t}{\gamma_\beta} \big] - \big| P_s\big[ h_t \nabla \log \frac{v_t}{\gamma_\beta}\big] \big|^2\\
& \le 
p( 1-\frac{\beta_s}{\beta} )P_s h_t P_s\big[ h_t \Delta \log\, \frac{v_t}{\gamma} \big]
-
p( 1 - \frac{\beta_s}{\beta} )( 1 + \frac{\beta_s}{\beta} ) P_sh P_s\big[ h_t \Delta \log\, \frac{v_t}{\gamma} \big] 
+\big( \frac{\beta_s}{\beta} \big)^2 I_1\\
& \quad 
-
	\big(1 + \frac{\beta_s}{\beta} \big)(1-\frac{1}{\beta})
	R( h_t, h_t \nabla \big(\log\, \frac{v_t}{\gamma}\big) )
	+
	(1-\frac1{\beta})^2 R( h_t, h_t x )\\
& = 
\frac{\beta_s}{\beta^2}\frac{pp'}{q} 
\bigg(
\frac{q}{pp'}p( \beta_s - \beta )  P_s h_t P_s\big[ h_t \Delta \log\, \frac{v_t}{\gamma} \big]
+\frac{q}{pp'} \beta_s I_1
\bigg)\\
& \quad  -
	\big(1 + \frac{\beta_s}{\beta} \big)(1-\frac{1}{\beta})
	R( h_t, h_t \nabla \big(\log\, \frac{v_t}{\gamma}\big) )
	+
	(1-\frac1{\beta})^2 R( h_t, h_t x ). 
\end{align*}
By applying Lemma \ref{l:Step1}, we conclude the proof. 
\end{proof}

\if0
To show Theorem \ref{t:Closure}, we use following two lemmas. 
\begin{lemma}\label{l:Step1}
Under assumptions in Theorem \ref{t:Closure}, we have 
\begin{align*}
&\frac{ \partial_t \widetilde{v}_t - \mathcal{L}_{\beta_s}^* \widetilde{v}_t }{ \widetilde{v}_t^{1- \frac{2}{q}}\gamma^\frac2q }
=
\frac{q}{pp'} p( \beta_s - \beta ) P_sh_t P_s\big[ h_t \Delta \log\, \frac{v_t}{\gamma} \big]
+ 
\frac{q}{pp'} \beta_s I_1,
\end{align*}
where $h_t: = \big(\frac{v_t}{\gamma}\big)^\frac1p$ and 
\begin{equation}\label{e:I1}
I_1:= P_sh_t P_s\big[ h_t \Gamma\log\, \frac{v_t}{\gamma} \big] - \big| P_s\big[ h_t \nabla \log \frac{v_t}{\gamma}\big]\big|^2. 
\end{equation}
\end{lemma}

\begin{lemma}\label{l:Step2}
Under assumptions in Theorem \ref{t:Closure}, we have 
\begin{align*}
&
P_sh_t P_s\big[ h_t \Gamma\log\, \frac{v_t}{\gamma_\beta} \big] - \big| P_s\big[ h_t \nabla \log \frac{v_t}{\gamma_\beta}\big]\big|^2\\
=& 
p( 1-\frac1\beta )( 1-\frac{\beta_s}{\beta} )P_s h_t P_s\big[ n h_t  \big]
-
p( 1 - \frac{\beta_s}{\beta} )( 1 + \frac{\beta_s}{\beta} ) P_sh_t P_s\big[ h_t \Delta \log\, \frac{v_t}{\gamma} \big] \\
& 
+\big( \frac{\beta_s}{\beta} \big)^2 I_1,
\end{align*}
where $h_t: = \big(\frac{v_t}{\gamma}\big)^\frac1p$ and $I_1$ is \eqref{e:I1}. 
\end{lemma}

Let us complete the proof of Theorem \ref{t:Closure} by assuming the validity of Lemmas \ref{l:Step1} and \ref{l:Step2} for a moment. 
\begin{proof}[Proof of Theorem \ref{t:Closure}]
With Lemma \ref{l:Step2} in mind,  we  focus on the term
$$
p( 1-\frac1\beta )( 1-\frac{\beta_s}{\beta} )P_s h_t P_s \big[ nh_t\big] 
= 
pP_s h_t \frac1\beta P_s\big[ h_t ( \beta- \beta_s ) ( 1-\frac1\beta )n \big]. 
$$
We appeal to Lemma \ref{l:LogPreserve} and assumption \eqref{e:SymmAssump} to see 
$$
\begin{cases}
(1 - \frac1\beta ) n  \ge \Delta \log\, \frac{v_t}{\gamma}\;\;\; &{\rm if}\;\;\; 0<\beta<1\\
(1 - \frac1\beta ) n  \le \Delta \log\, \frac{v_t}{\gamma}\;\;\; &{\rm if}\;\;\; 1<\beta,
\end{cases}
$$
for all $t>0$. 
For the sign of coefficient, from the definition of $\beta_s$, we know that 
$$
\beta - \beta_s = \beta - 1 - (\beta - 1) \frac{q}{p}e^{-2s} = \frac1p( \beta - 1 )( 1 - e^{-2s}  )
$$
and hence
$$
\begin{cases}
\beta - \beta_s\le 0\;\;\; &{\rm if}\;\;\; 0< \beta <1\\
\beta - \beta_s\ge0\;\;\; &{\rm if}\;\;\; 1< \beta.  
\end{cases}
$$
Overall, we see that for all $t>0$
$$
( \beta- \beta_s ) ( 1-\frac1\beta ) n \le (\beta - \beta_s) \Delta \log\, \frac{v_t}{\gamma},
$$
regardless of $\beta>0$. 
Therefore from Lemma \ref{l:Step2} we obtain that for all $t,\beta>0$, 
\begin{align*}
&
P_sh_t P_s\big[ h_t \Gamma\log\, \frac{v_t}{\gamma_\beta} \big] - \big| P_s\big[ h_t \nabla \log \frac{v_t}{\gamma_\beta}\big] \big|^2\\
& \le
p( 1-\frac{\beta_s}{\beta} )P_s h_t P_s\big[ h_t \Delta \log\, \frac{v_t}{\gamma} \big]
-
p( 1 - \frac{\beta_s}{\beta} )( 1 + \frac{\beta_s}{\beta} ) P_sh P_s\big[ h_t \Delta \log\, \frac{v_t}{\gamma} \big] 
+\big( \frac{\beta_s}{\beta} \big)^2 I_1\\
& = 
\frac{\beta_s}{\beta^2}\frac{pp'}{q} 
\bigg(
\frac{q}{pp'}p( \beta_s - \beta )  P_s h_t P_s\big[ h_t \Delta \log\, \frac{v_t}{\gamma} \big]
+\frac{q}{pp'} \beta_s I_1
\bigg)\\
\end{align*}
By applying Lemma \ref{l:Step1}, we see that 
\begin{equation}\label{e:Nice}
\frac{\beta_s}{\beta^2}\frac{pp'}{q} 
\frac{ \partial_t \widetilde{v}_t - \mathcal{L}_{\beta_s}^* \widetilde{v}_t }{ \widetilde{v}_t^{1- \frac{2}{q}}\gamma^\frac2q }
\ge 
P_sh_t P_s\big[ h_t \Gamma\log\, \frac{v_t}{\gamma_\beta} \big] - \big| P_s\big[ h_t \nabla \log \frac{v_t}{\gamma_\beta}\big] \big|^2. 
\end{equation}
If we notice from Cauchy--Schwarz inequality that 
$$
P_sh_t P_s\big[ h_t \Gamma\log\, \frac{v_t}{\gamma_\beta} \big] - \big| P_s\big[ h_t \nabla \log \frac{v_t}{\gamma_\beta}\big] \big|^2
\ge 0,
$$
then we conclude the proof. 
\end{proof}
\fi

It remains to show Lemmas \ref{l:Step1} and \ref{l:Step2}. 
\begin{proof}[Proof of Lemma \ref{l:Step1}]
We remark that each of the terms appearing on the right-hand side (e.g. $P_s \big[ \big( \frac{v_t}{\gamma} \big)^\frac1p \Delta( \log\, \frac{v_t}{\gamma} ) \big]$) are well-defined for each $t>0$ since $P_s$ is bounded on $L^1(\gamma)$ and Lemma \ref{l:Technical1}.

In view of the commutation relation $\nabla P_s = e^{-s}P_s \nabla$, our goal is to show 
\begin{align}
\frac{\partial_t \widetilde{v}_t - \mathcal{L}_{\beta_s}^* \widetilde{v}_t}{\widetilde{v}_t^{1-\frac 2q}\gamma^\frac{2}{q}}
& = \frac q{pp'}p (\beta_s-\beta) P_sh_t P_s \big[ h_t \Delta \big( \log\,  \frac{v_t}{\gamma} \big) \big] \label{e:4OctStep1}\\
& \quad  + 
\frac{q}{pp'} \beta_s \bigg( P_sh_t P_s \big[ h_t \Gamma( \log\,  \frac{v_t}{\gamma} ) \big] - ( pe^{ s } )^2 \Gamma( P_s h_t ) \bigg) + I +N. \nonumber 
\end{align}

Thanks to the definition of $\widetilde{v}_t$ and the fact that $v_t$ is a solution to \eqref{e:BetaFPv0}, we see that 
\begin{equation}\label{e:7Oct1}
\partial_t \widetilde{v}_t = q \widetilde{v}_t^{ 1-\frac1q }\partial_t \big( \widetilde{v}_t^\frac1q \big)
= 
\frac{q}{p} \widetilde{v}_t\big( \frac{\widetilde{v}_t}{\gamma} \big)^{- \frac1q} P_s\big[ \big( \frac{v_t}{\gamma} \big)^{\frac1p-1} \frac{ \mathcal{L}_{\beta}^* v_t}{\gamma}  \big].
\end{equation}
Here we implicitly commuted the operators $\partial_t$ and $P_s$; that is
\begin{equation}\label{e:Change1}
\partial_t P_s \big[ \big( \frac{v_t}{\gamma} \big)^\frac1p \big]
=
P_s \big[ \partial_t \big( \frac{v_t}{\gamma} \big)^\frac1p \big].
\end{equation}
This can be checked as follows. 
By denoting the integral kernel of $P_s$ with respect to $ d\gamma$ by $p_s(x,y)$,  we write 
\begin{align*}
P_s \big[ \big( \frac{v_t}{\gamma} \big)^\frac1p\big](x) 
&=
\int  \big( \frac{v_t}{\gamma_\beta} \big)^\frac1p (y) \big( \frac{\gamma_\beta}{\gamma} \big)^\frac1p(y) p_s(x,y)\, d\gamma(y)\\
&=: 
\int \big( \frac{v_t}{\gamma_\beta} \big)^\frac1p (y) \big( \frac{\gamma_\beta}{\gamma} \big)^\frac1p(y) \, dP_{s,x}(y)
\end{align*}
and investigate the bound of $\partial_t \big( \frac{v_t}{\gamma_\beta} \big)^\frac1p = \frac1p\big( \frac{v_t}{\gamma_\beta} \big)^{\frac1p-1} \partial_t \big( \frac{v_t}{\gamma_\beta} \big) $. 
As we did in the proof of Lemma \ref{l:Technical1}, we may appeal to \cite[(7.78)]{Grigolyan} to obtain that 
$$
\sup_y |\partial_t \frac{v_t}{\gamma_\beta}(y)| \le C(t) \big\| \frac{v_0}{\gamma_\beta} \big\|_{L^2(\gamma_\beta)}
=
C(t) \| v \|_{L^2(\gamma_\beta^{-1})}
$$
and hence 
$$
\partial_t \big(  \big( \frac{v_t}{\gamma_\beta} \big)^\frac1p (y) \big( \frac{\gamma_\beta}{\gamma} \big)^\frac1p(y) \big)
\le 
C(t,\varepsilon) \| u_0\|_{L^2(\gamma_\beta)} G_x(y),
$$
where $\varepsilon = \inf \frac{v}{\gamma_\beta}$, the constant $C(t,\varepsilon)$ is locally bounded in $t$, and 
$$
G_x(y) 
:= 
\big( \frac{\gamma_\beta}{\gamma} \big)^\frac1p(y).
$$
The same argument in the proof of Lemma \ref{l:Technical1} shows that $G_x\in L^1(\gamma)$ from which we obtain that 
\begin{equation}\label{e:7Oct_2}
\int G_x(y) \, dP_{s,x}(y) = P_s \big[ G_x \big](x) <\infty, 
\end{equation}
for almost every $x \in \mathbb{R}^n$ since $P_s$ is bounded on $L^1(\gamma)$.  
This suffices to ensure \eqref{e:Change1}. 
We remark that this further shows the bound
\begin{equation}\label{e:Deri_t}
|\partial_t \widetilde{v}(t,x)|
\le 
C(t,\varepsilon)\gamma(x) P_s\big[e^{\frac1p(1-\frac1\beta)|\cdot|^2+c(t)|\cdot|}\big](x)^{q} ,
\end{equation}
where $C(t,\varepsilon),c(t)>0$ are locally bounded in $t$ and depend on $\|v\|_{L^2(\gamma_\beta^{-1})} <\infty$. 


We then compute $\mathcal{L}_\beta^* {v}_t$.  Since $\gamma$ satisfies $ \mathcal{L}_1^* \gamma = 0 $, in general we have that 
\begin{equation}\label{e:Formula1}
\mathcal{ L }^*_{\beta} \big( \gamma^\frac1q f\big) 
=
\gamma^\frac1q \mathcal{L}_{\beta} f
+ 
(1 - \frac{\beta}{q}) \gamma^\frac1q T_q f
\end{equation}
for $q \in \mathbb{R}\setminus\{0\}$,
where  
$$
T_qf:= 2 x \cdot \nabla f + ( n - \frac1q | x |^2 ) f. 
$$
Applying this with $q = 1$ and $ f= \frac{v_t}{\gamma} $, it follows that 
$$
\mathcal{L}^*_{\beta} v_t 
= 
\mathcal{ L }^*_{\beta} \big( \gamma \frac{v_t}{\gamma} \big) 
=
\gamma \mathcal{L}_{\beta}\big( \frac{v_t}{\gamma} \big) 
+ 
(1 - \beta) \gamma T_1\big( \frac{v_t}{\gamma} \big),
$$
which further yields that 
\begin{equation}\label{e:5OctId1}
\partial_t \widetilde{v}_t
=
\frac{q}{p} \widetilde{v}_t\big( \frac{\widetilde{v}_t}{\gamma} \big)^{- \frac1q} P_s\big[ \big( \frac{v_t}{\gamma} \big)^{\frac1p-1} \mathcal{L}_{\beta}\big(  \frac{  v_t}{\gamma} \big)  \big]
+ 
\frac{q}{p} (1-\beta) \widetilde{v}_t\big( \frac{\widetilde{v}_t}{\gamma} \big)^{- \frac1q} P_s\big[ \big( \frac{v_t}{\gamma} \big)^{\frac1p-1} T_1\big( \frac{ v_t}{\gamma} \big)  \big].
\end{equation}

On the other hand,  in order to compute $\mathcal{L}_{\beta_s}^* \widetilde{v}_t$, we appeal to the diffusion property of $\mathcal{L}_{\beta}^*$ which states that for $\mu \in \mathbb{R} \setminus\{0\}$, 
\begin{equation}\label{e:DiffL*}
\mathcal{L}_{\beta}^* (f^\mu)
=
\mu f^{ \mu - 1 } \mathcal{L}_{\beta}^* f - (\mu - 1) f^\mu  n+ \beta \mu (\mu - 1) f^{ \mu -2 } \Gamma(f),
\end{equation}
and the commutation property 
\begin{equation}\label{e:Comm}
\mathcal{L}_{\beta} P_sf = \beta P_s \mathcal{L} f + (\beta-1) x\cdot \nabla \big( P_sf \big).
\end{equation}
A sequence of applications of \eqref{e:DiffL*}, \eqref{e:Formula1}, and \eqref{e:Comm} reveals that 
\begin{align*}
\mathcal{L}_{\beta_s}^* \widetilde{v}_t
& = 
\beta_s q \widetilde{v}_t^{ 1 - \frac1q } \gamma^\frac1q P_s \mathcal{L}\big[ \big( \frac{v_t}{\gamma} \big)^\frac1p \big]
+ 
(\beta_s -1) q \widetilde{v}_t^{1-\frac1q} \gamma^\frac1q x \cdot \nabla \big( P_s \big[ \big( \frac{v_t}{\gamma} \big)^\frac1p \big] \big)\\
& \quad + 
(q-\beta_s) \widetilde{v}_t^{1-\frac1q} \gamma^\frac1q T_qP_s \big[ \big( \frac{v_t}{\gamma} \big)^\frac1p \big]
-
(q-1) n\widetilde{v}_t
+ 
\beta_s q(q-1) \widetilde{v}_t^{1-\frac2q} \Gamma( \widetilde{v}_t^\frac1q ).
\end{align*}
We further apply the diffusion property of $\mathcal{L}$ to see that 
\begin{align}
\mathcal{L}_{\beta_s}^* \widetilde{v}_t
& =
\beta_s \frac{q}{p}  \widetilde{v}_t^{ 1 - \frac1q } \gamma^\frac1q P_s \big[ \big( \frac{v_t}{\gamma} \big)^{\frac1p-1} \mathcal{L} \big( \frac{v_t}{\gamma} \big) \big]
-
\beta_s \frac{q}{pp'} \widetilde{v}_t^{ 1 - \frac1q } \gamma^\frac1q P_s \big[ \big( \frac{v_t}{\gamma} \big)^{\frac1p-2} \Gamma \big( \frac{v_t}{\gamma} \big) \big]\label{e:5OctId2}\\
&\quad + 
(\beta_s -1) q \widetilde{v}_t^{1-\frac1q} \gamma^\frac1q x \cdot \nabla \big( P_s \big[ \big( \frac{v_t}{\gamma} \big)^\frac1p \big] \big)\nonumber\\
&\quad + 
(q-\beta_s) \widetilde{v}_t^{1-\frac1q} \gamma^\frac1q T_qP_s \big[ \big( \frac{v_t}{\gamma} \big)^\frac1p \big]
-
(q-1) n\widetilde{v}_t
+ 
\beta_s q(q-1) \widetilde{v}_t^{1-\frac2q} \Gamma( \widetilde{v}_t^\frac1q ).\nonumber
\end{align}
We combine \eqref{e:5OctId1} and \eqref{e:5OctId2} and then rearrange terms  to derive that 
\begin{align*}
&\frac{\partial_t \widetilde{v}_{  t} - \mathcal{L}_{\beta_s}^* \widetilde{v}_{  t}}{\widetilde{v}_{  t}^{1-\frac 2q}\gamma^\frac{2}{q}}\\
& =
\frac{q}{p} ( \beta - \beta_s ) P_sh_t P_s \big[ \big( \frac{v_t}{\gamma}  \big)^{\frac1p-1} \Delta \frac{v_t}{\gamma} \big]
+ 
\frac{q}{p}(1-\beta)  P_sh_t P_s \big[  h_t n \big] \\
& \quad +
(\beta_s -1)  \big(P_sh_t\big)^2 n
+
\bigg( \frac{q}{p} (\beta_s - 1) -2\frac{q}{p} (\beta-1) \bigg) P_sh_t P_s \big[ \big( \frac{v_t}{\gamma}  \big)^{\frac1p-1} x  \cdot \nabla \frac{v_t}{\gamma}\big]\\
& \quad  +
q(\beta_s -1) P_sh_t  x \cdot \nabla  P_s h_t 
+ 
\frac{q}{p}(\beta-1) P_sh_t P_s\big[ h_t |x|^2 \big] \\
&\quad  - 
(\beta_s-1) \big( P_sh_t \big)^2  |x|^2 
+
\frac{q}{pp'} \beta_s P_sh_t P_s\big[ \big( \frac{v_t}{\gamma} \big)^{\frac1p - 2} \Gamma( \frac{v_t}{\gamma} ) \big]
- 
\frac{q}{pp'} \beta_s ( pe^s )^2 \Gamma\big( P_s h_t \big).
\end{align*}
It now suffices to apply 
$$
\nabla \frac{v_t}{\gamma} = \frac{v_t}{\gamma} \nabla\big( \log\, \frac{v_t}{\gamma} \big),\;\;\;
\Delta \frac{v_t}{\gamma} = \frac{v_t}{\gamma} \Delta\big( \log\, \frac{v_t}{\gamma} \big) + \frac{v_t}{\gamma }\Gamma( \log\, \frac{v_t}{\gamma} )
$$
to obtain \eqref{e:4OctStep1}. 
\end{proof}

\begin{proof}[Proof of Lemma \ref{l:Step2}]
Using the fact that 
$$
\nabla \big( \log\, \frac{v_t}{\gamma_\beta} \big)
=
\nabla \big( \log\, \frac{v_t}{\gamma} \big)
+
( 1-\frac1\beta ) x, 
$$
it is easy to see that 
$$
P_s h_t P_s\big[ h_t \Gamma( \log\, \frac{v_t}{\gamma_\beta} ) \big] - P_s\big[ h_t \nabla ( \log\, \frac{v_t}{\gamma_\beta} )\big]^2
=
I_1+I_2+I_3,
$$
where $I_1$ is defined in \eqref{e:I1} and 
\begin{align*}
I_2
:=& 
( 1- \frac1\beta )^2 
\bigg(
P_sh_t P_s\big[ h_t |x|^2 \big] - P_s\big[ h_t x \big] \cdot P_s\big[ h_t x \big]
\bigg), \\
I_3
:=& 
-2( 1-\frac1{\beta} )
\bigg(
P_s h_t P_s\big[ h_t x \cdot \nabla \big( \log\, \frac{v_t}{\gamma} \big) \big] - P_s\big[ h_t x \big] \cdot P_s\big[ h_t \nabla \big( \log\, \frac{v_t}{\gamma}\big) \big]
\bigg). 
\end{align*}
For $I_2$,  from the definition of $R(f, \mathbf{g})$ with $f = h_t$ and $\mathbf{g} = h_t x $,  we have that 
\begin{align*}
	I_2=
	(1-\frac1{\beta})^2
	\bigg(& (1-e^{-2s})P_s h_t P_s\big[ h_t n \big] \\
	&+\frac{1-e^{-2Ks}}p( - \frac12(1-\frac{1}{\beta})^{-1}) I_3 
	+
	R( h_t, h_t x)
	\bigg). 
\end{align*}
Similarly for $I_3$, 
\begin{align*}
	I_3
	=& 
	-2(1-\frac1{\beta})\bigg(
	(1-e^{-2s})P_s h_t P_s\big[ h_t \Delta \big(\log\, \frac{v_t}{\gamma}\big)\big] \\
	&+\frac{1-e^{-2s}}p \big\{ 
	P_sh_t P_s\big[ h_t  \Gamma\big(\log\, \frac{v_t}{\gamma}\big) \big] 
	-
	\big| P_s\big[ h_t \nabla  \big(\log\, \frac{v_t}{\gamma}\big)
	\big]\big| ^2 
	\big\}\nonumber\\
	&+
	R( h_t, h_t \nabla  \big(\log\, \frac{v_t}{\gamma}\big) )
	\bigg).
\end{align*}
Hence, after a rearrangement, we arrive at 
\begin{align*}
&I_1+I_2+I_3\\
& =
(1-\frac1{\beta})^2 (1-e^{-2s})P_sh_t P_s\big[ h_t n \big] \\
	&\quad +\big(-2 + (1-\frac1{\beta})\frac{1-e^{-2s}}{p}\big) 
	(1-\frac{1}{\beta})(1-e^{-2s})P_s h_t P_s\big[ h_t \Delta \big(\log\, \frac{v_t}{\gamma}\big)\big] \\
	&\quad +\big( (1-\frac1{\beta})\frac{1-e^{-2s}}{p} - 1\big)^2 
	 I_1 \nonumber\\
	&\quad +
	\big(-2 + (1-\frac1{\beta})\frac{1-e^{-2s}}{p}\big)(1-\frac{1}{\beta})
	R( h_t, h_t  \nabla \big(\log\, \frac{v_t}{\gamma}\big) )
	+
	(1-\frac1{\beta})^2 R( h_t, h_t {  x} ).
\end{align*}
In view of the relation 
\begin{equation}\label{e:Exp5Oct}
(1-\frac1{\beta}) \frac{ 1 - e^{-2s} }{p}
=
-\frac1{\beta} (\beta_s - \beta),
\end{equation}
we can simplify to 
\begin{align*}
&I_1+I_2+I_3\\
& = 
p(1-\frac1{\beta}) (1- \frac{\beta_s}{\beta})P_s h_t  P_s\big[ h_t  n \big] 
	-p(1-\frac{\beta_s}{\beta})\big(1 + \frac{\beta_s}{\beta}\big) 
	P_s h_t P_s\big[ h_t \Delta \big(\log\, \frac{v_t}{\gamma}\big)\big] \\
	&\quad +\big( \frac{\beta_s}{\beta}\big)^2 
	 I_1 
	-
	\big(1 + \frac{\beta_s}{\beta} \big)(1-\frac{1}{\beta})
	R( h_t, h_t \nabla \big(\log\, \frac{v_t}{\gamma}\big) )
	+
	(1-\frac1{\beta})^2 R( h_t, h_t {  x} )
\end{align*}
as desired.
\end{proof}

\subsection{Proof of Theorem \ref{t:MainHyper}: hypercontractivity}
\begin{proof}[Proof of Theorem \ref{t:MainHyper}]
First we consider the forward version inequality \eqref{e:RegHC}, so we suppose $1 < p < q < \infty$. Also, we first handle the case $\beta > 1$ and thus we assume, in particular, that $v$ is $\beta$-semi-log-subharmonic.  To show \eqref{e:RegHC}, we may also suppose that $v$ satisfies \eqref{e:Tech25May}\footnote{To see this, consider $v^{\varepsilon}:= v + \varepsilon \gamma_\beta$ for arbitrary small $\varepsilon>0$. Then we have that 
$$
\Delta \log\, \frac{v^{\varepsilon}}{\gamma_\beta} \ge0,\;\;\; \frac{v^{\varepsilon}}{\gamma_\beta} \ge \varepsilon
$$
since log-sub-harmonicity is preserved under the superposition; see \cite[Proposition 2.2]{GKL}. Obtaining \eqref{e:RegHC} for each $v^\varepsilon$ would then allow us to deduce \eqref{e:RegHC} for $v$ by a limiting argument.}.

Recall the quantity $Q(t)$ defined in \eqref{e:vtilde} and that our goal is to show that $Q(t)$ is monotone non-decreasing. 
To this end, we first note that
\begin{equation}\label{e:TimeChange}
\frac{d}{dt}Q(t)
=
\int_{\mathbb{R}^n} \partial_t \widetilde{v}(t,x)\, dx,
\end{equation}
where the interchange of derivative and integral can be justified by observing \eqref{e:Deri_t} from the proof of Lemma \ref{l:Step1}.
We further see from integration by parts that 
$$
\frac{d}{dt}Q(t)
=
\int_{\mathbb{R}^n} \big( \partial_t \widetilde{v}(t,x) - \mathcal{L}_{\beta_s}^* \widetilde{v}(t,x) \big)\, dx. 
$$
To be precise, we implicitly used the fact that $\widetilde{v}(t,\cdot)$ satisfies \eqref{e:DecayTech} for each $t>0$ in the above step. 
This can be justified by 
$$
|x|\widetilde{v}(t,x)\le C(t,\varepsilon)|x|\gamma(x)P_s\big[\big(\frac{\gamma_\beta}{\gamma}\big)^\frac1p\big](x)^q\to 0,\;\;\; |x|\to\infty
$$
and 
$$
|\nabla \widetilde{v}(t,x)|\le 
C(t,\varepsilon)(1+|x|)\gamma(x) P_s\big[e^{\frac1p(1-\frac1\beta)|\cdot|^2+c(t)|\cdot|}\big](x)^{q} \to0,\;\;\; |x|\to\infty, 
$$
both of which follow from the assumption \eqref{e:Tech25May} and the argument in Lemma \ref{l:Technical1}. 

Thanks to the assumption on $v$, we may apply Proposition \ref{Prop:Uhmmm2} to see that 
\begin{align*}
& \big( \frac{\beta_s}{\beta} \big)^2	\frac1{\beta_s} \frac{pp'}{q}\frac{d}{dt}Q(t)\\
& \ge \int_{\mathbb{R}^n} \widetilde{v}_t^{1-\frac{2}{q}}\gamma^\frac{2}q 
\bigg(
\big( \frac{\beta_s}{\beta} \big)^2	\frac1{\beta_s} \frac{pp'}{q} (I+N)
	+\big(1 + \frac{\beta_s}{\beta} \big)(1-\frac{1}{\beta})
	R( h_t,  h_t \nabla  \big(\log\, \frac{v_{  t}}{\gamma}\big) )\\
& \quad -
	(1-\frac1{\beta})^2 R( h_t,  h_t  {  x} )
\bigg)\, dx, 
\end{align*}
where we also used the Cauchy--Schwarz inequality to see that 
$$
P_s h P_s\big[ h \Gamma( \log\, \frac{v_t}{\gamma_\beta}) \big] - \big| P_s\big[ h \nabla \big( \log\, \frac{v_t}{\gamma_\beta} \big) \big] \big|^2\ge 0. 
$$
In the above expression, the term involving $N$ vanishes. 
In fact,  we see that 
\begin{align*}
\int_{\mathbb{R}^n}  P_s\big[ \big(\frac{v_t}{\gamma}\big)^\frac1p\big]^q |x|^2 \, d\gamma
& =
- \int_{\mathbb{R}^n} P_s\big[\big( \frac{v_t}{\gamma} \big)^\frac1p \big]^q x\cdot \nabla \gamma  \, dx\\
& = 
q \int_{\mathbb{R}^n} \big(\frac{\widetilde{v}_t}{\gamma} \big)^{\frac{q-1}{q}}  \nabla \big( \big( \frac{ \widetilde{v}_t }{ \gamma }  \big)^\frac1q \big) \cdot x\, d\gamma 
+
\int_{\mathbb{R}^n} \frac{\widetilde{v}_t}{\gamma} n\, d\gamma\\
& =
q \int_{\mathbb{R}^n} \big(\frac{\widetilde{v}_t}{\gamma} \big)^{ -\frac{1}{q}} \widetilde{v}_t  \nabla \big( \big( \frac{ \widetilde{v}_t }{ \gamma }  \big)^\frac1q \big) \cdot x\, dx
+
\int_{\mathbb{R}^n} \widetilde{v}_t n\, dx
\end{align*}
from which $\int_{\mathbb{R}^n} \widetilde{v}_t^{1-\frac{2}{q}}\gamma^\frac{2}q  N\, dx = 0$ follows.  
Hence we see that 
\begin{align}
& \big( \frac{\beta_s}{\beta} \big)^2	\frac1{\beta_s} \frac{pp'}{q} \frac{d}{dt}Q(t) \label{e:10Oct_1} \\
& \ge \int_{\mathbb{R}^n} \widetilde{v}_t^{1-\frac{2}{q}}\gamma^\frac{2}q 
\bigg(
\big( \frac{\beta_s}{\beta} \big)^2	\frac1{\beta_s} \frac{pp'}{q} I 
	+\big(1 + \frac{\beta_s}{\beta} \big)(1-\frac{1}{\beta})
	R( h_t, h_t \nabla  \big(\log\, \frac{v_{  t}}{\gamma}\big) ) \nonumber \\
& \quad -
	(1-\frac1{\beta})^2 R( h_t, h_t  {  x} )
\bigg)\, dx. \nonumber 
\end{align}

Next we appeal to a feature well-suited to the gaussian case, namely two identities 
\begin{align}
&\int_{\mathbb{R}^n}  \nabla \big(P_{s}\big[ (P_sh)^{q-2} P_s f\big] \big) \cdot \mathbf{g} \, d\gamma\label{e:AssumpG=}\\
& = 
e^{-2s} \int_{\mathbb{R}^n}    P_{s} \big[ (P_s h)^{q-2} P_s\big[ \nabla f \big] \big] \cdot \mathbf{g} \, d\gamma \nonumber \\
& \quad +
e^{-2s} (q-2) 
\int_{\mathbb{R}^n}    P_{s} \big[ (P_s h)^{q-3} P_s f P_s\big[ \nabla h \big] \big] \cdot  \mathbf{g} \, d\gamma,\nonumber 
\end{align}
for $f, \mathbf{g} = (g_j)_{j=1}^n, {  h}$ and 
\begin{align}
&\int_{\mathbb{R}^n}   {\rm div}\,  \big(P_{s}\big[ (P_sh)^{q-2} P_s \mathbf{f} \big] \big){g} \, d\gamma\label{e:AssumpF=}\\
& = 
e^{-2s} \int_{\mathbb{R}^n}    P_{s} \big[ (P_s h)^{q-2} P_s\big[ {\rm div}\, \mathbf{f} \big] \big]{g} \, d\gamma \nonumber \\
& \quad +
e^{-2s} (q-2) 
\int_{\mathbb{R}^n} P_{s} \big[ (P_s h)^{q-3} P_s \mathbf{f} \cdot P_s\big[ \nabla h \big]  \big] \mathbf{g} \, d\gamma,\nonumber 
\end{align}
for $\mathbf{f}= (f_j)_{j=1}^n, g,{  h}$. 
These two identities follow from the commutation property $\nabla P_s = e^{-s} P_s \nabla$. 
As an immediate consequence from integration by parts, we obtain 
\begin{align}\label{e:ConsSep26G=}
&
\int_{\mathbb{R}^n} 
P_s f P_s \big[  \mathbf{g} \cdot {  x} \big] (P_sh)^{q-2} \, d\gamma  \\
& =
\int_{\mathbb{R}^n} 
P_s f P_s \big[ {\rm div}\, \mathbf{g} \big] (P_sh)^{q-2} \, d\gamma 
+ 
e^{-2s} \int_{\mathbb{R}^n} 
P_s \big[\nabla f\big] \cdot P_s \mathbf{g} (P_sh)^{q-2} \, d\gamma \nonumber \\
& \quad + e^{-2s} (q-2) 
\int_{\mathbb{R}^n} 
P_s f P_s \mathbf{g} \cdot  P_s\big[ \nabla h \big] (P_sh)^{q-3} \, d\gamma \nonumber
\end{align}
and
\begin{align}\label{e:ConsSep26F=}
&
\int_{\mathbb{R}^n} 
P_s \mathbf{f} \cdot  P_s \big[ g { x} \big] (P_sh)^{q-2} \, d\gamma  \\
& =
\int_{\mathbb{R}^n} 
P_s \mathbf{f} \cdot P_s \big[ \nabla g \big] (P_sh)^{q-2} \, d\gamma 
+ 
e^{-2s} \int_{\mathbb{R}^n} 
P_s \big[{\rm div}\, \mathbf{f} \big] P_s g (P_sh)^{q-2} \, d\gamma \nonumber \\
& \quad + e^{-2s} (q-2) 
\int_{\mathbb{R}^n} 
P_s \mathbf{f} \cdot   P_s\big[ \nabla h \big] P_s g  (P_sh)^{q-3}  \, d\gamma. \nonumber
\end{align}
These identities will be applied for appropriate inputs. 

We focus on the term involving $ P_s\big[ h_{  t} | {  x} |^2 \big] $ which comes from $I$ and $ R( h_{  t}, h_{  t} { x} ) $.  More precisely the contribution from such a term in \eqref{e:10Oct_1} is 
$$
\Upsilon_1(p,s,\beta)\int_{\mathbb{R}^n} 
P_s\big[ h_{  t} |{ x}|^2 \big]
P_s h_{  t}^{q-1}d\gamma,
$$
where 
\begin{equation}\label{e:Upsilon1}
\Upsilon_1(p,s,\beta):= 
\big( \frac{\beta_s}{\beta} \big)^2	\frac1{\beta_s} \frac{pp'}{q} \frac{q}{p} (\beta-1)
-
(1-\frac1{\beta})^2. 
\end{equation}
We apply \eqref{e:ConsSep26G=} with $(f,\mathbf{g}) = (h_{  t},h_{  t} { x})$ to this term to see that 
\begin{align*}
& \big( \frac{\beta_s}{\beta} \big)^2	\frac1{\beta_s} \frac{pp'}{q} \frac{d}{dt} Q(t) \\
& \ge
\big( \frac{\beta_s}{\beta} \big)^2	\frac1{\beta_s} \frac{pp'}{q} \widetilde{I} 
+\big(1 + \frac{\beta_s}{\beta} \big)(1-\frac{1}{\beta})
	\int_{\mathbb{R}^n} R( h_t,  h_t \nabla  \big(\log\, \frac{v_{  t}}{\gamma}\big) ) 
	P_sh_t^{q-2}\, d\gamma \\
	& \quad - (1-\frac1{\beta})^2 \int_{\mathbb{R}^n} \widetilde{R}(  h_t,  h_t { x} )
	P_sh_t^{q-2}\, d\gamma,
\end{align*}
where 
\begin{align}\label{e:ItildeNew}
& \widetilde{I} := 
q(\beta-\beta_s) \int_{\mathbb{R}^n}  P_s \big[ h_t \Delta\big( \log\,  \frac{v}{\gamma} \big) \big] P_sh_t^{q-1}\, d\gamma \\
& \quad +\frac qp (\beta-\beta_s) \int_{\mathbb{R}^n} P_s \big[ h_t \Gamma( \log\,  \frac{v}{\gamma} ) \big] P_sh_t^{q-1}\, d\gamma \nonumber \\
& \quad + 
\frac{q}{p}( \beta_s - 2\beta +1 + \frac{\beta-1}{p} ) \int_{\mathbb{R}^n}  P_s \big[ h_t { x}\cdot \nabla\big( \log\,  \frac{v_{  t}}{\gamma} \big) \big] P_sh_t^{q-1}\, d\gamma \nonumber \\
& \quad + 
\frac{q}{p} (\beta-1) \frac{e^{-2s}}{p} (q-1)
\int_{\mathbb{R}^n} P_s \big[ h_t \nabla \big( \log\, \frac{v_{  t}}{\gamma} \big) \big] \cdot P_s \big[  h_t { x} \big] P_sh_t^{q-2}\, d\gamma\nonumber
\end{align}
and 
\begin{align*}
&
\int_{\mathbb{R}^n} 
\widetilde{R}( h_t, h_t { x} ) 
P_sh_t^{q-2}\, d\gamma\\
& := 
e^{-2s}\int_{\mathbb{R}^n}  P_s\big[ {\rm div}\, \big(  h_t { x} \big) \big]  P_sh_t^{q-1}\, d\gamma
+ 
\int_{\mathbb{R}^n} P_s\big[ \nabla h_t  \big] \cdot P_s\big[ h_t { x}   \big]  P_sh_t^{q-2}\, d\gamma\\
& \quad + e^{-2s}(q-2) 
\int_{\mathbb{R}^n} P_s\big[  h_t { x} \big] \cdot P_s\big[\nabla h_t\big]  P_sh_t^{q-2} \, d\gamma
-
\int_{\mathbb{R}^n} \big| P_s \big[ h_t { x}  \big] \big|^2 P_sh_t^{q-2}\, d\gamma .
\end{align*}
We then apply \eqref{e:ConsSep26F=} with $( \mathbf{f}, g ) = ( h_{  t} { x}, h_{  t} )$ to the term involving $  \big| P_s \big[ h_{  t} { x}  \big] \big|^2$ to see that  
$$
\int_{\mathbb{R}^n} 
\widetilde{R}( h_t, h_t { x} ) 
P_sh_t^{q-2}\, d\gamma
= 0
$$
which shows that 
\begin{align}
\big( \frac{\beta_s}{\beta} \big)^2	\frac1{\beta_s} \frac{pp'}{q} \frac{d}{dt} Q(t) &\ge
\big( \frac{\beta_s}{\beta} \big)^2	\frac1{\beta_s} \frac{pp'}{q} \widetilde{I} \label{e:Q'(t)2} \\
& \quad
+\big(1 + \frac{\beta_s}{\beta} \big)(1-\frac{1}{\beta})
	\int_{\mathbb{R}^n} R( h_t,  h_t \nabla  \big(\log\, \frac{v_{  t}}{\gamma}\big) ) 
	P_sh_t^{q-2}\, d\gamma. \nonumber
\end{align}
Next we focus on the term involving $ P_s\big[ h_t { x} \big] \cdot P_s\big[ h_t \nabla \log\, \frac{v_{  t}}{\gamma} \big] $. Such term on the right-hand side of \eqref{e:Q'(t)2} is 
\begin{align*}
\Upsilon_2(p,s,\beta) \int_{\mathbb{R}^n} P_s \big[  h_t \nabla \big( \log\, \frac{v_{  t}}{\gamma} \big) \big] \cdot P_s \big[  h_t { x} \big] P_sh_t^{q-2}\, d\gamma, 
\end{align*}
where
\begin{equation}\label{e:Upsilon2}
\Upsilon_2(p,s,\beta):= \big( \frac{\beta_s}{\beta} \big)^2 \frac{1}{\beta_s} \frac{pp'}{q} \frac{q}{p} (\beta-1) \frac{e^{-2s}}{p}(q-1) - ( 1+ \frac{\beta_s}{\beta} )(1-\frac1{\beta})=-(1-\frac1\beta) .
\end{equation}
Hence we may apply \eqref{e:ConsSep26F=} with $ (\mathbf{f},g) = (h_t \nabla \log\, \frac{v_{  t}}{\gamma}, h_t) $ to see that 
\begin{align}
\big( \frac{\beta_s}{\beta} \big)^2	\frac1{\beta_s} \frac{pp'}{q}\frac{d}{dt}Q(t) 
& \ge 
\big( \frac{\beta_s}{\beta} \big)^2	\frac1{\beta_s} \frac{pp'}{q} \overline{I} \label{e:Sep27_2} \\
& \quad +\big(1 + \frac{\beta_s}{\beta} \big)(1-\frac{1}{\beta})
	\int_{\mathbb{R}^n} \overline{R}(h_t, h_t \nabla \big(\log\, \frac{v_{  t}}{\gamma}\big) )
	P_sh_t^{q-2}\, d\gamma, \nonumber
\end{align}
where 
\begin{align*}
\overline{I}
& =:
q(\beta-\beta_s) \int_{\mathbb{R}^n}  P_s \big[ h_t \Delta \big( \log\,  \frac{v_{  t}}{\gamma} \big) \big] P_sh_t^{q-1}\, d\gamma \nonumber \\
& \quad +\frac qp (\beta-\beta_s) \int_{\mathbb{R}^n}   P_s \big[ h_t \Gamma( \log\,  \frac{v_{  t}}{\gamma} ) \big] P_sh_t^{q-1}\, d\gamma \nonumber \\
& \quad + 
\frac{q}{p}( \beta_s - 2\beta +1 + \frac{\beta-1}{p} ) \int_{\mathbb{R}^n} P_s \big[ h_t { x}\cdot \nabla \big( \log\,  \frac{v_{  t}}{\gamma} \big) \big] P_sh_t^{q-1}\, d\gamma \nonumber \\
& \quad + 
\frac{q}{p} (\beta-1) \frac{e^{-2s}}{p} (q-1)
{ ( 1 + e^{-2s}(q-2) ) \int_{\mathbb{R}^n} P_s \big[  \nabla  \big( \log\, \frac{v_{  t}}{\gamma} \big) \big] \cdot P_s \big[ \nabla h_t \big] P_sh_t^{q-2}\, d\gamma}
\nonumber\\
& \quad + 
\frac{q}{p} (\beta-1) \frac{e^{-2s}}{p} (q-1)
{ e^{-2s} \int_{\mathbb{R}^n} P_s \big[ {\rm div}\,  \big( h_t \nabla \big( \log\, \frac{v_{  t}}{\gamma} \big) \big) \big]  P_sh_t^{q-1}\, d\gamma}.
\nonumber
\end{align*}
	and 
\begin{align*}
&\int_{\mathbb{R}^n} \overline{R}( h_t, h_t  \nabla \big(\log\, \frac{v_{  t}}{\gamma}\big) )
	P_sh_t^{q-2}\, d\gamma\\
& := 
\int_{\mathbb{R}^n}   P_s\big[ h_t  { x} \cdot\nabla  \big(\log\, \frac{v_{  t}}{\gamma}\big) \big]
	P_sh_t^{q-1}\, d\gamma \\
& \quad -
{
\int_{\mathbb{R}^n}  P_s\big[ \nabla h_t  \big] \cdot P_s\big[h_t \nabla \big(\log\, \frac{v_{  t}}{\gamma}\big) \big]
	P_sh_t^{q-2}\, d\gamma
	}\\
& \quad -
{ e^{-2s} 
\int_{\mathbb{R}^n}   P_s\big[ {\rm div}\,  \big( h_t \nabla \big(\log\, \frac{v_{  t}}{\gamma}\big) \big) \big]
	P_sh_t^{q-1}\, d\gamma
}	\\
& \quad -
{ e^{-2s} (q-2) 
\int_{\mathbb{R}^n}   P_s\big[  h_t \nabla \big(\log\, \frac{v_{  t}}{\gamma}\big)  \big] \cdot P_s\big[ \nabla h_t \big]
	P_sh_t^{q-2} \, d\gamma
}	\\
& \quad - 
(1-e^{-2s}) 
\int_{\mathbb{R}^n}  \big( P_s h_t P_s\big[{\rm div}\,  \big(  h_t \nabla \big(\log\, \frac{v_{  t}}{\gamma}\big) \big) \big] \\
& \qquad \qquad \qquad \qquad  -
P_s\big[ \nabla h_t \big] \cdot P_s\big[   h_t \nabla \big(\log\, \frac{v_{  t}}{\gamma}\big) \big]
\big)
	P_sh_t^{q-2}\, d\gamma. 
\end{align*}

Finally we focus on the term involving $ P_s\big[ h_t { x} \cdot \nabla \log\, \frac{v_{  t}}{\gamma} \big] $. 
The contribution of such term on the right-hand side of \eqref{e:Sep27_2} is
$$
\Upsilon_3(p,s,\beta) 
\int_{\mathbb{R}^n}  P_s\big[ h_t{ x}\cdot \nabla \log\, \frac{v_{  t}}{\gamma} \big]P_sh_{  t}^{q-1}\, d\gamma,
$$
where
\begin{equation}\label{e:Upsilon3}
\Upsilon_3(p,s,\beta)
=
\big( p-2 +e^{-2s} + \frac{(1-e^{-2s})^2}{p} (1-\frac1{\beta}) \big) \frac1{p-1} (1-\frac1{\beta}). 
\end{equation}
We may apply \eqref{e:ConsSep26G=} with $(f,\mathbf{g}) = ( h_t, h_t \nabla \log\, \frac{v_{  t}}{\gamma} )$ to this term. This, along with the fact that $ (\beta - 1) \frac{1-e^{-2s}}{p} = \beta - \beta_s $, allows us to conclude that 
$$
\frac{d}{dt} Q(t) \ge 0. 
$$
Hence we see that $Q(t)$ is non-decreasing and, comparing $t \to 0$ and $t  \to \infty$, we obtain
$$
\big\| P_s\big[ \big(\frac{v}{\gamma}\big)^\frac1p\big] \big\|_{L^q(d\gamma)}^q
\le 
\big\| P_s\big[ \big(\frac{\gamma_\beta}{\gamma}\big)^\frac1p\big] \big\|_{L^q(d\gamma)}^q
\bigg( \int_{\mathbb{R}^n} \frac{v}{\gamma}\, d\gamma\bigg)^\frac{q}{p},
$$
since 
$$
\lim_{t\to\infty} v_t = \bigg(\int_{\mathbb{R}^n} \frac{v}{\gamma}\, d\gamma\bigg)\gamma_\beta. 
$$

By a careful examination of the above proof of \eqref{e:RegHC} (in the case where $\beta > 1$), one can see that the argument works just as well if $\beta < 1$ and $v$ is $\beta$-semi-log-concave. Similarly, for the reverse inequality \eqref{e:RegRevHC}, one can also follow the above proof and obtain the claimed results in Theorem \ref{t:MainHyper}(2)\footnote{See also the forthcoming discussion in Section \ref{S3.4} where the existence of certain closure properties (which underpin the monotonicity of $Q$) is elucidated.}. 
\end{proof}

\subsection{Proof of Theorem \ref{t:MainLSI}: log-Sobolev inequality } 
\label{S3.3}
The following lemma bridges our regularised hypercontractivity inequality to the LSI.  
\begin{lemma}\label{l:Hyper->LogSob}
Let $q(s) = 1 + e^{2s} \in (2,\infty)$ for $s\ge 0$. 
Assume that $f\ge0$ satisfy 
\begin{equation}\label{e:30Apr-1}
\big\| P_s\big[ f^\frac12 \big]  \big\|_{L^{q(s)}(\gamma)} \le \psi(s) \| f\|_{L^1(\gamma)}^\frac12,\;\;\; \forall s>0,
\end{equation}
for some positive function $\psi(s)\in C^1((0,\infty))$ such that $\psi(0)=1$. Then for such $f$, we have that 
\begin{equation}\label{e:30Apr-2}
{\rm Ent}_{\gamma} (f) \le \frac1{2} {\rm I}_{\gamma}(f) + 2 \psi'(0) \|f\|_{L^1(\gamma)}.
\end{equation}
\end{lemma}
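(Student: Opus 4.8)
The plan is to run Gross's classical differentiation argument. The crucial observation is that hypothesis \eqref{e:30Apr-1} holds with equality at the endpoint $s=0$: since $P_0$ is the identity, $q(0)=2$ and $\psi(0)=1$, both sides reduce to $\|f\|_{L^1(\gamma)}^{1/2}$. Writing $F(s):=\|P_s[f^{1/2}]\|_{L^{q(s)}(\gamma)}$ and $G(s):=\psi(s)\|f\|_{L^1(\gamma)}^{1/2}$, we therefore have $F(0)=G(0)$ and $F(s)\le G(s)$ for all $s>0$, so that $F'(0^+)\le G'(0^+)$ once both one-sided derivatives are known to exist. The whole proof then amounts to computing these two derivatives at $s=0$ and reading off \eqref{e:30Apr-2}.

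The derivative of the right-hand side is immediate: $G'(0)=\psi'(0)\|f\|_{L^1(\gamma)}^{1/2}$. For the left-hand side, put $g:=f^{1/2}$ and $N(s):=\int_{\mathbb{R}^n}(P_sg)^{q(s)}\,d\gamma$, so that $\log F(s)=q(s)^{-1}\log N(s)$ and
$$
\frac{F'(s)}{F(s)}=\frac{N'(s)}{q(s)N(s)}-\frac{q'(s)\log N(s)}{q(s)^2}.
$$
Differentiating under the integral sign, using $\partial_s(P_sg)=\mathcal{L}_1 P_sg$ and the gaussian integration-by-parts identity $\int\phi\,\mathcal{L}_1\psi\,d\gamma=-\int\nabla\phi\cdot\nabla\psi\,d\gamma$ (with $\phi=(P_sg)^{q(s)-1}$), one obtains
$$
N'(s)=q'(s)\int_{\mathbb{R}^n}(P_sg)^{q(s)}\log(P_sg)\,d\gamma-q(s)(q(s)-1)\int_{\mathbb{R}^n}(P_sg)^{q(s)-2}|\nabla P_sg|^2\,d\gamma.
$$
Evaluating at $s=0$, where $q(0)=2$, $q'(0)=2$, $P_0g=g$ and $N(0)=\|f\|_{L^1(\gamma)}$, and then substituting $g=f^{1/2}$ (so $g^2\log g=\tfrac12 f\log f$ and $|\nabla g|^2=\tfrac14|\nabla f|^2/f$) and invoking the definitions \eqref{e:EntFisher}, a short computation in which the two $\log\|f\|_{L^1(\gamma)}$ terms cancel gives
$$
F'(0)=\frac{1}{2\|f\|_{L^1(\gamma)}^{1/2}}\Bigl({\rm Ent}_\gamma(f)-\tfrac12{\rm I}_\gamma(f)\Bigr).
$$
Plugging this and $G'(0)$ into $F'(0)\le G'(0)$ and multiplying through by $2\|f\|_{L^1(\gamma)}^{1/2}$ yields exactly \eqref{e:30Apr-2}.

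The genuinely delicate point is not this (routine) calculation but its analytic justification: the differentiability of $s\mapsto F(s)$ at $s=0^+$, the legitimacy of differentiating under the integral in $N(s)$, and the vanishing of the boundary terms in the integration by parts. I would handle this by the standard reductions: one may assume ${\rm I}_\gamma(f)<\infty$, since otherwise \eqref{e:30Apr-2} is vacuous; then prove the inequality first for $f$ that is smooth and bounded above and below away from $0$ and $\infty$, for which all the manipulations above are legitimate using regularity and decay estimates for $P_s$ in the spirit of Lemma \ref{l:Technical1}; and finally remove these restrictions by truncation and approximation, using the lower semicontinuity of ${\rm Ent}_\gamma$ and ${\rm I}_\gamma$ and the continuity of $\|\cdot\|_{L^1(\gamma)}$ to pass to the limit. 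The only subtlety worth flagging beyond bookkeeping is that $F$ is a priori only known to satisfy $\limsup_{s\to0^+}s^{-1}(F(s)-F(0))\le G'(0)$; combining this one-sided bound with the explicit evaluation of $F'(0^+)$ on the nice class of $f$ closes the argument.
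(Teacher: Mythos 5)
Your proof is correct and follows essentially the same route as the paper's: both run Gross's differentiation argument at $s=0$, using the fact that hypothesis \eqref{e:30Apr-1} is saturated there, and arrive at the same evaluation of $F'(0)$ (the paper packages the comparison into $\Lambda(s)=F(s)/\psi(s)$ with $\Lambda'(0)\le 0$, but since $\psi(0)=1$ this is identical to your $F'(0)\le G'(0)$). Your remarks on the one-sided nature of the derivative and the need for an approximation argument are sound and, if anything, slightly more explicit about the analytic caveats than the paper itself.
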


\begin{proof}
We modify the argument of \cite[Theorem 5.2.3]{BGL} so that the dependence on $\psi(s)$ can be tracked. 
Define 
$$
\Lambda(s):=  \frac{\big\| P_s\big[ f^\frac12 \big]  \big\|_{L^{q(s)}(\gamma)} }{\psi(s)},\;\;\; s\ge0. 
$$
Note that 
$
\Lambda(0) =  \|f\|_{L^1(\gamma)}^\frac12 
$
since $\psi(0)=1$ and $q(0) = 2$,  and hence the assumption \eqref{e:30Apr-1} implies that 
$
\Lambda'(0) \le 0.
$
Now notice from direct calculations that 
$$
\Lambda'(0)
= 
\frac{d}{ds} \big( \big\| P_s\big[ f^\frac12 \big] \big\|_{q(s)} \big)|_{s=0} - \big\| f^\frac12 \big\|_{2} \psi'(0).
$$
In order to compute the first term, we invoke two formulae (see \cite{BGL}):  
\begin{align*}
&\frac{d}{dq} \| h \|_{L^q(\gamma)}^q = \frac1q \bigg( {\rm Ent}_{\gamma} (h^q) + \bigg(\int_{\mathbb{R}^n} h^q \, d\gamma\bigg) \log\, \bigg( \int_{\mathbb{R}^n} h^q \, d\gamma\bigg)  \bigg) ,\\
& 
\frac{d}{ds} \int_{\mathbb{R}^n} \big( P_sf  \big)^q\, d\gamma 
= -q(q-1) \int_{\mathbb{R}^n} \big( P_sf \big)^{q-2}  |\nabla P_sf|^2 \, d\gamma. 
\end{align*}
From the first formula, we see that
\begin{equation}\label{e:30Apr-3}
\frac{d}{dq} \| h \|_{L^q(\gamma)}= \frac1{q^2} \| h\|_q^{1-q} {\rm Ent}_{\gamma} (h^q) .
\end{equation}
On the other hand, we see from $\frac{d}{ds}q(s) = 2e^{2s}$,  \eqref{e:30Apr-3} and the second formula that 
\begin{align*}
\frac{d}{ds} \big( \big\| P_s\big[ f^\frac12 \big] \big\|_{q(s)} \big)
=& 
-(q(s)-1) \big\| P_s\big[ f^\frac12 \big]  \big\|_{q(s)}^{1- q(s)} \bigg( \int_{\mathbb{R}^n} \big( P_s\big[f^\frac12 \big] \big)^{q(s)-2}  |\nabla P_s\big[f^\frac12 \big]|^2 \, d\gamma   \bigg) \\
&+
2e^{2s} \frac1{q(s)^2} \big\| P_s\big[ f^\frac12 \big] \big\|_{q(s)}^{1-q(s)} {\rm Ent}_{\gamma} ( P_s\big[ f^\frac12 \big]^{q(s)}).
\end{align*}
This shows that 
\begin{equation}\label{e:30Apr-4}
\frac{d}{ds} \big( \big\| P_s\big[ f^\frac12 \big] \big\|_{q(s)} \big)|_{s=0}
= 
\frac{1}2 \big\|  f \big\|_{1}^{-\frac12}
\bigg( -\frac1{2} {\rm I}_{\gamma}(f) + {\rm Ent}_{\gamma} ( f ) \bigg)
\end{equation}
and hence 
\begin{align*}
\Lambda'(0)
& =
- \big\|  f^\frac12  \big\|_{2}^{-1} \bigg( \int_{\mathbb{R}^n}  |\nabla \big[f^\frac12 \big]|^2 \, d\gamma   \bigg) 
+
2 \frac1{4} \big\|  f^\frac12 \big\|_{2}^{-1} {\rm Ent}_{\gamma} ( \big[ f^\frac12 \big]^{2})
-
\| f \|_{1}^\frac12 \psi'(0) \\
& = 
\frac{1}2 \big\|  f^\frac12  \big\|_{2}^{-1}
\bigg( -\frac1{2} {\rm I}_{\gamma}(f) + {\rm Ent}_{\gamma} ( f )  - 2 \psi'(0) \|f\|_1 \bigg)
\end{align*}
which concludes the proof since $\Lambda'(0)\le 0$. 
\end{proof}

\begin{proof}[Proof of Theorem \ref{t:MainLSI}] 
Take an arbitrary $v$ satisfying the assumptions in Theorem \ref{t:MainLSI}.  
In view of the hypercontractivity inequality in Theorem \ref{t:MainHyper}(1), we apply Lemma \ref{l:Hyper->LogSob} with  
$$
\psi(s) := \big\| P_s\big[ \big( \frac{\gamma_\beta}{\gamma} \big)^\frac12 \big]  \big\|_{L^{q(s)}(\gamma)} = \big\| P_s\big[ f_*^\frac12 \big]  \big\|_{L^{q(s)}(\gamma)}, 
$$
where $f_*:=  \frac{\gamma_\beta}{\gamma}$. 
From \eqref{e:30Apr-4}, we notice that 
\begin{align*}
\psi'(0) & = \frac{1}2 \big\|  f_* \big\|_{1}^{-\frac12}
\bigg( -\frac1{2} {\rm I}_{\gamma}(f_*) + {\rm Ent}_{\gamma} ( f_* ) \bigg) \\
& =
\frac{n}{2}\bigg(\log \beta - 1 + \frac{1}{\beta} \bigg).
\end{align*}
Since $\|f_*\|_{L^1(\gamma)}=1$,  we conclude \eqref{e:ELS} for those $v$ satisfying the conditions in Theorem \ref{t:MainHyper}(1).  
\end{proof}

\subsection{Proof of Corollary \ref{c:FPversion}}
Formally, Lemma \ref{l:LogConVari} is enough to derive Corollary \ref{c:FPversion} from Theorems \ref{t:MainHyper} and \ref{t:MainLSI}. The argument becomes rigorous once we address the assumption $v \in L^2(\gamma_\beta^{-1})$. 
\begin{proof}[Proof of Corollary \ref{c:FPversion}]
	Take any $v\in {\rm FP}(\beta)$, that is, $v = v_*(t_*,\cdot)$ and $v_*$ is a solution to \eqref{e:v*} with some non-negative finite measure $\mu$. 
	For the purpose of proving \eqref{e:RegHC}, we may suppose that $\mu$ has compact support without loss of generality thanks to  a standard approximation argument and the monotone convergence theorem. 
	Then the explicit form of the solution 
	$$
	v_*(t,x) = \frac1{\big(4\pi \beta (1-e^{-2t})\big)}\int_{\mathbb{R}^n} e^{- \frac{|x-e^{-t}y|^2}{4\beta(1-e^{-2t})}}\, d\mu(y)
	$$
	yields that 
	$$
	v(x) = v_*(t_*,x) \le C_\beta \mu(\mathbb{R}^n) \big( \1_{|x|\le 100r_\mu} + e^{- \frac1{2\beta}|x|^2} \big) \in L^2(\gamma_\beta^{-1}),
	$$
	where $r_\mu>0$ is a radius of support of $\mu$. Hence we may apply Theorems \ref{t:MainHyper}(1) and \ref{t:MainLSI}(1) to obtain \eqref{e:RegHC} and \eqref{e:ELS}, respectively. 
\end{proof}

\subsection{Closure properties} \label{S3.4}
By making use of explicit formulae for the flows, it is possible to derive a somewhat stronger statement than the monotonicity of $Q(t)$ (a ``closure property") in terms of supersolutions of Fokker--Planck equations. In fact, we observe 
\begin{align}
&(1-e^{-2s})P_s\big[ h_t \Delta \big( \log\,  \frac{v_t}{\gamma} \big) \big] \label{e:UseExplicitForm1}\\
& = 
P_s\big[h_t x \cdot \nabla \big( \log\,  \frac{v_t}{\gamma} \big)\big] 
-
e^{-s} x \cdot P_s\big[h_t \nabla \big( \log\,  \frac{v_t}{\gamma} \big)\big] 
-
\frac{1-e^{-2s}}p P_s\big[ h_t \Gamma\big( \log\, \frac{v_t}{\gamma} \big)\big], \nonumber
\end{align}
and
\begin{align}
&\frac{(1-e^{-2s})^2}pP_s\big[ h_t \Delta \big( \log\,  \frac{v_t}{\gamma} \big) \big] \label{e:UseExplicitForm2}\\
& = 
P_s\big[ |x|^2 h_t  \big] 
-
e^{-2s}x^2P_s h_t 
-
(1-e^{-2s})P_s h_t \nonumber \\
&\quad -
2\frac{(1-e^{-2s})}p e^{-s}x \cdot P_s\big[ h_t \nabla \big(\log\, \frac{v_t}{\gamma} \big) \big]
-
\frac{(1-e^{-2s})^2}{p^2} P_s\big[ h_t \Gamma\big( \log\,  \frac{v_t}{\gamma} \big)\big], \nonumber
\end{align}
for the term involving $h_t\Delta\log\, \frac{v_t}{\gamma}$, and also 
\begin{align}
&\big|P_s\big[ x h_t  \big]\big|^2  \label{e:UseExplicitForm3}\\
& = 
e^{-2s} |x|^2 \big( P_s h_t \big)^2 
+
2e^{-s} \frac{1-e^{-2s}}{p} P_s h_t x \cdot P_s\big[ h_t \nabla \big( \log\, \frac{v_t}{\gamma}\big) \big]\nonumber\\
& \quad + 
\big( \frac{1-e^{-2s}}{p} \big)^2 \big| P_s\big[ h_t \nabla \big( \log\, \frac{v_t}{\gamma}\big) \big] \big|^2,\nonumber
\end{align}
and
\begin{align}
& P_s\big[ h_t \nabla \big( \log\, \frac{v_t}{\gamma}\big) \big]\cdot P_s\big[ x h_t  \big] \label{e:UseExplicitForm4}\\
& = 
e^{-s} P_sh_t x\cdot P_s\big[ h_t \nabla \big( \log\, \frac{v_t}{\gamma}\big) \big] + \frac{1-e^{-2s}}{p} \big| P_s\big[ h_t \nabla \big( \log\, \frac{v_t}{\gamma}\big) \big] \big|^2.\nonumber
\end{align} 
Applying  \eqref{e:UseExplicitForm1}--\eqref{e:UseExplicitForm4} with changes of variables, we can check that 
$I$, $N$, $R(h_t,h_tx)$, and $R(h_t,h_t \nabla (\log\, \frac{v_t}{\gamma}))$ appeared in Proposition \ref{Prop:Uhmmm2} all vanish identically. Combining this fact and Proposition \ref{Prop:Uhmmm2}, we may derive the following. 
\begin{theorem}\label{t:Closure}
Under the assumptions of Theorem \ref{t:MainHyper}(1) we have
\begin{equation}\label{e:Closure}
\partial_t v_t = \mathcal{L}^*_\beta v_t,\;\;\; v_0 = v\;\;\; \Rightarrow \;\;\; \partial_t \widetilde{v}_t \ge \mathcal{L}^*_{\beta_s} \widetilde{v}_t,
\end{equation}
where $\beta_s>0$ and $\widetilde{v}_t$ are given by \eqref{e:beta2} and \eqref{e:vtilde}, respectively. 
\end{theorem}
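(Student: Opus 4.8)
The plan is to obtain \eqref{e:Closure} as a pointwise refinement of the monotonicity of $Q$ used in the proof of Theorem~\ref{t:MainHyper}(1): instead of merely integrating the identity for $\partial_t\widetilde v_t$ against Lebesgue measure, I would show that $\partial_t\widetilde v_t-\mathcal L^*_{\beta_s}\widetilde v_t$ is already pointwise non-negative. The starting point is Proposition~\ref{Prop:Uhmmm2}, which writes the quantity $\tfrac1{\beta_s}\big(\tfrac{\beta_s}{\beta}\big)^2\tfrac{pp'}{q}\big(\partial_t\widetilde v_t-\mathcal L^*_{\beta_s}\widetilde v_t\big)$, divided by the positive factor $\widetilde v_t^{1-2/q}\gamma^{2/q}$, as the sum of the ``Cauchy--Schwarz term'' $P_sh_t\,P_s\big[h_t\,\Gamma(\log\tfrac{v_t}{\gamma_\beta})\big]-\big|P_s\big[h_t\nabla(\log\tfrac{v_t}{\gamma_\beta})\big]\big|^2$ and the explicit linear combination of $I$, $N$, $R(h_t,h_tx)$ and $R(h_t,h_t\nabla(\log\tfrac{v_t}{\gamma}))$ recorded there. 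Thus it suffices to show that the Cauchy--Schwarz term is non-negative and that the remaining combination vanishes identically.

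For the first point, since $P_s$ is an average against a probability kernel and $\Gamma\varphi=|\nabla\varphi|^2$, the coordinatewise Cauchy--Schwarz inequality $\big|P_s[h_t\partial_j\varphi]\big|^2\le P_sh_t\,P_s[h_t(\partial_j\varphi)^2]$, summed in $j$, gives $\big|P_s[h_t\nabla\varphi]\big|^2\le P_sh_t\,P_s[h_t\,\Gamma\varphi]$ for any differentiable $\varphi$, and taking $\varphi=\log\tfrac{v_t}{\gamma_\beta}$ makes the Cauchy--Schwarz term non-negative on $(0,\infty)\times\mathbb R^n$. For the vanishing of $I$, $N$ and the two $R$-terms I would use the explicit Ornstein--Uhlenbeck structure, namely $\nabla P_s=e^{-s}P_s\nabla$ together with the Mehler identity $P_s[x_if](x)=e^{-s}x_iP_sf(x)+(1-e^{-2s})P_s[\partial_if](x)$ --- equivalently the four identities \eqref{e:UseExplicitForm1}--\eqref{e:UseExplicitForm4} --- applied after the natural change of variables inside the $P_s$-integrals. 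For $R(f,\mathbf g)$ the cancellation is immediate: substituting $P_s[x\cdot\mathbf g]=e^{-s}x\cdot P_s\mathbf g+(1-e^{-2s})P_s[\mathrm{div}\,\mathbf g]$ and $P_s[fx]=e^{-s}xP_sf+(1-e^{-2s})P_s[\nabla f]$ into its definition, every term drops out. For $I$ and $N$ one rewrites the pieces containing $P_s[h_t\Delta\log\tfrac{v_t}{\gamma}]$, $x\cdot\nabla P_sh_t=\tfrac{e^{-s}}{p}x\cdot P_s[h_t\nabla\log\tfrac{v_t}{\gamma}]$ and $|x|^2(P_sh_t)^2$ via \eqref{e:UseExplicitForm1}--\eqref{e:UseExplicitForm4}, and then verifies that, after using the relations $\beta-\beta_s=\tfrac1p(\beta-1)(1-e^{-2s})$ and $\tfrac{q-1}{p-1}=e^{2s}$, the coefficient of each surviving type of term (the $\Gamma$-term, the $x\cdot\nabla$-term, the mixed term, and the $|x|^2(P_sh_t)^2$ and $(P_sh_t)^2$ terms) collapses to zero. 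This last step is the main obstacle: it is purely mechanical but lengthy, and it is precisely why Proposition~\ref{Prop:Uhmmm2} retains $I$, $N$, $R$ in visible form, since on a general measure space the identities \eqref{e:UseExplicitForm1}--\eqref{e:UseExplicitForm4} are not available and these terms need not vanish (cf. Section~\ref{subsection:hypergeneral}).

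Finally, combining the two points, the right-hand side of the inequality in Proposition~\ref{Prop:Uhmmm2} is non-negative, so since $1<p<q<\infty$ gives $p,p',q>1$ and $\beta_s=\beta-\tfrac1p(\beta-1)(1-e^{-2s})>0$ for every $\beta>0$, the factor $\tfrac1{\beta_s}\big(\tfrac{\beta_s}{\beta}\big)^2\tfrac{pp'}{q}\,\widetilde v_t^{1-2/q}\gamma^{2/q}$ is strictly positive and we may divide through to obtain $\partial_t\widetilde v_t-\mathcal L^*_{\beta_s}\widetilde v_t\ge0$, which is \eqref{e:Closure}. Integrating this over $\mathbb R^n$ and using $\int_{\mathbb R^n}\mathcal L^*_{\beta_s}\widetilde v_t\,dx=0$ recovers $Q'(t)\ge0$, so Theorem~\ref{t:Closure} genuinely strengthens the monotonicity exploited for Theorem~\ref{t:MainHyper}(1); the analogue under the hypotheses of Theorem~\ref{t:MainHyper}(2) follows along the same lines, with the single appeal to Cauchy--Schwarz replaced by whichever direction is appropriate to the relevant range of exponents.
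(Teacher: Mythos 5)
Your proposal is correct and follows essentially the same route as the paper's own proof of Theorem~\ref{t:Closure}: invoke Proposition~\ref{Prop:Uhmmm2}, use the Ornstein--Uhlenbeck commutation/Mehler identities (equivalently \eqref{e:UseExplicitForm1}--\eqref{e:UseExplicitForm4}) to show that $I$, $N$, $R(h_t,h_tx)$ and $R(h_t,h_t\nabla(\log\tfrac{v_t}{\gamma}))$ vanish identically, apply Cauchy--Schwarz to the remaining term, and divide by the positive prefactor. The paper, like you, leaves the mechanical cancellation of $I$ and $N$ as a routine check; your computation showing $R\equiv 0$ directly from the Mehler identity is exactly the intended argument.
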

It is easy to see that Theorem \ref{t:Closure} implies the monotonicity of $Q(t)$ and thus is a somewhat stronger result compared with Theorem \ref{t:MainHyper}(1). 

Furthermore the property \eqref{e:Closure} can be slightly generalised in the following form 
\begin{equation}\label{e:Closure2}
\partial_t v_t \ge \mathcal{L}_\beta^* v_t,\;\;\; v_0 = v 
\;\;\;\Rightarrow \;\;\; 
\partial_t \widetilde{v}_t \ge \mathcal{L}_{\beta_s}^* \widetilde{v}_t,
\end{equation}
under appropriate smoothness and decay conditions on $v$. Observations of this nature emerged in \cite{BB,BBCrell} in the context of the sharp Young convolution inequality and Brascamp--Lieb inequalities. We follow these papers and refer to \eqref{e:Closure2} as a \textit{closure property} (for supersolutions of Fokker--Planck equations) associated with the operation $v_t \mapsto \widetilde{v}_t$. A closely related closure property for supersolutions of the gaussian heat equation was investigated in \cite{ABBMMS} and, in fact, the result in \cite{ABBMMS} yields \eqref{e:Closure2} in the special case $\beta=1$. 
However there are critical differences distinguishing the case $\beta =1$ and the case $\beta\neq1$.  The most prominent and phenomenological one is the fact that the closure property \eqref{e:Closure} or \eqref{e:Closure2} for $\beta\neq1$ cannot be true unless one imposes  some additional structure (such as $\beta$-semi-log-convexity/concavity). On the other hand, the closure property for $\beta=1$ holds true in general (under very mild decay condition on $v$). It is also clear that the closure property for $\beta=1$ is not enough to obtain deficit estimate \eqref{e:RegHC} and, as we have seen, significant work is required to handle the case $\beta \neq 1$.   


Regarding the reverse inequality \eqref{e:RegRevHC}, in which case we assume $-\infty < q< p <1$, one can very closely follow the argument for $1<p<q$ to see that  
$$
\frac{pp'}{q} \big( \partial_t \widetilde{v}_t - \mathcal{L}_{\beta_s}^* \widetilde{v}_t \big)\ge0
$$ 
as long as $\beta_s >0$.  Notice also that $\beta_s \ge1$ holds in the following cases: (i) $\beta \ge 1$ and $pq >0$, or (ii) $0< \beta \le 1$ and $p q <0$.  As a consequence, we obtain the following. 
\begin{theorem}\label{P:ClosureFPRev}
Suppose that we are under the assumptions of Theorem \ref{t:MainHyper}(2).
\begin{enumerate}
\item 
In the case  $p<0$,  we suppose $\beta > 1$. Then 
$$
\partial_t v_t = \mathcal{L}_{\beta_s}^* v_t ,\;\;\; v_0=v \;\;\; \Rightarrow\;\;\; \partial_t \widetilde{v}_t \ge \mathcal{L}_{\beta_s} \widetilde{v}_t. 
$$
\item 
In the case $0< p <  1-e^{-2s} $,  we suppose $0<\beta < 1$. Then 
$$
\partial_t v_t = \mathcal{L}_{\beta_s}^* v_t ,\;\;\; v_0=v \;\;\; \Rightarrow\;\;\; \partial_t \widetilde{v}_t \ge \mathcal{L}_{\beta_s} \widetilde{v}_t. 
$$
\item 
In the case  $1-e^{-2s}< p <1$,  we suppose $\beta > 1$. Then 
$$
\partial_t v_t = \mathcal{L}_{\beta_s}^* v_t ,\;\;\; v_0=v \;\;\; \Rightarrow\;\;\; \partial_t \widetilde{v}_t \le \mathcal{L}_{\beta_s} \widetilde{v}_t. 
$$
\end{enumerate}
Here $\beta_s>0$ and $\widetilde{v}_t$ are given by \eqref{e:beta2} and \eqref{e:vtilde}, respectively. 
\end{theorem}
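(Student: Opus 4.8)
The plan is to re-run the proof of the forward closure property (Theorem~\ref{t:Closure}) in the reverse range of exponents, keeping track only of the signs that change. The crucial observation is that none of the \emph{identities} used there requires the ordering $1<p<q<\infty$: the expansions in Lemmas~\ref{l:Step1} and~\ref{l:Step2}, and hence the identity at the heart of Proposition~\ref{Prop:Uhmmm2}, follow solely from the diffusion property~\eqref{e:DiffL*}, the relations~\eqref{e:Formula1} and~\eqref{e:Comm}, and $\nabla P_s=e^{-s}P_s\nabla$, all valid for arbitrary real $p\ne 0,1$ and $q\ne 0,1$ with $\tfrac{q-1}{p-1}=e^{2s}$; the only structural input is $h_t=(v_t/\gamma)^{1/p}>0$, which also underlies the Cauchy--Schwarz bound $P_sh_t\,P_s[h_t\Gamma(\log\tfrac{v_t}{\gamma_\beta})]-|P_s[h_t\nabla(\log\tfrac{v_t}{\gamma_\beta})]|^2\ge 0$. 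Likewise, the vanishing of $I$, $N$, $R(h_t,h_tx)$ and $R(h_t,h_t\nabla(\log\tfrac{v_t}{\gamma}))$ follows from the Mehler-kernel identities~\eqref{e:UseExplicitForm1}--\eqref{e:UseExplicitForm4}, which are purely algebraic. So, in the reverse range as well, one arrives at
\[
\Big(\tfrac{\beta_s}{\beta}\Big)^2\tfrac1{\beta_s}\tfrac{pp'}{q}\,\frac{\partial_t\widetilde{v}_t-\mathcal{L}^*_{\beta_s}\widetilde{v}_t}{\widetilde{v}_t^{1-2/q}\gamma^{2/q}}\ \ge\ P_sh_t\,P_s\big[h_t\Gamma(\log\tfrac{v_t}{\gamma_\beta})\big]-\big|P_s\big[h_t\nabla(\log\tfrac{v_t}{\gamma_\beta})\big]\big|^2\ \ge\ 0,
\]
provided the single place where an inequality (rather than an identity) enters the proof of Proposition~\ref{Prop:Uhmmm2} still goes in the same direction.

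That single step bounds the $n$-term $p(1-\tfrac1\beta)(1-\tfrac{\beta_s}{\beta})P_sh_t\,P_s[nh_t]$ from Lemma~\ref{l:Step2} above by $p(1-\tfrac{\beta_s}{\beta})P_sh_t\,P_s[h_t\Delta\log\tfrac{v_t}{\gamma}]$. Writing $1-\tfrac{\beta_s}{\beta}=\tfrac{\beta-\beta_s}{\beta}$, this is equivalent to
\[
\tfrac{p(\beta-\beta_s)}{\beta}\,P_sh_t\,P_s\!\Big[h_t\big((1-\tfrac1\beta)n-\Delta\log\tfrac{v_t}{\gamma}\big)\Big]\le 0.
\]
By Lemma~\ref{l:LogPreserve}, the bracketed quantity $(1-\tfrac1\beta)n-\Delta\log\tfrac{v_t}{\gamma}$ is $\le 0$ when $v$ is $\beta$-semi-log-subharmonic with $\beta>1$, and $\ge 0$ when $v$ is $\beta$-semi-log-concave with $\beta<1$; on the other hand $\mathrm{sign}\big(\tfrac{p(\beta-\beta_s)}{\beta}\big)=\mathrm{sign}(\beta-1)$, since $\beta-\beta_s=\tfrac1p(\beta-1)(1-e^{-2s})$. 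As $P_s$ is positivity-preserving and $h_t>0$, the left-hand side of the last display is therefore $\le 0$ in all three cases of the theorem --- this is precisely where the sign of $p$ cancels --- so the remainder of the arguments of Proposition~\ref{Prop:Uhmmm2} and Theorem~\ref{t:Closure} proceeds verbatim and gives $\tfrac{pp'}{q}\big(\partial_t\widetilde{v}_t-\mathcal{L}^*_{\beta_s}\widetilde{v}_t\big)\ge 0$ whenever $\beta_s>0$.

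It then remains to read off signs. From $q=1-e^{2s}+e^{2s}p$ one has $q<0$ exactly when $p<1-e^{-2s}$ and $q>0$ exactly when $1-e^{-2s}<p<1$; since $p<1$, the sign of $\tfrac{pp'}{q}=\tfrac{p^2}{q(p-1)}$ is opposite to that of $q$. Hence in cases (1) and (2), where $q<0$, we get $\partial_t\widetilde{v}_t\ge\mathcal{L}^*_{\beta_s}\widetilde{v}_t$, and in case (3), where $q>0$, we get $\partial_t\widetilde{v}_t\le\mathcal{L}^*_{\beta_s}\widetilde{v}_t$. Finally $\beta_s=1+(\beta-1)\tfrac{q}{p}e^{-2s}>1$ in each case: in (1) and (3) we have $\beta>1$ and $q/p>0$, in (2) we have $\beta<1$ and $q/p<0$, and in all three $(\beta-1)\tfrac{q}{p}e^{-2s}>0$.

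The main obstacle is exactly this sign bookkeeping: in the reverse range each of $p$, $p'$, $q$ and $\beta-\beta_s$ may be negative, and they switch among the three sub-cases, so one must verify that every reversal is compensated --- the robust fact making this work being $\mathrm{sign}\big(\tfrac{p(\beta-\beta_s)}{\beta}\big)=\mathrm{sign}(\beta-1)$, which is insensitive to the sign of $p$; it is this mechanism that produces the reversed conclusion $\le$ in case~(3), as anticipated in the discussion before the statement. A secondary and routine point is to justify that $\widetilde{v}_t$ is regular enough in $(t,x)$ for the closure inequality to be meaningful and that the interchange of $\partial_t$ with $P_s$ used in Lemma~\ref{l:Step1} remains licit; this is handled as in Lemma~\ref{l:Technical1} and the proof of Theorem~\ref{t:MainHyper}, the only changes being that various exponents are replaced by $|p|$ and $|q|$ and that the lower bound $\inf v/\gamma_\beta>0$ is used to control the negative powers.
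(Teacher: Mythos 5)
Your proof is correct and takes essentially the same approach as the paper, which in Section \ref{S3.4} disposes of this statement with a brief remark that the forward argument carries over and one need only track the sign of $\frac{pp'}{q}$ (and that $\beta_s>0$ in the stated ranges). Your write-up is a more careful unpacking of that remark — in particular, isolating the single inequality step and observing that $\mathrm{sign}\big(\tfrac{p(\beta-\beta_s)}{\beta}\big)=\mathrm{sign}(\beta-1)$ regardless of the sign of $p$ is exactly the compensation mechanism the paper leaves implicit.
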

The above theorem can be viewed as a somewhat stronger version of Theorem \ref{t:MainHyper}(2).

\subsection{Further remarks on the proof of Theorem \ref{t:MainHyper}} 
\label{section:furtherremarksflow} The Fokker--Plank equation $\partial_t v = \mathcal{L}_\beta^* v$ has a close connection\footnote{For example, when $n=1$ it is straightforward to verify that given a solution $v$ to $\partial_t v = \mathcal{L}_\beta^* v$ on $(0,\infty) \times \mathbb{R}$, if we define $u$ on $(0,\infty) \times \mathbb{R}$ by
\[
u(\tau,y) = \frac{1}{(2\tau + 1)^{1/2}} v\bigg(\frac{1}{2}\log(2\tau + 1), \frac{y}{(2\tau + 1)^{1/2}}\bigg) 
\]
then $\partial_\tau u = \beta \partial_{yy}u$ holds with $u(0) = v(0)$. 
} with the classical heat equation $\partial_t u = \beta \Delta u$ and this raises the obvious question of why we have opted to use Fokker--Planck flow as the basis for our main results. The short yet somewhat opaque answer is that Fokker--Planck flow seems to be significantly better suited to 
framework.

To explain this in more detail, for the sake of simplicity, let us consider the one-dimensional case $n=1$. Our discussion here is based on the fact that one may write
\begin{equation*}
\| P_s[f^\frac{1}{p}] \|_{L^q(\gamma)} = C \| (f\gamma)^\frac{1}{p} * \gamma_\lambda  \|_{L^q(dx)}
\end{equation*}
for suitably chosen constants  $C$ and $\lambda$ (depending on $p$ and $q$); see, for example, \cite[Theorem 5]{BecknerAnnals}. As a result, the claim regarding the regularised hypercontractivity inequality \eqref{e:RegHC} in Theorem \ref{t:MainHyper}(1) is equivalent to
\begin{equation} \label{e:convolutionversion}
\| v^\frac{1}{p} * \gamma_\lambda  \|_{L^q(dx)} \leq \| \gamma_\beta^\frac{1}{p} * \gamma_\lambda  \|_{L^q(dx)}  
\end{equation}
for $v$ satisfying the assumptions in Theorem \ref{t:MainHyper}(1) which are normalised by $\int_{\mathbb{R}} v \, dx = 1$. It is tempting to try to prove \eqref{e:convolutionversion}  using classical heat flow associated with the Laplacian on $\mathbb{R}$. That $v = \gamma_\beta$ is a maximiser indicates that it would be natural to evolve $v$ according to the classical heat equation
\begin{equation} \label{e:betaover2}
\partial_t u = \frac{\beta}{2} \partial_{xx} u, \quad u(t_0) = v 
\end{equation}
since we have for any fixed $t_0\ge0$ that
\[
(t - t_0)^\frac{1}{2} u(t,(t - t_0)^\frac{1}{2}y) \to  \gamma_\beta(y) \quad (t \to \infty).
\]
Moreover, it follows from \cite[Theorem 6]{BB} that 
\[
t \mapsto t^{\frac{1}{2}(\frac{1}{p} + \frac{1}{r} - 1 - \frac{1}{q})} \| u_1(t,\cdot)^{\frac{1}{p}} * u_2(t,\cdot)^{\frac{1}{r}}  \|_{L^q(dx)}
\]
gives rise to a non-decreasing functional if $\frac{1}{p} + \frac{1}{r} \geq 1 + \frac{1}{q}$ and the $u_j$ are non-negative solutions of
\[
\partial_t u_j = \frac{\beta_j}{2} \partial_{xx} u_j
\]
for appropriate diffusion parameters $\beta_1, \beta_2 > 0$ (we refer the reader to \cite[Section 1.3]{BB} for the restrictions on the diffusion parameters).
However, as far as we can tell, it does not seem clear how to make use of this result to prove \eqref{e:convolutionversion} under the conditions of Theorem \ref{t:MainHyper}(1)\footnote{If one is only interested in proving the rather weaker result in Corollary \ref{c:FPversion} (where $v \in \textrm{FP}(\beta)$ and $\beta \geq 1$), then an approach based on \eqref{e:convolutionversion} and the results in \cite{BB} is possible by a careful choice of $r$ and the $\beta_j$.}. The issue seems to be related to the preservation of $\beta$-semi-log-subharmonicity/$\beta$-semi-log-concavity along the lines of Lemma \ref{l:LogPreserve}, and mis-match of that $\beta$ parameter with the $\frac{\beta}{2}$ diffusion parameter in \eqref{e:betaover2}.

In the above sense, our framework seems most harmonious with Fokker--Planck rather than classical heat flow. We also suspect that Fokker-Planck flow seems the more natural candidate for generalising the Ornstein--Uhlenbeck semigroup to more general Markov semigroups; see the forthcoming Section \ref{S5} for further details in this direction.

\subsection{Proof of Corollary \ref{Cor:Matrix}: Hypercontractivity and the LSI}
Let us first prove the statements in part (1) concerning hypercontractivity and the LSI. For this, in fact, it suffices to show \eqref{e:HCMatrixConv} since we may then obtain \eqref{e:LSIMatrixConv} via Lemma \ref{l:Hyper->LogSob}. 
Moreover, we may assume $B = {\rm diag}\, (\beta_1,\ldots,\beta_n)$ since matters are rotationally invariant. 

For the sake of simplicity, we discuss the case $n=2$; the case $n\ge3$ can be obtained by an induction argument. 
Therefore,  if we write $u:= \frac{v}{\gamma}$, then our goal boils down to show
\begin{equation}
\big\| P_s \big[ u^\frac1p \big] \big\|_{ L^q(\gamma) }
\le 
\prod_{\substack{i=1,2 \\ \beta_i \geq 1}}
\beta_i^{ \frac{1}{2p'} } (\beta_i)_s^{ - \frac{1}{2q'} }
\bigg(
\int_{\mathbb{R}^2} 
u\, d\gamma
\bigg)^{ \frac{1}{p} }  
\end{equation}
provided 
\begin{equation}\label{e:Diagonal} 
\nabla^2 \log\, u \ge  {\rm diag}\, (1-\beta_1^{-1},1-\beta_2^{-1}).
\end{equation}  
For this, we employ the standard tensorising argument to reduce matters to the one-dimensional result in Theorem \ref{t:MainHyper}(1). 

Firstly, we consider the case $\beta_1 \geq 1$ and $\beta_2 \ge1$. Write 
\begin{align*}
P_s \big[ u^{ \frac1p } \big](x_1,x_2)
&= 
\int_{\mathbb{R}} \bigg( \int_{\mathbb{R}} 
u( y_1,y_2 )^\frac1p p_s( x_1, y_1 )\, dy_1\bigg) p_s( x_2, y_2 ) \,  dy_2, 
\end{align*}
where the integral kernel is given by 
$$
p_s(x,y) := \frac{1}{ ( 2\pi( 1-e^{-2s} ) )^\frac12 } e^{ - \frac12 \frac{ | e^{-s}x - y |^2 }{ 1-e^{-2s} } },\;\;\; (x,y) \in \mathbb{R}\times \mathbb{R}.  
$$
Then we use Minkowski's integral inequality to see that 
\begin{align*}
&\big\| P_s \big[ u^\frac1p \big] \big\|_{ L^q(\gamma) }^q \\
&= 
\int_{\mathbb{R}} \bigg\|
\int_{\mathbb{R}} \bigg( \int_{\mathbb{R}} 
u( y_1,y_2 )^\frac1p p_s( x_1, y_1 )\, dy_1\bigg) p_s( x_2, y_2 ) \,  dy_2
\bigg\|_{L^q_{x_1}(\gamma)}^q\, d\gamma(x_2)\\
&\le
\int_{\mathbb{R}} 
\bigg( 
\int_{\mathbb{R}} 
\bigg\|
\int_{\mathbb{R}} 
u( y_1,y_2 )^\frac1p p_s( x_1, y_1 )\, dy_1
\bigg\|_{L^q_{x_1}(\gamma)}
p_s( x_2, y_2 ) \,  dy_2
\bigg)^q
\, d\gamma(x_2).
\end{align*}
For each fixed $y_2$, we know that 
$$
\partial_{y_1}^2 \log\, u(y_1,y_2) \ge 1 - \frac{1}{\beta_1}
$$
from the assumption \eqref{e:Diagonal}.  Hence we may apply Theorem \ref{t:MainHyper}(1) on $\mathbb{R}$ to have 
\begin{align*}
\bigg\|
\int_{\mathbb{R}} 
u( y_1,y_2 )^\frac1p p_s( x_1, y_1 )\, dy_1
\bigg\|_{L^q_{x_1}(\gamma)}  &\le 
\beta_1^{ \frac{1}{2p'} } (\beta_1)_s^{ - \frac{1}{2q'} }
\bigg( 
\int_{\mathbb{R}}
u(x_1,x_2)\, d\gamma(x_1)
\bigg)^\frac1p \\
&=: 
\beta_1^{ \frac{1}{2p'} } (\beta_1)_s^{ - \frac{1}{2q'} }
V(x_2)^\frac1p 
\end{align*}
which implies that 
\begin{align*}
&\big\| P_s \big[ u^\frac1p \big] \big\|_{ L^q(\gamma) }^q 
\le 
\big( 
\beta_1^{ \frac{1}{2p'} } (\beta_1)_s^{ - \frac{1}{2q'} }
\big)^q
\int_{\mathbb{R}} 
\bigg( 
\int_{\mathbb{R}} 
V(x_2)^\frac1p 
p_s( x_2, y_2 ) \,  dy_2
\bigg)^q
\, d\gamma(x_2).
\end{align*}
We then notice from \eqref{e:Diagonal} that $ \partial_{x_2}^2 \log\,  u(x_1,x_2) \ge 1 - \frac{1}{\beta_2} $ for each $x_1$ and hence 
$$
\partial_{x_2}^2 \log\, V \ge 1 - \frac{1}{\beta_2}
$$
since the superposition preserves the log-convexity; see for instance the proof of Lemma 1.3 in \cite{EL}. 
This allows us to apply Theorem \ref{t:MainHyper}(1) on $\mathbb{R}$ again to conclude 
\begin{align*}
\big\| P_s \big[ u^\frac1p \big] \big\|_{ L^q(\gamma) }^q 
\le& 
\big( 
\prod_{i=1,2}
\beta_i^{ \frac{1}{2p'} } (\beta_i)_s^{ - \frac{1}{2q'} }
\big)^q
\bigg(
\int_{\mathbb{R}} 
V(x_2)\, d\gamma(x_2)
\bigg)^{ \frac{q}{p} }\\
=& 
\big( 
\prod_{i=1,2}
\beta_i^{ \frac{1}{2p'} } (\beta_i)_s^{ - \frac{1}{2q'} }
\big)^q
\bigg(
\int_{\mathbb{R}^2} 
u\, d\gamma
\bigg)^{ \frac{q}{p} }
\end{align*}
as desired.

The case where either $\beta_1$ or $\beta_2$ (or both) are less than or equal to $1$ may be handled in a similar manner. For example, if $\beta_1 \leq 1$ then rather than applying Theorem \ref{t:MainHyper}(1) to $y_1 \mapsto u(y_1,y_2)$, we simply apply classical hypercontractivity with constant $1$.

Statement (2) can be proved in a similar way except a little more care is needed at the point where we obtained the semi-log-convexity of $V$.  To show statement (2),  we need to ensure the semi-log-concavity of $V$. Although superposition does not always preserve the log-concavity, we can obtain the semi-log-concavity of $V$ as in the argument in Lemma \ref{l:LogPreserve} based on the Pr\'{e}kopa--Leindler inequality.  \qed

\section{Talagrand's inequality : Proof of Theorem \ref{t:MainTalagrand} and Corollary \ref{Cor:Matrix}}\label{S3.5}

For Talagrand's inequality in both Theorem \ref{t:MainTalagrand} and Corollary \ref{Cor:Matrix}, the input $v$ is either semi-log-convex or semi-log-concave and so it turns out that, via a tensorisation argument, it suffices to prove the one-dimensional case in Theorem \ref{t:MainTalagrand}. In particular, the tensorisation argument yields the Talagrand inequalities appearing in Corollary \ref{Cor:Matrix} for general $n$, and these are stronger than the corresponding statements in Theorem \ref{t:MainTalagrand} when $n \geq 2$. 

We first fix the notation and some preliminaries. In \eqref{e:TalCl}, the definition of the Wasserstein distance $W_2$ is as follows. First, take two probability measures $\mu,\nu$  whose second moment is finite. Let $\Pi (\mu,\nu)$ be the set of all couplings of $\mu$ and $\nu$, namely probability measures $\pi$ on $\mathbb{R}^n\times \mathbb{R}^n$ such that $\pi(A\times \mathbb{R}^n) = \mu(A)$ and $\pi(\mathbb{R}^n\times A) = \nu(A)$ hold for all Borel sets $A\subset\mathbb{R}^n$. Then 
\begin{equation}\label{e:DefWassDist}
W_2(\mu,\nu)^2:= \inf_{\pi \in \Pi(\mu,\nu)} \int_{\mathbb{R}^n} |x-y|^2\, d\pi(x,y). 
\end{equation}
It will be important that $W_2(\mu,\nu)$ can be represented by using \emph{Brenier's map} (pushing forward $\mu$ onto $\nu$) which we now describe. For two given probability measures $\mu, \nu$ on $\mathbb{R}^n$ which are absolutely continuous with respect to the $n$-dimensional Lebesgue measure,  Brenier's theorem ensures the existence of a map $T: \mathbb{R}^n\to\mathbb{R}^n$, which is called Brenier's map from $\mu$ to $\nu$,  such that 
\begin{equation}\label{e:Transport}
\int_{\mathbb{R}^n} h(T(x))\, d\mu(x) = \int_{\mathbb{R}^n} h(x)\, d\nu(x)
\end{equation}
holds for all $h \in L^\infty(\mathbb{R}^n)$.  If we slightly abuse notation and write $d\mu (x) = \mu(x)dx$ and $d\nu(x) = \nu(x)dx$ assuming $\mu,\nu$ have densities with respect to Lebesgue measure, then \eqref{e:Transport} leads to the Monge--Amp\'{e}re equation
\begin{equation}\label{e:MA}
\mu(x) = \nu(T(x))  {\rm det}\,  \nabla T(x)
\end{equation} 
for $\mu$-a.e. $x\in\mathbb{R}^n$. The reader is referred to \cite{BGL, Vil1, Vil2} for further details. 

In terms of Brenier's map $T$ pushing forward $\mu$ onto $\nu$, we have 
\begin{equation}\label{e:W2Brenier}
	W_2(\mu,\nu)^2 = \int_{\mathbb{R}^n} |x - T(x)|^2\, d\mu(x). 
\end{equation} 
We refer the reader to \cite{Vil1} for further details.

As promised, we first prove Theorem \ref{t:MainTalagrand} for $n=1$. 
\begin{proposition}\label{t:RegTal}
	Let $\beta>0$ and suppose the twice differentiable $v:\mathbb{R}\to(0,\infty)$ satisfies $\int_{\mathbb{R}} v\, dx =1$ and $\int_{\mathbb{R}} x^2v\, dx<\infty$. 
	
(1) If $\beta > 1$ and $v$ satisfies 
		$$
		0\ge (\log\, v)'' \ge - \frac1{\beta},
		$$
then we have \eqref{e:TIMWeak}.
		
(2) If $0<\beta < 1$ and $v$ is $\beta$-semi-log-concave, then we have \eqref{e:TIMWeak}.
\end{proposition}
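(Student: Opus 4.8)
The plan is to prove this one-dimensional statement by a mass transportation argument, in the spirit of Cordero-Erausquin's transport proof of the logarithmic Sobolev inequality. Normalise $\int_{\mathbb{R}} v\,dx = 1$ and let $T\colon\mathbb{R}\to\mathbb{R}$ be Brenier's map pushing $\gamma$ forward onto $v\,dx$; in dimension one $T$ is an increasing function and the Monge--Amp\`ere equation \eqref{e:MA} becomes $\gamma(x) = v(T(x))\,T'(x)$. Writing $\theta(x):=T(x)-x$, I would substitute $v(T(x)) = \gamma(x)/T'(x)$ into the identity ${\rm Ent}_\gamma(v/\gamma) = \int_{\mathbb{R}} \log\frac{v(T(x))}{\gamma(T(x))}\,d\gamma(x)$ (valid by \eqref{e:Transport}), use $\tfrac12(x-T(x))^2 - \tfrac12\big(T(x)^2-x^2\big) = -x\,\theta(x)$ together with \eqref{e:W2Brenier}, and then integrate by parts via $\gamma'(x) = -x\gamma(x)$, to arrive at
\[
\frac12 W_2(\gamma, v)^2 - {\rm Ent}_\gamma\Big(\frac{v}{\gamma}\Big) = \int_{\mathbb{R}} \big(-\theta'(x) + \log(1 + \theta'(x))\big)\,d\gamma(x) = \int_{\mathbb{R}} g\big(T'(x)-1\big)\,d\gamma(x),
\]
where $g(t):=\log(1+t)-t$. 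Applying the same computation to $v=\gamma_\beta$, for which $T(x)=\sqrt{\beta}\,x$, shows that the right-hand side of \eqref{e:TIMWeak} is exactly $g(\sqrt{\beta}-1)$. Hence it suffices to establish the pointwise inequality $g\big(T'(x)-1\big)\le g(\sqrt{\beta}-1)$ for every $x$, since integrating it against the probability measure $\gamma$ then yields \eqref{e:TIMWeak}.

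The key step is a one-dimensional Caffarelli-type estimate comparing $T'$ with $\sqrt{\beta}$: in case (1) that $T'(x)\ge\sqrt{\beta}$ for all $x$, and in case (2) that $T'(x)\le\sqrt{\beta}$ for all $x$. To prove the case (2) bound, I would differentiate $\log\gamma(x) = \log v(T(x)) + \log T'(x)$ twice and set $h:=\log T'$; using $T''=T'h'$ one obtains $h''(x) = -1 - (\log v)''(T(x))\,T'(x)^2 - (\log v)'(T(x))\,T'(x)\,h'(x)$, so at a point where $h$ attains its maximum ($h'=0$, $h''\le 0$) one gets $(\log v)''(T(x))\,T'(x)^2 \ge -1$; since $(\log v)''\le -1/\beta < 0$ this forces $T'(x)^2 \le -1/(\log v)''(T(x)) \le \beta$, whence $T'\le\sqrt{\beta}$ at the maximum of $h$ and therefore everywhere. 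For case (1) I would apply the same reasoning to the inverse map $S:=T^{-1}$, which pushes $v\,dx$ onto $\gamma$ and satisfies $S'(T(x)) = 1/T'(x)$: the analogous maximum-principle computation for $\log S'$ gives $S'(x)^2 \le -(\log v)''(x) \le 1/\beta$, i.e.\ $S'\le 1/\sqrt{\beta}$ and hence $T'\ge\sqrt{\beta}$.

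With these bounds in hand the conclusion is immediate. In case (1) we have $\beta>1$, so $\theta'(x)=T'(x)-1\ge\sqrt{\beta}-1\ge 0$, and $g$ is non-increasing on $[0,\infty)$ (because $g'(t)=-t/(1+t)$), so $g(\theta'(x))\le g(\sqrt{\beta}-1)$. In case (2) we have $0<\beta<1$, so $-1<\theta'(x)=T'(x)-1\le\sqrt{\beta}-1<0$, and $g$ is non-decreasing on $(-1,0)$, so again $g(\theta'(x))\le g(\sqrt{\beta}-1)$. Integrating against $\gamma$ gives \eqref{e:TIMWeak} in both cases.

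The main obstacle I expect is purely technical: justifying the maximum-principle step — that $h=\log T'$ (respectively $\log S'$) actually attains its supremum, or else controlling its behaviour at infinity — along with the change of variables and the integration by parts used to obtain the first display, and in particular the vanishing of the boundary terms there. These points require a little care with the regularity and the tail behaviour of $v$ and of the transport map $T$, and should be dispatched by a standard approximation/truncation argument using the standing hypotheses that $v$ is twice differentiable and strictly positive with $\int_{\mathbb{R}} v\,dx = 1$ and $\int_{\mathbb{R}} x^2 v\,dx<\infty$ (the extra log-concavity assumption in case (1) serving to keep the transport map under control); alternatively one may quote Caffarelli's contraction theorem directly for the bounds on $T'$.
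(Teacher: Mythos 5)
Your proposal is correct and follows the paper's own mass-transport argument: the paper likewise uses Brenier's map, the Monge--Amp\`ere equation, integration by parts against $\gamma' = -x\gamma$ to arrive at the identity $\frac12 W_2(\gamma,v)^2 - \mathrm{Ent}_\gamma(v/\gamma) = \int_{\mathbb{R}} (\log T' - T' + 1)\,d\gamma$ (i.e.\ your $\int g(T'-1)\,d\gamma$), Caffarelli's contraction theorem to compare $T'$ with $\sqrt{\beta}$, and then the monotonicity of $t\mapsto \log t - t + 1$ on either side of $t=1$. The only minor deviation is that for case (1) the paper transports $v \to \gamma$ rather than $\gamma \to v$ --- a cosmetic choice that makes the decay estimate \eqref{e:LimIntByParts} for the boundary term more transparent --- and the technical points you defer (boundary terms, control of $T'$ at infinity) are handled in the paper via the regularisation $v \mapsto v\,e^{-\varepsilon x^2/2}$ together with the resulting linear growth of the transport map.
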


\begin{proof}
First of all, we recall that \eqref{e:TIMWeak} is equivalent to 
$$
		\frac12 W_2(\gamma,v)^2 - {\rm Ent}_\gamma\big( \frac{v}{\gamma} \big)
		\le 
		1 + \frac{1}{2} \log \beta - \sqrt{\beta}.
$$

Let us consider $\beta > 1$ first. 
In this case, since $(-\log\, v)'' \ge 0$, considering $v(x)e^{-\frac{\varepsilon}{2} x^2}$ for small enough $\varepsilon >0$ and by an approximation, we may assume $(-\log\, v)'' \ge \varepsilon$.
Let $T$ be Brenier's map from $v$ to $\gamma$. Then we have 
$$
v(x) = \gamma(T(x)) T'(x). 
$$
With this and \eqref{e:W2Brenier} in mind, we see that 
\begin{align*}
&{\rm Ent}_{\gamma}\big(\frac{v}{\gamma}\big) - \frac{1}{2} W_2(v, \gamma)^2 \\
	=& \int_{\mathbb{R}} \big(\frac{1}{2}|x|^2 - \frac{1}{2}|T(x)|^2 + \log\, T'(x) \big) \, dv -\frac{1}{2}\int_{\mathbb{R}} |T(x)-x|^2\, dv\\
	=& -\int_{\mathbb{R}} |T(x)|^2\, dv + \int_ \mathbb{R} \log\, T'(x) \, dv + \int_{\mathbb{R}} xT(x)\, dv \\
	=& -\int_{\mathbb{R}} |x|^2\, d\gamma + \int_ \mathbb{R} \log T'(x)\, dv + \int_{\mathbb{R}} xT(x)T'(x) \gamma(T(x))\, dx. 
\end{align*}
If we notice that $T(x)T'(x) \gamma(T(x))=(-\gamma(T(x)))' $, then integration by parts shows that 
\begin{align*}
{\rm Ent}_{\gamma}\big(\frac{v}{\gamma}\big) - \frac{1}{2} W_2(v, \gamma)^2 
		= \int_ \mathbb{R} \big(-1 - \log \frac{1}{T'(x)} + \frac{1}{T'(x)}\big)\,  dv.
\end{align*}
In this step we used that
	\begin{equation}\label{e:LimIntByParts}
\lim_{|x| \to +\infty} |x|\gamma(T(x)) = 0
	\end{equation}
and this can be justified as follows. 
Let $S$ be Brenier's map pushing forward $\gamma$ to $v$. Then Caffarelli's contraction theorem yields $S' \le \frac{1}{\sqrt{\varepsilon}}$. 
On the other hand, since $T(S(x))=x$ for $x \in \mathbb{R}$, we obtain 
$T'(S(x))S'(x) =1$ for $x \in \mathbb{R}$.
Hence, we enjoy $T' \ge \sqrt{\varepsilon}$ on $\mathbb{R}$ from which we can deduce $|T(x)| \ge c_1 |x| - c_2$ for $|x|$ sufficiently large. Here $c_1, c_2 > 0$ are some constants depending on $\varepsilon$.
Since $|x| \gamma(T(x)) \le |x| \gamma(c_1 |x| - c_2)$ for $|x|$ sufficiently large, we can obtain \eqref{e:LimIntByParts}.

We now appeal to our assumption $(-\log\, v)'' \le  \frac1\beta$ which, by applying Caffarelli's contraction theorem, gives $T' \le \frac{1}{\sqrt{\beta}} \le1$. 
With this in mind, we notice that $x \mapsto -1 - \log x + x$ is monotone increasing in $x \in (1,\infty)$
from which we conclude 
\begin{align*}
{\rm Ent}_{\gamma}\big(\frac{v}{\gamma}\big) - \frac{1}{2} W_2(v, \gamma)^2 \ge  -1 - \frac{1}{2} \log \beta + \sqrt{\beta}.
\end{align*}

Let us next consider $0<\beta < 1$. In this case, our assumption is $(-\log\, v)''\ge \frac1\beta$ and thus if $T$ is Brenier's map from $\gamma$ to $v$, then $T'\le \sqrt{\beta}\le1$ follows from Caffarelli's contraction theorem.

Next note that by using
$$
\gamma(x) = v(T(x)) T'(x)
$$ 
we obtain
\begin{align*}
	{\rm Ent}_\gamma\big( \frac{v}{\gamma} \big)
	=& 
	\int_{\mathbb{R}} \log\, v(T(x))\, d\gamma - \int_{\mathbb{R}} \log\, \gamma(T(x))\, d\gamma \\
	=& 
	\int_{\mathbb{R}} \log\, \gamma\, d\gamma - \int_{\mathbb{R}} \log\, T'(x)\, d\gamma - \int_{\mathbb{R}} \log\, \gamma(T(x))\, d\gamma \\
	=& 
	\frac12\int_{\mathbb{R}} \big( |T(x)|^2 - |x|^2 \big)\, d\gamma - \int_{\mathbb{R}} \log\, T'(x)\, d\gamma\\
	=& 
	\frac12\int_{\mathbb{R}} \big| T(x) - x \big|^2\, d\gamma 
	- \int_{\mathbb{R}} |x|^2\, d\gamma
	+  \int_{\mathbb{R}}xT(x)\, d\gamma - \int_{\mathbb{R}} \log\, T'(x)\, d\gamma.
\end{align*}
For the third term, we perform integration by parts to see that 
\begin{align*}
	\int_{\mathbb{R}}xT(x)\, d\gamma
	=
	\int_{\mathbb{R}}T(x) (-\gamma)'\, dx
	=
	\int_{\mathbb{R}}T'(x) \, d\gamma.
\end{align*}
This step can be justified since $T'\le1$ means that $|T(x)|\gamma(x) \le C|x|\gamma(x)$ for some constant $C>0$ and $|x|$ sufficiently large.
By  \eqref{e:W2Brenier} we may now write 
\begin{align*}
	{\rm Ent}_\gamma\big( \frac{v}{\gamma} \big)
	=&
	\frac12 W_2(\gamma,v)^2 
	- \int_{\mathbb{R}} \big( \log\, T'(x) - T'(x) +1\big)\, d\gamma
\end{align*}
and 
$$
{\rm Ent}_\gamma\big( \frac{v}{\gamma} \big)
	\ge 
	\frac12 W_2(\gamma,v)^2 
	- \big( \log\, \sqrt{\beta} - \sqrt{\beta} +1\big)
$$
follows since $T'\le \sqrt{\beta}$.
\end{proof}

\begin{remark}
The technical condition $(-\log\, v)'' \ge 0$  imposed in the case $\beta > 1$ is only used to justify an integration by parts step in the proof and thus it is reasonable to hope that it could be removed. For instance, when $v$ is symmetric and has a gaussian concentration property, then one may ensure $|T(x)| \ge c_1|x| - c_2$ for some constants $c_1, c_2>0$ and for all large enough $|x|$ from which one can show \eqref{e:LimIntByParts}; see  \cite[Theorem 1.1]{ColFat}.
\end{remark}

\begin{proof}[Proof of Corollary \ref{Cor:Matrix}: Talagrand's inequality]
As in the proof of Corollary \ref{Cor:Matrix} for hypercontractivity,  we show only the assertion in (1) with $n=2$ and $B = {\rm diag}\, (\beta_1,\beta_2)$. 
To this end, we set 
\begin{align*}
v^1 (y_1) := \int_{\mathbb{R}}\, v(y_1, z)\, dz,\;\;\; y_1 \in \mathbb{R}
\end{align*}
and 
\begin{align*}
v(y_2 \,|\, y_1) := \frac{ v(y_1, y_2) }{ \int_{\mathbb{R}}\, v(y_1, z)\, dz},\;\;\; (y_1,y_2) \in \mathbb{R}^2.
\end{align*}
Then $\int_{\mathbb{R}} v^1\, dy_1= 1$ and $\int_{\mathbb{R}} v(y_2\,|\,y_1)\, dy_2 = 1$ for every $y_1 \in \mathbb{R}$ which follows from  $\int_{\mathbb{R}^2} v\, dx = 1$.
Furthermore, we can easily see that 
\begin{align*}
\int_{\mathbb{R}} |y_1|^2 v^1(y_1)\, dy_1 < \infty, \;\;\; \int_{\mathbb{R}} |y_2|^2 v(y_2\,|\,\widetilde{y}_1)\, dy_2 < \infty
\end{align*}
for every $\widetilde{y}_1 \in \mathbb{R}$ since we have  $\int_{\mathbb{R}^2} |x|^2v\, dx<\infty$.
Let $\pi^1 \in \Pi(\gamma, v^1)$ and $\pi(\cdot, \cdot\,|\,y_1) \in \Pi(\gamma, v(\cdot \,|\,y_1))$ be optimal couplings attaining equality in \eqref{e:DefWassDist}, respectively.
We consider a measure $d\pi((x_1,x_2), (y_1, y_2)) := d\pi(x_2, y_2\,|\,y_1)d\pi^1(x_1, y_1)$ on $\mathbb{R}^2 \times \mathbb{R}^2$, then we enjoy that $\pi \in \Pi( \gamma \otimes \gamma, v)$.
Hence by the definition in \eqref{e:DefWassDist}, we see that 
\begin{align*}
&\frac12 W_2(\gamma\otimes \gamma, v)^2 \\
	&\le 
	\frac12
	\int_{\mathbb{R}^2 \times \mathbb{R}^2}\, |x - y|^2\, d\pi(x, y) \\
	&=  
	\frac12 \int_{\mathbb{R} \times \mathbb{R}}\, |x_1 - y_1|^2\, d\pi^1(x_1, y_1) + \frac12 \int_{\mathbb{R}^2} \bigg( \int_{\mathbb{R}^2}\, |x_2 - y_2|^2\, d\pi(x_2, y_2\,|\,y_1) \bigg) d\pi^1(x_1, y_1)\\
	&= 
	\frac12 W_2(\gamma, v^1)^2 + \frac12 \int_{\mathbb{R}} W_2(\gamma, v(\cdot\,|\,y_1))^2\, dv^1(y_1).
\end{align*}
By the assumption for $\nabla^2 \log\, v$,  we have 
$$
\partial_{y_2}^2 \log\, v(y_2\,|\,y_1) \ge - \frac{1}{\beta_2}, \;\;\; \partial_{y_2}^2 \log\, v(y_2\,|\,y_1) \le 0
$$
for every $y_1 \in \mathbb{R}$. We also obtain 
$$
\partial_{y_1}^2 \log\, v^1 \ge - \frac{1}{\beta_1}, \;\;\; \partial_{y_1}^2 \log\, v^1 \le 0
$$
from $0 \ge \partial_{y_1}^2 \log\, v(y_1, y_2) \ge - \frac{1}{\beta_1}$ as in the argument in Corollary \ref{Cor:Matrix}. 
Hence we may apply our one dimensional result in Proposition \ref{t:RegTal} to see that 
$$
\frac12 W_2(\gamma,v^1)^2
		\le 
		{\rm Ent}_\gamma\big( \frac{v^1}{\gamma} \big) +1 + \frac{1}{2} \log \beta_1 - \sqrt{\beta_1}, 
$$
$$\frac12 W_2(\gamma,v(\cdot\,|\,y_1))^2
		\le 
		{\rm Ent}_\gamma\big( \frac{v(\cdot\,|\, y_1)}{\gamma} \big) +1 + \frac{1}{2} \log \beta_2 - \sqrt{\beta_2}
$$
for every $y_1 \in \R$ in the case of $\beta_1,\beta_2\ge1$. 
If either $\beta_1$ or $\beta_2$ are $<1$,  then we replace the corresponding inequality above by the standard Talagrand inequality \eqref{e:TalCl}.
Thus we can deduce that 
\begin{align*}
\frac12 W_2(\gamma, v)^2 
	\le 
	{\rm Ent}_\gamma\big( \frac{v^1}{\gamma} \big) 
	+ \int_{\mathbb{R}} {\rm Ent}_\gamma\big( \frac{v(\cdot\,|\, y_1)}{\gamma} \big) \, dv^1(y_1) 
	- \sum_{ \substack{ i=1, 2:\\ \beta_i \ge1 }} ( - 1 - \frac12 \log\, \beta_i + \sqrt{\beta_i} ).
\end{align*}
Finally, it follows from the additivity of entropy that we have 
\begin{equation}\label{e:AdditivityEnt}
{\rm Ent}_\gamma\big( \frac{v^1}{\gamma} \big) 
	+ \int_{\mathbb{R}} {\rm Ent}_\gamma\big( \frac{v(\cdot\,|\, y_1)}{\gamma} \big) \, dv^1(y_1) 
	= {\rm Ent}_\gamma\big( \frac{v}{\gamma} \big).
\end{equation}
For instance, see \cite[Theorem 22.8]{Vil2} for details of additivity of entropy in general settings. For completeness, we give a proof of \eqref{e:AdditivityEnt}.
Indeed, we enjoy 
\begin{align*}
{\rm Ent}_\gamma\big( \frac{v^1}{\gamma} \big) 
	=& \int_{\mathbb{\R}} \left( \int_{\mathbb{\R}}\, v(y_1, y_2)\, dy_2 \right) \log \left(  \int_{\mathbb{\R}}\, v(y_1, y_2)\, dy_2 \right)\, dy_1 \\
	&+ \int_{\mathbb{R}^2} v(y_1, y_2) \log\, \frac1{\gamma(y_1)}\,dy_1dy_2
\end{align*}
and 
\begin{align*}
\int_{\mathbb{R}} {\rm Ent}_\gamma\big( \frac{v(\cdot\,|\, y_1)}{\gamma} \big) \, dv^1(y_1)
	= & 
	- \int_{\mathbb{\R}} \left( \int_{\mathbb{\R}}\, v(y_1, y_2)\, dy_2 \right) \log \left(  \int_{\mathbb{\R}}\, v(y_1, y_2)\, dy_2 \right)\, dy_1 \\
	& + \int_{\mathbb{R}^2} v(y_1, y_2) \log\, \frac{v(y_1, y_2)}{\gamma(y_2)}\,dy_1dy_2.
\end{align*}
Summing up the two equalities above, we can deduce \eqref{e:AdditivityEnt}, and our proof is complete.
\end{proof}

\section{Applications}\label{S4}
In this section we capitalise on the wealth of connections that the hypercontractivity inequality and the LSI enjoy with other important inequalities. By applying our main results in Theorem \ref{t:MainHyper} (we shall need both the forward and reverse statements) and Theorem \ref{t:MainLSI}, we obtain certain improved versions of the hypercontractivity inequality for the Hamilton--Jacobi equation, the dual version of Talagrand's inequality, the gaussian Poincar\'e inequality, and Beckner's inequality. Also, at the end of the section we discuss improved versions of the dual form of hypercontractivity for the Ornstein--Uhlenbeck semigroup (viewed as a special case of the Brascamp--Lieb inequality). The underlying arguments we use in the proofs in this section are not novel and so our presentation will be somewhat terse.

\subsection{Hypercontractivity of the Hamilton--Jacobi equation and Talagrand's inequality}
Hypercontractivity for the Hamilton--Jacobi equation 
\begin{equation}\label{e:HJ}
\begin{cases}
\partial_\tau u + \frac12 |\nabla u|^2=0,\;\;\;  &(\tau,x) \in (0,\infty) \times\mathbb{R}^n, \\
 u(0,\cdot) = f,\;\;\; & x\in \mathbb{R}^n,
\end{cases}
\end{equation}
was investigated by Bobkov--Gentil--Ledoux in \cite{BGLJMPA} and a close link with \eqref{e:HCClassic} (or, alternatively, \eqref{e:LogSob}) was observed. 
To describe this, we introduce the so-called Hopf--Lax formula
\begin{equation}\label{e:Hopf}
Q_\tau f(x):= \inf_{ y\in \mathbb{R}^n } \big\{ f(y) + \frac1{2\tau}|x-y|^2  \big\},\;\;\; \tau >0,\; x\in \mathbb{R}^n
\end{equation}
for any measurable and real-valued function $f$ on $\mathbb{R}^n$. 
There are a large number of references concerning the theory of the Hamilton--Jacobi equation and we refer the reader to, for example, Evans \cite{Evans} for an excellent introduction to the theory.  In particular, it is known that if $f$ is Lipschitz continuous then Hopf--Lax formula $Q_\tau f$  is also Lipschitz continuous and solves the initial value problem \eqref{e:HJ} in an almost everywhere sense (see \cite[Theorem 6 on p. 128]{Evans}). 

Obtaining a suitable framework in which one has \textit{uniqueness} of ``weak" solutions to Hamilton--Jacobi equations is a highly non-trivial task (classical solutions do not exist in general). For the particular case \eqref{e:HJ} we consider here, \cite[Theorem 8 on p. 134]{Evans} is one possibility in which the notion of weak solution includes the condition that $\partial_\tau u + \frac12 |\nabla u|^2=0$ holds in an almost everywhere sense and $u$ satisfies a certain ``semi-convexity" condition. For more general Hamilton--Jacobi equations, a more suitable framework involves the notion of \emph{viscosity solutions} and goes back to work of Crandall--Lions \cite{CrandallLions} (see also Crandall--Evans--Lions \cite{CEL}). In this framework, if we assume that $f$ is Lipschitz continuous and bounded on $\mathbb{R}^n$, then $Q_\tau f$ is the unique viscosity solution to \eqref{e:HJ} (see, for example, \cite[Theorem 3 on p. 601]{Evans}). Results of this nature have also been obtained for wider classes of initial data and, for later use, we note that Alvarez, Barron and Ishii handled lower semi-continuous initial data which are bounded below by a function of linear growth.
\begin{theorem} [Theorem 5.2 in \cite{ABI}] \label{t:ABI}
Suppose $f:\mathbb{R}^n\to\mathbb{R}$ is lower semi-continuous and satisfies 
\begin{equation}\label{e:LowBound}
f(x) \ge - C (1+|x|),\;\;\; x\in \mathbb{R}^n
\end{equation}
for some $C>0$. Then $Q_\tau f$ is the unique lower semi-continuous viscosity solution to \eqref{e:HJ}. 
\end{theorem}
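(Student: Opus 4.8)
The statement is Theorem~5.2 of Alvarez--Barron--Ishii \cite{ABI}, and the plan is to run the standard dynamic-programming argument of viscosity solution theory while accommodating the two non-classical features: $f$ is only lower semi-continuous, and the solution is controlled merely by a linear growth bound. I would write $F(\tau,x):=Q_\tau f(x)$ and work throughout with Ishii's notion of (discontinuous) viscosity solution, testing the subsolution property on the upper envelope $F^*$ and the supersolution property on $F=F_*$.

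First I would check that $F$ is well defined and lower semi-continuous. For fixed $(\tau,x)$ the lower bound \eqref{e:LowBound} makes $y\mapsto f(y)+\frac1{2\tau}|x-y|^2$ lower semi-continuous and coercive, so its infimum is finite and attained at some $y_*$ lying in a ball whose radius depends only on $\tau$, $|x|$ and $C$; in particular $F(\tau,x)\in\mathbb{R}$. For lower semi-continuity in $x$: given $x_k\to x$, the minimisers $y_k$ stay bounded, because $F(\tau,x_k)\le f(x)+\frac1{2\tau}|x_k-x|^2$ is bounded above and \eqref{e:LowBound} is coercive, and passing to a limit point together with lower semi-continuity of $f$ yields $\liminf_k F(\tau,x_k)\ge f(y_*)+\frac1{2\tau}|x-y_*|^2\ge F(\tau,x)$. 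The same argument, now letting $\tau$ vary as well, gives joint lower semi-continuity on $(0,\infty)\times\mathbb{R}^n$, and a direct estimate shows $F$ attains $f$ in the lower semi-continuous sense as $\tau\downarrow 0$, which is exactly the sense in which the initial condition in \eqref{e:HJ} is to be interpreted here.

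Next I would record the Hopf--Lax semigroup identity $Q_{\tau+s}f=Q_\tau(Q_sf)$ for $\tau,s>0$ --- a completion-of-the-square computation valid for any $f$ bounded below by an affine function --- and use it to verify that $F$ solves $\partial_\tau u+\tfrac12|\nabla u|^2=0$ in the viscosity sense. Indeed, if $\varphi$ is smooth and $F-\varphi$ has a local minimum at $(\tau_0,x_0)$ with $\tau_0>0$, the inequality $F(\tau_0,x_0)\le F(\tau_0-s,x_0-z)+\frac1{2s}|z|^2$, valid for small $s>0$ and every $z$, with $\varphi$ inserted, $s\downarrow 0$, and $z$ optimised, yields $\partial_\tau\varphi(\tau_0,x_0)+\tfrac12|\nabla\varphi(\tau_0,x_0)|^2\ge 0$; the reverse inequality at a local maximum follows by choosing $z$ along $-\nabla\varphi$. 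Since the Hamiltonian $p\mapsto\tfrac12|p|^2$ does not depend on $(\tau,x)$, there are no modulus-of-continuity subtleties at this stage.

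The hard part will be uniqueness, which I would deduce from a comparison principle adapted to linear growth: if $w$ is an upper semi-continuous subsolution and $u$ a lower semi-continuous supersolution with $w(0,\cdot)\le f\le u(0,\cdot)$ and both obeying $|w(\tau,x)|,|u(\tau,x)|\le C'(1+|x|)$ --- a bound that $F$ itself enjoys by the first step --- then $w\le u$. The genuine obstacle is that the usual doubling-of-variables supremum $\sup_{\tau,x,y}\{w(\tau,x)-u(\tau,y)-\tfrac1{2\varepsilon}|x-y|^2-\eta\tau\}$ need not be attained when $w$ and $u$ are unbounded; the remedy, going back to Ishii and to Crandall--Lions, is to subtract a confining barrier such as $\delta(1+|x|^2)^{1/2}$, which is a strict supersolution of a slightly perturbed equation precisely because of the linear growth and therefore localises the supremum, after which one sends $\varepsilon,\eta,\delta\to 0$ in the appropriate order. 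Granting comparison, uniqueness is immediate: an lsc viscosity solution is simultaneously a sub- and a supersolution, so two such solutions with the same data dominate each other and hence coincide with $Q_\tau f$. All the difficulty is concentrated in constructing the growth-compatible barrier and running this comparison estimate cleanly; the existence and viscosity-solution parts are routine once the coercivity in \eqref{e:LowBound} and the semigroup identity are in hand.
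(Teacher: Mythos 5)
This statement is not proved in the paper; it is quoted verbatim, with attribution, from Alvarez--Barron--Ishii \cite{ABI}, Theorem~5.2, and the paper simply invokes it as a black box to justify the vanishing-viscosity argument for Theorem \ref{t:RegHJ}. There is therefore no internal proof against which to compare your sketch, and a blind ``proof'' here should really be a pointer to the cited source rather than a new argument.

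As a sketch of how one \emph{would} prove the result, your outline is largely on the right track for existence (coercivity from \eqref{e:LowBound}, joint lower semi-continuity, the Hopf--Lax semigroup identity, and the dynamic-programming derivation of the sub/supersolution inequalities), but the uniqueness step contains a genuine conceptual gap. You set up Ishii's discontinuous-viscosity framework, testing the subsolution property on the upper envelope $F^*$ and the supersolution property on $F_* = F$, and then assert that ``an lsc viscosity solution is simultaneously a sub- and a supersolution, so two such solutions dominate each other.'' That does not follow: in the Ishii framework the subsolution is $F^*$, which in general strictly dominates the lsc function $F$, and comparison between $F^*$ and another lsc candidate $G = G_*$ gives $F^* \le G$ and $G^* \le F$, which does \emph{not} force $F = G$ unless both are continuous. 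The notion that actually makes the theorem true --- and the one used in \cite{ABI} --- is the Barron--Jensen lower-semicontinuous (``bilateral'') solution, in which one tests only with smooth functions touching $u$ \emph{from below} but demands the PDE hold as an \emph{equality}, not an inequality, at the touching point; this exploits the convexity of $H(p)=\tfrac12|p|^2$ in an essential way, and the comparison/uniqueness argument is structurally different from the doubling-of-variables estimate between a usc subsolution and an lsc supersolution that you describe. Your confining-barrier idea for handling linear growth is sound, but it needs to be deployed inside the Barron--Jensen comparison machinery, not the Crandall--Lions one, for the uniqueness claim to close.
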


In the case of gaussian measure, hypercontractivity of the Hamilton--Jacobi equation takes the form\footnote{It is instructive to note that hypercontractivity for the Ornstein--Uhlenbeck semigroup \eqref{e:HCClassic} can be equivalently reformulated in exponential form as $\| e^{P_s f} \|_{e^{2s}} \leq \| e^f \|_1$ for $s \geq 0$ (see, for example, \cite{BE2}), or equivalently again, as $\| e^{P_s f} \|_{ae^{2s}} \leq \| e^f \|_a$ for $a, s \geq 0$. Here, all norms are taken with respect to the gaussian measure $\gamma$.}
\begin{equation}\label{e:HyperHJ}
\big\| e^{ Q_\tau f } \big\|_{ L^{a+\tau }(\gamma) } \le \|e^f\|_{L^a(\gamma)}
\end{equation}
for all $f \in L^\infty(\mathbb{R}^n)$, $\tau\ge0$, and $a\in\mathbb{R}$. Offering two approaches, Bobkov, Gentil and Ledoux obtained \eqref{e:HyperHJ} in \cite{BGLJMPA}; one is based on ideas of Gross \cite{Gross} and a second approach is based on the so-called \textit{vanishing viscosity method}\footnote{The latter approach was used in \cite{CrandallLions} and gave rise to the terminology \emph{viscosity solutions}.}.

Our aim here is to improve the constant in \eqref{e:HyperHJ} in the spirit of Theorem \ref{t:MainHyper}, and it turns out that we are able to handle initial data which are uniformly subharmonic in the sense that
\[
\Delta f\ge n (1 -\frac1\beta)
\]
for some $\beta \geq 1$. Because of a technical reason,  we also impose \eqref{e:LowBound}. 
Whilst there are examples of uniformly subharmonic functions which do not have the property \eqref{e:LowBound}, if we impose the stronger assumption $\nabla^2 f \ge (1-\frac1\beta) {\rm id} $ for some $\beta>1$, then $f$ has a global minimum and hence \eqref{e:LowBound} is satisfied.  

As in \cite{BGL}, we adopt the vanishing viscosity method. The point is that, for each $\varepsilon > 0$, the solution of 
$$
\partial_\tau u^\varepsilon_\tau - \varepsilon \mathcal{L} u^\varepsilon_\tau + \frac12 | \nabla u^\varepsilon_\tau |^2 = 0,\;\;\; u^\varepsilon(0,\cdot) = f,
$$
which is a certain regularisation of \eqref{e:HJ}, can be expressed as\footnote{To see this, note that $w = \exp(-\frac{u^\varepsilon}{2\varepsilon})$ solves the gaussian heat equation $\partial_\tau w = \varepsilon \mathcal{L} w, w(0) = \exp(-\frac{f}{2\varepsilon})$.}
\begin{equation}\label{e:VaniVis}
u_\tau^\varepsilon:= -2\varepsilon \log\, P_{\varepsilon \tau}\big[ e^{ -\frac{f}{2\varepsilon} } \big], 
\end{equation}
and thus one can derive information from  hypercontractivity inequalities for the Ornstein--Uhlenbeck semigroup. Moreover, we expect to have
$$
\lim_{\varepsilon\downarrow0} u^\varepsilon_\tau = Q_\tau f
$$
in some appropriate sense. Whilst there are important papers which investigate this type of problem in much more generality (see, for example, \cite{BBL} and references therein), it shall suffice for us to establish
\begin{equation} \label{e:vanishingviscosity}
\liminf_{\varepsilon\downarrow0} u^\varepsilon_\tau(x) \geq Q_\tau f(x)
\end{equation}
for each $x \in \mathbb{R}^n$, under the assumption that $f$ is lower semi-continuous and bounded from below. To see this, note that
\begin{align*}
P_{\varepsilon \tau}\big[ e^{ -\frac{f}{2\varepsilon} } \big](x)^{2\varepsilon} & \leq C_\varepsilon \bigg( \int_{\mathbb{R}^n} e^{g^\varepsilon_\tau(x,y)} dy \bigg)^{2\varepsilon} \| e^{g^\varepsilon_\tau(x,y)} \|_{L^\infty(dy)}^{1-2\varepsilon} \\
& \leq C_\varepsilon \bigg( \int_{\mathbb{R}^n} e^{-f(y)} dy \bigg)^{2\varepsilon} \| e^{g^\varepsilon_\tau(x,y)} \|_{L^\infty(dy)}^{1-2\varepsilon}
\end{align*}
where $C_\varepsilon = (2\pi(1 - e^{-2\varepsilon \tau}))^{-\varepsilon n}$, and
\begin{align*}
g^\varepsilon_\tau(x,y)  := -f(y) - \frac{2\varepsilon \tau}{2\tau(1 - e^{-2\varepsilon \tau})} |y - e^{-\varepsilon \tau}x|^2.
\end{align*}
We have
\[
\sup_{y \in \mathbb{R}^n} g^\varepsilon_\tau(x,y)  \leq \sup_{y \in \mathbb{R}^n} \{ -f(y) - \frac{1}{2\tau} |y - e^{-\varepsilon \tau}x|^2 \} = - Q_\tau f(e^{-\varepsilon \tau}x).
\]
Since we are assuming that $f$ is lower semi-continuous and bounded from below, by Theorem \ref{t:ABI} we know that $Q_\tau f$ is lower semi-continuous and therefore
\[
\liminf_{\varepsilon \downarrow 0} Q_\tau f(e^{-\varepsilon \tau}x) \geq Q_\tau f(x).
\]
From the above we obtain
\[
\limsup_{\varepsilon \downarrow 0} P_{\varepsilon \tau}\big[ e^{ -\frac{f}{2\varepsilon} } \big](x)^{2\varepsilon} \leq e^{-Q_\tau f(x)}
\]
and hence \eqref{e:vanishingviscosity}.

Armed with the above observations, we obtain the following. 
\begin{theorem}\label{t:RegHJ}
Suppose $a,\tau>0$ and $\beta > 1$ satisfy 
\begin{equation}\label{e:Cond8Nov1}
\beta(1-\frac1a) < 1. 
\end{equation} 
Let $\beta(a) > 1$ be given by
\[
\frac1{\beta(a)}:= 1 - a( 1-\frac1\beta )
\]
For all $ f :\mathbb{R}^n\to\mathbb{R} $ such that \eqref{e:LowBound} and 
\begin{equation}\label{e:Cond8Nov2}
\Delta f\ge (1 -\frac1\beta) {\rm id},\;\;\;  \int_{\mathbb{R}^n} e^{2af} \frac{\gamma}{\gamma_{\beta(a)}}\, d\gamma <\infty, 
\end{equation}
we have that 
\begin{equation}\label{e:HCHJ}
\big\| e^{Q_\tau f}  \big\|_{L^{a+\tau}(\gamma)} \le \big\| e^{Q_\tau \big[ \frac1a \log\, \frac{\gamma_{\beta(a)}}{\gamma} \big]}  \big\|_{L^{a+\tau}(\gamma)} \| e^{f} \|_{L^a(\gamma)}. 
\end{equation}
In particular, \eqref{e:HCHJ} holds for all $f = \frac1a \log\, \frac{v}{\gamma}$ where $v\in {\rm FP}(\beta(a))$. 
\end{theorem}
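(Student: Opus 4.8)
The plan is to adapt the vanishing-viscosity argument of Bobkov--Gentil--Ledoux \cite{BGLJMPA} for \eqref{e:HyperHJ}, feeding in the improved \emph{reverse} hypercontractivity inequality of Theorem \ref{t:MainHyper}(2)(i) in place of the classical one. Fix $f$ as in the statement. For $\varepsilon>0$ let $u^\varepsilon_\tau:=-2\varepsilon\log P_{\varepsilon\tau}\big[e^{-f/(2\varepsilon)}\big]$ as in \eqref{e:VaniVis}, so that $e^{u^\varepsilon_\tau}=P_{\varepsilon\tau}\big[e^{-f/(2\varepsilon)}\big]^{-2\varepsilon}$ and hence, with $q_\varepsilon:=-2\varepsilon(a+\tau)$, the elementary identity $\|e^{u^\varepsilon_\tau}\|_{L^{a+\tau}(\gamma)}=\big\|P_{\varepsilon\tau}[e^{-f/(2\varepsilon)}]\big\|_{L^{q_\varepsilon}(\gamma)}^{-2\varepsilon}$ holds. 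Define $p_\varepsilon<1$ by Nelson's relation $\tfrac{q_\varepsilon-1}{p_\varepsilon-1}=e^{2\varepsilon\tau}$, i.e. $p_\varepsilon=1-(1+2\varepsilon(a+\tau))e^{-2\varepsilon\tau}$; then $-\infty<q_\varepsilon<p_\varepsilon<0$ for small $\varepsilon>0$ (so $p_\varepsilon q_\varepsilon>0$ and $p_\varepsilon\notin\{0,1-e^{-2\varepsilon\tau}\}$), and $c_\varepsilon:=-\tfrac{p_\varepsilon}{2\varepsilon}\to a$ as $\varepsilon\downarrow0$. Set $v_\varepsilon:=\gamma\,e^{c_\varepsilon f}$, so that $(v_\varepsilon/\gamma)^{1/p_\varepsilon}=e^{-f/(2\varepsilon)}$ and $\int v_\varepsilon/\gamma\,d\gamma=\|e^f\|_{L^{c_\varepsilon}(\gamma)}^{c_\varepsilon}$.

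Next I would check that $v_\varepsilon$ obeys the hypotheses of Theorem \ref{t:MainHyper}(2)(i) once $\varepsilon$ is small. Positivity and twice differentiability are clear; since $\Delta\log v_\varepsilon=-n+c_\varepsilon\Delta f$ and $\Delta f\ge n(1-\tfrac1\beta)\ge0$ with $c_\varepsilon>0$, we obtain $\Delta\log v_\varepsilon\ge-n/\beta_\varepsilon$ with $\tfrac1{\beta_\varepsilon}:=1-c_\varepsilon(1-\tfrac1\beta)\to\tfrac1{\beta(a)}$; as \eqref{e:Cond8Nov1} is equivalent to $a(1-\tfrac1\beta)<1$, hence to $\beta(a)>1$, we get $\beta_\varepsilon>1$ for small $\varepsilon$ and so $v_\varepsilon$ is $\beta_\varepsilon$-semi-log-subharmonic. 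Finally $\|v_\varepsilon\|_{L^2(\gamma_{\beta_\varepsilon}^{-1})}^2=\int e^{2c_\varepsilon f}\tfrac{\gamma}{\gamma_{\beta_\varepsilon}}\,d\gamma<\infty$ for small $\varepsilon$, by the integrability hypothesis in \eqref{e:Cond8Nov2} together with a H\"older/continuity argument in the parameters $c_\varepsilon\to a$, $\beta_\varepsilon\to\beta(a)$; after the standard approximation (passing to $v_\varepsilon+\delta\gamma_{\beta_\varepsilon}$ and letting $\delta\downarrow0$, as in the proof of Theorem \ref{t:MainHyper}) we may also assume $\inf v_\varepsilon/\gamma_{\beta_\varepsilon}>0$. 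Applying Theorem \ref{t:MainHyper}(2)(i) with $(p,q,s,\beta)=(p_\varepsilon,q_\varepsilon,\varepsilon\tau,\beta_\varepsilon)$ and then raising to the power $-2\varepsilon<0$ gives
\[
\|e^{u^\varepsilon_\tau}\|_{L^{a+\tau}(\gamma)}\le\Big\|P_{\varepsilon\tau}\big[(\gamma_{\beta_\varepsilon}/\gamma)^{1/p_\varepsilon}\big]\Big\|_{L^{q_\varepsilon}(\gamma)}^{-2\varepsilon}\,\|e^f\|_{L^{c_\varepsilon}(\gamma)}.
\]

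It then remains to let $\varepsilon\downarrow0$. On the left, the vanishing-viscosity bound \eqref{e:vanishingviscosity} gives $\liminf_{\varepsilon\downarrow0}u^\varepsilon_\tau\ge Q_\tau f$ pointwise ($Q_\tau f$ being finite and lower semi-continuous by Theorem \ref{t:ABI}, using \eqref{e:LowBound}), so Fatou's lemma yields $\|e^{Q_\tau f}\|_{L^{a+\tau}(\gamma)}\le\liminf_{\varepsilon\downarrow0}\|e^{u^\varepsilon_\tau}\|_{L^{a+\tau}(\gamma)}$. For the factor $\|e^f\|_{L^{c_\varepsilon}(\gamma)}$, dominated convergence (using the integrability in \eqref{e:Cond8Nov2}) gives convergence to $\|e^f\|_{L^a(\gamma)}$. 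For the Gaussian constant, the same identity used above, now for the input $\gamma_{\beta_\varepsilon}$, gives $\big\|P_{\varepsilon\tau}[(\gamma_{\beta_\varepsilon}/\gamma)^{1/p_\varepsilon}]\big\|_{L^{q_\varepsilon}(\gamma)}^{-2\varepsilon}=\|e^{R^\varepsilon_\tau}\|_{L^{a+\tau}(\gamma)}$ with $R^\varepsilon_\tau:=-2\varepsilon\log P_{\varepsilon\tau}\big[e^{-g_\varepsilon/(2\varepsilon)}\big]$ and $g_\varepsilon:=\tfrac1{c_\varepsilon}\log(\gamma_{\beta_\varepsilon}/\gamma)\to\tfrac1a\log(\gamma_{\beta(a)}/\gamma)$ locally uniformly; since all quantities here are explicit Gaussians one computes the limit directly (using the deficit formula \eqref{e:DeficitHC}), obtaining $\|e^{R^\varepsilon_\tau}\|_{L^{a+\tau}(\gamma)}\to\|e^{Q_\tau[\frac1a\log(\gamma_{\beta(a)}/\gamma)]}\|_{L^{a+\tau}(\gamma)}$. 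Combining the three limits proves \eqref{e:HCHJ}. For the final assertion: if $v\in{\rm FP}(\beta(a))$ then Lemma \ref{l:LogConVari} gives $\nabla^2\log v\ge-\tfrac1{\beta(a)}{\rm id}$, so $f=\tfrac1a\log\tfrac v\gamma$ satisfies $\Delta f=\tfrac1a(\Delta\log v+n)\ge\tfrac na(1-\tfrac1{\beta(a)})=n(1-\tfrac1\beta)$, satisfies \eqref{e:LowBound} since $\log v(x)\ge-\tfrac{|x|^2}{2\beta(a)}+\langle b,x\rangle+c$ forces $f$ to be bounded below by a positive-definite quadratic, and satisfies the integrability in \eqref{e:Cond8Nov2} because it equals $\|v\|_{L^2(\gamma_{\beta(a)}^{-1})}^2$, which is finite via the explicit Fokker--Planck solution formula exactly as in the proof of Corollary \ref{c:FPversion}.

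The step I expect to be the main obstacle is the $\varepsilon\downarrow0$ passage: one must control three one-parameter families (the exponents $p_\varepsilon,q_\varepsilon$, the Gaussians $\gamma_{\beta_\varepsilon}$, and the initial data $g_\varepsilon$) simultaneously, justify each interchange of limit and integral (Fatou on the left, dominated convergence for the $\|e^f\|_{L^{c_\varepsilon}}$ factor, and the explicit Gaussian computation for the constant), and verify $v_\varepsilon\in L^2(\gamma_{\beta_\varepsilon}^{-1})$ \emph{uniformly} for small $\varepsilon$ — which is exactly where the finiteness hypothesis in \eqref{e:Cond8Nov2} and the threshold $\beta(a)>1$ enter.
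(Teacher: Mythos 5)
Your proof is correct and follows the paper's vanishing-viscosity strategy, but with one genuinely different structural choice. The paper holds the hypercontractivity exponent $p=-2\varepsilon a$ (equivalently $c_\varepsilon=a$) exactly, so the factor $\|e^f\|_{L^a(\gamma)}$ appears on the nose, and instead lets the target exponent $b_\varepsilon = q/(-2\varepsilon)$ drift with $\varepsilon$; the cost is that the inequality one obtains is a bound on $\|e^{u^\varepsilon_\tau}\|_{L^{b_\varepsilon}(\gamma)}$, and passing from $b_\varepsilon$ to the fixed exponent $a+\tau$ requires a mild additional monotonicity-of-$L^p$-norms argument (or the slightly informal ``take $\varepsilon\to 0$''). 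You instead hold $q_\varepsilon=-2\varepsilon(a+\tau)$ exactly, so the left-hand exponent is frozen at $a+\tau$ and Fatou applies directly, at the cost of letting both the intermediate exponent $c_\varepsilon$ and the regularity parameter $\beta_\varepsilon$ drift. This trades one approximation for another: the paper needs to control the $\varepsilon$-dependence in the Lebesgue exponent on the left, while you need to control $\|e^f\|_{L^{c_\varepsilon}(\gamma)}\to\|e^f\|_{L^a(\gamma)}$ and $v_\varepsilon\in L^2(\gamma_{\beta_\varepsilon}^{-1})$ uniformly for small $\varepsilon$. Your H\"older interpolation argument to verify the latter works (the exponent $\alpha$ that appears comes out to $1/2$ because $c_\varepsilon = a\theta$ and $\tfrac1{\beta_\varepsilon} = 1 - c_\varepsilon(1-\tfrac1\beta)$ are affine in $\theta=c_\varepsilon/a$), and for the former, since $c_\varepsilon<a$ for small $\varepsilon$ and $\gamma$ is a probability measure, Jensen already gives $\|e^f\|_{L^{c_\varepsilon}}\le\|e^f\|_{L^a}$, so you do not even strictly need dominated convergence there. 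Both routes are sound; yours is arguably a bit cleaner in the Fatou step, while the paper's is cleaner in that the regularised input $v=\gamma e^{af}$ is a single fixed function.
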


\begin{proof}
Let $a,\tau>0$ be fixed and set $b_\varepsilon>0$ so that 
\begin{equation}\label{e:Expo12June}
\frac{1+2\varepsilon b_\varepsilon}{1+ 2\varepsilon a} = e^{2\varepsilon \tau},\;\;\; \varepsilon>0.
\end{equation}
Note that we have $\lim_{\varepsilon \downarrow 0} b_\varepsilon = a+\tau$ under this choice.  
Setting
$$
u^\varepsilon_\tau := -2\varepsilon \log\, P_{\varepsilon\tau} \big[ e^{ - \frac{f}{2\varepsilon}} \big]
$$
and
$$
q = -2\varepsilon b_\varepsilon,\;\;\; p = -2\varepsilon a < 0, \;\;\; s= \varepsilon \tau, 
$$
we have $\frac{q-1}{p-1} = e^{2s}$ thanks to \eqref{e:Expo12June}, and also
\begin{align*}
\big\| e^{u^\varepsilon_\tau} \big\|_{L^{b_\varepsilon}(\gamma)} 
&=\bigg( \int_{\mathbb{R}^n} P_{s} \big[ e^{ - \frac{f}{2\varepsilon}} \big]^{q }  \, d\gamma\bigg)^{-\frac{2\varepsilon}{q}}
=
\bigg( \int_{\mathbb{R}^n} P_{s} \big[ (e^{  af})^\frac1p \big]^{q }  \, d\gamma\bigg)^{-\frac{2\varepsilon}{q}}. 
\end{align*}
If we regard $e^{af} = \frac{v}{\gamma}$, then the assumption \eqref{e:Cond8Nov2} can be read as 
$$
\Delta \log\, v = \Delta af - n \ge  \big( a( 1 -\frac1{\beta} ) - 1 \big) = -\frac{n}{\beta(a)},\;\;\; v \in L^2(\gamma_{\beta(a)}). 
$$
Since $\beta(a) > 1$,  we may apply Theorem \ref{t:MainHyper}(2) to see that 
\begin{align*}
\big\| e^{u^\varepsilon_\tau} \big\|_{L^{b_\varepsilon}(\gamma)} 
& \le 
\big\| P_s \big[ \big( \frac{\gamma_{\beta(a)}}{\gamma} \big)^\frac1p \big] \big\|_{L^q(\gamma)}^{-2\varepsilon} \bigg(\int e^{af}\, d\gamma \bigg)^{\frac{-2\varepsilon}{p}} \\
& =
\big\| P_{\varepsilon \tau} \big[ \big( \frac{\gamma_{\beta(a)}}{\gamma} \big)^\frac1{-2\varepsilon a} \big]^{-2\varepsilon} \big\|_{L^{b_\varepsilon}(\gamma)} \|e^{f}\|_{L^a(\gamma)}. 
\end{align*}
Formally, taking $\varepsilon \to 0$ we deduce the desired estimate \eqref{e:HCHJ}. To make this rigorous, we may invoke Fatou's lemma and \eqref{e:vanishingviscosity}.
\end{proof}

  We are interested in the dual form of \eqref{e:TalCl} here. As observed in the work of Bobkov--G\"{o}tze \cite{BoGo}, \eqref{e:TalCl} is indeed equivalent to 
\begin{equation}\label{e:DualTalCl}
\int_{\mathbb{R}^n} e^{ Q_1 f }\, d\gamma \le e^{ \int_{\mathbb{R}^n} f\, d\gamma }
\end{equation}
for all bounded and continuous $f :\mathbb{R}^n\to\mathbb{R}$, and this proceeds by taking $\tau =1$ and $a\to0$ in \eqref{e:HyperHJ}.  Given our improvement of \eqref{e:HyperHJ} in Theorem \ref{t:RegHJ}, it is not a surprise that we obtain an improvement of Talagrand's inequality too. 
\begin{corollary}\label{Cor:Tal}
Let $\tau>0$ and $\beta > 1$. Then 
$$
\big\| e^{Q_\tau f} \big\|_{L^\tau(\gamma)} \le T(\tau,\beta) e^{\int_{\mathbb{R}^n} f\, d\gamma}
$$
whenever $f:\mathbb{R}^n\to \mathbb{R}$ satisfies \eqref{e:LowBound} and  \eqref{e:Cond8Nov2} for all sufficiently small $0<a\ll1$, and
where 
$$
T(\tau,\beta):= \bigg( e^{-1} ( 1 +\tau(1-\frac1\beta) )^{ \frac{1}{ \tau(1-\frac1\beta) } } \bigg)^{ \frac{n}{2}(1-\frac1{\beta} )} \in (0,1).
$$
\end{corollary}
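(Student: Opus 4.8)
The plan is to derive this from the refined Hamilton--Jacobi hypercontractivity of Theorem~\ref{t:RegHJ} by letting $a \downarrow 0$, in the same way that the classical dual Talagrand inequality \eqref{e:DualTalCl} is extracted from \eqref{e:HyperHJ}. Fix $\tau > 0$, $\beta > 1$, write $c := 1 - \frac1\beta \in (0,1)$, and let $f$ be as in the statement. For every $0 < a < 1$ one has $\beta(1 - \frac1a) < 1$, and $\beta(a) > 1$ for all sufficiently small $a$, so the hypotheses \eqref{e:Cond8Nov1}--\eqref{e:Cond8Nov2} of Theorem~\ref{t:RegHJ} are in force and yield
\[
\|e^{Q_\tau f}\|_{L^{a+\tau}(\gamma)} \le \|e^{Q_\tau g_a}\|_{L^{a+\tau}(\gamma)}\,\|e^{f}\|_{L^{a}(\gamma)}, \qquad g_a := \tfrac1a \log\,\tfrac{\gamma_{\beta(a)}}{\gamma}.
\]
Since $\gamma$ is a probability measure, $\|\cdot\|_{L^{\tau}(\gamma)} \le \|\cdot\|_{L^{a+\tau}(\gamma)}$, so it will suffice to compute $\limsup_{a\downarrow 0}$ of the right-hand side.

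For the first factor I would use the explicit action of the Hopf--Lax semigroup on quadratics. Writing out $\gamma_{\beta(a)}/\gamma$ and using $1 - \frac1{\beta(a)} = ac$, one sees that $g_a(x) = A_a + \frac c2 |x|^2$ with $A_a := -\frac{n}{2a}\log\,\beta(a) = \frac{n}{2a}\log(1-ac)$; minimising the strictly convex quadratic $y \mapsto \frac c2 |y|^2 + \frac1{2\tau}|x-y|^2$ (the minimiser being $y = x/(1+c\tau)$) gives $Q_\tau g_a(x) = A_a + \frac{c}{2(1+c\tau)}|x|^2$. A Gaussian integral then gives, for $ac < 1$,
\[
\|e^{Q_\tau g_a}\|_{L^{a+\tau}(\gamma)}^{a+\tau} = e^{(a+\tau)A_a}\int_{\mathbb{R}^n} e^{-\frac{1-ac}{2(1+c\tau)}|x|^2}\,d\gamma(x) = e^{(a+\tau)A_a}\Big(\frac{1+c\tau}{1-ac}\Big)^{\frac n2},
\]
and since $A_a \to -\frac{nc}{2}$ as $a\downarrow 0$, raising to the $\frac1{a+\tau}$ power and letting $a\downarrow 0$ produces $\|e^{Q_\tau g_a}\|_{L^{a+\tau}(\gamma)} \to e^{-\frac{nc}{2}}(1+c\tau)^{\frac{n}{2\tau}}$, which is exactly $T(\tau,\beta)$.

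For the second factor I would invoke the standard limit $\|e^{f}\|_{L^{a}(\gamma)} \to \exp\big(\int_{\mathbb{R}^n} f\,d\gamma\big)$ as $a\downarrow 0$. To apply it one needs $f \in L^1(\gamma)$ and $e^{f} \in L^{a}(\gamma)$ for some $a > 0$: the lower bound \eqref{e:LowBound} forces $f^- \in L^1(\gamma)$, while splitting the integrability hypothesis in \eqref{e:Cond8Nov2} by Cauchy--Schwarz against the decaying Gaussian factor $\gamma/\gamma_{\beta(a)}$ (which behaves like $e^{-\frac{ac}{2}|x|^2}$) yields $e^{f} \in L^{a}(\gamma)$, hence $f^+ \in L^1(\gamma)$, for all small $a$; the limit itself then follows by dominated convergence, as in \cite{BGLJMPA}. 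Combining the three displays gives $\|e^{Q_\tau f}\|_{L^{\tau}(\gamma)} \le T(\tau,\beta)\,e^{\int f\,d\gamma}$, and the bound $T(\tau,\beta) < 1$ is immediate from $\log(1+x) < x$ for $x > 0$ applied with $x = c\tau$. I expect the only genuinely delicate point to be the justification of the limit $\|e^{f}\|_{L^{a}(\gamma)} \to \exp(\int f\,d\gamma)$ together with the finiteness statements used along the way; everything else is an explicit one-line Gaussian computation.
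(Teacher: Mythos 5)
Your proof is correct and follows the same route as the paper: apply Theorem~\ref{t:RegHJ}, compute $Q_\tau$ on the explicit quadratic $g_a$, evaluate the resulting Gaussian integral, and let $a\downarrow 0$; your justifications for passing to the limit (monotonicity of $L^p(\gamma)$-norms for the left side, the geometric-mean limit $\|e^f\|_{L^a(\gamma)}\to e^{\int f\,d\gamma}$ for the right) are a bit more careful than what the paper writes down. One small notational slip: in the intermediate display for the Gaussian integral, the exponent $-\tfrac{1-ac}{2(1+c\tau)}|x|^2$ already absorbs the Gaussian density, so the measure there should be normalised Lebesgue rather than $d\gamma$, but the value $\big(\tfrac{1+c\tau}{1-ac}\big)^{n/2}$ you obtain is correct and the rest goes through.
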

\begin{proof}
By taking a limit $a\to0$ in \eqref{e:HCHJ}, it suffices to show that 
\begin{equation}\label{e:Goal9Nov}
\lim_{a\to0} \big\| e^{ Q_\tau \big[ \frac1{a} \log\, \frac{\gamma_{\beta(a)}}{\gamma} \big] } \big\|_{L^{a+\tau}(\gamma)} = T(\tau,\beta). 
\end{equation}
To this end, we first notice from direct computation that 
$$
Q_\tau\big[ \frac1{a} \log\, \frac{\gamma_{\alpha}}{\gamma} \big](x) = \frac1{2\tau} \frac{\delta_{a,\alpha}}{\delta_{a,\alpha}+1} |x|^2 - \frac{n}{2a}\log\, \alpha,\;\;\; \delta_{a,\alpha}:= \frac{\tau}{a} (1-\frac1{\alpha})
$$
holds true in general for $\tau,a>0$ and $\alpha\ge1$. 
Hence
$$
\big\| e^{ Q_\tau \big[ \frac1{a} \log\, \frac{\gamma_{\alpha}}{\gamma} \big] } \big\|_{L^{a+\tau}(\gamma)} = \alpha^{-\frac{n}{2a}} \big( ( \frac{a}{\tau} + 1 )\frac{1}{\delta_{a,\alpha}+1} - \frac{a}{\tau} \big)^{ -\frac{n}{2(a+\tau)} }
$$
as long as $( \frac{a}{\tau} + 1 )\frac{1}{\delta_{a,\alpha}+1} - \frac{a}{\tau}>0$.  By taking $\alpha = \beta(a)\ge1$, we see that $\delta_{a,\beta(a)} = \tau(1-\frac1\beta)$ and hence, recalling the definition of $\beta(a)$ in \eqref{e:HCHJ}, 
\begin{align*}
\lim_{a\to0} \big\| e^{ Q_\tau \big[ \frac1{a} \log\, \frac{\gamma_{\alpha}}{\gamma} \big] } \big\|_{L^{a+\tau}(\gamma)}
=&
\lim_{a\to0} \big( 1 - a ( 1 - \frac1\beta) \big)^{ \frac{n}{2a} } \big( ( \frac{a}{\tau} + 1 )\frac{1}{ \tau(1-\frac1\beta) +1} - \frac{a}{\tau} \big)^{ -\frac{n}{2(a+\tau)} }\\
=& 
e^{ -\frac{n}{2} (1-\frac1\beta) } \big( 1 + \tau(1-\frac1\beta) \big)^{ \frac{n}{2\tau} } 
=
T(\tau, \beta)
\end{align*}
as claimed.
\end{proof}

\subsection{Poincar\'{e} and Beckner inequalities}
Next we consider improvements of the gaussian Poincar\'{e} inequality
\begin{equation}\label{e:Poi}
\int_{\mathbb{R}^n} f^2\, d\gamma - \bigg( \int_{\mathbb{R}^n} |f|\, d\gamma \bigg)^2 \le \int_{\mathbb{R}^n} |\nabla f|^2\, d\gamma
\end{equation}
for real-valued functions $f\in L^2(\gamma)$ whose gradient belongs to $L^2(\gamma)$.  

Define the constant
\[
D_n(\beta) = \frac{n}{2}\bigg(\log \beta - 1 + \frac{1}{\beta}\bigg).
\]
\begin{corollary}\label{Cor:RegPoi}
Let $\beta>0$ and let $\gamma f^2 \in L^2(\gamma_\beta^{-1})$. When $\beta > 1$, assume that $\gamma f^2$ is $\beta$-semi-log-subharmonic, and when $\beta < 1$, assume that $\gamma f^2$ is $\beta$-semi-log-concave. Then 
\begin{equation}\label{e:RegPoi}
\frac{1}{2}(1 + D_n(\beta))\int_{\mathbb{R}^n} f^2\, d\gamma - \frac12 \bigg( \int_{\mathbb{R}^n} |f|\, d\gamma \bigg)^2 \le \int_{\mathbb{R}^n} |\nabla f|^2\, d\gamma.
\end{equation}
\end{corollary}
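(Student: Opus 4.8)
The plan is to derive \eqref{e:RegPoi} from the improved logarithmic Sobolev inequality in Theorem \ref{t:MainLSI} by the classical linearisation trick, i.e. by applying the improved LSI to a function close to a constant and extracting the second-order term. First I would set $v := \gamma f^2$, so that $v \geq 0$, and (after normalising so that $\int_{\mathbb{R}^n} f^2\, d\gamma = 1$, which we may do by homogeneity) $v$ is a probability density on $\mathbb{R}^n$. Under the stated hypotheses $v$ is $\beta$-semi-log-subharmonic when $\beta > 1$ and $\beta$-semi-log-concave when $\beta < 1$, and $v \in L^2(\gamma_\beta^{-1})$, so Theorem \ref{t:MainLSI} applies to $v$ and yields
$$
{\rm Ent}_\gamma\big(f^2\big) - \tfrac12 {\rm I}_\gamma\big(f^2\big) \leq {\rm Ent}_\gamma\big(\tfrac{\gamma_\beta}{\gamma}\big) - \tfrac12 {\rm I}_\gamma\big(\tfrac{\gamma_\beta}{\gamma}\big) = -D_n(\beta),
$$
where in the last step I use the explicit computation \eqref{e:DeficitLogSob}, that is ${\rm Ent}_{\gamma}( \frac{\gamma_\beta}{\gamma} ) - \frac12 {\rm I}_{\gamma}( \frac{\gamma_\beta}{\gamma} ) = -\frac{n}{2}(\log\beta - 1 + \frac1\beta) = -D_n(\beta)$.

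Next I would rewrite the Fisher information as ${\rm I}_\gamma(f^2) = 4\int_{\mathbb{R}^n} |\nabla f|^2\, d\gamma$ (valid for the sign-variable $f$ via $\nabla(f^2) = 2f\nabla f$, so $\frac{|\nabla(f^2)|^2}{f^2} = 4|\nabla f|^2$), and expand the entropy. With the normalisation $\int f^2\, d\gamma = 1$ we have
$$
{\rm Ent}_\gamma(f^2) = \int_{\mathbb{R}^n} f^2 \log(f^2)\, d\gamma.
$$
The key inequality I would invoke is the pointwise bound $t\log t \geq t - 1$ for $t \geq 0$ — more precisely, I want a lower bound on $\int f^2\log(f^2)\,d\gamma$ that produces the Poincaré-type quantity $\int f^2\,d\gamma - (\int |f|\,d\gamma)^2$. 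Here I recall that the standard route from LSI to Poincaré is: apply LSI to $1 + \varepsilon g$ and let $\varepsilon \to 0$; the entropy behaves like $2\varepsilon^2(\int g^2 d\gamma - (\int g\, d\gamma)^2)$ and the Fisher information like $4\varepsilon^2 \int |\nabla g|^2 d\gamma$ to leading order. So I would instead take $f = 1 + \varepsilon g$ with $g \in L^2(\gamma)$ smooth and compactly-modified so that $\gamma f^2$ satisfies the required semi-log-subharmonicity/concavity — this is where care is needed, see below — and apply the displayed deficit inequality. Expanding both sides to order $\varepsilon^2$: the left side gives $2\varepsilon^2\big(\int g^2\,d\gamma - (\int g\,d\gamma)^2\big) - 2\varepsilon^2 \int|\nabla g|^2\,d\gamma + o(\varepsilon^2)$, while the right side is the $\varepsilon$-independent constant $-D_n(\beta)$; but since $D_n(\beta) \geq 0$ with equality iff $\beta = 1$, for $\beta \neq 1$ the constant term on the right is strictly negative, so a naive $\varepsilon \to 0$ limit would force $0 \leq -D_n(\beta) < 0$, a contradiction.

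This shows the linearisation must be organised differently, and the correct reading is the one already encoded in the statement: \eqref{e:RegPoi} is \emph{not} obtained by letting $\varepsilon \to 0$ but by keeping $f$ itself (not a perturbation) and using $t\log t \geq (t-1) + \frac12(t-1)^2 \cdot c$ type bounds, or more cleanly, by combining the improved LSI with the elementary inequality ${\rm Ent}_\gamma(f^2) \geq \int f^2\,d\gamma \cdot \log\big(\int f^2\,d\gamma\big) + \big(\int f^2\,d\gamma - (\int |f|\,d\gamma)^2\big)$; wait — the cleanest correct statement is that ${\rm Ent}_\gamma(f^2) \geq \|f\|_{L^2(\gamma)}^2 - \|f\|_{L^1(\gamma)}^2$ fails in general, so instead I would use the \textbf{defective} form: from $t \log t \geq t - 1$ applied with $t = f^2/\|f\|_{L^2(\gamma)}^2$ and Jensen one deduces nothing useful directly. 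The actually workable path, and the one I would pursue, is: apply the improved LSI to $f$, then use the standard fact (see \cite{BGL}) that for the Gaussian the LSI \emph{with any additive deficit constant} $-D$ (i.e. ${\rm Ent}_\gamma(f^2) \leq \frac12{\rm I}_\gamma(f^2) - D\int f^2 d\gamma$ after rescaling by $\|f\|_{L^2}^2$) implies a Poincaré inequality with improved constant via the tight linearisation $\frac{d^2}{d\varepsilon^2}\big|_{\varepsilon=0}$ applied to $f_\varepsilon$ with $f_\varepsilon = \sqrt{1 + \varepsilon h}$ where $\int h\, d\gamma$ is not constrained; here the deficit term contributes at order $\varepsilon^2$ only through the normalisation, yielding the factor $(1 + D_n(\beta))/2$. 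Concretely: put $\phi_\varepsilon = 1 + \varepsilon f$, set $v_\varepsilon = \gamma \phi_\varepsilon^2$, note $v_\varepsilon$ inherits the semi-log-subharmonicity/concavity and $L^2(\gamma_\beta^{-1})$ membership from $v = \gamma f^2$ for $\varepsilon$ in a suitable range (this is the technical crux, handled by a perturbation/approximation argument exactly as in the superposition lemma \cite[Proposition 2.2]{GKL} used elsewhere in the paper), apply Theorem \ref{t:MainLSI} to each $v_\varepsilon$, and Taylor-expand. The order-$\varepsilon^0$ term on both sides: left side ${\rm Ent}_\gamma(1) - \frac12{\rm I}_\gamma(1) = 0$, right side $-D_n(\beta)$, giving $0 \leq -D_n(\beta)$ — still the same contradiction, so $\phi_\varepsilon = 1 + \varepsilon f$ with $\int f\,d\gamma = 0$ does not work either unless one \emph{renormalises}.

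The resolution — and the argument I will actually write — is to apply Theorem \ref{t:MainLSI} to $v = \gamma f^2$ directly (no perturbation), divide through by $\|f\|_{L^2(\gamma)}^2 = \int f^2\,d\gamma$ after noting the inequality is homogeneous of degree $2$ in $f$ so we may assume $\int f^2\,d\gamma = 1$, and then bound the entropy from below using the sharp inequality $\log s \leq s - 1$ in the form: writing $g = f^2$ with $\int g\, d\gamma = 1$,
$$
{\rm Ent}_\gamma(g) = \int g\log g\,d\gamma = \int \Big(g\log g - g + 1\Big)\,d\gamma \geq \frac{1}{2}\int \frac{(g-1)^2}{g \vee 1}\,d\gamma \geq \ldots
$$
which is not quite it; the genuinely correct elementary estimate is ${\rm Ent}_\gamma(g) \geq \int g\,d\gamma - \big(\int \sqrt{g}\,d\gamma\big)^2 = 1 - \|f\|_{L^1(\gamma)}^2$, which \emph{does} hold: it is exactly the statement that the Gaussian entropy dominates the "$L^1$-variance" of $\sqrt g$, provable by $x \log x \geq x - 1 \geq $ ... — I would cite or verify this as a lemma. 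Granting it, combine with Theorem \ref{t:MainLSI}:
$$
1 - \|f\|_{L^1(\gamma)}^2 \leq {\rm Ent}_\gamma(f^2) \leq \tfrac12{\rm I}_\gamma(f^2) - D_n(\beta) = 2\int |\nabla f|^2\,d\gamma - D_n(\beta),
$$
hence $(1 + D_n(\beta)) - \|f\|_{L^1(\gamma)}^2 \leq 2\int|\nabla f|^2\,d\gamma$; dividing by $2$ and undoing the normalisation $\int f^2\,d\gamma = 1$ gives precisely \eqref{e:RegPoi}. The main obstacle is therefore twofold: first, establishing (or correctly citing) the elementary entropy lower bound ${\rm Ent}_\gamma(g) \geq \int g\,d\gamma - (\int\sqrt g\,d\gamma)^2$ with the right constant — this needs the convexity inequality for $x\log x$ handled carefully and may require the extra factor structure, so I would double-check whether the clean constant $1$ is attainable or whether one loses a harmless factor; and second, verifying that the perturbative/approximation manoeuvre (if needed) preserves semi-log-subharmonicity and the $L^2(\gamma_\beta^{-1})$ condition, for which the superposition-stability results already cited in the paper (\cite[Proposition 2.2]{GKL} and Lemma \ref{l:Technical1}) should suffice. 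Everything else is routine: the explicit value of the extremal deficit is \eqref{e:DeficitLogSob}, and the passage ${\rm I}_\gamma(f^2) = 4\int|\nabla f|^2\,d\gamma$ is immediate.
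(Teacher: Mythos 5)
Your final argument is correct and is essentially the $p=1$ special case of what the paper does; the paper proves the slightly more general interpolated estimate \eqref{e:PoiGoal} for all $p\in[1,2)$ (following \cite[Proposition~5.1.8]{BGL}) and then specialises. The one step you were unsure of -- the elementary lower bound $\mathrm{Ent}_\gamma(f^2)\ge\|f\|_{L^2(\gamma)}^2-\|f\|_{L^1(\gamma)}^2$ -- does hold with constant exactly $1$, and it is precisely the composite of the two ingredients the paper uses: convexity of $r\mapsto\log\|f\|_{L^{1/r}(\gamma)}$ (from H\"older, as a probability measure) gives
\[
\|f\|_{L^2(\gamma)}^2\,\log\frac{\|f\|_{L^2(\gamma)}^2}{\|f\|_{L^1(\gamma)}^2}\le \mathrm{Ent}_\gamma(f^2),
\]
and then $x\log x\ge x-1$ with $x=\|f\|_{L^2(\gamma)}^2/\|f\|_{L^1(\gamma)}^2\ge 1$ converts the left side into $\|f\|_{L^2(\gamma)}^2-\|f\|_{L^1(\gamma)}^2$. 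Once you have this, your chain -- normalise $\int f^2\,d\gamma=1$, apply Theorem~\ref{t:MainLSI} to $v=\gamma f^2$ to get $\mathrm{Ent}_\gamma(f^2)\le 2\int|\nabla f|^2\,d\gamma-D_n(\beta)$, combine, and dehomogenise -- gives exactly \eqref{e:RegPoi}. Two small remarks: (i) no perturbation or approximation argument is needed, since Theorem~\ref{t:MainLSI} applies directly to $v=\gamma f^2$ under the stated hypotheses (the normalisation is harmless because $\Delta\log(cv)=\Delta\log v$ and membership in $L^2(\gamma_\beta^{-1})$ is scale invariant), so the technical worries you raise are moot; (ii) the linearisation detours you tried first ($f=1+\varepsilon g$, $\varepsilon\to 0$) indeed cannot work here, exactly for the reason you observe -- the deficit $-D_n(\beta)$ is an order-$\varepsilon^0$ term and would produce a contradiction -- so the correct move, which you eventually found, is not to linearise at all.
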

\begin{remark}
Note that this result offers a gain over  the classical inequality \eqref{e:Poi} for sufficiently small or sufficiently large $\beta$. More precisely, this is possible when $D_n(\beta) > 1$ or equivalently,
in terms of Lambert $W$ functions, when
$$
\beta \notin \bigg[- \frac{1}{W_{-1}(e^{-(1 + 2/n)})}, - \frac{1}{W_0(e^{-(1 + 2/n)})} \bigg].
$$
\end{remark}
\begin{proof}[Proof of Corollary \ref{Cor:RegPoi}]
We will prove a more general inequality. For all $p \in[1,2)$ we show
\begin{equation}\label{e:PoiGoal}
\|f\|_{L^2(\gamma)}^2\log\, \frac{\|f\|_{L^2(\gamma)}^2}{\| f \|_{L^p(\gamma)}^2}
\le 
\frac{2(2-p)}{p}  \int_{\mathbb{R}^n} | \nabla f|^2\, d\gamma - \frac{2-p}{p} D_n(\beta) \|f\|_{L^2(\gamma)}^2,
\end{equation}
from which we can see that 
$$
\|f\|^2_{L^2(\gamma)}- \| f \|_{L^p(\gamma)}^2
\le 
\frac{2(2-p)}{p}  \int_{\mathbb{R}^n} | \nabla f|^2\, d\gamma - \frac{2-p}{p} D_n(\beta) \|f\|_{L^2(\gamma)}^2,
$$
since $ x\log\, x \ge x-1 $.  By choosing $p=1$, we clearly derive \eqref{e:RegPoi}. 

To prove \eqref{e:PoiGoal}, we appeal to the argument of \cite[Proposition 5.1.8]{BGL}. 
Let us introduce $\phi(r):= \log\, ( \| f \|_{L^{\frac1r}(\gamma)} )$, $r\in (0,1]$. This $\phi$ is convex thanks to H\"{o}lder's inequality and hence, for $p \in [1,2)$, we obtain
\begin{align*}
\frac12\log\, \frac{\| f \|_{L^p(\gamma)}^2}{\|f\|_{L^2(\gamma)}^2} =  \phi(\tfrac1p) - \phi(\tfrac12) 
& \ge (\tfrac1p - \tfrac12) \phi'(\tfrac12) \\
& = \frac{\tfrac12 - \tfrac1p }{\|f\|_{L^2(\gamma)}^2} {\rm Ent}_{\gamma}( f^2 ) \\
& \geq \frac{\tfrac12 - \tfrac1p}{\|f\|_{L^2(\gamma)}^2} \bigg(\frac12 {\rm I}_{\gamma}(f^2) - D_n(\beta) \|f\|_{L^2(\gamma)}^2 \bigg)
\end{align*}
through use of Theorem \ref{t:MainLSI}. Since ${\rm I}_{\gamma}(f^2) = 4 \int | \nabla f |^2\, d\gamma$, we quickly obtain \eqref{e:PoiGoal}.
\end{proof}

Beckner's inequality \cite{Beckner} for each $p \in [1,2)$
\begin{equation}\label{e:GPI}
\frac1{2-p} \bigg[ \int_{\mathbb{R}^n} f^2\, d\gamma - \bigg( \int_{\mathbb{R}^n} |f|^p\, d\gamma\bigg)^{\frac2p} \bigg] \le \int_{\mathbb{R}^n} | \nabla f |^2\, d\gamma
\end{equation}
is a generalisation of the Poincar\'{e} inequality (take $p=1$) and recovers the LSI in \eqref{e:LogSob} (take $p \to 2$). Beckner obtained \eqref{e:GPI} by proving
\begin{equation} \label{e:BecknerPre}
\int_{\mathbb{R}^n} f^2\, d\gamma - \int_{\mathbb{R}^n} |P_sf|^2\, d\gamma
\le  (1 - e^{-2s}) \int_{\mathbb{R}^n} | \nabla f |^2\, d\gamma
\end{equation}
and combining this with hypercontractivity of the Ornstein--Uhlenbeck semigroup \eqref{e:HCClassic}. In the same manner, replacing \eqref{e:HCClassic} with our result in Theorem \ref{t:MainHyper}, we obtain the following.
\begin{corollary}\label{Cor:RegBeckner}
Let $\beta>0$, $1 < p < 2$ and let $\gamma f^p \in L^2(\gamma_\beta^{-1})$. When $\beta > 1$, assume that $\gamma f^p$ is $\beta$-semi-log-subharmonic, and when $\beta < 1$, assume that $\gamma f^p$ is $\beta$-semi-log-concave. Then 
$$
\frac1{2-p} \bigg[ \int_{\mathbb{R}^n} f^2 \, d\gamma - B(p,\beta) \bigg( \int_{\mathbb{R}^n} f^p \, d\gamma \bigg)^{\frac2p} \bigg] \le  \int_{\mathbb{R}^n} | \nabla f |^2\, d\gamma
$$
Here
$$
B(p,\beta):= \int_{\mathbb{R}^n} |P_{s} \big[ \big( \frac{\gamma_\beta}{\gamma_1}\big)^\frac1p \big]|^2\, d\gamma 
=
\beta^{\frac{n}{p'}} ( 1 + (\beta -1 ) \frac{2}{p'} )^{- \frac{n}{2}},
$$
where $s := -\frac12 \log\, (p-1)$.
\end{corollary}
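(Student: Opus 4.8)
The plan is to follow Beckner's original strategy, substituting our regularised hypercontractivity inequality from Theorem \ref{t:MainHyper}(1) in place of the classical inequality \eqref{e:HCClassic}. The key observation, which is already recorded in \eqref{e:BecknerPre}, is the semigroup interpolation identity
\[
\int_{\mathbb{R}^n} f^2\, d\gamma - \int_{\mathbb{R}^n} |P_s f|^2\, d\gamma = -\int_0^s \frac{d}{d\sigma}\int_{\mathbb{R}^n} |P_\sigma f|^2\, d\gamma\, d\sigma = 2\int_0^s \int_{\mathbb{R}^n} |\nabla P_\sigma f|^2\, d\gamma\, d\sigma,
\]
where we have used $\frac{d}{d\sigma}\int |P_\sigma f|^2\, d\gamma = -2\int |\nabla P_\sigma f|^2\, d\gamma$ (integration by parts against $\mathcal{L}_1$). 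Combining this with the commutation $\nabla P_\sigma = e^{-\sigma}P_\sigma\nabla$ and the contractivity of $P_\sigma$ on $L^2(\gamma)$ gives $\int |\nabla P_\sigma f|^2\, d\gamma \le e^{-2\sigma}\int |\nabla f|^2\, d\gamma$, and integrating in $\sigma$ over $[0,s]$ yields \eqref{e:BecknerPre}.

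First I would fix $1<p<2$ and set $s := -\tfrac12\log(p-1)>0$, which is precisely the Nelson time making $\tfrac{2-1}{p-1} = e^{2s}$, i.e.\ the exponent pair $(p,2)$ is admissible for hypercontractivity. Applying Theorem \ref{t:MainHyper}(1) with $v = \gamma f^p$ (which is $\beta$-semi-log-subharmonic when $\beta>1$, resp.\ $\beta$-semi-log-concave when $\beta<1$, by hypothesis, and lies in $L^2(\gamma_\beta^{-1})$) gives
\[
\Big\| P_s\big[(f^p)^{1/p}\big] \Big\|_{L^2(\gamma)} \le \Big\| P_s\big[(\gamma_\beta/\gamma)^{1/p}\big]\Big\|_{L^2(\gamma)} \Big(\int_{\mathbb{R}^n} f^p\, d\gamma\Big)^{1/p},
\]
that is, $\int |P_s f|^2\, d\gamma \le B(p,\beta)\big(\int f^p\, d\gamma\big)^{2/p}$, where $B(p,\beta) = \|P_s[(\gamma_\beta/\gamma)^{1/p}]\|_{L^2(\gamma)}^2$. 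The explicit value $B(p,\beta) = \beta^{n/p'}(1+(\beta-1)\tfrac{2}{p'})^{-n/2}$ follows from the constant formula $\|P_s[(\gamma_\beta/\gamma)^{1/p}]\|_{L^q(\gamma)} = \beta^{n/(2p')}\beta_s^{-n/(2q')}$ recorded after Theorem \ref{t:MainHyper} (with $q=2$, $q'=2$, and $\beta_s = 1+(\beta-1)\tfrac{q}{p}e^{-2s} = 1+(\beta-1)\tfrac{2}{p}(p-1) = 1+(\beta-1)\tfrac{2}{p'}$, using $e^{-2s}=p-1$ and $\tfrac{2}{p}(p-1) = \tfrac{2}{p'}$).

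Then I would combine the two ingredients: from \eqref{e:BecknerPre} and the hypercontractivity estimate,
\[
\int_{\mathbb{R}^n} f^2\, d\gamma - B(p,\beta)\Big(\int_{\mathbb{R}^n} f^p\, d\gamma\Big)^{2/p} \le \int_{\mathbb{R}^n} f^2\, d\gamma - \int_{\mathbb{R}^n}|P_sf|^2\, d\gamma \le (1-e^{-2s})\int_{\mathbb{R}^n}|\nabla f|^2\, d\gamma.
\]
Since $1-e^{-2s} = 1-(p-1) = 2-p$, dividing through by $2-p$ gives exactly the claimed inequality. The argument is short and essentially none of the steps present a genuine obstacle; the only points requiring a modicum of care are the justification of the differentiation under the integral sign and the integration by parts in deriving \eqref{e:BecknerPre} (standard for the Ornstein--Uhlenbeck semigroup acting on $f\in L^2(\gamma)$ with $\nabla f\in L^2(\gamma)$, and in any case one may argue first for nice $f$ and pass to the limit), and the bookkeeping in the computation of $B(p,\beta)$, which reduces to the already-stated explicit constant. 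One should also note that the result is only informative when $B(p,\beta)>1$ fails to hold trivially; as in Corollary \ref{Cor:RegPoi} one could remark on the range of $\beta$ for which $B(p,\beta)<1$, but this is not needed for the proof of the stated inequality.
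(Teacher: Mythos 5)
Your proof is correct and follows precisely the route the paper indicates, namely Beckner's original argument with the classical hypercontractivity inequality \eqref{e:HCClassic} replaced by the regularised version in Theorem \ref{t:MainHyper}(1) applied to $v=\gamma f^p$ at Nelson's time $s=-\tfrac12\log(p-1)$ for the pair $(p,2)$. The derivation of \eqref{e:BecknerPre}, the identification $1-e^{-2s}=2-p$, and the computation of $\beta_s=1+(\beta-1)\tfrac{2}{p'}$ leading to $B(p,\beta)$ all check out; since the paper gives no explicit proof beyond the prose sketch, there is nothing more to compare.
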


\begin{remark}
We have $B(p,\beta) < 1$ whenever $\beta > 1$ and $p \in (1,2)$ and so we see an improvement over \eqref{e:GPI} in such cases. 
Also, it is easy to verify that $\partial_p B(2,\beta) = \frac{1}{2}D_n(\beta)$ and thus, as one would expect, the limiting case $p \to 2$ in Corollary \ref{Cor:RegBeckner} recovers our regularised LSI in Theorem \ref{t:MainLSI} (and thus gives a slightly different route compared to that in Section \ref{S3.3}).
In the limiting case $p \to 1$, one sees that $B(p,\beta) \to 1$ and hence Corollary \ref{Cor:RegBeckner} does not yield any improvement for the Poincar\'{e} inequality. 
\end{remark}

\subsection{Regularised hypercontractivity in dual form}
Via duality, the hypercontractivity inequality \eqref{e:HCClassic} on $\mathbb{R}$ (for simplicity) can be equivalently viewed as a special case of Brascamp--Lieb inequality 
\begin{equation}\label{e:BL}
\int_{\mathbb{R}^2} e^{ -\pi x\cdot Q x  } \prod_{j=1,2} f_j(L_jx)^{c_j}\, dx\le \mathrm{H}(c_1,c_2) \prod_{j=1,2} \bigg( \int_{\mathbb{R}} f_j \bigg)^{c_j}
\end{equation}
for nonnegative $f_1,f_2 \in L^1( \mathbb{R} )$. Here, the positive semi-definite transformation $Q$ and exponents $c_1,c_2\in (0,1)$ are given by 
\begin{equation}\label{e:Relation}
Q:= \frac1{ 2\pi (1-e^{-2s}) } 
\begin{pmatrix}
1 - ( 1-e^{-2s} ) \frac1p & -e^{-s} \\
-e^{-s} & 1 - ( 1-e^{-2s} )\frac1{q'}
\end{pmatrix},\;\;\;
c_1:= \frac1p,\;\;\; c_2 := \frac1{q'}.
\end{equation}
The condition $e^{2s} = \frac{q-1}{p-1}$ is equivalent to
\begin{equation} \label{e:HCscondition}
c_1+c_2 = 1 + (1-e^{-2s})c_1c_2,
\end{equation}
and the constant $\mathrm{H}(c_1,c_2)$ is given by
\[
\mathrm{H}(c_1,c_2)  := (2\pi)^{ 1 - \frac12 (c_1 + c_2 ) } \sqrt{ 1 - e^{-2s} }.
\]
We note that in order to obtain \eqref{e:BL} from \eqref{e:HCClassic}, it suffices to choose
\begin{equation}\label{e:BLLink}
f = \big( \frac{ f_1 }{\gamma} \big)^\frac1p
\end{equation}
and apply H\"{o}lder's inequality. Similarly, the reverse hypercontractivity inequality \eqref{e:RevHC} is equivalent to the inverse Brascamp--Lieb inequality
\begin{equation}\label{e:RevBL}
\int_{\mathbb{R}^2} e^{ -\pi x\cdot Q x  } \prod_{j=1,2} f_j(L_jx)^{c_j}\, dx\ge\mathrm{H}(c_1,c_2) \prod_{j=1,2} \bigg( \int_{\mathbb{R}} f_j \bigg)^{c_j}
\end{equation}
 for all positive $f_1,f_2 \in L^1( \mathbb{R} )$. Here, we reuse the same notation as above, but we note that positive semi-definiteness of $Q$ and the positivity of $c_1,c_2$ do not necessarily persist in this case; we refer to \cite{BWAnn,BW} for further discussion along these lines.
 
The equivalences alluded to above rest on the fact that we are referring to general input functions. When restricting to special classes of input functions there is no reason to expect such equivalences to continue to be valid and typically one might expect the dual version to be weaker. For instance, it would be reasonable to consider the dual statement \eqref{e:BL} where both $f_1$ and $f_2$ are regularised in a similar manner. Indeed, this would fit into the framework of regularised Brascamp--Lieb inequalities established by Bennett--Carbery--Christ--Tao \cite{BCCT} using heat flow (see also \cite{V} for an alternative approach based on mass transportation, and also \cite{BN} for regularised inverse Brascamp--Lieb inequalities). Obtaining \eqref{e:BL} for such inputs from \eqref{e:RegHC} simply relies on H\"older's inequality (and, in fact, does not make use of the regularity imposed on one of the inputs). On the other hand, based on the form of the duality function in $L^p$ duality, there seems to be a much less clear path towards obtaining the reverse implication. The following result improves  \eqref{e:BL} and \eqref{e:RevBL} and is obtained by making use of our regularised hypercontractivity inequalities in \eqref{e:RegHC} and \eqref{e:RegRevHC}  (via H\"older and \eqref{e:BLLink}). The improved constant which arises in this way takes the form
\[
\mathcal{H}(c_1,c_2)  := \mathrm{H}(c_1,c_2) \big\| P_s \big[ \big( \frac{\gamma_\beta}{\gamma} \big)^{c_1} \big] \big\|_{L^{ (\frac1{ c_2 })' }(\gamma)}
\]
\begin{corollary}\label{Cor:BL}
Let $\beta>0$,  $s>0$ and $c_1,c_2 \in \mathbb{R}\setminus\{0\}$ satisfy \eqref{e:HCscondition}.

\begin{enumerate}
\item 
In the case $c_1,c_2 \in (0,1)$, we suppose $\beta > 1$. Then 
\begin{align*}
\int_{\mathbb{R}^2} e^{ -\pi x\cdot Q x  } \prod_{j=1,2} f_j(L_jx)^{c_j}\, dx
\le \mathcal{H}(c_1,c_2)\prod_{j=1,2} \bigg( \int_{\mathbb{R}} f_j \bigg)^{c_j}
\end{align*}
for any nonnegative $f_2 \in L^1(\mathbb{R})$ and $f_1 \in L^1(\mathbb{R}) $ satisfying $(\log\, f_1)'' \ge - \frac1\beta$.
\item 
In the case $c_1 c_2 <0$,  we suppose $\beta > 1$. Then 
\begin{align*}
\int_{\mathbb{R}^2} e^{ -\pi x\cdot Q x  } \prod_{j=1,2} f_j(L_jx)^{c_j}\, dx
\ge 
\mathcal{H}(c_1,c_2) \prod_{j=1,2} \bigg( \int_{\mathbb{R}} f_j  \bigg)^{c_j}
\end{align*}
for any positive $f_2 \in L^1(\mathbb{R})$ and $f_1 \in L^1(\mathbb{R}) $ satisfying $(\log\, f_1)'' \ge - \frac1\beta$.
\item 
In the case $c_1,c_2 >1$, we suppose $\beta < 1$. Then 
\begin{align*}
\int_{\mathbb{R}^2} e^{ -\pi x\cdot Q x  } \prod_{j=1,2} f_j(L_jx)^{c_j}\, dx
\ge 
\mathcal{H}(c_1,c_2)\prod_{j=1,2} \bigg( \int_{\mathbb{R}} f_j  \bigg)^{c_j}
\end{align*}
for any positive $f_2 \in L^1(\mathbb{R})$ and $f_1 \in L^1(\mathbb{R}) $ satisfying $(\log\, f_1)'' \le - \frac1\beta$.
\end{enumerate}
\end{corollary}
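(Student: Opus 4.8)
The plan is to deduce Corollary \ref{Cor:BL} directly from our regularised hypercontractivity inequalities in Theorem \ref{t:MainHyper}, by retracing the classical equivalence between hypercontractivity for $P_s$ and the corresponding (forward or inverse) Brascamp--Lieb inequality \eqref{e:BL}/\eqref{e:RevBL}, and checking that the only non-trivial input, namely $f_1$, satisfies the semi-log-subharmonicity (in $n=1$, equivalently semi-log-convexity) or semi-log-concavity hypothesis precisely when $\gamma f^{1/p}$ does. First I would fix the dictionary between the two pictures: with $Q, c_1, c_2$ as in \eqref{e:Relation}, one has $c_1 = 1/p$ and $c_2 = 1/q'$, so the relation \eqref{e:HCscondition} is exactly $\frac{q-1}{p-1} = e^{2s}$, and the Gaussian weight $e^{-\pi x \cdot Qx}$ factors so that, after the substitution \eqref{e:BLLink}, i.e. $f = (f_1/\gamma)^{1/p}$, the left-hand side of \eqref{e:BL} becomes (up to the explicit constant $\mathrm{H}(c_1,c_2)$) the quantity $\int P_s[f]^{q} \, d\gamma$ tested against $f_2$ via H\"older's inequality with exponents $q$ and $q'$. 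This is the same computation that proves \eqref{e:BL} from \eqref{e:HCClassic} in \cite{BeLo} and is reviewed in the excerpt around \eqref{e:BLLink}; I would simply carry it out keeping track of which estimate on $\|P_s[f]\|_{L^q(\gamma)}$ is being used.

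Next I would invoke Theorem \ref{t:MainHyper} in place of \eqref{e:HCClassic} or \eqref{e:RevHC}. In case (1), $c_1, c_2 \in (0,1)$ forces $1 < p < q < \infty$, and the hypothesis $(\log f_1)'' \ge -1/\beta$ with $\beta > 1$ says precisely that $f_1$ is $\beta$-semi-log-subharmonic in dimension one (recall that in $n=1$, $\Delta \log v \ge -n/\beta$ and $\nabla^2 \log v \ge -\frac1\beta \mathrm{id}$ coincide); writing $v = f_1$ and applying Theorem \ref{t:MainHyper}(1) yields $\|P_s[(f_1/\gamma)^{1/p}]\|_{L^q(\gamma)} \le \beta^{\frac{1}{2p'}} \beta_s^{-\frac{1}{2q'}} (\int f_1/\gamma \, d\gamma)^{1/p}$, and the constant $\beta^{\frac{1}{2p'}}\beta_s^{-\frac{1}{2q'}}$ is exactly $\|P_s[(\gamma_\beta/\gamma)^{c_1}]\|_{L^{(1/c_2)'}(\gamma)}$ since $(1/c_2)' = q$. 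Feeding this into the H\"older step produces the constant $\mathcal H(c_1,c_2) = \mathrm{H}(c_1,c_2)\|P_s[(\gamma_\beta/\gamma)^{c_1}]\|_{L^{(1/c_2)'}(\gamma)}$, as claimed. For case (2), $c_1 c_2 < 0$ corresponds to $pq < 0$ in the reverse regime $-\infty < q < p < 1$ (one checks that $c_1 + c_2 = 1 + (1-e^{-2s})c_1c_2$ with $c_1 c_2 < 0$ forces exactly this); then $\beta > 1$ and the semi-log-subharmonicity of $f_1$ put us in the hypothesis of Theorem \ref{t:MainHyper}(2)(i), so \eqref{e:RegRevHC} gives the reverse bound on $\|P_s[(f_1/\gamma)^{1/p}]\|_{L^q(\gamma)}$, and the reverse H\"older inequality (valid for the dual exponents in this range, as in the proof of \eqref{e:RevBL} from \eqref{e:RevHC}) turns this into the inverse Brascamp--Lieb inequality with the improved constant. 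Case (3), $c_1, c_2 > 1$, translates to $pq < 0$ again but now with $0 < \beta < 1$ and $f_1$ being $\beta$-semi-log-concave, which is the hypothesis of Theorem \ref{t:MainHyper}(2)(ii); the same reverse-H\"older argument applies.

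The one point that genuinely needs care, and which I expect to be the main obstacle, is the bookkeeping of exponent ranges and the direction of H\"older's inequality in the reverse cases: one must verify that the signs of $c_1, c_2$, the position of $p, q$ relative to $0$ and $1$, and the sign of $pq$ all line up consistently under the dictionary \eqref{e:Relation}, and that the version of H\"older (forward versus reverse) used to pass between $\int P_s[f]^q \, d\gamma$ tested against $f_2$ and the Brascamp--Lieb integral is the correct one in each regime. In particular in case (2) one has $q < 0$, so $L^q$ is not a norm and the ``reverse H\"older'' step is really the elementary inequality $\int gh \, d\gamma \ge \|g\|_{L^q}\|h\|_{L^{q'}}$ valid for $q < 0 < q' < 1$ with $g, h > 0$; I would state this as a small lemma (or cite \cite{BWAnn,BW}) and check its hypotheses are met. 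A secondary, purely technical, point is that the integrability of $f_1$ together with $L^1(\mathbb R)$ membership of $f_2$ suffices to make all integrals finite and the substitution $f = (f_1/\gamma)^{1/p}$ legitimate; since the excerpt already flags that one should not dwell on the $L^2(\gamma_\beta^{-1})$-type conditions, I would note that a density/truncation argument reduces to the case where $f_1$ additionally satisfies the hypotheses of Theorem \ref{t:MainHyper} and then pass to the limit. Once these compatibility checks are in place, the proof is a direct translation of the excerpt's own derivation of \eqref{e:BL} and \eqref{e:RevBL} from \eqref{e:HCClassic} and \eqref{e:RevHC}, with Theorem \ref{t:MainHyper} substituted at the key step, so I would keep the write-up terse as the paper suggests.
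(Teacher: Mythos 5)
Your route is the same as the paper's: substitute $f=(f_1/\gamma)^{1/p}$, apply H\"older (or its reverse) in the $L^q/L^{q'}$ duality to peel off $f_2$, and invoke Theorem~\ref{t:MainHyper} in place of \eqref{e:HCClassic}/\eqref{e:RevHC} to get the improved constant $\mathcal H(c_1,c_2)$. The paper itself gives only the one-sentence indication preceding the corollary, so your fleshing-out is in the spirit intended, and your identification $(1/c_2)'=q$ and the translation of the hypotheses on $f_1$ into the one-dimensional semi-log-subharmonic/concave conditions are correct.

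However, the one place you flag as needing care is also where you slip. Writing $c_1c_2=\frac{1}{p}\cdot\frac{1}{q'}=\frac{q-1}{pq}$ and noting that in the reverse regime $q<p<1$ one always has $q-1<0$, one sees that $c_1c_2$ and $pq$ have \emph{opposite} signs. Thus in your case~(2), $c_1c_2<0$ corresponds to $pq>0$ (either $q<p<0$ or $0<q<p<1$), not to $pq<0$ as you assert. As written your parenthetical is inconsistent with the very next step, where you invoke Theorem~\ref{t:MainHyper}(2)(i) — which requires $pq>0$. The conclusion you draw is the right one (part (2)(i) is indeed the correct case to apply under the hypotheses $\beta>1$ and $(\log f_1)''\ge-1/\beta$), but the stated dictionary is wrong and should be corrected before the bookkeeping of signs and H\"older directions can be trusted. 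Your case~(3) dictionary, $c_1,c_2>1\Leftrightarrow q<0<p<1\Leftrightarrow pq<0$, is correct and matches part (2)(ii). Once the sign in case~(2) is fixed, the remainder of the argument is sound.
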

This result indicates that it would be reasonable to try and extend the regularised Brascamp--Lieb inequalities in \cite{BCCT} and \cite{BN}, in which the input functions are given by solutions to heat equations at time $t=1$, and allow inputs which are semi-log-convex/semi-log-concave in the appropriate sense.

\section{Further results}\label{S5}
\subsection{Hypercontractivity on general measure} \label{subsection:hypergeneral}
Given our regularised hypercontractivity inequality on gaussian space, it is of course natural to pursue the case of more general measure spaces. Whilst significant parts of our proof of Theorem \ref{t:MainHyper} readily generalise, certain parts make use of the specific nature of the gaussian measure, { namely the commutation property $ \nabla P_s = e^{-s} P_s \nabla $}. In this subsection we isolate exactly which parts of the argument offer up difficulties in extending to more general measures and we suggest possible ways to overcome these.

First, we provide the following set-up. Take a smooth, non-negative,  and convex potential $V$ on $\mathbb{R}^n$ and fix the probability measure 
\[
d\mathfrak{m}(x) = Z^{-1}e^{ -V(x) }dx, \quad Z:= \int_{\mathbb{R}^n} e^{-V(x)}\, dx, 
\]
which is invariant and symmetric with respect to the operator 
\[
\mathcal{L} = \mathcal{L}_V:= \Delta - \nabla V \cdot \nabla.
\] 
Then we denote a diffusion semigroup corresponding to $\mathcal{L}$ by $P_s:L^2(\mathfrak{m}) \to L^2(\mathfrak{m})$, $s\ge0$, which satisfies 
$$
\partial_s P_s f = \mathcal{L} P_sf,\;\;\; \lim_{s\downarrow0} P_sf =f 
$$
for all $f \in L^2(\mathfrak{m})$.  We remark that $P_s$ can be extended to  a bounded operator on $L^p(\mathfrak{m})$ for all $1\le p\le\infty$, and moreover it is a contraction. We refer the reader to the book \cite{BGL} for a comprehensive treatment.  
We will frequently use the notation 
$$
d\mathfrak{m}_\beta(x) = Z_\beta^{-1} e^{ - \frac{1}{\beta} V(x)}dx, \quad Z_\beta:= \int_{\mathbb{R}^n} e^{ - \frac{1}{\beta}V(x) }\, dx, \quad \mathcal{L}_{\beta} := \beta\Delta - \nabla V \cdot \nabla 
$$
for $\beta>0$.  In the gaussian case $V(x) = \frac12|x|^2$, we have $\mathfrak{m}_\beta = \gamma_\beta$ and the notation $\mathcal{L}_{\beta}$ is consistent with notation we introduced in Section 1.  

In this general setting, the hypercontractivity inequality \eqref{e:HCClassic} can be generalised as follows.  Suppose $\nabla^2 V \ge K {\rm id}$ for some $K>0$ and let $1 < p< q <\infty$ and $s>0$ satisfy $\frac{q-1}{p-1} = e^{ 2Ks }$. Then  
\begin{equation}\label{e:HCPotential}
\big\| P_s\big[ f^\frac1p \big] \big\|_{L^q(\mathfrak{m})} \le \bigg(\int_{\mathbb{R}^n} f\, d\mathfrak{m} \bigg)^{\frac1p}
\end{equation}
for all non-negative $f \in L^1(\mathfrak{m})$.
It is reasonable to expect to improve the constant in \eqref{e:HCPotential} via the Fokker--Planck equation 
\begin{equation}\label{e:FPPotential}
\partial_t v_t = \mathcal{L}_\beta^* v_t := \beta \Delta v_t  + \nabla\cdot (v_t \nabla V )
\end{equation}
in the spirit of \eqref{e:RegHC}.  
Here $\mathcal{L}_{\beta}^*$ means the dual of $\mathcal{L}_\beta$ with respect to the $L^2(dx)$ inner product. 
Note that the assumption $\nabla^2 V \ge K {\rm id}$ is indeed known to be related to the commutativity of $P_s$ and $\nabla$.  In the case of $V(x) = \frac1{2} |x|^2$, it is straightforward from the explicit form of $P_sf$ in \eqref{e:PsFormula} to see that 
\begin{equation}\label{e:GradEstGauss}
\nabla ( P_sf ) = e^{-s} P_s\big[ \nabla f\big]:= e^{-s} ( P_s\big[ \partial_1 f \big],\ldots,  P_s\big[ \partial_n f \big] ). 
\end{equation}
For general potential $V$, the convexity condition on $V$ is in fact equivalent to a weak form of \eqref{e:GradEstGauss}, the so-called \textit{gradient estimate} 
\begin{equation}\label{e:GradEstPotensial}
|\nabla ( P_sf ) | \le e^{-Ks} P_s\big[ |\nabla f|\big]
\end{equation}
for all smooth and compactly supported functions $f$.
We refer to \cite{BE} for details on this equivalence.
In what follows, we will impose a slightly stronger assumption than the gradient estimate \eqref{e:GradEstPotensial}, namely
\begin{equation}\label{e:GradEstStrong}
|\nabla ( P_sf ) | \le e^{-Ks} |P_s\big[ \nabla f\big]|
\end{equation}
for all smooth $f \in L^1(\mathfrak{m})$.

Let us consider the case $\beta\in(0,1)$. 
We generalise the notion of semi-log-concavity by assuming that the solution $v_t$ to \eqref{e:FPPotential} satisfies 
\begin{equation}\label{e:LogConvex}
\nabla^2  \log\, v_t \le -\frac1{\beta} \nabla^2 V
\end{equation}
for all $t \geq 0$. Under this set-up, let us discuss to what extent our proof of Theorem \ref{t:MainHyper} can be generalised.

Firstly, for technical reasons, we will restrict our attention to the case $q=2$ and thus we consider $1<p<q=2$ satisfying $\frac{q-1}{p-1} = e^{2Ks}$ for some $K>0$. Along the lines of our proof of Theorem \ref{t:MainHyper}, our goal is to show that the functional
\[
Q(t):= \int_{\mathbb{R}^n} P_s [ h_t ]^q \, d\mathfrak{m}
\]
is non-decreasing for $t > 0$, where 
$
h_t:= ( \frac{v_t}{\mathfrak{m}})^\frac1p.
$

Observe that in the proof of Theorem \ref{t:MainHyper} we made use of identities \eqref{e:AssumpG=} and \eqref{e:AssumpF=}, both of which follow from \eqref{e:GradEstGauss} and are unique to the case $V(x) = \frac12 |x|^2$. 
With this in mind,  for general potential $V$,  here we \emph{impose} an inequality version of \eqref{e:AssumpG=} and \eqref{e:AssumpF=} as follows. 
We say that the solution $v_t$ to \eqref{e:FPPotential} satisfies \textit{gradient type estimates} if \eqref{e:GradEstStrong} holds, the inequality
\begin{align}
\int_{\mathbb{R}^n}  \nabla P_{2s}  f \cdot \mathbf{g} \, d\mathfrak{m}
\le 
e^{-2Ks} \int_{\mathbb{R}^n}  P_{2s}\big[ \nabla f \big] \cdot \mathbf{g} \, d\mathfrak{m} \label{e:AssumpG<} 
\end{align}
is satisfied for $( f,\mathbf{g} ) = ( h_t, h_t \nabla V ), ( h_t, h_t \nabla \big( \log\, \frac{v_t}{\mathfrak{m}}\big) )$, and the inequality
\begin{align}
\int_{\mathbb{R}^n}   \big( {\rm div}\,  P_{2s} \mathbf{f} \big){g} \, d\mathfrak{m}
\ge 
e^{-2Ks} \int_{\mathbb{R}^n}    P_{2s}\big[ {\rm div}\, \mathbf{f} \big] {g} \, d\mathfrak{m} \label{e:AssumpF>}
\end{align}
is satisfied for $( \mathbf{f}, g ) = (h_t \nabla V, h_t) , ( h_t \nabla \big(\log\, \frac{v_t}{\mathfrak{m}}\big),  h_t)$. 

 The relevancy of our gradient type estimates can be seen from the following facts. If we have \eqref{e:AssumpG<} for $(f,\mathbf{g})$,  then  
\begin{align}\label{e:ConsSep26G<}
\int_{\mathbb{R}^n} 
P_s f P_s \big[  \mathbf{g} \cdot \nabla V \big] \, d\mathfrak{m}  
\le \int_{\mathbb{R}^n}
P_s f P_s \big[ {\rm div}\, \mathbf{g} \big] \, d\mathfrak{m} 
+ 
e^{-2Ks} \int_{\mathbb{R}^n} 
P_s \big[\nabla f\big] \cdot P_s \mathbf{g}  \, d\mathfrak{m}.  
\end{align}
Also, if we have \eqref{e:AssumpF>} for $(\mathbf{f}, g)$,  then 
\begin{align}\label{e:ConsSep26F>}
\int_{\mathbb{R}^n} 
P_s \mathbf{f} \cdot  P_s \big[ g \nabla V \big] \, d\mathfrak{m}  \ge
\int_{\mathbb{R}^n} 
P_s \mathbf{f} \cdot P_s \big[ \nabla g \big]  \, d\mathfrak{m} 
+ 
e^{-2Ks} \int_{\mathbb{R}^n} 
P_s \big[{\rm div}\, \mathbf{f} \big] P_s g  \, d\mathfrak{m}.  
\end{align}
These two properties can be seen by integration by parts. In fact, since $P_s$ is self-adjoint with respect to $d\mathfrak{m}$, we have that 
\begin{align*}
\int_{\mathbb{R}^n} 
P_s f P_s \big[ \mathbf{g} \cdot \nabla V  \big]  \, d\mathfrak{m} 
=&
\int_{\mathbb{R}^n} 
P_{2s} f 
(\mathbf{g}\cdot \nabla V)
\, d\mathfrak{m} \\
=& 
\int_{\mathbb{R}^n} 
P_{2s} f  \mathbf{g} \cdot \nabla(-\mathfrak{m}) \, dx \\
=& 
\int_{\mathbb{R}^n} 
P_s f   P_s\big[ {\rm div}\, \mathbf{g} \big] \, d\mathfrak{m}
+ 
\int_{\mathbb{R}^n} 
\nabla \big( P_{2s} f \big) \cdot \mathbf{g} \, d\mathfrak{m},
\end{align*}
which shows the first claim.

We remark that the proof of Proposition \ref{Prop:Uhmmm2} continues to work for general potentials.   
Hence, armed with \eqref{e:ConsSep26G<} and \eqref{e:ConsSep26F>}, we can modify and generalise the proof of Theorem \ref{t:MainHyper} under the assumptions above.  
In fact, in the proof of Theorem \ref{t:MainHyper}, we used identities \eqref{e:ConsSep26G=} and \eqref{e:ConsSep26F=} for the terms
\begin{align*}
&\Upsilon_1(p,s,\beta) \int_{\mathbb{R}^n} P_s \big[ h_{  t} |\nabla V|^2 \big]P_s h_{  t}^{q-1}\, d\mathfrak{m}, \\
& ( 1-\frac1\beta )^2\int_{\mathbb{R}^n} | P_s\big[ h_{  t}\nabla V \big] |^2 P_sh_{  t}^{q-2} \, d\mathfrak{m},\\
&\Upsilon_2(p,s,\beta) \int_{\mathbb{R}^n} P_s\big[ h_{  t}\nabla \big( \log\, \frac{v_t}{\mathfrak{m}} \big) \big] \cdot P_s\big[ h_{  t}\nabla V \big] P_sh_{  t}^{q-2}\, d\mathfrak{m},\\ 
&\Upsilon_3(p,s,\beta) \int_{\mathbb{R}^n} P_s\big[ h_{  t}\nabla V\cdot \nabla \log\, \frac{v_t}{\mathfrak{m}} \big] P_sh_{  t}^{q-1}\, d\mathfrak{m}
\end{align*}
and other parts of the proof did not make use of the special nature of $V(x) = \frac12|x|^2$. The signs of the coefficients $\Upsilon_j(p,s,\beta)$, $j=1,2,3$, are key to establishing the non-decreasingness of $Q$. 
Note that $\Upsilon_1(p,s,\beta) \le 0$ and $ (1-\frac1\beta)^2, \Upsilon_2(p,s,\beta) = -(1-\frac1\beta) \ge 0$ under $0<\beta<1$.  
In the case $q=2$, we have 
$$
\Upsilon_3(p,s,\beta) = 2e^{-2Ks} - 1 + \frac{(1-e^{-2Ks})^2}{p}(1-\frac1\beta)
$$
and hence if $s < \frac{1}{2K}\log\, 2$, then there exists some $\beta_*\in(0,1)$ for which $\Upsilon_3(p,s,\beta_*)>0$. 

In summary, if the potential $V$ satisfies \eqref{e:GradEstStrong} for some $K>0$, the exponents satisfy $0<s<\frac{1}{2K}\log\, 2$, $1<p<q=2$, $\frac{q-1}{p-1} = e^{2Ks}$, and the following holds for some $\beta = \beta_*\in(0,1)$ sufficiently close to $1$, then $Q$ can be shown to be non-decreasing on $(0,\infty)$. The input $v_0 \in L^2(\mathfrak{m}_{\beta}^{-1})$ and its evolution $v_t$ under \eqref{e:FPPotential} are semi-log-concave in the sense that \eqref{e:LogConvex} holds and satisfy gradient type estimates. If also
\[
\lim_{t \to \infty} v_t = \bigg( \int_{\mathbb{R}^n} \frac{v_0}{\mathfrak{m}}\, d\mathfrak{m} \bigg) \mathfrak{m}_\beta
\]
in an appropriate sense, we obtain the regularised hypercontractivity inequality
\begin{equation*}
\big\| P_s \big[ \big( \frac{v_0}{\mathfrak{m}} \big)^\frac1p \big] \big\|_{ L^q(\mathfrak{m} ) } \le \big\| P_s \big[ \big( \frac{\mathfrak{m}_\beta}{\mathfrak{m}} \big)^\frac1p \big] \big\|_{ L^q(\mathfrak{m} ) } \bigg( \int_{\mathbb{R}^n} \frac{v_0}{\mathfrak{m}}\, d\mathfrak{m} \bigg)^\frac1p.
\end{equation*}
Admittedly, the conditions we impose are somewhat forbidding and, as far as we can tell, it is not clear how to identify concrete examples outside the gaussian case. In the following subsection, we change tack somewhat and more fruitfully we obtain regularised LSI outside the gaussian case via mass transportation rather than flow monotonicity.

\subsection{Regularised LSI for general measures}\label{S5-1}
On a general measure space, the gaussian LSI \eqref{e:LogSob} can be generalised as follows.  
Provided $\nabla^2 V \ge K {\rm id}$ for some $K>0$,  then 
\begin{equation}\label{e:LogSobPotential}
{\rm Ent}_{\mathfrak{m}} (f) \le \frac1{2K} {\rm I}_{\mathfrak{m}} (f)
\end{equation} 
holds for all non-negative locally Lipschitz $f \in L^1(\mathfrak{m})$, where 
\begin{align*}
{\rm Ent}_{\mathfrak{m}} (f) & := \int_{\mathbb{R}^n} \, f\log\, f\, d\mathfrak{m} - \int_{\mathbb{R}^n} \, f \, d\mathfrak{m} \log\, \int_{\mathbb{R}^n} \, f \, d\mathfrak{m}, \\
{\rm I}_{\mathfrak{m}} (f) & := \int_{\mathbb{R}^n} \frac{|\nabla f|^2}{f}\, d\mathfrak{m}.
\end{align*}
See, for example, \cite{BGL} for further details.  At this level of generality, the cases of equality for \eqref{e:LogSobPotential} was recently investigated by Ohta--Takatsu \cite{OhTa}.  In particular,  one can see from \cite{OhTa} that $f = \frac{\mathfrak{m}_\beta}{\mathfrak{m}}$ does not attain equality in \eqref{e:LogSobPotential} unless $\beta=1$. 
Hence, as in the spirit of Theorem \ref{t:MainLSI}, we may expect to improve the constant in \eqref{e:LogSobPotential} under an appropriate log-convexity assumption. To avoid difficulties encountered in the previous subsection, here we investigate regularised LSI via a mass-transport approach. As mentioned earlier, Cordero-Erausquin \cite{Cor} obtained a mass-transport proof of the classical LSI and this inspired the following argument.

For the sake of simplicity we discuss the one-dimensional case. 
\begin{theorem}\label{t:LSIPo}
Let $K>0$ and $\beta > 1$.  
Suppose $v$ and $V$ are symmetric on $\mathbb{R}$ and satisfy 
$$V''\ge K,\;\;\; (\log\,v)'' \ge - \frac{K}{\beta},\;\;\; \int_{\mathbb{R}} v\, dx =1, \;\;\; \lim_{|x| \to +\infty} |V'(x)|v(x) = 0.$$ 
Then we have 
\begin{equation}\label{e:LSIPo}
{\rm Ent}_{\mathfrak{m}} \big( \frac{v}{\mathfrak{m}} \big)
- \frac1{2K} {\rm I}_{\mathfrak{m}} \big( \frac{v}{\mathfrak{m}} \big)
\le
{\rm Ent}_{\mathfrak{m}} \big( \frac{\mathfrak{m}_\beta}{\mathfrak{m}} \big)
- \frac1{2K} {\rm I}_{\mathfrak{m}} \big( \frac{\mathfrak{m}_\beta}{\mathfrak{m}} \big)
+ (1-\frac1{\beta}) \int_{\mathbb{R}}  \frac{V''}{K} (v -\mathfrak{m}_\beta)\, dx.  
\end{equation}

In particular, if we additionally assume $V''\le L$, then 
\begin{equation}\label{e:LSIPo2}
{\rm Ent}_{\mathfrak{m}} \big( \frac{v}{\mathfrak{m}} \big)
- \frac1{2K} {\rm I}_{\mathfrak{m}} \big( \frac{v}{\mathfrak{m}} \big)\\
\le
{\rm Ent}_{\mathfrak{m}} \big( \frac{\mathfrak{m}_\beta}{\mathfrak{m}} \big)
- \frac1{2K} {\rm I}_{\mathfrak{m}} \big( \frac{\mathfrak{m}_\beta}{\mathfrak{m}} \big)
+
(1-\frac1\beta) \frac{L-K}{K}. 
\end{equation}

\end{theorem}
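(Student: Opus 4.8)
The plan is to deduce \eqref{e:LSIPo2} from \eqref{e:LSIPo} immediately, and then to prove \eqref{e:LSIPo} by a mass‑transportation argument modelled on Cordero--Erausquin's proof of the classical logarithmic Sobolev inequality \cite{Cor}, running parallel to the proof of Proposition \ref{t:RegTal}. For the reduction: granting \eqref{e:LSIPo}, it is enough to bound the last term on its right‑hand side. Since $v$ and $\mathfrak{m}_\beta$ are probability densities, $\int_{\mathbb{R}}(v-\mathfrak{m}_\beta)\,dx=0$, so $\int_{\mathbb{R}}\tfrac{V''}{K}(v-\mathfrak{m}_\beta)\,dx=\int_{\mathbb{R}}\big(\tfrac{V''}{K}-1\big)(v-\mathfrak{m}_\beta)\,dx$; using $K\le V''\le L$ we have $0\le \tfrac{V''}{K}-1\le \tfrac{L-K}{K}$, and since $(v-\mathfrak{m}_\beta)_+\le v$ this gives $\int_{\mathbb{R}}\big(\tfrac{V''}{K}-1\big)(v-\mathfrak{m}_\beta)\,dx\le \tfrac{L-K}{K}\int_{\mathbb{R}}(v-\mathfrak{m}_\beta)_+\,dx\le \tfrac{L-K}{K}$. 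As $\beta>1$ forces $1-\tfrac1\beta>0$, \eqref{e:LSIPo2} follows.

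To prove \eqref{e:LSIPo}, let $T$ be Brenier's monotone map pushing $v\,dx$ forward onto $\mathfrak{m}_\beta\,dx$. The hypotheses $(\log v)''\ge -\tfrac{K}{\beta}$ and $(V/\beta)''=\tfrac{V''}{\beta}\ge \tfrac{K}{\beta}$ put us exactly in the setting of Caffarelli's contraction theorem, which yields $0<T'\le 1$ on $\mathbb{R}$ (precisely as $T'\le 1/\sqrt{\beta}$ was obtained in the proof of Proposition \ref{t:RegTal}(1)); if needed, a harmless $\varepsilon$‑perturbation of $v$ ensures $T'$ is bounded and bounded away from $0$, and we may assume ${\rm Ent}_{\mathfrak{m}}(\tfrac{v}{\mathfrak{m}})$ and ${\rm I}_{\mathfrak{m}}(\tfrac{v}{\mathfrak{m}})$ are finite. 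We record the Monge--Amp\`ere relation $v(x)=\mathfrak{m}_\beta(T(x))\,T'(x)$ and its logarithmic derivative $(\log v)'(x)=-\tfrac{V'(T(x))}{\beta}\,T'(x)+(\log T')'(x)$.

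Next I would express the two sides in terms of $T$. From the Monge--Amp\`ere relation, the change of variables $T_\#(v\,dx)=\mathfrak{m}_\beta\,dx$, and the explicit forms of $\mathfrak{m}$ and $\mathfrak{m}_\beta$, one obtains (with no integration by parts needed) the identity ${\rm Ent}_{\mathfrak{m}}(\tfrac{v}{\mathfrak{m}})-{\rm Ent}_{\mathfrak{m}}(\tfrac{\mathfrak{m}_\beta}{\mathfrak{m}})=\int_{\mathbb{R}}V(v-\mathfrak{m}_\beta)\,dx+\int_{\mathbb{R}}v\log T'\,dx$. For the Fisher information one writes ${\rm I}_{\mathfrak{m}}(\tfrac{v}{\mathfrak{m}})=\int_{\mathbb{R}}v\,\big((\log\tfrac{v}{\mathfrak{m}})'\big)^2\,dx$, substitutes the logarithmic derivative above (so that $(\log\tfrac{v}{\mathfrak{m}})'=[V'(x)-V'(T(x))]+V'(T(x))\big(1-\tfrac{T'(x)}{\beta}\big)+(\log T')'(x)$), and uses the change of variables once more to get ${\rm I}_{\mathfrak{m}}(\tfrac{\mathfrak{m}_\beta}{\mathfrak{m}})=(1-\tfrac1\beta)^2\int_{\mathbb{R}}V'(T(x))^2\,v(x)\,dx$. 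Writing also $\int V(v-\mathfrak{m}_\beta)=\int[V(x)-V(T(x))]v\,dx$ and likewise for $V''$, the inequality \eqref{e:LSIPo} reduces to
\[
\int_{\mathbb{R}}v\Big(\log T'+V(x)-V(T(x))-\tfrac{1-\frac1\beta}{K}\big(V''(x)-V''(T(x))\big)\Big)\,dx\le \tfrac1{2K}\int_{\mathbb{R}}v\Big((A+B)^2-A^2\Big)\,dx,
\]
with $A:=(1-\tfrac1\beta)V'(T(x))$ and $B:=[V'(x)-V'(T(x))]+V'(T(x))\tfrac{1-T'}{\beta}+(\log T')'$. To prove this I would: discard the nonnegative term $\int vB^2$; use that $T$ is odd (by symmetry of $v,\mathfrak{m}_\beta$) and $T'\le1$, so $0<T(x)\le x$ for $x>0$ and $x\le T(x)<0$ for $x<0$, whence $V'(T(x))\big(V'(x)-V'(T(x))\big)\ge0$ and $V'(T(x))^2(1-T')\ge0$ pointwise, making several pieces of $2\int vAB$ nonnegative; integrate by parts the cross terms carrying $(\log T')'$, the boundary contributions vanishing by the decay hypothesis $\lim_{|x|\to\infty}|V'(x)|v(x)=0$; invoke strong convexity in the sharpened form $\big(V'(x)-V'(T(x))\big)(x-T(x))\ge K(x-T(x))^2$ and the elementary bound $\log t\le t-1\le0$ for $t=T'(x)\in(0,1]$; and finally recognise $\int V''(T(x))v(x)\,dx=\int V''\mathfrak{m}_\beta\,dx$, which is what produces the asserted deficit $(1-\tfrac1\beta)\int\tfrac{V''}{K}(v-\mathfrak{m}_\beta)\,dx$. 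Sharpness is automatic: when $v=\mathfrak{m}_\beta$ one has $T=\mathrm{id}$, $T'\equiv1$, and all inequalities used become equalities.

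I expect the last step to be the main obstacle, and the difficulty is essentially bookkeeping: after the integrations by parts the Fisher‑information difference splits into a handful of cross terms, and one must verify that each is either a nonnegative square, or controlled by the combination $V''\ge K$ together with $0<T'\le1$, or matches exactly the deficit term $(1-\tfrac1\beta)\int\tfrac{V''}{K}(v-\mathfrak{m}_\beta)$. The secondary technical point is the justification of the boundary terms in those integrations by parts, which is precisely the role of the hypothesis $\lim_{|x|\to\infty}|V'(x)|v(x)=0$ and mirrors the corresponding step (justification of \eqref{e:LimIntByParts}) in the proof of Proposition \ref{t:RegTal}.
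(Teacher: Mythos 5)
Your strategy is the same mass‑transport route the paper takes: Brenier's map $T$ pushing $v$ onto $\mathfrak{m}_\beta$, Caffarelli's contraction theorem to get $0<T'\le1$ (your reading of the hypotheses as the right one for Caffarelli is correct), the Monge--Amp\`ere relation to express the entropy and Fisher‑information differences, and the oddness of $T$ together with $T'\le1$ to get pointwise monotone comparisons between $x$ and $T(x)$. The reduction of \eqref{e:LSIPo2} from \eqref{e:LSIPo} is also correct. You also correctly identify all the raw ingredients (the sharpened strong convexity $(V'(x)-V'(T))(x-T)\ge K(x-T)^2$, the change of variables $\int\phi(T)\,dv=\int\phi\,d\mathfrak{m}_\beta$, and the decay hypothesis to kill boundary terms).

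However, the algebraic core — the step you yourself flag as ``essentially bookkeeping'' — is a genuine gap, and your chosen decomposition is not the one that closes cleanly. You split $(\log\frac{v}{\mathfrak{m}})' = A + B$ with $A = (1-\frac1\beta)V'(T(x))$ and propose to discard $B^2$, then argue term by term that $\int vAB$ dominates. The paper does something structurally different: it first uses $V(x)-V(T)\le V'(x)(x-T)-\frac{K}{2}|T-x|^2$, then \emph{combines} the $-\frac{K}{2}|T-x|^2$ term with the Fisher‑information term $-\frac{1}{2K}|(\log v)'+V'|^2$ into a single square $-\frac12|\sqrt{K}(T-x)+\frac{(\log v)'+V'}{\sqrt{K}}|^2$ (plus the cross term $((\log v)'+V')(T-x)$, one piece of which, $\int(\log v)'(T-x)\,dv$, integrates by parts to $-\int v(T'-1)$). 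It then splits that square as $a = a_1+a_2$ with $a_2 = (1-\frac1\beta)\sqrt{K}\,(T-x+V'(x)/K)$ — note this carries $T-x$ and $V'(x)$, not $V'(T)$ — expands, drops $|a_1|^2$ and one more nonnegative piece, and only then arrives at the key pointwise inequality $(T-x+V'(x)/K)\,V'(T)\ge V'(T)^2/K$, which is exactly your ``sharpened strong convexity'' multiplied through by $V'(T)\ge0$ (for $x>0$); this is precisely what produces, after the change of variables and an integration by parts for $\int|V'|^2\,d\mathfrak{m}_\beta=\beta\int V''\,d\mathfrak{m}_\beta$, the exact deficit $(1-\frac1\beta)\frac{1}{K}\int V''(v-\mathfrak{m}_\beta)$ with the residual $\int(\log T'-\frac{T'}{\beta}+\frac1\beta)\,dv\le 0$. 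In your scheme, discarding $B^2$ strengthens what you must prove, and after integrating the $(\log T')'$ piece of $AB$ by parts (which produces both $-\int vV''(T)T'$ and $\frac1\beta\int vV'(T)^2$), the remaining terms do not obviously combine into a sum of manifestly nonnegative quantities, nor do you indicate where the multiplication of the sharpened‑convexity inequality by $V'(T)$ occurs. The argument may be salvageable, but as stated the decisive rearrangement is missing, and you have not verified that your route reproduces the precise constant $(1-\frac1\beta)\int\frac{V''}{K}(v-\mathfrak{m}_\beta)$ on the nose.
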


\begin{remark}
It is not immediately clear if \eqref{e:LSIPo2}  improves upon \eqref{e:LogSobPotential} or not. 
However, if one notices that 
$$
V(x) \le V(0) + \frac1{2K} |V'(x)|^2
$$
from $V''\ge K$, then one can see that 
$$
{\rm Ent}_{\mathfrak{m}} \big( \frac{\mathfrak{m}_\beta}{\mathfrak{m}} \big)
- \frac1{2K} {\rm I}_{\mathfrak{m}} \big( \frac{\mathfrak{m}_\beta}{\mathfrak{m}} \big)
\le  
- \frac12 \log\, \big( \frac{2\pi \beta}{L} \big) + V(0) + \frac{L}{2K} (1-\frac1{\beta}) + \log\, Z, 
$$
provided $K\le V''\le L$. 
In particular, this shows that $ {\rm Ent}_{\mathfrak{m}} \big( \frac{\mathfrak{m}_\beta}{\mathfrak{m}} \big)
- \frac1{2K} {\rm I}_{\mathfrak{m}} \big( \frac{\mathfrak{m}_\beta}{\mathfrak{m}} \big)
\to -\infty $ as $\beta\to\infty$. Therefore there exists sufficiently large $\beta = \beta(V) \ge1$ such that \eqref{e:LSIPo2} improves \eqref{e:LogSobPotential}. 
\end{remark}
\begin{proof}[Proof of Theorem \ref{t:LSIPo}]
Let $T:\mathbb{R}\to\mathbb{R}$ be Brenier's map from $v$ to $\mathfrak{m}_\beta$ in which case \eqref{e:Transport} and \eqref{e:MA} can be read as 
\begin{equation}\label{e:MA2}
\int_{\mathbb{R}} h(T(x))\, dv = \int_{\mathbb{R}} h(x)\, d\mathfrak{m}_\beta,\;\;\; v(x) = \mathfrak{m}_\beta(T(x))  T'(x). 
\end{equation} 
Then we have from \eqref{e:MA2} that 
\begin{align*}
{\rm Ent}_{\mathfrak{m}} \big( \frac{v}{\mathfrak{m}} \big)
=& 
\int_{\mathbb{R}} \big(\log\, v- \log\, \mathfrak{m} \big)\, dv \\
=& 
\int_{\mathbb{R}} \big(\log\, \mathfrak{m}_\beta(T(x)) + \log\, T'(x) -\log\, \mathfrak{m}(x) \big)\, dv \\
=& 
-\log\, Z_\beta + \log\, Z - \frac1\beta \int_{\mathbb{R}} V(T(x))\, dv+ \int_{\mathbb{R}} V(x)\, dv + \int_{\mathbb{R}} \log\, T'(x)\, dv.
\end{align*}
In the case of $v = \mathfrak{m}_\beta$, we have that 
$$
{\rm Ent}_{\mathfrak{m}} \big( \frac{\mathfrak{m}_\beta}{\mathfrak{m}} \big)
=
-\log\, Z_\beta + \log\, Z + (1 - \frac1\beta) \int_{\mathbb{R}} V(T(x))\, dv.
$$
Regarding the Fisher information, from the definition 
$$
{\rm I}_{\mathfrak{m}} \big( \frac{v}{\mathfrak{m}} \big)
=
\int_{\mathbb{R}} \big| (\log\, v)' + V' \big|^2\, dv,
\;\;\; 
{\rm I}_{\mathfrak{m}} \big( \frac{\mathfrak{m}_\beta}{\mathfrak{m}} \big)
=
(1-\frac{1}{\beta} )^2 \int_{\mathbb{R}}   |V'|^2\, d\mathfrak{m}_\beta. 
$$
Hence 
\begin{align*}
R(v)
& :=
{\rm Ent}_{\mathfrak{m}} \big( \frac{v}{\mathfrak{m}} \big)
- \frac1{2K} {\rm I}_{\mathfrak{m}} \big( \frac{v}{\mathfrak{m}} \big)
- {\rm Ent}_{\mathfrak{m}} \big( \frac{\mathfrak{m}_\beta}{\mathfrak{m}} \big)
+ \frac1{2K} {\rm I}_{\mathfrak{m}} \big( \frac{\mathfrak{m}_\beta}{\mathfrak{m}} \big)\\
& =
\int_{\mathbb{R}} \big(V(x) - V(T(x)) \big) \, dv + \int_{\mathbb{R}} \log\, T'(x)\, dv\\
&\qquad -\frac1{2K} \int_{\mathbb{R}} \big| (\log\, v)' + V' \big|^2\, dv
+ \frac1{2K} (1-\frac{1}{\beta} )^2 \int_{\mathbb{R}}   |V'|^2\, d\mathfrak{m}_\beta. 
\end{align*}
We know that $V''\ge K$ implies 
$$
V(x) - V(T(x)) \le V'(x) (x-T(x)) - \frac{K}{2} | T(x) - x |^2
$$
from which we obtain 
\begin{align*}
R(v)
\le&
-\int_{\mathbb{R}} (T(x)-x) V'(x)  \, dv - \frac{K}{2} \int_{\mathbb{R}} |T(x)-x|^2\, dv  + \int_{\mathbb{R}} \log\, T'(x)\, dv\\
&-\frac1{2K} \int_{\mathbb{R}} \big| (\log\, v)' + V' \big|^2\, dv
+ \frac1{2K} (1-\frac{1}{\beta} )^2 \int_{\mathbb{R}}   |V'|^2\, d\mathfrak{m}_\beta. 
\end{align*}
We focus on 
\begin{align*}
&- \frac{K}{2} \int_{\mathbb{R}} |T(x)-x|^2\, dv-\frac1{2K} \int_{\mathbb{R}} \big| (\log\, v)' + V' \big|^2\, dv\\
=&
-\frac12 \int_{\mathbb{R}} \big| \sqrt{K} ( T(x)-x ) + \frac{1}{\sqrt{K}} \big( (\log\, v)'(x) +V'(x) \big)  \big|^2\, dv\\
&+ 
\int_{\mathbb{R}} \big(  (\log\, v)'(x) +V'(x) \big)( T(x)-x )\, dv.
\end{align*}
Notice simply that 
$$
\int_{\mathbb{R}} (\log\, v)'( T(x)-x )\, dv 
=
- \int_{\mathbb{R}} (T'(x) - 1)\, dv. 
$$
This integration by parts can be justified from assumptions $\lim_{|x| \to +\infty} |V'(x)|v(x) = 0$, $V'' \ge K$, and the fact that $V$ is symmetric.
In fact, as we will see later,  we have from $V'' \ge K$ and the symmetric assumption on $V$ that  $|T(x)| \le |x|\le \frac1K |V'(x)|$. This yields that $v(x)|x|, v(x)|T(x)| \to 0$ as $|x| \to +\infty$.
Therefore we obtain 
\begin{align}\label{e:Simple}
R(v)
\le&
\int_{\mathbb{R}} \big( \log\, T'(x) - T'(x) +1 \big)  \, dv 
+ \frac1{2K} (1-\frac{1}{\beta} )^2 \int_{\mathbb{R}}   |V'|^2\, d\mathfrak{m}_\beta\\
&
-\frac12 \int_{\mathbb{R}} \big| \sqrt{K} ( T(x)-x ) + \frac{1}{\sqrt{K}} \big( (\log\, v)' +V' \big)  \big|^2\, dv. \nonumber 
\end{align}
We then reorganise terms in the third term as 
\begin{align*}
&\big| \sqrt{K} ( T(x)-x ) + \frac{1}{\sqrt{K}} \big( (\log\, v)' +V' \big)  \big|^2\\
& =
\big| \frac{\sqrt{K}}{\beta} ( T(x)-x ) + \frac{1}{\sqrt{K}} \big( (\log\, v)' + \frac{V'}{\beta} \big)  + (1-\frac1{\beta})\sqrt{K} ( T(x)-x ) + (1-\frac1{\beta}) \frac{V'}{\sqrt{K}} \big|^2\\
& = 
\big| \frac{\sqrt{K}}{\beta} ( T(x)-x ) + \frac{1}{\sqrt{K}} \big( (\log\, v)' + \frac{V'}{\beta} \big) \big|^2 \\
& \qquad + 2  (1-\frac1{\beta}) (\log\, v)' \big(  T(x)-x  +  \frac{V'}{K}  \big)  
+
K (1-\frac1{\beta^2}) \big| T(x)-x  +  \frac{V'}{K}  \big|^2. 
\end{align*}
We further rearrange $\big| T(x)-x  +  \frac{V'}{K}  \big|^2$ as
\begin{align*}
&\big| T(x)-x  +  \frac{V'}{K}  \big|^2\\
& =
\big| T(x)-x  +  \frac{V'(x)}{K} - \frac{V'(T(x))}{K}  \big|^2
+2 \big(T(x)-x  +  \frac{V'}{K}\big)  \frac{V'(T(x))}{K}
- 
\big|  \frac{V'(T(x))}{K} \big|^2. 
\end{align*}
Since $\beta\ge1$, we obtain  
\begin{align*}
&\big| \sqrt{K} ( T(x)-x ) + \frac{1}{\sqrt{K}} \big( (\log\, v)' +V' \big)  \big|^2\\
& \ge
2  (1-\frac1{\beta}) (\log\, v)' \big(  T(x)-x  +  \frac{V'}{K}  \big) \\
& \qquad +
2 (1-\frac1{\beta^2})\big(T(x)-x  +  \frac{V'}{K}\big)  V'(T(x)) 
-
\frac1{K}(1-\frac1{\beta^2}) |V'(T(x))|^2. 
\end{align*}
Now we appeal to the assumption that $V,v$ are symmetric to see that 
\begin{equation}\label{e:Hiroshi}
\big(T(x)-x  +  \frac{V'}{K}\big)  V'(T(x))
\ge 
\frac{ |V'(T(x))|^2}{K}
\end{equation}
as follows. Consider the case $x>0$. First notice that 
if we define 
$\psi(x):= -x + \frac1KV'(x)$, then 
$$
\psi'(x)= -1 + \frac1KV''(x) \ge0
$$
and hence $\psi$ is monotone increasing.  From the convexity assumptions on $V$ and $v$, Caffarelli's contraction theorem \cite{Caf,Kol} shows that $0\le T' \le 1$. Also the symmetry of $v,V$ yields that $T(0) = 0$. These two imply $0\le T(x)\le x$ and hence 
$\psi(x) \ge \psi(T(x))$. In other words, 
$$
T(x) - x +\frac1{K} V'(x)\ge \frac1KV'(T(x)). 
$$
Also the symmetry of $V$ yields $V'(0) = 0$ and hence $\psi(x) \ge \psi(0) = 0$ which turns into $V'(x) \ge Kx\ge0$. 
Therefore we see \eqref{e:Hiroshi} when $x\ge0$. In a similar way, one can show \eqref{e:Hiroshi} even when $x\le 0$. 

We then apply \eqref{e:Hiroshi} to obtain  
\begin{align*}
&\big| \sqrt{K} ( T(x)-x ) + \frac{1}{\sqrt{K}} \big( (\log\, v)' +V' \big)  \big|^2\\
& \ge
2  (1-\frac1{\beta}) (\log\, v)' \big(  T(x)-x  +  \frac{V'}{K}  \big) 
+
\frac{1}{K}(1-\frac1{\beta^2}) |V'(T(x))|^2.  
\end{align*}
Overall we derive 
\begin{align*}
R(v)
\le&
\int_{\mathbb{R}} \big( \log\, T'(x) - T'(x) +1 \big)  \, dv 
+ \frac1{2K} (1-\frac{1}{\beta} )^2 \int_{\mathbb{R}}   |V'|^2\, d\mathfrak{m}_\beta\\
&
-\frac12 \int_{\mathbb{R}} \bigg( 2  (1-\frac1{\beta}) (\log\, v)' \big(  T(x)-x  +  \frac{V'}{K}  \big) 
+
\frac{1}{K}(1-\frac1{\beta^2}) |V'(T(x))|^2 \bigg) \, dv \\
=& 
\int_{\mathbb{R}} \big( \log\, T'(x) - T'(x) +1 \big)  \, dv 
+ \frac1{2K} (1-\frac{1}{\beta} )^2 \int_{\mathbb{R}}   |V'|^2\, d\mathfrak{m}_\beta\\
&
-(1-\frac1{\beta}) \int_{\mathbb{R}} v'(x)  \big(  T(x)-x  +  \frac{V'}{K}  \big) \, dx 
-
\frac{1}{2K} (1-\frac1{\beta^2}) \int_{\mathbb{R}} |V'(T(x))|^2 \, dv.  
\end{align*}
We finally use \eqref{e:MA2} to see that 
\begin{align*}
&\frac1{2K} (1-\frac{1}{\beta} )^2 \int_{\mathbb{R}}   |V'|^2\, d\mathfrak{m}_\beta-
\frac{1}{2K} (1-\frac1{\beta^2}) \int_{\mathbb{R}}  |V'(T(x))|^2 \, dv \\
=& 
- \frac1{K} (1-\frac{1}{\beta} ) \int_{\mathbb{R}}   V'' \, d\mathfrak{m}_\beta, 
\end{align*}
where we also use $\lim_{|x| \to +\infty} |V'(x)|v(x) = 0$ to validate the integration by parts step.
From this we conclude
\begin{align*}
R(v)
\le&
\int_{\mathbb{R}} \big( \log\, T'(x) - \frac{1}{\beta}T'(x) +\frac{1}{\beta} \big)  \, dv 
+ (1-\frac1{\beta})\frac1{K}\int_{\mathbb{R}}  V'' (v -\mathfrak{m}_\beta)\, dx
\end{align*}
and since $0\le T'(x)\le 1$ we thus obtain \eqref{e:LSIPo}. 
\end{proof}

\if0 
\subsection{Talagrand's inequality}\label{S5-2}

\subsection{Hypercontractivity and LSI under a semi-log-subharmonic assumption}
Although Theorem \ref{t:SymmReg} and Corollary \ref{Cor:Matrix} are stated in terms of the Hessian of $\log\, v$,
it is plausible that these results continue to hold under the weaker assumption whereby one replaces the Hessian with the Laplacian.
For example,  we can easily see that an assumption that 
\begin{equation}\label{e:SubLogHar}
\Delta \log\, v \ge - \frac{n}{\beta_0},\;\;\; \frac{1}{\beta_0}:= \frac1n \sum_{i=1}^n \frac{1}{\beta_i}
\end{equation}
is a weaker assumption than \eqref{e:Semilogconvex}.  
In this subsection, we discuss our inequalities under \eqref{e:SubLogHar}.  
To this end,  it is appropriate to refer to the work by Graczyk--Kemp--Loeb \cite{GKL} where they investigated hypercontractivity for semi-group $T_sf(x) := f(e^{-s}x)$ rather than $P_s$ under a log-subharmonic assumption. 
In particular, by employing  \cite[Lemma 2.4]{GKL}, we may weaken the assumption \eqref{e:Semilogconvex} to a condition akin to \eqref{e:SubLogHar}. 
For certain technical reasons, on inputs $v$ we impose that 
\begin{equation}\label{e:Tech18Feb}
\int_{\mathbb{R}^n} v(x)\, dx <\infty,\;\;\; v(x) e^{ \frac{|x|^2}{2\beta} } \le C
\end{equation}
holds for some $C>0$.  We remark that if \eqref{e:Tech18Feb} is ensured then $v \in L^2(\gamma_\beta^{-1})$ (which is our technical assumption in Theorem \ref{t:SymmReg}). 

\begin{theorem}\label{t:SubHar}
Let $\beta\ge1$.  If the twice differentiable function $v:\mathbb{R}^n\to (0,\infty)$ satisfies \eqref{e:Tech18Feb} and 
$$
\Delta \log\, v \ge - \frac{n}{\beta},
$$
then we have \eqref{e:RegHC} and \eqref{e:ReglogSobolev}.  
\end{theorem}

\begin{proof}
The proof is essentially the same as the proof of Theorem \ref{t:SymmReg}. Following the argument and notation from that proof, we have only to ensure 
\begin{equation}\label{e:vtLogSub}
\Delta \log\, v_t \ge - \frac{n}{\beta}
\end{equation}
for all $t>0$ under the assumption $\Delta \log\, v \ge - \frac{n}{\beta}$. 
To this end, we recall the formula \eqref{e:vtForm18Feb}.  
The assumption $\Delta \log\, v \ge - \frac{n}{\beta}$ can be read as 
$$
\Delta_x \log\, \phi(x,w) \ge 0 
$$
and hence $x\mapsto \phi(x,w)$ is log-subharmonic for each $w$. 
We also know that $\| \phi \|_{L^\infty_{x,w}} < \infty$ from the assumption \eqref{e:Tech18Feb}. Hence we may employ \cite[Lemma 2.4]{GKL} to ensure that 
$$
\Delta_x  \int_{\mathbb{R}^n} \log\, \phi(x,w)\, d\mu(w)  \ge 0 
$$
which turns into  \eqref{e:vtLogSub}. 
Once we ensure \eqref{e:vtLogSub}, we can run the same proof for Theorem \ref{t:SymmReg} to conclude. 
\end{proof}

It is also seems reasonable to expect the analogue of Theorem \ref{t:SubHar} when $0<\beta\le1$ under a semi-log-superharmonic assumption.  Namely, one may hope that for sufficiently well-behaved inputs $v$ satisfying
$$
\Delta \log\, v \le - \frac{n}{\beta},
$$
we have both \eqref{e:RegHC} and \eqref{e:ReglogSobolev}. 
Once one tries to apply the argument of Theorem \ref{t:SubHar},  then one is naturally led to the following question. 
If some (sufficiently nice) function $f$ on $\mathbb{R}^n$ satisfies 
$$
\Delta \log\, f \le 0,
$$ 
does this continue to be satisfied by $P_tf$ for all $t>0$? 
If true, this would constitute a complementary result to \cite[Lemma 2.4]{GKL} and, given our Lemma \ref{l:LogPreserve}, appears to stand a chance. Interestingly, this is too optimistic and such preservation of log-superharmonicity can easily be seen to fail. 
In fact, if we take 
$$
f(x) = e^{ x_1x_2 },\;\;\; x = (x_1,x_2) \in \mathbb{R}^2, 
$$
then $ \Delta \log\, f =0 $ whilst $\Delta \log\, P_tf >0$ for all $t>0$.  
Despite this, it remains possible that an analogue of Theorem \ref{t:SubHar} holds when $0<\beta<1$ under a semi-log-superharmonic assumption. We leave this as an interesting open problem.

\section{Note}
\subsection{ELS cannot be applied to semi-log-submarmonic setting}
The argument showing that Theorem 1 implies Theorem 3 in ELS does not seem to work if we assume 
$$
\Delta \log\, v \ge - \frac{n}\beta.
$$ 
The identity $\mathcal{I}(\mu | \mathcal{L} ) = - \int \nabla^2 \log\, \frac{d\mu}{dx}\, d\mu$ only gives us that 
$$
\sum_{i=1}^n \alpha_i = \mathcal{I}(\mu | \mathcal{L} )  \le  \frac{n}{\beta}
$$
where 
$\alpha_i$ denotes eigenvalues of $\mathcal{I}(\mu|\mathcal{L})$. 
However, this does not enough to derive information of each $\alpha_i$ which enables us to obtain 
$$
n( \frac1\beta - 1 + \log\, \beta ) \le \sum_{i=1}^n (\alpha_i - 1 - \log\, \alpha_i).
$$
Note: To have this, ELS uses that $\alpha_i \le \frac1\beta \le 1$ and $x \mapsto x - 1 - \log \, x$ is decreasing on $0< x<1$.

\fi

\section*{Acknowledgements}
The authors thank Jonathan Bennett, Fumio Hiroshima, Shin-ichi Ohta, and Asuka Takatsu for sharing their deep insight from several mathematical fields. 
The authors would also like to express their appreciation to Yair Shenfeld for extremely generous discussions which, in particular, led us to pursue the possibility of extending our results to the setting of semi-log-subharmonicity as well as the results in Corollary \ref{Cor:Matrix} in which the dependence on the underlying dimension is weakened. The conjectural estimate for Talagrand's inequality in \eqref{e:TIMWeak} under the small-covariance assumption was also raised by him.

This work was supported by JSPS Kakenhi grant numbers 18KK0073, 19H00644 and 19H01796 (Bez), Grant-in-Aid for JSPS Research Fellow no. 17J01766 and JSPS Kakenhi grant numbers 19K03546, 19H01796 and 21K13806 (Nakamura), and JST,  ACT-X Grant Number JPMJAX200J, Japan, and JSPS Kakenhi grant number 19H01796 (Tsuji).


\end{document}